\newcommand{\quash}[1]{}  %%Anything in \quash is ignored
\newcommand{\leftexp}[2]{{\vphantom{#2}}^{#1}{#2}}
\newcommand{\Hk}[2]{{\vphantom{\Hecke}}_{#1}\Hecke_{#2}}
\newcommand{\Hrs}[2]{{\vphantom{\calH}}_{#1}\calH^{\rs}_{#2}}
\newcommand{\const}[1]{\overline{\QQ}_{\ell,#1}}
\newcommand{\twtimes}[1]{\stackrel{#1}{\times}}
\newcommand{\homo}[2]{\mathbf{H}_{#1}({#2})}   % sheaf
\newcommand{\homog}[2]{\textup{H}_{#1}({#2})}  % plain group
\newcommand{\coho}[2]{\mathbf{H}^{#1}({#2})}    % sheaf
\newcommand{\cohog}[2]{\textup{H}^{#1}({#2})}     % plain group
\newcommand{\jiao}[1]{\langle{#1}\rangle}
\newcommand{\dquot}[2]{W_{#1}\backslash\tilW/W_{#2}}
\newcommand{\conv}[1]{\stackrel{#1}{\ast}}
\DeclareMathOperator{\SL}{SL}
\DeclareMathOperator{\SO}{SO}
\DeclareMathOperator{\Sp}{Sp}
\DeclareMathOperator{\Hom}{Hom}
\DeclareMathOperator{\Ext}{Ext}
\DeclareMathOperator{\End}{End}
\DeclareMathOperator{\Aut}{Aut}
\DeclareMathOperator{\Corr}{Corr}
\DeclareMathOperator{\Sym}{Sym}
\DeclareMathOperator{\Spec}{Spec}
\DeclareMathOperator{\Spf}{Spf}
\DeclareMathOperator{\Res}{Res}
\DeclareMathOperator{\id}{id}
\DeclareMathOperator{\ev}{ev}
\DeclareMathOperator{\Irr}{Irr}
\DeclareMathOperator{\Pic}{Pic}
\DeclareMathOperator{\codim}{codim}
\DeclareMathOperator{\coker}{coker}
\DeclareMathOperator{\Av}{Av}
\def\bG{\mathbf{G}}
\def\bI{\mathbf{I}}
\def\bP{\mathbf{P}}
\def\bQ{\mathbf{Q}}
\def\bR{\mathbf{R}}
\def\BB{\mathbb{B}}
\def\CC{\mathbb{C}}
\def\DD{\mathbb{D}}
\def\GG{\mathbb{G}}
\def\HH{\mathbb{H}}
\def\PP{\mathbb{P}}
\def\QQ{\mathbb{Q}}
\def\ZZ{\mathbb{Z}}
\def\frg{\mathfrak{g}}
\def\frt{\mathfrak{t}}
\def\frb{\mathfrak{b}}
\def\frl{\mathfrak{l}}
\def\frc{\mathfrak{c}}
\def\frA{\mathfrak{A}}
\def\frB{\mathfrak{B}}
\def\frF{\mathfrak{F}}
\def\calA{\mathcal{A}}
\def\calB{\mathcal{B}}
\def\calC{\mathcal{C}}
\def\calE{\mathcal{E}}
\def\calF{\mathcal{F}}
\def\calG{\mathcal{G}}
\def\calH{\mathcal{H}}
\def\calI{\mathcal{I}}
\def\calK{\mathcal{K}}
\def\calL{\mathcal{L}}
\def\calM{\mathcal{M}}
\def\calN{\mathcal{N}}
\def\calO{\mathcal{O}}
\def\calP{\mathcal{P}}
\def\calQ{\mathcal{Q}}
\def\calR{\mathcal{R}}
\def\tilT{\widetilde{T}}
\def\tilC{\widetilde{C}}
\def\tilg{\widetilde{\mathfrak{g}}}
\def\tilp{\widetilde{p}}
\def\tils{\widetilde{s}}
\def\tilw{\widetilde{w}}
\def\tilW{\widetilde{W}}
\def\tilv{\widetilde{v}}
\def\tilx{\widetilde{x}}
\def\tilI{\widetilde{\bI}}
\def\tilxi{\widetilde{\xi}}
\def\tilev{\widetilde{\ev}}
\def\tilphi{\widetilde{\varphi}}
\def\till{\widetilde{\mathfrak{l}}}
\def\tilL{\widetilde{L}}
\def\tilf{\widetilde{f}}
\def\tcP{\widetilde{\calP}}
\def\tcA{\widetilde{\calA}}
\def\hatG{\widehat{G}}
\def\hatO{\widehat{\calO}}
\def\hatg{\widehat{g}}
\def\hatom{\widehat{\omega}}
\def\upH{\textup{H}}
\def\unL{\underline{L}}
\def\unl{\underline{\frl}}
\def\unB{\underline{B}}
\def\unb{\underline{\frb}}
\def\unAut{\underline{\Aut}}
\def\undi{\underline{i}}
\def\Ln{L^{\natural}}
\def\xch{\mathbb{X}^*}
\def\xcoch{\mathbb{X}_*}
\def\one{\mathbf{1}}
\def\Ql{\overline{\QQ}_\ell}
\def\hotimes{\widehat{\otimes}}
\def\isom{\stackrel{\sim}{\to}}
\def\act{\textup{act}}
\def\rs{\textup{rs}}
\def\red{\textup{red}}
\def\proj{\textup{proj}}
\def\Ad{\textup{Ad}}
\def\ad{\textup{ad}}
\def\aff{\textup{aff}}
\def\nil{\textup{nil}}
\def\modo{\textup{ mod }}
\def\St{\textup{St}}
\def\st{\textup{st}}
\def\unSt{\underline{\St}}
\def\parab{\textup{par}}
\def\bch{\textup{b.c.}}
\def\adj{\textup{ad.}}
\def\XR{X_R}
\def\XS{S\times X}
\def\BM{\textup{BM}}
\def\Coor{\textup{Coor}}
\def\sing{\textup{sing}}
\def\punc{\textup{punc}}
\def\Bun{\textup{Bun}}
\def\Hit{\textup{Hit}}
\def\Grass{\mathcal{G}r}
\def\Flag{\mathcal{F}\ell}
\def\tilBun{\widetilde{\Bun}}
\def\Bunpar{\Bun^{\parab}}
\def\hatBun{\widehat{\Bun}}
\def\hatgloop{\widehat{G((t))}}
\def\stPic{\calP\textup{ic}}
\def\ani{\textup{ani}}
\def\Ah{\calA^{\heartsuit}}
\def\Aa{\calA^{\ani}}
\def\AHit{\calA^{\Hit}}
\def\tcArs{\tcA^{\rs}}
\def\Mpar{\mathcal{M}^{\parab}}
\def\Mparrs{\mathcal{M}^{\parab,\rs}}
\def\MHit{\mathcal{M}^{\Hit}}
\def\Hecke{\mathcal{H}\textup{ecke}}
\def\Heckep{\Hecke^{\parab}}
\def\fpar{f^{\parab}}
\def\fQl{f^{\parab}_*\Ql}
\def\fHQl{f^{\Hit}_*\Ql}
\def\tfQl{\widetilde{f}_*\Ql}
\def\bxi{\overline{\xi}}
\def\PQ{^{\bQ}_{\bP}}
\def\Wa{W_{\textup{aff}}}
\def\disk{\mathfrak{D}}
\def\pdisk{\disk^{\times}}
\def\cent{\Ql[\xcoch(T)]^W}
\def\grot{\GG^{\textup{rot}}_m}
\def\gcen{\GG^{\textup{cen}}_m}
\def\can{\textup{can}}
\def\canb{\omega_{\Bun}}
\def\Lcan{\calL_{\can}}
\def\forg{\textup{For}}
\def\tforg{\widetilde{\forg}}
\theoremstyle{plain}
\newtheorem{theorem}[subsubsection]{Theorem}
\newtheorem{lemma}[subsubsection]{Lemma}
\newtheorem{cor}[subsubsection]{Corollary}
\newtheorem{prop}[subsubsection]{Proposition}
\newtheorem{ques}[subsubsection]{Question}
\newtheorem*{tha}{Theorem A}
\newtheorem*{thb}{Theorem B}
\newtheorem*{thc}{Theorem C}
\theoremstyle{definition}
\newtheorem{defn}[subsubsection]{Definition}
\newtheorem{cons}[subsubsection]{Construction}
\newtheorem{remark}[subsubsection]{Remark}
\newtheorem{exam}[subsubsection]{Example}
\numberwithin{equation}{section}
\title[Towards a Global Springer Theory II]{Towards a Global Springer Theory II:\\ the double affine action}
\author{Zhiwei Yun}
\address{Department of Mathematics, Princeton University, Princeton, NJ 08544, USA}
\email{zyun@math.princeton.edu}
\date{February 2009; Revised April 2009}
\subjclass[2000]{Primary 14H60, 20C08; Secondary 17B67, 20F55}
\begin{document}

\begin{abstract}
We construct an action of the graded double affine Hecke algebra (DAHA) on the parabolic Hitchin complex, extending the affine Weyl group action constructed in \cite{GSI}. In particular, we get representations of the degenerate DAHA on the cohomology of parabolic Hitchin fibers. We also generalize our construction to {\em parahoric} versions of Hitchin stacks, including the construction of 'tHooft operators as a special case. We then study the interaction of the DAHA action and the cap product action given by the Picard stack acting on the parabolic Hitchin stack.
\end{abstract}

\maketitle

\tableofcontents

\section{Introduction}

This paper is a continuation of \cite{GSI}. For an overview of the ideas and motivations of this series of papers, see the Introduction of \cite{GSI}. We will use the notations and conventions from \cite[Sec. 2]{GSI}. In particular, we fix a connected reductive group $G$ over an algebraically closed field $k$ with a Borel subgroup $B$, a connected smooth projective curve $X$ over $k$ and a divisor $D$ on $X$ of degree at least twice the genus of $X$. Recall from \cite[Def. 3.1.2]{GSI} that we defined the parabolic Hitchin moduli stack $\Mpar=\Mpar_{G,X,D}$ as the moduli stack of quadruples $(x,\calE,\varphi,\calE^B_x)$ where
\begin{itemize}
\item $x\in X$;
\item $\calE$ is a $G$-torsor on $X$ with a $B$-reduction $\calE^B_x$ at $x$;
\item $\varphi\in\cohog{0}{X,\Ad(\calE)(D)}$ is a Higgs field compatible with $\calE^B_x$.
\end{itemize}
We also defined the parabolic Hitchin fibration (see \cite[Def. 3.1.6]{GSI}):
\begin{equation*}
\fpar:\Mpar\to\calA\times X.
\end{equation*}

In \cite{GSI}, we have constructed an action of the extended affine Weyl group $\tilW$ on the parabolic Hitchin complex $\fQl$, which justifies to be called the ``global Springer action''. As me mentioned in \cite[Sec. 1.2]{GSI}, there are at least three pieces of symmetry acting on the complex $\fQl$: the affine Weyl group action, the cup product action given by certain Chern classes and the cap product action given by the Picard stack $\calP$. This paper is devoted to the study of the second and the third action on $\fQl$, as well as the interplay among the three actions.

Recall from \cite[Rem. 3.5.6]{GSI} that we have chosen an open subset $\calA$ of the anisotropic Hitchin base $\Aa$ on which the codimension estimate $\codim_{\AHit}(\calA_\delta)\geq\delta$ holds for any $\delta\in\ZZ_{\geq0}$. Throughout this paper, with the only exception of Sec. \ref{s:parahoric}, we will work over this open subset $\calA$. All stacks originally over $\AHit$ or $\Aa$ will be restricted to $\calA$ without changing notations. Note that when $\textup{char}(k)=0$, we may take $\calA=\Aa$.

\subsection{Main results}

\subsubsection{The double affine action}\label{sss:double}
The $\tilW$-action on $\fQl$ constructed in \cite[Th. 4.4.3]{GSI} and the Chern class action mentioned above together give a full symmetry of the graded {\em double affine Hecke algebra $\HH$} (DAHA) on $\fQl$, which we now define.

For simplicity, let us assume $G$ is almost simple and simply-connected, so that the affine Weyl group $\tilW=\xcoch(T)\rtimes W$ is a Coxeter group with simple reflections $\{s_0,s_1,\cdots,s_n\}$. The graded algebra $\HH$ is, as a vector space, the tensor product of the group ring $\Ql[\tilW]$ with the polynomial algebra $\Sym_{\Ql}(\xch(\tilT)_{\Ql})\otimes\Ql[u]$. Here $\tilT$ is the Cartan torus in the Kac-Moody group associated to the loop group $G((t))$ (see Sec. \ref{ss:KM}). The graded algebra structure of $\HH$ is uniquely determined by
\begin{itemize}
\item $\Ql[\tilW]$ is a subalgebra of $\HH$ in degree 0;
\item $\Sym_{\Ql}(\xch(\tilT)_{\Ql})$ is a subalgebra of $\HH$ with $\xi\in\xch(\tilT)$ in degree 2;
\item $u$ has degree 2, and is central in $\HH$;
\item For each simple reflection $s_i$ and $\xi\in\xch(\tilT)$,
\begin{equation}\label{eq:introDAHA}
s_i\xi-\leftexp{s_i}{\xi}s_i=\langle\xi,\alpha_i^\vee\rangle u
\end{equation}
Here $\alpha_i^\vee\in\xcoch(\tilT)$ is the coroot corresponding to $s_i$. 
\end{itemize}

We have a decomposition $\tilT=\gcen\times T\times\grot$, where $\gcen$ is the one dimensional central torus in the Kac-Moody group and $\grot$ is the one dimensional ``loop rotation'' torus. Let $\delta\in\xch(\grot)$ and $\Lambda_0\in\xch(\gcen)$ be the generators (here we are using Kac's notation for affine Kac-Moody groups, see \cite[6.5]{Kac}).

The moduli meaning of $\Bunpar_G$ gives a universal $B$-torsor on $\Bunpar_G$, and induces a $T$-torsor $\calL^T$ on $\Bunpar_G$. In particular, for any $\xi\in\xch(T)$, we have a line bundle $\calL(\xi)$ on $\Bunpar_G$ induced from $\calL^T$ and the character $\xi$. We can view these line bundles as line bundles on $\Mpar$ via the morphism $\Mpar\to\Bunpar_G$. We also have the {\em determinant line bundle} on $\Mpar$, which is (up to a power) the pull-back of the canonical bundle $\canb$ of $\Bun_G$.

\begin{tha}[See Th. \ref{th:daction}]
There is a graded algebra homomorphism
\begin{equation*}
\HH\to\bigoplus_{i\in\ZZ}\End^{2i}_{\calA\times X}(\fQl)(i)
\end{equation*}
extending the $\tilW$-action in \cite[Th. 4.4.3]{GSI}. The elements $\xi\in\xch(T),\delta,2h^\vee\Lambda_0$ ($h^\vee$ is the dual Coxeter number of $G$) and $u$ in $\HH$ act as cup products with the Chern classes of $\calL(\xi),\omega_X,\canb$ and $\calO_X(D)$ respectively, where $D$ is divisor on $X$ that we used to define $\Mpar$.
\end{tha}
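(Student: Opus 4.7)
The plan is to define the homomorphism on generators and then check the relations of $\HH$. The group-ring part $\Ql[\tilW]$ acts via the construction of \cite[Th. 4.4.3]{GSI}. On the polynomial side, I would send $\xi\in\xch(T)$ to cup product with $c_1(\calL(\xi))$, and similarly $\delta\mapsto c_1(\omega_X)$, $2h^\vee\Lambda_0\mapsto c_1(\canb)$, $u\mapsto c_1(\calO_X(D))$. Most of the defining relations are then immediate: the $\tilW$-relations hold by construction; the polynomial subalgebra lands in a commutative subring because all generators have even cohomological degree; and $u$ is automatically central since $c_1(\calO_X(D))$ is pulled back from $X$ and every $\tilW$-correspondence is defined relatively over $\calA\times X$, so cup product with $u$ commutes with convolution by such correspondences.

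The heart of the proof is the Bernstein--Lusztig--type relation \eqref{eq:introDAHA}, namely
\[
s_i\cdot c_1(\calL(\xi)) - c_1(\calL(\leftexp{s_i}{\xi}))\cdot s_i \;=\; \langle\xi,\alpha_i^\vee\rangle\cdot c_1(\calO_X(D))
\]
in $\End^{2}_{\calA\times X}(\fQl)(1)$ for each simple reflection $s_i$ and each $\xi\in\xch(\tilT)$. The action of $s_i$ is given by convolution with a Hecke correspondence $Z_i\subset\Mpar\times_{\calA\times X}\Mpar$ that is, relative to a ``type-$i$ Levi'' parabolic Hitchin base, a $\PP^1$-bundle. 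Rewriting the left-hand side as a correspondence weighted by the line bundle $\calN_i(\xi):=p_1^*\calL(\xi)\otimes p_2^*\calL(\leftexp{s_i}{\xi})^{-1}$, I would identify $\calN_i(\xi)$ fiberwise along the $\PP^1$'s with $\calO_{\PP^1}(\langle\xi,\alpha_i^\vee\rangle)$, tensored with pullbacks from the Levi base. A fiberwise pushforward then reduces the problem to showing that the relative Chern-class computation, combined with the self-intersection of the diagonal section of $Z_i$, produces exactly $\langle\xi,\alpha_i^\vee\rangle$ copies of $c_1(\calO_X(D))$.

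The main obstacle is this last identification, in which the Higgs twist $D$ has to emerge as the ``Planck constant'' of the degenerate DAHA. The key geometric input is that the Higgs field lies in $\Ad(\calE)(D)$, so the conormal direction to the diagonal inside $Z_i$ is precisely controlled by $\calO_X(D)$ pulled back from $X$; matching this with the $u$-scaling feature of $\HH$ is the crux of the calculation. The finite simple reflections $s_1,\dots,s_n$ reduce to a universal computation on a minimal-parabolic $\PP^1$-bundle, while the affine reflection $s_0$ needs a small variant using the longest root and an additional bookkeeping of the $\delta$ and $\Lambda_0$ directions of $\xch(\tilT)$. Once the relation is checked on the regular semisimple locus $\calA^{\rs}\times X$, where the relevant correspondences admit transparent local models, it extends to all of $\calA\times X$ via the purity and generic-extension arguments inherited from \cite{GSI}, which realize $\End(\fQl)$ as a subring of its restriction to $\calA^{\rs}\times X$.
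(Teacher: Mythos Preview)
Your overall architecture matches the paper's, but two substantive gaps separate the sketch from a proof. First, your local model for the $s_i$-correspondence is incorrect: $\calH_{s_i}$ is not a $\PP^1$-bundle over any Levi base. The projection $\Mpar\to\calM_{\bP_i}$ is an \'etale double cover over the regular locus and has $\PP^1$ fibers only over the nilpotent locus, so there is no uniform $\PP^1$ along which to push forward or compute degrees. What the paper actually does (Sec.~\ref{ss:simplerefl}) is use a Cartesian diagram to pull back $C_i=\Mpar\times_{\calM_{\bP_i}}\Mpar$ from $[\St_i/\Ln_i]$, where $\St_i$ is the Steinberg variety of the rank-one Levi $L_i$ and $\Ln_i$ contains a dilation $\GG_m$. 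The relation then becomes an identity in $L_i^{\ad}\times\GG_m$-equivariant Borel--Moore homology of $\St_i$, proved by showing both sides equal $2[\St_i^{\nil}]$; the dilation parameter pulls back to $c_1(\calO_X(D))$, which is how $u$ enters---not via any conormal-to-the-diagonal computation. Your ``extend from the rs locus'' step is used in the paper, but only for the identification $\epsilon^*[\St_i^-]=[\calH_{s_i}]$ of fundamental classes (via \cite[Lem.~A.5.2]{GSI}), not for the full degree-$2$ relation.

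Second, your case division is misplaced. The paper does not single out $s_0$; all simple reflections are handled uniformly via $\calM_{\bP_i}$ once $\xi\in\xch(T\times\grot)$. What genuinely requires a separate argument is $\xi=\Lambda_{\can}$, for \emph{every} $s_i$, because $\calL(\Lambda_{\can})=\canb$ does not descend to the Levi quotient. The paper (Sec.~\ref{ss:pfdaha}) handles this by comparing determinant bundles on $\Bun_{\bP_i}$ (Lem.~\ref{l:pullcan}, Cor.~\ref{c:pullcan}): one shows $\Lambda_{\can}+2\rho-\alpha_i$ descends to $\Bun_{\bP_i}$ and hence commutes with $s_i$, and then checks $\langle\Lambda_{\can}+2\rho-\alpha_i,\alpha_i^\vee\rangle=0$ using Lem.~\ref{l:dCox} (this is where $h^\vee$ enters, for $i=0$). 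You also omit relation~(\ref{eq:commomega}) for $\omega\in\Omega_{\bI}$, which is not automatic and is checked in Sec.~\ref{ss:conncomp} by identifying $\calH_\omega$ with the graph of an automorphism $R_\omega$ of $\Bunpar_G$ and proving $R_\omega^*\calL(\xi)\cong\calL(\leftexp{\omega}{\xi})$.
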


In particular, for any point $(a,x)\in(\calA\times X)(k)$, we get an action of $\HH$ on the cohomology $\cohog{*}{\Mpar_{a,x}}$. It is easy to see that $u$ and $\delta$ acts trivially on $\cohog{*}{\Mpar_{a,x}}$. This gives geometric realizations of representations of the graded DAHA specialized at $u=\delta=0$.

The above theorem is inspired by the results of Lusztig (\cite{L88}) in the classical situation, where he constructed an action of the graded affine Hecke algebra on the Springer sheaf $\pi_*\Ql$, where $\pi:\tilg\to\frg$ is the Grothendieck simultaneous resolution. 

\subsubsection{Generalizations to the parahoric versions}
In classical Springer theory, many constructions for the Grothendieck simultaneous resolution can be generalized to partial Grothendieck resolutions using general parabolic subgroups, see \cite{BM}. In the global situation, the parahoric Hitchin moduli stacks (see Def. \ref{def:hitp}) play the role of partial resolutions. The main results of \cite{GSI} and Th. A above can all be generalized to parahoric Hitchin stacks of arbitrary type $\bP$. For example, Th. A generalizes to:

\begin{thb}[see Th. \ref{th:pardaction}] Fix a standard parahoric subgroup $\bP\subset G(F)$ and let $W_{\bP}$ be the Weyl group of the Levi factor of $\bP$. Let $\HH_{\bP}$ be the subalgebra of $\HH$ generated by $\Ql[\dquot{\bP}{\bP}]\subset\Ql[\tilW]$, $\Sym_{\Ql}(\xch(\tilT)_{\Ql})^{W_{\bP}}\subset\Sym_{\Ql}(\xcoch(\tilT)_{\Ql})$ and $\Ql[u]$. Then there is a natural graded algebra homomorphism:
\begin{equation*}
\HH_{\bP}\to\bigoplus_{i\in\ZZ}\End^{2i}_{\calA\times X}(f_{\bP,*}\Ql)(i).
\end{equation*}
\end{thb}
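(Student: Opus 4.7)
The plan is to realize $f_{\bP,*}\Ql$ as the $W_\bP$-invariants of $\fQl$ and to check that the subalgebra $\HH_\bP\subset\HH$ preserves this subcomplex, so that the $\HH$-action of Theorem A restricts to give the desired homomorphism.

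First, I would construct the natural forgetful morphism $\pi_\bP:\Mpar\to\calM^\bP$ (forgetting the Borel refinement of the $\bP$-parahoric reduction at $x$), satisfying $f=f_\bP\circ\pi_\bP$. This map is proper and acts fiberwise as the Grothendieck--Springer-type resolution for the Levi factor of $\bP$: on the locus where $\varphi(x)$ is regular semisimple in this Levi, $\pi_\bP$ is a finite étale $W_\bP$-cover, while in general its fibers are (parahoric) Springer fibers. By a standard Springer-theoretic argument applied fiberwise---or equivalently by the Hecke-correspondence construction of \cite[Sec.~4]{GSI} restricted to $W_\bP$---the pushforward $(\pi_\bP)_*\Ql$ acquires a canonical $W_\bP$-action whose trivial isotypic component is the constant sheaf on $\calM^\bP$. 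This $W_\bP$-action coincides with the restriction to $W_\bP\subset\tilW$ of the global Springer action of \cite[Th.~4.4.3]{GSI}, since both are built from the same Hecke correspondences indexed by $W_\bP$. Applying $f_{\bP,*}$ then yields $f_{\bP,*}\Ql\simeq(\fQl)^{W_\bP}$.

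Next I would verify that each generator of $\HH_\bP$ preserves $(\fQl)^{W_\bP}$. For a bi-$W_\bP$-invariant element $\phi\in\Ql[W_\bP\backslash\tilW/W_\bP]$, left $W_\bP$-invariance gives $u\phi=\phi$ in $\Ql[\tilW]$ for every $u\in W_\bP$, so $u\cdot(\phi\cdot v)=\phi\cdot v$ for any $v\in(\fQl)^{W_\bP}$. For $\xi\in\Sym_\Ql(\xch(\tilT)_\Ql)^{W_\bP}$ and any simple reflection $s_i\in W_\bP$, the DAHA relation \eqref{eq:introDAHA} specializes to
\[
s_i\xi-\xi s_i=\langle\xi,\alpha_i^\vee\rangle u=0,
\]
since $W_\bP$-invariance of $\xi$ forces $\langle\xi,\alpha_i^\vee\rangle=0$; hence $\Sym_\Ql(\xch(\tilT)_\Ql)^{W_\bP}$ centralizes $\Ql[W_\bP]$ inside $\HH$. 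The element $u$ is central by construction. Restricting the $\HH$-action of Theorem A to $\HH_\bP$ and to the subcomplex $(\fQl)^{W_\bP}\simeq f_{\bP,*}\Ql$ therefore produces the required graded homomorphism $\HH_\bP\to\bigoplus_i\End^{2i}_{\calA\times X}(f_{\bP,*}\Ql)(i)$.

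The main obstacle is the first step, specifically the clean identification $f_{\bP,*}\Ql\simeq(\fQl)^{W_\bP}$ together with the matching of the two natural $W_\bP$-actions. On the open locus where $\varphi(x)$ is regular semisimple in the Levi of $\bP$ the claim reduces to the elementary statement that a $W_\bP$-cover's pushforward contains the constant sheaf as its trivial isotypic summand, but extending the identification across the parahoric discriminant requires additional input. A natural route---paralleling the propagation arguments of \cite{GSI}---is to use semismallness and purity of the parabolic Hitchin fibration to decompose $\fQl$ into simple perverse summands in a $\tilW$-equivariant way, after which $f_{\bP,*}\Ql$ is forced to be the $W_\bP$-trivial isotypic component by its behaviour on the regular semisimple locus.
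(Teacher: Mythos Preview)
Your approach coincides with the alternative proof the paper gives in the Remark immediately following Theorem~\ref{th:pardaction}: identify $f_{\bP,*}\Ql\cong(\fQl)^{W_\bP}$ and observe that $\HH_\bP$ acts there because $\HH_\bP=\one_{W_\bP}\HH\one_{W_\bP}$ as an idempotent compression. The paper's primary proof (omitted but declared parallel to Theorem~\ref{th:daction}) instead constructs the action of generators directly via the characteristic classes of the $\tilL_\bP$-torsor on $\Bun_\bP$ (Construction~\ref{cons:charP}) and the Hecke correspondences of Theorem~\ref{th:paraction}, then re-runs the Steinberg-variety computation; your route is the more economical one once Theorem~A is in hand.

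Two small points. First, your commutation argument for $\xi\in\Sym(\xch(\tilT)_\Ql)^{W_\bP}$ is written only for linear $\xi$; for higher-degree invariants you need the standard consequence $s_i f-{}^{s_i}\!f\,s_i=u\cdot\partial_i(f)$ of the defining relation, together with $\partial_i(f)=0$ for $s_i$-invariant $f$. Second, on the ``main obstacle'': the paper handles the extension of $f_{\bP,*}\Ql\cong(\fQl)^{W_\bP}$ off the regular-semisimple locus not by purity or the decomposition theorem, but by proving directly that $\tforg^{\bP}_{\bI}:\Mpar\to\calM_\bP$ is \emph{small} (Proposition~\ref{p:forsmall}), so that $\tforg^{\bP}_{\bI,*}\Ql$ is the middle extension of its restriction and the identification propagates immediately (Lemma~\ref{l:WPinv}). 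This is cleaner than invoking a perverse decomposition of $\fQl$.
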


\subsubsection{Relation with the cap product action}

As we saw in \cite[Sec. 3.2]{GSI}, $\Mpar$ has another piece of symmetry, namely the fiberwise action of a Picard stack $\calP$ over $\calA$. This action induces an action of the homology complex $\homo{*}{\calP/\calA}$ on $\fQl$, called the {\em cap product} action, see Sec. \ref{ss:cap}. Since the homology complex $\homo{*}{\calP/\calA}$ is the exterior algebra of $\homo{1}{\calP/\calA}$ over $\homo{0}{\calP/\calA}$, the cap product action is determined by its restriction to $\homo{1}{\calP/\calA}$ and $\homo{0}{\calP/\calA}$. Also note that $\homo{0}{\calP/\calA}$ is isomorphic to $\Ql[\pi_0(\calP/\calA)]$, the group algebra of the sheaf of fiberwise connected components of $\calP\to\calA$.

The next result is about the interplay between the DAHA action constructed in Th. B and the cap product action by the homology of $\calP$.

\begin{thc}[see Prop. \ref{p:capW}, Th. \ref{th:comp}, Prop. \ref{p:hxi} and Cor. \ref{c:stalkcomm} respectively]
\begin{enumerate}
\item []
\item The action of $\homo{*}{\calP/\calA}$ on $\fQl$ commutes with the action of $\tilW$, $u$ and $\delta$.
\item The action of $\cent$ on $\bR^m\fQl$ given by restricting the $\tilW$-action factors through the action of $\Ql[\pi_0(\calP/\calA)]$ on $\bR^m\fQl$ via a natural homomorphism $\cent\to\Ql[\pi_0(\calP/\calA)]$.
\item For a local section $h$ of $\homo{1}{\calP/\calA}$, and $\xi\in\xch(T)$, their actions on $\fQl$ satisfy the commutation relation:
\begin{equation*}
[\xi,h]=c_\xi(h_{\st})
\end{equation*}
where $h_{\st}$ is the {\em stable part} of $h$ (see Def. \ref{def:stP}), $c_\xi:\homo{1}{\calP/\calA}_{\st}\to\coho{*}{\tcA/\calA}(1)$ is a linear map defined in (\ref{eq:dualc1Q}), and $c_\xi(h_{\st})$ (a local section of $\coho{1}{\tcA/\calA}(1)$) acts on $\fQl$ by cup product. 
\item For any point $(a,x)\in(\calA\times X)(k)$, the cap product action of $\homog{*}{\calP_a}$ on $\cohog{*}{\Mpar_{a,x}}$ commutes with the action of the subalgebra $\Ql[\tilW]\otimes\Sym(\xch(T)_{\Ql})$ of the degenerate graded DAHA $\HH/(\delta,u)$.
\end{enumerate}
\end{thc}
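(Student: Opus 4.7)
The plan is to prove the four assertions roughly in order, with (4) falling out of (1) and (3) at the level of stalks. I would exploit throughout the fact that both the $\tilW$-action and the cap-product action originate from geometric constructions on $\Mpar$ over $\calA$, so many commutation statements reduce to geometric compatibilities.

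\textbf{Part (1).} The $\tilW$-action from \cite[Th.~4.4.3]{GSI} is realized by Hecke-type self-correspondences of $\Mpar$ which modify the underlying $G$-torsor $\calE$ only in a neighborhood of the marked point $x$. The cap-product action of $\homo{*}{\calP/\calA}$ is induced by the fiberwise action $a\colon\calP\times_\calA\Mpar\to\Mpar$, which modifies $(\calE,\varphi)$ globally via spectral data. I would verify that every Hecke correspondence used in the $\tilW$-action is $\calP$-equivariant: the two kinds of modifications (localized at $x$ versus global via the Picard action) commute as operations on Higgs bundles, so the correspondence diagrams intertwine the two actions. This equivariance descends to commutation on $\fQl$. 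The elements $u$ and $\delta$ act as cup product with Chern classes pulled back from $X$, so their commutation with any endomorphism relative over $\calA\times X$ is formal.

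\textbf{Parts (2) and (3).} For (2), I would construct the natural map $\cent\to\Ql[\pi_0(\calP/\calA)]$ from a $W$-equivariant map $\xcoch(T)\to\pi_0(\calP/\calA)$ sending a cocharacter $\mu$ to the component class of the Hecke modification at $x$ associated to $\mu$, then observe that the translation correspondence for $\mu$ and the $\calP$-action correspondence in the class $[\mu]$ differ by an automorphism of $\Mpar$ over $\calA$ whose induced action on each $\bR^m\fQl$ is trivial. For (3), the commutator of cup product with $c_1(\calL(\xi))$ and cap product with $h\in\homo{1}{\calP/\calA}$ is governed by the failure of $\calL(\xi)$ to be $\calP$-equivariant: comparing $a^*\calL(\xi)$ with $p^*\calL(\xi)$ on $\calP\times_\calA\Mpar$ gives a character of the stabilizers of the $\calP$-action, which on the regular semisimple locus is captured by a class in the relative cohomology of the cameral cover $\tcA$. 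This produces the linear map $c_\xi$. Only the stable summand $\homo{1}{\calP/\calA}_{\st}$ pairs nontrivially with $\calL(\xi)$ (the complementary summand has stabilizer action on which $\xi$ restricts trivially), which explains the appearance of $h_{\st}$. I expect this step to be the main technical obstacle: extracting $c_\xi$ explicitly and establishing the cup-product formula on the full complex $\fQl$ requires a careful analysis of cameral-cover geometry and the behavior of $\calL(\xi)$ relative to the Kostant section.

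\textbf{Part (4).} This follows by combining (1) and (3). Since $\homo{*}{\calP/\calA}$ is the exterior algebra on $\homo{1}{\calP/\calA}$ over $\homo{0}{\calP/\calA}$, it suffices to check commutation on these two pieces. Commutation with $\Ql[\tilW]$ on stalks is immediate from (1). For $\xi\in\xch(T)$ and $h\in\homo{1}{\calP/\calA}$, part (3) expresses the commutator as $c_\xi(h_{\st})$, a local section of $\coho{1}{\tcA/\calA}(1)$. On the stalk at a closed point $(a,x)\in(\calA\times X)(k)$, this class acts on $\cohog{*}{\Mpar_{a,x}}$ through the geometric stalk of $\coho{1}{\tcA/\calA}(1)$, which vanishes for dimensional reasons since after pullback to a closed point no positive-degree relative cohomology class survives. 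Hence $[\xi,h]=0$ on the stalk, proving (4).
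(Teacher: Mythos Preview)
Your outline for parts (1), (3), and (4) is close to the paper's argument. For (1), the paper also checks $\calP$-equivariance of the Hecke correspondences $\calH_{\tilw}$ (first over the regular-semisimple locus, then by density using smoothness of $\calP$) and invokes a general lemma that $\calP$-invariant cohomological correspondences commute with cap product; your formal observation for $u,\delta$ matches Prop.~\ref{p:hxi}(1). For (3), the paper's argument is more direct than you anticipate: the key input is the isomorphism $\act^*\calL(\xi)\cong\calQ(\xi)\boxtimes_{\tcA}\calL(\xi)$ already established in \cite[Lem.~3.2.9]{GSI}, after which unfolding the cap product gives the commutator formula in a few lines. Your explanation for why only $h_{\st}$ appears is not quite right: the paper shows (Lem.~\ref{l:clQnat}) that $c_1(\calQ(\xi))^\natural$ factors through $\coho{1}{\calP/\calA}_{\st}$ because the morphism $\calP\to\stPic(\tcA/\calA)$ lands in the neutral component (as $\pi_0(\calP_a)$ is finite while $\pi_0(\stPic(X_a))$ is torsion-free), combined with an $N$-th power eigenvalue argument. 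For (4), your vanishing argument is right in spirit, but be precise: the relevant fiber is $q_a^{-1}(x)\subset X_a$, a finite set of points, so $\cohog{1}{q_a^{-1}(x)}=0$.

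Part (2) has a genuine gap. Your proposed argument---that the translation correspondence for $\mu$ and the $\calP$-translation in component class $[\mu]$ ``differ by an automorphism whose induced action on each $\bR^m\fQl$ is trivial''---is not justified and the paper explicitly notes (Rem.~after Th.~\ref{th:comp}) that the full $\xcoch(T)$-action on $\bR^m\fQl$ does \emph{not} factor through a finite quotient. The paper's actual mechanism is a two-point deformation in the style of Gaitsgory: one introduces a Hecke correspondence $\Hecke'$ over $\calA\times X^2$ that modifies at a second point $y$ while keeping the parabolic structure at $x$, obtains a $\cent$-action on $\fQl\boxtimes\const{X}$ (Lem.~\ref{l:twoptaction}), shows this auxiliary action factors through $\Ql[\pi_0(\calP/\calA)]$ by comparison with the usual Hitchin complex (Lem.~\ref{l:twofactor}, Cor.~\ref{c:Hitfactor}), and then restricts to the diagonal $x=y$ to recover the original $\cent$-action on $\fQl$. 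Separating the modification point from the parabolic-structure point is the essential idea you are missing.
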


\subsection{Organization of the paper and remarks on the proofs}

In Sec. \ref{s:parahoric}, we define parahoric versions of Hitchin moduli stacks. Many properties of $\calM_{\bP}$ parallel those of $\Mpar$, and we only mention them without giving proofs. We also give examples of parahoric Hitchin moduli stacks in Section \ref{ss:geomex}. This section is used in the proof of Th. A.

In Sec. \ref{s:DAHA}, we construct the graded DAHA action on the parabolic Hitchin complex (i.e., we prove Th. A). For this, we need some knowledge on the line bundles on $\Bunpar_G$ and the connected components of $\Mpar$, which we review in Sec. \ref{ss:linebd} and Sec. \ref{ss:conncomp}. The proof of the relation (\ref{eq:introDAHA}) in the DAHA is essentially a calculation of the equivariant cohomology of the Steinberg variety for $\SL(2)$, which we carry out in Sec. \ref{ss:simplerefl}. 

In Sec. \ref{s:genaction}, we generalize the main results in \cite{GSI} and the graded DAHA action to parahoric Hitchin stacks. In particular, we get an action of $\cent$ on the usual Hitchin complex $\fHQl\boxtimes\const{X}$. This can be viewed as 'tHooft operators in the context of constructible sheaves.

In Sec. \ref{s:cap}, we study the relation between the cap product action of $\homo{*}{\calP/\calA}$ on $\fQl$ and the graded DAHA action constructed in the Th. A. The proof of Th. C(2) uses the idea of deforming the product of the affine Grassmannian $\Grass_G$ and the usual flag variety $\calB$ into the affine flag variety $\Flag_G$, which first appeared in Gaitsgory's work \cite{Ga}.

In App. \ref{s:capapp}, we review the notion of the Pontryagin product and the cap product, which is used in Sec. \ref{s:cap}. 

In App. \ref{s:complcorr}, we prove lemmas concerning the relation between cohomological correspondences and cup/cap products.

\subsection*{Acknowledgment}
I would like to thank G.Lusztig for drawing my attention to the paper \cite{L88}, which is crucial to this paper. I would also like to thank V.Ginzburg and R.Kottwitz for helpful discussions.

% parahoric

\section{Parahoric versions of the Hitchin moduli stack}\label{s:parahoric}

In this section, we generalize the notion of Hitchin stacks to arbitrary parahoric level structures, not just the Iwahori level structure considered in \cite[Sec. 3]{GSI}. Throughout this section, let $F=k((t))$ be the field of formal Laurent series over $k$, and let $\calO_F=k[[t]]$ be its valuation ring. The reductive group $G$ over $k$ determines a split group scheme $\bG=G\otimes_{\Spec k}\Spec\calO_F$ over $\calO_F$.

Technically speaking, this section is only used in the proof of Th. \ref{th:daction}, but many results about $\Mpar$ also have their counterparts for parahoric Hitchin stacks, as we will see in Sec. \ref{s:genaction}.

\subsection{Local coordinates}\label{ss:pre}
Parahoric subgroups are local notions. In order to make sense of them over a global curve, we have to deal with parahoric subgroups in a ``Virasoro-equivariant'' way. For this, we need to consider local coordinates on the curve $X$.

\subsubsection{The group of coordinate changes} We follow \cite[2.1.2]{Ga} in the following discussion. Let $\Aut_{\calO}$ be the pro-algebraic group of automorphisms of the topological ring $\calO_F$. More precisely, for any $k$-algebra $R$, $\Aut_{\calO}(R)$ is the set of $R$-linear continuous automorphisms of the topological ring $R[[t]]=R\hotimes_k\calO_F$ (with $t$-adic topology). If we define $\Aut_{\calO,n}$ to be the algebraic group of automorphisms of $\calO_F/t^{n+1}\calO_F$, then $\Aut_{\calO}$ is the projective limit of $\Aut_{\calO,n}$.

\subsubsection{The space of local coordinates} We have a canonical $\Aut_{\calO}$-torsor $\Coor(X)$ over $X$, called the {\em space of local coordinates of $X$}, defined as follows. For any $k$-algebra $R$, the set $\Coor(X)(R)$ consists of pairs $(x,\alpha)$ where $x\in X(R)$ and $\alpha$ is an $R$-linear continuous isomorphism $\alpha:R[[t]]\isom\hatO_{x}$ (here $\hatO_x$ is the completion of $\calO_{\XR}$ along the graph $\Gamma(x)$, see \cite[Sec. 2.2]{GSI} for notations). An element $\sigma\in\Aut_{\calO}(R)=\Aut(R[[t]])$ acts on $\Coor(X)(R)$ from the right by
\begin{equation*}
(x,\alpha)\cdot\sigma=(x,\alpha\circ\sigma), 
\end{equation*}
hence realizing the forgetful morphism $\Coor(X)\to X$ as a right $\Aut_{\calO}$-torsor.

The following fact is well-known.

\begin{lemma}\label{l:completediag}
Consider the $\Spf\calO_F$-bundle $\Coor(X)\twtimes{\Aut_{\calO}}\Spf(\calO_F)$ associated to the $\Aut_{\calO}$-torsor $\Coor(X)$ and the tautological action of $\Aut_{\calO}$ on $\Spf\calO_F$. We have a natural isomorphism over $X$
\begin{equation*}
\Coor(X)\twtimes{\Aut_{\calO}}\Spf\calO_F\cong \widehat{X^2}_\Delta,
\end{equation*}
where the RHS is the formal completion of $X^2$ along the diagonal $\Delta(X)\subset X^2$, viewed as a formal $X$-scheme via the projection to the first factor.
\end{lemma}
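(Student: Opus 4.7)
The plan is to give a functor-of-points description of both sides and exhibit a natural isomorphism between them. For a test $k$-algebra $R$, an $R$-point of $\widehat{X^2}_\Delta$, viewed as a formal $X$-scheme via the first projection, is a pair $(x,y)\in X(R)\times X(R)$ such that the induced map $\Spec R\to X^2$ factors through a nilpotent thickening of $\Delta$; equivalently, such that $y$ lifts to an $R$-point of the formal completion $\Spf\hatO_x$. An $R$-point of $\Coor(X)\twtimes{\Aut_\calO}\Spf\calO_F$, on the other hand, is (Zariski-locally on $\Spec R$) an equivalence class of triples $(x,\alpha,f)$ with $(x,\alpha)\in\Coor(X)(R)$ and $f\in R$ topologically nilpotent (the corresponding $R$-point of $\Spf\calO_F$ being the continuous map $R[[t]]\to R$, $t\mapsto f$), modulo the action $(x,\alpha,f)\sim(x,\alpha\circ\sigma,\sigma^{-1}\cdot f)$ for $\sigma\in\Aut_\calO(R)$.

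The map I would build sends a triple $(x,\alpha,f)$ to the pair $(x,y)$, where $y:\Spec R\to X$ is the composite
\begin{equation*}
\Spec R \xrightarrow{t\mapsto f} \Spf R[[t]] \xrightarrow{\alpha} \Spf\hatO_x \hookrightarrow X_R \to X.
\end{equation*}
Invariance under the $\Aut_\calO$-action follows from the observation that, writing $\sigma\in\Aut_\calO(R)$ as $t\mapsto g(t)\in tR[[t]]$ with $g/t\in R[[t]]^\times$, the tautological action of $\sigma$ on $\Spf\calO_F$ is $f\mapsto g(f)$; the composites along $(x,\alpha,f)$ and $(x,\alpha\circ\sigma,g^{-1}(f))$ then agree by a direct substitution check. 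Hence the map descends to a natural morphism $\bar\phi$ of formal $X$-schemes over $X$.

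To conclude that $\bar\phi$ is an isomorphism I would argue locally on $X$. Any local section $\alpha_0$ of the $\Aut_\calO$-torsor $\Coor(X)$ over an open $U\subset X$ simultaneously trivializes the left-hand side as $U\times\Spf\calO_F$ and presents the right-hand side $\widehat{X^2}_\Delta|_U$ via the coordinate $\alpha_0$, and in these trivializations $\bar\phi$ is visibly the identity. The only real subtlety — and the step requiring the most care — is setting up the functor of points of $\Spf\calO_F$ together with its $\Aut_\calO$-action precisely enough that the substitution formula $\sigma\cdot f = \sigma(t)|_{t=f}$ holds literally; once that is in place, the invariance and hence the isomorphism follow formally.
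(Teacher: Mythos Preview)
The paper does not prove this lemma---it records it as ``well-known''---so there is nothing to compare your argument against. Your functor-of-points approach is the standard way to establish this and is correct in outline: build the map $(x,\alpha,f)\mapsto(x,y)$, check it descends to the quotient, and verify it is an isomorphism after trivializing $\Coor(X)$ locally.

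There is, however, a sign slip at the one step you flag as delicate. If $\sigma(t)=g(t)$, the \emph{left} action of $\Aut_{\calO}$ on $\Spf\calO_F$ needed for the twisted product is $\sigma\cdot f=g^{-1}(f)$ (compositional inverse), coming from $\Spf(\sigma^{-1})$; your formula $f\mapsto g(f)$ describes $\Spf(\sigma)$, which is a right action. A direct check shows the composites attached to $(x,\alpha,f)$ and $(x,\alpha\circ\sigma,h)$ agree precisely when $h=g(f)$, not $h=g^{-1}(f)$: for instance on $X=\AA^1$ with $\alpha(t)=s-x$ and $\sigma(t)=2t$, the first triple gives $y=x+f$ and the second gives $y=x+h/2$. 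With the corrected convention one has $\sigma^{-1}\cdot f=g(f)$ and the invariance holds; the rest of your argument then goes through unchanged.
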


The pro-algebraic group $\Aut_{\calO}$ has the Levi quotient $\GG_m$ given by
\begin{eqnarray}\label{eq:defgrot}
\Aut_{\calO}&\to&\GG_m\\
\notag
\sigma&\mapsto&\sigma(t)/t\modo t.
\end{eqnarray}
We call this quotient $\GG_m$ the {\em rotation torus}, and denote it by $\grot$. Since we have fixed a uniformizing parameter $t$ of $F$, the quotient map $\Aut_{\calO}\to\grot$ admits a section under which $\lambda\in\grot(k)$ is the automorphism of $\calO_F$ given by: $a(t)\mapsto a(\lambda t)$ (for $a(t)\in k[[t]]=\calO_F$). We shall also identify $\grot$ with the subgroup of $\Aut_{\calO}$ given by the image of this section.

From Lem. \ref{l:completediag}, we immediately get

\begin{cor}\label{c:canX}
The $\grot$-torsor associated to the $\Aut_{\calO}$-torsor $\Coor(X)$ is naturally isomorphic to the $\GG_m$-torsor associated to the canonical bundle of $X$, i.e.,
\begin{equation*}
\Coor(X)\twtimes{\Aut_{\calO}}\grot\isom\rho_{\omega_X}.
\end{equation*}
(See \cite[Sec. 2.2]{GSI} for notations such as $\rho_{\omega_X}$).
\end{cor}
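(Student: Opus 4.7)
The plan is to construct the isomorphism directly as the ``first-order'' part of the isomorphism in Lem. \ref{l:completediag}, then use the standard fact that a map of $\grot$-torsors is automatically an isomorphism.

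First I would unpack the data on both sides. A point $(x,\alpha)\in\Coor(X)(R)$ is a continuous $R$-algebra isomorphism $\alpha:R[[t]]\isom\hatO_x$. Writing $\calI\subset\hatO_x$ for the ideal of the graph $\Gamma(x)\subset X_R$, the fact that $\alpha$ is continuous forces $\alpha(tR[[t]])=\calI$, so $\alpha$ induces an $R$-linear isomorphism of line bundles on $\Gamma(x)=\Spec R$,
\begin{equation*}
\bar\alpha:R\cdot t=tR[[t]]/t^2R[[t]]\isom\calI/\calI^2.
\end{equation*}
Since $\Gamma(x)$ is a section of $X_R\to\Spec R$, the conormal bundle $\calI/\calI^2$ is canonically $x^*\Omega_{X/k}=x^*\omega_X$. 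Thus $\bar\alpha$ is a trivialization of $x^*\omega_X$, i.e., a point of $\rho_{\omega_X}(R)$. This defines a morphism $\Phi:\Coor(X)\to\rho_{\omega_X}$ over $X$.

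Next I would check that $\Phi$ is $\Aut_{\calO}$-equivariant with respect to the Levi quotient $\Aut_{\calO}\to\grot$ of \eqref{eq:defgrot}. For $\sigma\in\Aut_{\calO}(R)$, write $\sigma(t)=ct+O(t^2)$ with $c=\sigma(t)/t\bmod t\in R^\times$ the image of $\sigma$ in $\grot(R)$. Then $(\alpha\circ\sigma)(t)=\alpha(ct)+\alpha(O(t^2))$, and reducing modulo $\calI^2$ gives $\overline{\alpha\circ\sigma}=c\cdot\bar\alpha$. This is precisely the scaling by $c\in\grot(R)$ of the trivialization, which is the $\grot$-action on $\rho_{\omega_X}$. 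Consequently $\Phi$ factors through a morphism
\begin{equation*}
\overline{\Phi}:\Coor(X)\twtimes{\Aut_{\calO}}\grot\to\rho_{\omega_X}
\end{equation*}
of $\grot$-torsors over $X$.

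Finally, any equivariant morphism between torsors for the same group is automatically an isomorphism, so $\overline{\Phi}$ is the desired isomorphism. (Alternatively, one may obtain $\overline{\Phi}$ by restricting the isomorphism of Lem. \ref{l:completediag} to the first-order infinitesimal neighborhoods of the origin and the diagonal, respectively.) There is no serious obstacle here: the only point that needs care is the identification $\calI/\calI^2\cong x^*\omega_X$ and checking that the natural action of $\Aut_{\calO}$ on the line $tR[[t]]/t^2R[[t]]$ is exactly through the character defining $\grot$; both are direct from the definitions.
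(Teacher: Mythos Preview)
Your argument is correct. The paper does not actually prove this corollary: it simply states that it follows ``immediately'' from Lem.~\ref{l:completediag}, leaving the reader to extract the first-order part of that isomorphism (normal bundle of the diagonal in $X^2$ versus normal bundle of the origin in $\Spf\calO_F$). You instead construct the map directly from the definition of $\Coor(X)$, which is the same idea unwound one level further, and you even note the alternative derivation from Lem.~\ref{l:completediag} in your parenthetical. There is no substantive difference in content; your version is just more self-contained.
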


\subsection{Parahoric subgroups}\label{ss:pargp}
The purpose of this subsection is to fix some notations concerning the parahoric subgroups of $G(F)$. Since $G$ has an $\calO_F$-model $\bG$, the group $\Aut(\calO_F)$ acts on the (semisimple) Bruhat-Tits building $\frB(G,F)$ of $G(F)$ in a simplicial way, and hence on the set of parahoric subgroups of $G(F)$. We have also fixed a Borel subgroup $B\subset G$, which gives rise to an Iwahori subgroup $\bI\subset\bG$. Any parahoric subgroup $\bP$ containing $\bI$ is called a {\em standard parahoric subgroup} of $G(F)$.

\begin{remark}
We make a slight digression on the fixed point set of $\Aut(\calO_F)$ on $\frB(G,F)$. The maximal facets that are fixed by $\Aut(\calO_F)$ are in bijection with the fixed points of the $\Aut(\calO_F)$-action on the $k$-points of the affine flag variety $\Flag_G=G(F)/\bI$, which are given by $G(k)\tilw\bI/\bI$, for $\tilw\in\tilW$. In other words, the fixed point locus of the $\Aut(\calO_F)$ on $\frB(G,F)$ is the $G(k)$-orbit of the standard apartment (given by $T(F)\subset G(F)$, for some maximal torus $T\subset G$ over $k$). In particular, a facet in $\frB(G,F)$ is stable under $\Aut(\calO_F)$ if and only if it is pointwise fixed by $\Aut(\calO_F)$. Hence, a parahoric subgroup $\bP\subset G(F)$ is stable under $\Aut(\calO_F)$ if and only if its facet $\frF_{\bP}\subset\frB(G,F)$ is pointwise fixed by $\Aut(\calO_F)$. In particular, all standard parahoric subgroups are stable under $\Aut(\calO_F)$. 
\end{remark}

Let $\bP\subset G(F)$ be a parahoric subgroup. By Bruhat-Tits theory, $\bP$ determines a smooth group scheme over $\Spec\calO_{F}$ with generic fiber $G\otimes_k F$ and whose set of $\calO_F$-points is equal to $\bP$ (\cite[3.4.1]{Tits}). We still denote this $\calO_F$-group scheme by $\bP$. Let $\frg_{\bP}$ be the Lie algebra of $\bP$, which is a free $\calO_F$-module of rank $\dim_k G$. Let $L_{\bP}$ be the Levi quotient of $\bP$, which is a connected reductive group over $k$. Let $\widetilde{\bP}$ be the stabilizer of $\frF_{\bP}$ under $G(F)$ (equivalently, $\widetilde{\bP}$ is the normalizer of $\bP$ in $G(F)$). Let $\omega_{\bP}$ be the finite group $\widetilde{\bP}/\bP$.

% Let $W_{\bP}\subset\Wa$ be the finite Weyl group generated by the affine reflections fixing $\frF_{\bP}$ pointwise. Then $W_{\bP}$ can be identified with the Weyl group of $L_{\bP}$. Let $W'_{\bP}$ be the subgroup of $\tilW$ fixing $\frF_{\bP}$ pointwise. Note that if $G$ is semisimple, it could still happen that $W_{\bP}\subsetneqq W'_{\bP}$. The quotient $W'_{\bP}/W_{\bP}$ is the subgroup of $\Omega$ fixing each vertex of the affine Dynkin diagram of $G$ which define $\bP$; for example, maximal parahorics are correspond to a single vertex. Let $\tilW_{\bP}\subset\tilW$ be the stabilizer of the facet corresponding to $\bP$. Let $\Omega_{\bP}=\tilW_{\bP}/W_{\bP}$ be the quotient group, which is naturally a subgroup of $\Omega\cong\tilW/\Wa$. 

Let $G((t))=\Res_{F/k}(G\otimes_kF)$ and $G_{\bP}=\Res_{\calO_F/k}\bP$ be the (ind-)$k$-groups obtained by Weil restrictions. We call $G((t))$ the {\em loop group} of $G$. For $\bP=\bG$, we write $G[[t]]$ instead of $G_{\bG}$.

Let $\bP\subset G(F)$ be a parahoric subgroup stabilized by $\Aut(\calO_F)$, then $\Aut(\calO_F)$ naturally acts on the group scheme $\bP$, lifting its action on $\calO_F$. Moreover generally, for any $k$-algebra $R$, the group $\Aut_{\calO}(R)$ acts on $G_{\bP}(R)=\bP(R[[t]])$ and $G((t))(R)=G(R((t)))$, giving an action of the pro-algebraic group $\Aut_{\calO}$ on the group scheme $G_{\bP}$ and group ind-scheme $G((t))$. Since $\Aut_{\calO}$ acts on $G_{\bP}$, it also acts on the Levi quotient $L_{\bP}$, and the action necessarily factors through a finite-dimensional quotient of $\Aut_{\calO}$. We can form the twisted product
\begin{equation}\label{eq:twistLP}
\unL_{\bP}:=\Coor(X)\twtimes{\Aut_{\calO}}L_{\bP}
\end{equation}
which is a reductive group scheme over $X$ with geometric fibers isomorphic to $L_{\bP}$. Let $\frl_{\bP}$ be the Lie algebra of $L_{\bP}$, and let $\unl_{\bP}$ be the Lie algebra of $\unL_{\bP}$, which is the vector bundle $\Coor(X)\twtimes{\Aut_{\calO}}\frl_{\bP}$ over $X$.

Let $\bP$ be a standard parahoric subgroup. Note that the Borel $B$ gives a Borel subgroup $B^{\bP}_{\bI}\subset L_{\bP}$ whose quotient torus is canonically isomorphic to $T$. Let $W_{\bP}$ be the Weyl group of $L_{\bP}$ determined by $B^{\bP}_{\bI}$ and $T$. Then $W_{\bP}$ is naturally a subgroup of $\tilW$. In fact, any maximal torus in $B$ gives an apartment $\frA$ in $\frB(G,F)$, on which $\tilW$ acts by affine transformations. The Weyl group $W_{\bP}$ can be identified with the subgroup of $\tilW$ which fixes $\frF_{\bP}$ pointwise. The resulting subgroup $W_{\bP}\subset\tilW$ is independent of the choice of the maximal torus in $B$.

% parahoric Bun

\subsection{Bundles with parahoric level structures}\label{ss:bunpar}

\begin{defn}\label{def:tilBun} Let $\tilBun_{\infty}:k-\textup{Alg}\to{\textup{Groupoids}}$  be the fpqc sheaf associated to the following presheaf $\tilBun_{\infty}^{pre}$: for any $k$-algebra $R$, $\tilBun_{\infty}^{pre}(R)$ is the groupoid of quadruples $(x,\alpha,\calE,\tau_x)$ where
\begin{itemize}
\item $x\in X(R)$ with graph $\Gamma(x)\subset\XR$;
\item $\alpha:R[[t]]\isom\hatO_{x}$ a local coordinate;
\item $\calE$ is a $G$-torsor over $\XR$;
\item $\tau_x:G\times\disk_x\isom\calE|_{\disk_x}$ is a trivialization of the restriction of $\calE$ to $\disk_x=\Spec\hatO_x$.
\end{itemize}
\end{defn}

In other words, $\tilBun_{\infty}$ parametrizes $G$-bundles on $X$ with a full level structure at a point of $X$, and a choice of local coordinate at that point.

\begin{cons}\label{cons:gloopact} Consider the semi-direct product $G((t))\rtimes\Aut_{\calO}$ formed using the action of $\Aut_{\calO}$ on $G((t))$ defined in Sec. \ref{ss:pargp}. We claim that this group ind-scheme naturally acts on $\tilBun_{\infty}$ from the right. In fact, for any $k$-algebra $R$, $g\in G(R((t))),\sigma\in\Aut(R[[t]])$ and $(x,\alpha,\calE,\tau_x)\in\tilBun_{\infty}(R)$, let
\begin{equation}\label{eq:dotg}
R_{g,\sigma}(x,\alpha,\calE,\tau_x)=(x,\alpha\circ\sigma,\calE^g,\tau^g_x).
\end{equation}
Let us explain the notations. By a variant of the main result of \cite{BL} (using Tannakian formalism to reduce to the case of vector bundles), to give a $G$-torsor on $\XR$ is the same as to give $G$-torsors on $\XR-\Gamma(x)$ and on $\disk_x$ respectively, together with a $G$-isomorphism between their restrictions to $\pdisk_x$. Now let $\calE^g$ be the $G$-torsor on $\XR$ obtained by gluing $\calE|_{\XR-\Gamma(x)}$ with the trivial $G$-torsor $G\times\disk_x$ via the isomorphism
\begin{equation*}
G\times\pdisk_{x}\xrightarrow{\alpha^{-1}_*g\alpha_*}G\times\pdisk_{x}\xrightarrow{\tau_x}\calE|_{\pdisk_x}
\end{equation*}
Here $\alpha_*:\pdisk_x\to\Spec R((t))$ is induced by $\alpha$, hence the first arrow is the transport of the left multiplication by $g$ on $G\times\Spec R((t))$ to $G\times\pdisk_x$ via the local coordinate $\alpha$. Since left multiplication by $g\in G(R((t)))$ is an automorphism of the trivial right $G$-torsor on $\Spec R((t))$, $\alpha^{-1}_*g\alpha_*$ is an automorphism of the trivial right $G$-torsor on $\pdisk_x$. The trivialization $\tau^g_x$ is tautologically given by the construction of $\calE^g$.
\end{cons}

Let $\bP$ be a parahoric subgroup of $G(F)$ which is stable under $\Aut(\calO_F)$.
\begin{defn}\label{def:modpar}
The moduli stack $\Bun_{\bP}$ of {\em $G$-bundles over $X$ with parahoric level structures of type $\bP$} is the fpqc sheaf associated to the quotient presheaf $R\mapsto\tilBun_{\infty}(R)/(G_{\bP}\rtimes\Aut_{\calO})(R)$.
\end{defn}

We will use the notation $(x,\calE,\tau_x\modo\bP)$ to denote the point in $\Bun_{\bP}$ which is the image of $(x,\alpha,\calE,\tau_x)\in\tilBun_{\infty}(R)$.

\begin{remark}
Instead of taking quotients of $\tilBun_{\infty}$, we could also define $\Bun_{\bP}$ as the quotient of $\Bun_{\infty}$ by the group $\Coor(X)\twtimes{\Aut_{\calO}}G_{\bP}$, where $\Bun_{\infty}=\tilBun_{\infty}/\Aut_{\calO}$ is the moduli stack of $G$-torsors on $X$ with a full level structure at point of $X$.
\end{remark}

We look at several special cases of the above construction. The first case is $\bP=\bG=G(\calO_F)$. In this case we have:

\begin{lemma}\label{l:BunGagree}
There is a canonical isomorphism $\Bun_{\bG}\cong\Bun_G\times X$, where $\Bun_G$ is the usual moduli stack of $G$-torsors over $X$.
\end{lemma}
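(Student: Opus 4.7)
The plan is to factor the defining quotient $\tilBun_{\infty}/(G_{\bG}\rtimes\Aut_{\calO})$ into two stages, first modding out by $G_{\bG}=G[[t]]$ and then by $\Aut_{\calO}$, so that each stage has a transparent geometric meaning.

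For the first stage, I would study the forgetful morphism
\begin{equation*}
\tilBun_{\infty}\to\Bun_{G}\times\Coor(X),\quad(x,\alpha,\calE,\tau_x)\mapsto(\calE,(x,\alpha)),
\end{equation*}
and show that it exhibits $\tilBun_{\infty}$ as a $G_{\bG}$-torsor over $\Bun_{G}\times\Coor(X)$. Invariance under $G_{\bG}$ is immediate from Construction \ref{cons:gloopact}: for $g\in G[[t]]$ the gluing datum $\alpha^{-1}_{*}g\alpha_{*}$ extends across $\disk_{x}$, so $\calE^{g}$ is canonically identified with $\calE$ and only the trivialization $\tau_{x}$ is altered. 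The fiber over a given $(\calE,(x,\alpha))$ is the space of trivializations of $\calE|_{\disk_{x}}$ transported through $\alpha$, which is naturally a torsor under $G_{\bG}$. After fpqc sheafification this gives $\tilBun_{\infty}/G_{\bG}\isom\Bun_{G}\times\Coor(X)$.

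For the second stage, I would check that the descended $\Aut_{\calO}$-action on $\Bun_{G}\times\Coor(X)$ is trivial on the first factor (since replacing $\alpha$ by $\alpha\circ\sigma$ leaves $\calE$ and $x$ untouched) and is the tautological action on the second. Since $\Coor(X)\to X$ is by construction an $\Aut_{\calO}$-torsor, the further quotient yields $\Bun_{G}\times X$, producing the claimed isomorphism; naturality is automatic since every step is described by a canonical morphism of moduli problems.

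The one step that I expect needs genuine care is the fpqc local triviality claim in stage one: one must verify that for any $k$-algebra $R$, every $G$-torsor on $\Spec R[[t]]$ becomes trivial after a suitable fpqc cover of $\Spec R$, so that the torsor statement really holds at the level of fpqc sheaves rather than just at the level of presheaves. This reduces, via smoothness of $G$ and the henselian property of the pair $(R[[t]],(t))$, to lifting a trivialization from $\Spec R$ (where one exists fpqc-locally because $\Bun_{G}$ is smooth) successively modulo $t^{n}$ and passing to the inverse limit. This is standard but is the only non-formal input in the argument.
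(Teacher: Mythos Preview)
Your proposal is correct and is essentially the same argument as the paper's, which shows directly that $\tilBun_{\infty}\to\Bun_G\times X$ is a $G[[t]]\rtimes\Aut_{\calO}$-torsor; your two-stage factorization through $\Bun_G\times\Coor(X)$ is a cosmetic repackaging. The paper likewise singles out the local-triviality step as the only nontrivial point and handles it exactly as you indicate: trivialize $\calE|_{\Gamma(x)}$ after an \'etale cover of $\Spec R$, then lift the section across the formal disk using smoothness of $G$.
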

\begin{proof}
In other words, we need to show that the forgetful morphism $\tilBun_{\infty}\to\Bun_G\times X$ is a $G[[t]]\rtimes\Aut_{\calO}$-torsor. The only not-so-obvious part is the essential surjectivity, i.e., for any $k$-algebra $R$ and any $(x,\calE)\in X(R)\times\Bun_G(R)$, we have to find a trivialization of $\calE|_{\disk_x}$ locally in the flat or \'etale topology of $\Spec R$. Since $\calE|_{\Gamma(x)}$ is a $G$-torsor, by definition, there is an \'etale covering $\Spec R'\to \Spec R$ which trivializes $\calE|_{\Gamma(x)}$, i.e., there is a section $\tau'_{0}:\Spec R'\to\calE|_{\disk_x}$ over $\Spec R'\to\Spec R=\Gamma(x)\subset\disk_x$. Since $\calE|_{\disk_x}$ is smooth over $\disk_x$, the section $\tau'_0$ extends to a section $\tau':\Spec(R'\otimes_R\hatO_x)\to\calE|_{\disk_x}$. In other words, after pulling back to the \'etale covering $\Spec R'\to\Spec R$, $\calE|_{\disk_x}$ can be trivialized.
\end{proof}

The second case is when $\bP\subset G(\calO_F)$. Such $\bP$ are in 1-1 correspondence with parabolic subgroups $P\subset G$ over $k$. In this case, using Lem. \ref{l:BunGagree}, it is easy to see that $\Bun_{\bP}$ is the moduli stack of $G$-torsors on $X$ with a parabolic reduction of type $P$ at a point of $X$. More precisely, $\Bun_{\bP}(R)$ classifies tuples $(x,\calE,\calE^P_x)$, where
\begin{itemize}
\item $x\in X(R)$ with graph $\Gamma(x)$;
\item $\calE$ is a $G$-torsor over $\XR$;
\item $\calE_x^P$ is a $P$-reduction of the $G$-torsor $\calE|_{\Gamma(x)}$ over $\Gamma(x)$.
\end{itemize}
In particular, if $\bP=\bI$, the standard Iwahori subgroup, $\Bun_{\bI}$ is what we denoted by $\Bunpar_G$ in \cite[Sec. 3]{GSI}.

% properties of Bun_P

\subsection{Properties and examples of $\Bun_{\bP}$}

We first explore the dependence of $\Bun_{\bP}$ on the choice of $\bP$.

\begin{lemma}\label{l:Pclass} There is a natural right action of $\Omega_{\bP}=\widetilde{\bP}/\bP$ on $\Bun_{\bP}$. Moreover, suppose two parahoric subgroups $\bP$ and $\bQ$ are conjugate under $G(F)$ and are both stable under $\Aut(\calO_F)$, then there is an isomorphism $\Bun_{\bP}\isom\Bun_{\bQ}$ which is canonical up to pre-composition with the $\Omega_{\bP}$-action on $\Bun_{\bP}$ (or up to post-composition with the $\Omega_{\bQ}$-action on $\Bun_{\bQ}$).  
\end{lemma}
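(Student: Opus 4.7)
The plan is to reduce both statements to manipulations of the semi-direct product $G((t))\rtimes\Aut_{\calO}$ acting on $\tilBun_{\infty}$ via Construction \ref{cons:gloopact}. For the $\Omega_{\bP}$-action, I will enlarge the group we quotient by, from $G_{\bP}\rtimes\Aut_{\calO}$ to $\widetilde{G}_{\bP}\rtimes\Aut_{\calO}$ (where $\widetilde{G}_{\bP}\subset G((t))$ is the disjoint union of the $|\Omega_{\bP}|$ cosets of $G_{\bP}$ corresponding to $\widetilde{\bP}$). For the isomorphism $\Bun_{\bP}\isom\Bun_{\bQ}$, I will use right multiplication by an element $h\in G(F)$ with $\bP=h\bQ h^{-1}$, and check it descends.

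\textbf{The $\Omega_{\bP}$-action.} Since $\widetilde{\bP}=N_{G(F)}(\bP)$ and $\bP$ is $\Aut(\calO_F)$-stable, so is $\widetilde{\bP}$; hence $\widetilde{G}_{\bP}\rtimes\Aut_{\calO}\subset G((t))\rtimes\Aut_{\calO}$, and its action on $\tilBun_{\infty}$ extends that of $G_{\bP}\rtimes\Aut_{\calO}$. A direct computation of the conjugation $(g,\sigma)(p,\tau)(g,\sigma)^{-1}$ in the semi-direct product shows that $G_{\bP}\rtimes\Aut_{\calO}$ is normal in $\widetilde{G}_{\bP}\rtimes\Aut_{\calO}$ with quotient $\Omega_{\bP}$, the key point being that $\Aut_{\calO}$ acts trivially on the finite group $\Omega_{\bP}$. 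The latter holds because $\Aut_{\calO}$ is pro-connected (it is an extension of $\grot=\GG_m$ by its pro-unipotent radical), so any morphism from $\Aut_{\calO}$ to a finite discrete group scheme is constant. Descending the $(\widetilde{G}_{\bP}\rtimes\Aut_{\calO})$-action on $\tilBun_{\infty}$ to the quotient by the normal subgroup $G_{\bP}\rtimes\Aut_{\calO}$ yields the desired right action of $\Omega_{\bP}$ on $\Bun_{\bP}$.

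\textbf{The isomorphism and its canonicity.} Fix $h\in G(F)=G((t))(k)$ with $\bP=h\bQ h^{-1}$, and let $R_h\colon\tilBun_{\infty}\to\tilBun_{\infty}$ be right multiplication by $(h,1)\in G((t))\rtimes\Aut_{\calO}$. Its intertwining property with group actions is governed by the conjugate subgroup $(h,1)^{-1}(G_{\bP}\rtimes\Aut_{\calO})(h,1)$, whose generic element is $(h^{-1}p\sigma(h),\sigma)=(h^{-1}ph)\cdot(h^{-1}\sigma(h))\cdot(1,\sigma)$. The factor $h^{-1}ph$ lies in $\bQ$ since $h^{-1}\bP h=\bQ$. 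The factor $h^{-1}\sigma(h)$ lies in $\widetilde{\bQ}$ because both $\bP$ and $\bQ$ are $\sigma$-stable, and the map $\sigma\mapsto h^{-1}\sigma(h)\bmod\bQ$ defines a morphism of schemes $\Aut_{\calO}\to\Omega_{\bQ}$ (the cocycle identity collapses to a homomorphism thanks to the trivial $\Aut_{\calO}$-action on $\Omega_{\bQ}$). Pro-connectedness of $\Aut_{\calO}$ forces this morphism to be constant, hence trivial. Therefore $h^{-1}(G_{\bP}\rtimes\Aut_{\calO})h=G_{\bQ}\rtimes\Aut_{\calO}$, and $R_h$ descends to an isomorphism $R_h\colon\Bun_{\bP}\isom\Bun_{\bQ}$, with inverse $R_{h^{-1}}$. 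For canonicity, any other $h_1\in G(F)$ with $\bP=h_1\bQ h_1^{-1}$ satisfies $h^{-1}h_1\in N_{G(F)}(\bQ)=\widetilde{\bQ}$, so one can write $h_1=hq$ with $q\in\widetilde{\bQ}$ or equivalently $h_1=g h$ with $g=hqh^{-1}\in\widetilde{\bP}$. Decomposing the action of $(h_1,1)$ accordingly on $\tilBun_{\infty}$ and descending gives $R_{h_1}=R_{[q]}\circ R_h=R_h\circ R_{[g]}$, exhibiting the ambiguity as post-composition with the $\Omega_{\bQ}$-action (or pre-composition with the $\Omega_{\bP}$-action).

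\textbf{Main obstacle.} The only non-formal input is the pro-connectedness of $\Aut_{\calO}$, which is used twice: once to ensure that $\Aut_{\calO}$ acts trivially on the finite groups $\Omega_{\bP},\Omega_{\bQ}$ (making the quotients into actions of $\Omega_{\bP}$ rather than crossed modules), and once to annihilate the class of the cocycle $\sigma\mapsto h^{-1}\sigma(h)$ so that $R_h$ really descends. Once these two observations are in place, the remaining verifications are bookkeeping with the semi-direct product structure.
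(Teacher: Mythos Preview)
Your proof is correct and follows essentially the same approach as the paper's: both use right multiplication by an element $g\in G(F)$ conjugating $\bP$ to $\bQ$, and both invoke the pro-connectedness of $\Aut_{\calO}$ to force the cocycle $\sigma\mapsto\sigma(g)g^{-1}$ (equivalently $h^{-1}\sigma(h)$) into $\bP$ (resp.\ $\bQ$), so that $R_g$ descends. The only organizational difference is that the paper first passes to the intermediate quotient $\tilBun_{\bP}=\tilBun_{\infty}/G_{\bP}$ and then checks $\Aut_{\calO}$-equivariance, whereas you work directly with the semi-direct product $G_{\bP}\rtimes\Aut_{\calO}$; and the paper derives the $\Omega_{\bP}$-action as the special case $\bQ=\bP$ of the isomorphism, while you build it first via the normal subgroup $G_{\bP}\rtimes\Aut_{\calO}\triangleleft\widetilde{G}_{\bP}\rtimes\Aut_{\calO}$.
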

\begin{proof}
Let $g\in G(F)$ be such that $\bQ=g^{-1}\bP g$. Let $\tilBun_{\bP}=\tilBun_{\infty}/G_{\bP}$, which is an $\Aut_{\calO}$-torsor over $\Bun_{\bP}$. Since $g^{-1}\bP g=\bQ$, the natural right action of $g$ on $\tilBun_{\infty}$ (see Construction \ref{cons:gloopact}) descends to an isomorphism $R_g:\tilBun_{\bP}\isom\tilBun_{\bQ}$. Moreover, for any $\sigma\in\Aut(R[[t]])$, we have a commutative diagram
\begin{equation}\label{d:sigmaPQ}
\xymatrix{\tilBun_{\bP}(R)\ar[r]^{R_g}\ar[d]^{\cdot\sigma} & \tilBun_{\bQ}(R) \ar[d]^{\cdot\sigma} \\
\tilBun_{\bP}(R)\ar[r]^{R_{\sigma(g)}} & \tilBun_{\bQ}(R)}
\end{equation}
Since both $\bP$ and $\bQ$ are stable under $\sigma$, $\sigma(g)g^{-1}$ normalizes $\bP$, and hence $\sigma(g)g^{-1}\in\widetilde{\bP}(R)$. The assignment $\sigma\mapsto\sigma(g)g^{-1}$ gives a morphism from the connected pro-algebraic group $\Aut_{\calO}$ to the discrete group $\widetilde{\bP}/\bP$, which must be trivial. Therefore $\sigma(g)g^{-1}\in\bP(R)$. It is clear that right multiplication by any element in $\bP$ induces the identity morphism on $\tilBun_{\bP}$, therefore
\begin{equation*}
R_{\sigma(g)}=R_{g}\circ R_{\sigma(g)g^{-1}}=R_{g}:\tilBun_{\bP}(R)\isom\tilBun_{\bQ}(R).
\end{equation*}
Using diagram (\ref{d:sigmaPQ}), we conclude that $R_g:\tilBun_{\bP}(R)\isom\tilBun_{\bQ}(R)$ is equivariant under $\Aut_{\calO}(R)$, hence descends to an isomorphism
\begin{equation}\label{eq:isomPQ}
R_g:\Bun_{\bP}\isom\Bun_{\bQ}.
\end{equation}

Finally we define the $\Omega_{\bP}$-action on $\Bun_{\bP}$, and check that the isomorphism (\ref{eq:isomPQ}) is canonical up to this action. If $g'$ is another element of $G(F)$ such that $\bQ=g'^{-1}\bP g'$, then $g'g^{-1}$ normalizes $\bP$, hence $g'g^{-1}\in\widetilde{\bP}$. We can write the isomorphism $R_{g'}$ as the composition:
\begin{equation*}
\Bun_{\bP}\xrightarrow{R_{g'g^{-1}}}\Bun_{\bP}\xrightarrow{R_g}\Bun_{\bQ},
\end{equation*}
where the first isomorphism only depends on the image of $g'g^{-1}$ in $\Omega_{\bP}=\widetilde{\bP}/\bP$. Taking $\bQ=\bP$, we get the desired $\Omega_{\bP}$-action on $\Bun_{\bP}$. In general, the isomorphisms between $\Bun_{\bP}$ and $\Bun_{\bQ}$ given by the various $R_g$ only differ by the action of $\Omega_{\bP}$ on $\Bun_{\bP}$.
\end{proof}

For parahoric subgroups $\bP,\bQ$, let
\begin{equation*}
\Omega_{\bP,\bQ}:=\{[g]\in\bP\backslash G(F)/\bQ|g^{-1}\bP g=\bQ\}.
\end{equation*}
For $g\in G(F)$ such that $g^{-1}\bP g=\bQ$, let $[g]\in\Omega_{\bP,\bQ}$ be the corresponding double coset. In particular, $\Omega_{\bP}=\Omega_{\bP,\bP}$. If $\bP$ and $\bQ$ are both stable under $\Aut(\calO_F)$, the proof of Lem. \ref{l:Pclass} gives for each $[g]\in\Omega_{\bP,\bQ}$ a canonical isomorphism
\begin{equation}\label{eq:rg}
R_{[g]}:\Bun_{\bP}\to\Bun_{\bQ}.
\end{equation}

Suppose we have an inclusion $\bP\subset\bQ$ of parahoric subgroups which are both stable under $\Aut(\calO_F)$, then by construction we have a forgetful morphism
\begin{equation}\label{eq:omPQ}
\forg\PQ:\Bun_{\bP}\to\Bun_{\bQ}
\end{equation}
whose fibers are isomorphic to $G_{\bQ}/G_{\bP}$, which is a partial flag variety of the reductive group $L_{\bQ}$. In particular, $\forg\PQ$ is representable, proper, smooth and surjective.

\begin{cor}\label{c:Bunstack}
For any parahoric subgroup $\bP\subset G(F)$ stable under $\Aut(\calO_F)$, the stack $\Bun_{\bP}$ is an algebraic stack locally of finite type.
\end{cor}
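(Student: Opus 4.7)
The plan is to reduce to the case of a standard parahoric, and then exploit the smooth proper forgetful morphism to $\Bun_{\bG}$. By the remark at the beginning of Section \ref{ss:pargp}, any parahoric $\bP \subset G(F)$ stable under $\Aut(\calO_F)$ has its facet contained in the $G(k)$-orbit of the standard apartment. Hence there is $g \in G(k)$ with $\bQ := g^{-1}\bP g \supset \bI$ a standard parahoric, and $\bQ$ is automatically stable under $\Aut(\calO_F)$. Lemma \ref{l:Pclass} then furnishes an isomorphism $R_{[g]}: \Bun_{\bP} \isom \Bun_{\bQ}$, reducing the claim to the case where $\bI \subset \bP \subset \bG$.

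For such a standard parahoric, the inclusion $\bP \subset \bG$ yields via (\ref{eq:omPQ}) a forgetful morphism $\forg_{\bP}^{\bG} : \Bun_{\bP} \to \Bun_{\bG}$. By Lemma \ref{l:BunGagree}, $\Bun_{\bG} \cong \Bun_G \times X$, and the classical theory of $G$-bundles on curves (Laumon--Moret-Bailly, Behrend) guarantees that $\Bun_G$, and hence $\Bun_{\bG}$, is an algebraic stack locally of finite type. As already asserted after (\ref{eq:omPQ}), $\forg_{\bP}^{\bG}$ is representable, smooth, proper and surjective, with geometric fibers the partial flag variety $G_{\bG}/G_{\bP}$ of the reductive group $L_{\bG} \cong G$. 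Granting this representability, the property of being algebraic and locally of finite type transfers from $\Bun_{\bG}$ to $\Bun_{\bP}$, completing the argument.

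The substantive point is thus the representability of $\forg_{\bP}^{\bG}$, which I expect to be the main (but mild) obstacle. Since $\bP$ is standard with $\bP \subset \bG$, it is the preimage in $\bG(\calO_F) = G[[t]]$ of a standard parabolic $P_{\bP} \subset G$ under the reduction-mod-$t$ map (where $P_{\bP}$ has Levi quotient $L_{\bP}$). As explained in the discussion following Lemma \ref{l:BunGagree}, $\Bun_{\bP}(R)$ then classifies triples $(x,\calE,\calE^{P_{\bP}}_x)$ with $\calE^{P_{\bP}}_x$ a $P_{\bP}$-reduction of $\calE|_{\Gamma(x)}$. For fixed $(x,\calE)$ over $\Spec R$, the functor of such $P_{\bP}$-reductions is represented by the twist of the smooth projective flag variety $G/P_{\bP}$ along the $G$-torsor $\calE|_{\Gamma(x)}$, a smooth projective $R$-scheme. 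This realizes $\forg_{\bP}^{\bG}$ as a smooth proper schematic morphism with connected projective fibers, yielding the desired conclusion.
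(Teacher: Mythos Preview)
Your proof is correct and follows essentially the same strategy as the paper: reduce to standard parahorics via Lemma~\ref{l:Pclass}, then bootstrap from the known algebraicity of $\Bun_G$ using the representable forgetful morphisms of~(\ref{eq:omPQ}). The only difference is that you use the single morphism $\forg^{\bG}_{\bP}:\Bun_{\bP}\to\Bun_{\bG}$ directly, whereas the paper routes through the Iwahori level: it first shows $\Bun_{\bI}$ is algebraic via $\forg^{\bG}_{\bI}$, and then descends along the smooth surjective $\forg^{\bP}_{\bI}:\Bun_{\bI}\to\Bun_{\bP}$. Your route is slightly more direct; the paper's has the minor advantage that the descent step does not require knowing that $\forg^{\bG}_{\bP}$ is itself representable for each $\bP$, only that $\forg^{\bG}_{\bI}$ is (though both are asserted after~(\ref{eq:omPQ})).
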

\begin{proof}
By Lem. \ref{l:Pclass}, we only need to check the statement for standard parahoric subgroups. Since the morphism $\forg^{\bG}_{\bI}:\Bun_{\bI}\to\Bun_{\bG}\cong\Bun_G\times X$ is representable and of finite type, and $\Bun_G$ is an algebraic stack locally of finite type, $\Bun_{\bI}$ is also algebraic and locally of finite type. On the other hand, since $\forg^{\bP}_{\bI}:\Bun_{\bI}\to\Bun_{\bP}$ is representable, smooth and surjective, $\Bun_{\bP}$ is also algebraic and locally of finite type.
\end{proof}

We describe examples of $\Bun_{\bP}$ for classical groups of type A, B and C.

\begin{exam}\label{ex:BunA} Let $G=\SL(n)$. The standard parahoric subgroups are in 1-1 correspondence with sequences of integers
\begin{equation*}
\undi=(0\leq i_0<\cdots<i_m<n), m\geq0
\end{equation*}
For each such sequence $\undi$, let $\bP_{\undi}$ be the corresponding parahoric subgroup. Then $\Bun_{\bP_{\undi}}$ classifies
\begin{equation*}
(x,\calE_{i_0}\supset\calE_{i_1}\supset\cdots\supset\calE_{i_m}\supset\calE_{i_0}(-x),\delta)
\end{equation*}
where $x\in X$, $\calE_{i_j}$ are vector bundles of rank $n$ on $X$ such that $\calE_{i_0}/\calE_{i_j}$ has length $i_j-i_0$ for $j=0,1,\cdots,m$, and $\delta$ is an isomorphism $\det(\calE_{i_0})\cong\calO_X(-i_0x)$.
\end{exam}

\begin{exam}\label{ex:BunBC} Let $G=\SO(2n+1)$ (resp. $G=\Sp(2n)$). The standard parahoric subgroups are in 1-1 correspondence with sequences of integers
\begin{equation*}
\undi=(0\leq i_0<\cdots<i_m\leq n), m\geq0.
\end{equation*}
For each such sequence $\undi$, let $\bP_{\undi}$ be the corresponding parahoric subgroup. Then $\Bun_{\bP_{\undi}}$ classifies
\begin{equation*}
(x,\calE_{i_0}\supset\cdots\supset\calE_{i_m}\supset\calE^{\bot}_{i_m}(-x)\supset\cdots\supset\calE^{\bot}_{i_0}(-x)\supset\calE_{i_0}(-x),\sigma)
\end{equation*}
where $x\in X$, $\calE_{i_j}$ are vector bundles of rank $2n+1$ (resp. $2n$) on $X$ such that $\calE_{i_0}/\calE_{i_j}$ has length $i_j-i_0$ for $j=0,1,\cdots,m$, and $\sigma$ is a symmetric (resp. alternating) pairing
\begin{equation*}
\sigma:\calE_{i_0}\otimes\calE_{i_0}\to\calO_X.
\end{equation*}
For any subsheaf $\calE\subset\calE_{i_0}$ of finite colength, $\calE^{\bot}$ denotes the subsheaf of the sheaf of rational sections $f$ of $\calE$ such that $\sigma(f,\calE)\subset\calO_X$, which is also a vector bundle over $X$.
\end{exam}

% parahoric Hitchin moduli

\subsection{The parahoric Hitchin fibrations}
In this subsection, we define parahoric analogues of $\Mpar$ and $\fpar$ considered in \cite[Sec. 3.1]{GSI}. These are analogues of the partial Grothendieck resolutions in the classical Springer theory, cf. \cite{BM}.

We first define the notion of Higgs fields in the parahoric situation. 

\begin{cons}[the Higgs fields]\label{cons:adP} For any $k$-algebra $R$ and $(x,\alpha,\calE,\tau_x)\in\tilBun_{\infty}(R)$, consider the composition
\begin{equation}\label{eq:j}
j_*j^*\Ad(\calE)\to\Ad(\calE)\otimes\hatO^{\punc}_x\xrightarrow{\tau_x^{-1}}\frg\otimes_k\hatO^{\punc}_x\xrightarrow{\alpha^{-1}}\frg\otimes_kR((t))
\end{equation}
where $j:\XR-\Gamma(x)\hookrightarrow \XR$ is the inclusion and the first arrow is the natural embedding. Let $\Ad_{\bP}(\calE)$ be the preimage of $\frg_{\bP}\otimes_kR\subset\frg\otimes_kR((t))$ under the injection (\ref{eq:j}). Sheafifying this procedure, the assignment $(x,\calE,\tau_x\modo\bP)\mapsto\Ad_{\bP}(\calE)$ gives a quasi-coherent sheaf $\Ad_{\bP}$ on $\tilBun_{\infty}\times X$.
\end{cons}

It is easy to check that
\begin{lemma}\label{l:AdP}
\begin{enumerate}
\item []
\item The quasi-coherent sheaf $\Ad_{\bP}$ descends to $\Bun_{\bP}\times X$;
\item The quasi-coherent sheaf $\Ad_{\bP}$ over $\Bun_{\bP}\times X$ is in fact coherent;
\item If both $\bP$ and $\bQ$ are stable under $\Aut(\calO_F)$ and $[g]\in\Omega_{\bP,\bQ}$, then there is a canonical isomorphism $R_{[g]}^*\Ad_{\bQ}\cong\Ad_{\bP}$ satisfying the obvious transitivity conditions. (Recall the isomorphism $R_{[g]}$ from (\ref{eq:rg})). In particular, $\Ad_{\bP}$ has a natural $\Omega_{\bP}$-equivariant structure.
\end{enumerate}
\end{lemma}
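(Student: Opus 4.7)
The plan is to verify each assertion by tracing how Construction \ref{cons:adP} interacts with the relevant group actions on $\tilBun_\infty$.

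For part (1), I would compute how the composition (\ref{eq:j}) transforms under an element $(g,\sigma)\in G_\bP(R)\rtimes\Aut_\calO(R)$ acting on $(x,\alpha,\calE,\tau_x)$ to produce $(x,\alpha\circ\sigma,\calE^g,\tau^g_x)$. From the gluing description of $\calE^g$, the new trivialization on $\pdisk_x$ is $\tau^g_x=\tau_x\circ(\alpha_*^{-1}g\alpha_*)$, and a short unwinding shows that the new map (\ref{eq:j}) equals $\sigma^{-1}\circ\Ad(g^{-1})$ composed with the old one. Two stability properties then close the argument: $g\in\bP(R[[t]])$ preserves $\frg_\bP\otimes R$ under the adjoint action (by definition of $\frg_\bP$ as the Lie algebra of the $\calO_F$-group scheme $\bP$), and $\sigma$ preserves $\frg_\bP\otimes R$ since $\bP$ — hence $\frg_\bP$ — is assumed stable under $\Aut(\calO_F)$. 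Together these show that the preimage of $\frg_\bP\otimes R$ inside $j_*j^*\Ad(\calE)$ is invariant under the action, so $\Ad_\bP$ descends to $\Bun_\bP\times X$.

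For part (2), the key input is that $\frg_\bP$ is a finitely generated $\calO_F$-lattice in $\frg\otimes F$ (it has the same generic fiber as $\frg\otimes\calO_F$ because $\bP$ and $\bG$ share the same generic fiber). Hence there is an integer $N\geq 0$ with $t^N(\frg\otimes\calO_F)\subset\frg_\bP\subset t^{-N}(\frg\otimes\calO_F)$, which globalizes to $\Ad(\calE)(-N\Gamma(x))\subset\Ad_\bP(\calE)\subset\Ad(\calE)(N\Gamma(x))$ as subsheaves of $j_*j^*\Ad(\calE)$. Both outer sheaves are coherent and their quotient has finite length, so $\Ad_\bP(\calE)$ is coherent; the same argument applies in families, giving coherence of $\Ad_\bP$ on the locally Noetherian stack $\Bun_\bP\times X$ (using Cor. \ref{c:Bunstack}).

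For part (3), I would repeat the computation of (1) for the right action of a representative $g\in G(F)$ of $[g]\in\Omega_{\bP,\bQ}$ satisfying $g^{-1}\bP g=\bQ$: this transforms (\ref{eq:j}) by $\Ad(g^{-1})$, which carries $\frg_\bP$ to $\frg_\bQ$. Thus the preimage of $\frg_\bP$ under the original map (namely $\Ad_\bP$) matches the preimage of $\frg_\bQ$ under the transported map (namely $R_{[g]}^*\Ad_\bQ$), producing the canonical isomorphism. Independence from the choice of representative in $[g]$, the transitivity conditions, and the specialization to $\bP=\bQ$ that yields the $\Omega_\bP$-equivariant structure then all follow by the same type of bookkeeping already carried out in Lem. \ref{l:Pclass}.

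The only delicate step is the computation in (1): one must carefully keep straight which trivialization and local coordinate enters at each stage, since the group action simultaneously modifies all of $\alpha$, $\calE$, and $\tau_x$. Once that is done cleanly, (2) becomes a pure lattice argument and (3) is a minor variant of (1).
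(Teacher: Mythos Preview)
Your argument is correct. Parts (1) and (3) are exactly the paper's approach, only spelled out in more detail: the paper simply says that $\frg_{\bP}$ is stable under $G_{\bP}\rtimes\Aut(\calO_F)$ for (1), and refers back to Lem.~\ref{l:Pclass} for (3).

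For part (2), however, you take a genuinely different route. The paper argues by reduction: it first handles $\bP=\bG$ (trivially) and $\bP=\bI$ (via an explicit exact sequence identifying the cokernel of $\Ad_{\bI}(\calE)\hookrightarrow\Ad(\calE)$), then invokes (3) to reduce an arbitrary $\bP$ to a standard one containing $\bI$, and finally sandwiches $\Ad_{\bP}(\calE)$ between $\Ad_{\bI}(\calE)$ and something coherent. Your lattice argument --- sandwiching $\frg_{\bP}$ between $t^N(\frg\otimes\calO_F)$ and $t^{-N}(\frg\otimes\calO_F)$ and globalizing to $\Ad(\calE)(-N\Gamma(x))\subset\Ad_{\bP}(\calE)\subset\Ad(\calE)(N\Gamma(x))$ --- is more direct and uniform: it handles all parahorics at once and does not rely on (3), so you can keep the natural order (1), (2), (3). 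The paper's approach has the side benefit of making the exact sequence for $\Ad_{\bI}$ explicit (which resurfaces later, e.g.\ in Lem.~\ref{l:pullcan}), but for the bare coherence statement your argument is cleaner.
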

\begin{proof}
(1) Since $\frg_{\bP}$ is stable under $G_{\bP}\rtimes\Aut(\calO_F)$, the subsheaf $\Ad_{\bP}(\calE)\subset j_*j^*\Ad(\calE)$ only depends on the image of $(x,\alpha,\calE,\tau_x)$ in $\Bun_{\bP}(R)$.

(3) Similar to the proof of Lem. \ref{l:Pclass}.

(2) Fix any $k$-algebra $R$ and $(x,\calE,\tau_x\modo\bP)\in\Bun_{\bP}(R)$, we want to show that $\Ad_{\bP}(\calE)$ is a coherent sheaf on $\XR$. For $\bP=\bG$, clearly $\Ad_{\bG}(\calE)=\Ad(\calE)$ is coherent on $\XR$. For $\bP=\bI$ and any point $(x,\calE,\calE^B_x)\in\Bun_{\bI}(R)$ (recall $\calE^B_x$ is a $B$-reduction of $\calE|_{\Gamma(x)}$), we have an exact sequence
\begin{equation*}
0\to\Ad_{\bI}(\calE)\to\Ad(\calE)\to i_*\left(\Ad(\calE|_{\Gamma(x)})/\Ad(\calE^B_x)\right)\to0
\end{equation*}
where $i:\Gamma(x)\hookrightarrow\XR$ is the closed inclusion. Since the middle and final terms of the above exact sequence are coherent, $\Ad_{\bI}(\calE)$ is also coherent.

In general, by (3), we can reduce to the case $\bP\supset\bI$. In this case we have an embedding $\Ad_{\bI}(\calE)\hookrightarrow\Ad_{\bP}(\calE)$ whose cokernel is again a finite $R$-module supported on $\Gamma(x)$, hence $\Ad_{\bP}(\calE)$ is a coherent sheaf on $\XR$.
\end{proof}

As in the usual definition of the Hitchin moduli stack, we fix a divisor $D$ on $X$ with $\deg(D)\geq2g_X$.

\begin{defn}\label{def:hitp}
The {\em Hitchin moduli stack of $G$-bundles over $X$ (with respect to $D$) with parahoric level structures of type $\bP$} ({\em parahoric Hitchin moduli stack of type $\bP$} for short) is the fpqc sheaf $\calM_{\bP}:k-\textup{Alg}\to\textup{Groupoids}$ which associates to every $k$-algebra $R$ the groupoid of pairs $(\xi,\varphi)$ where 
\begin{itemize}
\item $\xi=(x,\calE,\tau_x\modo\bP)\in\Bun_{\bP}(R)$;
\item $\varphi\in\cohog{0}{\XR,\Ad_{\bP}(\calE)\otimes\calO_X(D)}$.
\end{itemize}
\end{defn}

\begin{remark}\label{r:standard}
By Lem. \ref{l:AdP}(3), the group $\Omega_{\bP}$ naturally acts on the parahoric Hitchin moduli stack $\calM_{\bP}$, and $\calM_{\bP}$ only depends on the the conjugacy class of $\bP$ up to this action of $\Omega_{\bP}$. Therefore we can concentrate on the study of $\calM_{\bP}$ for standard parahoric subgroups $\bP\supset\bI$.
\end{remark}

\begin{lemma}
The Hitchin moduli stack $\calM_{\bP}$ is an algebraic stack locally of finite type.
\end{lemma}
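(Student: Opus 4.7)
The plan is to factor the verification through the forgetful morphism
\begin{equation*}
\pi_{\bP}:\calM_{\bP}\to\Bun_{\bP},\qquad (\xi,\varphi)\mapsto\xi,
\end{equation*}
and to show that $\pi_{\bP}$ is representable by affine schemes of finite type. Combined with Cor. \ref{c:Bunstack}, which asserts that $\Bun_{\bP}$ is an algebraic stack locally of finite type, this immediately yields the result.

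To prove the representability, first I would choose a smooth atlas $U\to\Bun_{\bP}$ with $U$ a scheme of finite type over $k$. Pulling the coherent sheaf $\Ad_{\bP}\otimes\calO_X(D)$ on $\Bun_{\bP}\times X$ (coherent by Lem. \ref{l:AdP}(2)) back along $U\times X\to\Bun_{\bP}\times X$ gives a coherent sheaf $\calF$ on $U\times X$. By construction, $\calM_{\bP}\times_{\Bun_{\bP}}U$ is the functor on $U$-schemes sending $T\to U$ to $\cohog{0}{X\times T,\calF|_{X\times T}}$.

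Next I would invoke the standard representability of global sections of a coherent sheaf along a projective morphism, in the form of the cohomology-and-base-change complex (EGA III.7): since $\pi:U\times X\to U$ is projective and $\calF$ is coherent, there exists a bounded two-term complex $[\calK^{0}\xrightarrow{d}\calK^{1}]$ of finite locally free $\calO_{U}$-modules that computes $R\pi_{*}\calF$ universally, so that for every $T\to U$ one has $\cohog{0}{X\times T,\calF|_{X\times T}}=\ker(d\otimes_{\calO_{U}}\calO_{T})$. This kernel is represented by the closed subscheme of the total space $\Spec_{U}(\Sym^{\bullet}\calK^{0,\vee})$ cut out by the linear equations defining the image of $d^{\vee}$, hence by an affine $U$-scheme of finite type.

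Consequently, $\pi_{\bP}$ becomes representable by affine schemes of finite type after base change along any smooth finite-type atlas of $\Bun_{\bP}$. Applying this to a smooth groupoid presentation of $\Bun_{\bP}$ furnishes a smooth groupoid presentation of $\calM_{\bP}$ by schemes locally of finite type over $k$, showing that $\calM_{\bP}$ is algebraic and locally of finite type. The only substantive input is the coherence of $\Ad_{\bP}$ established in Lem. \ref{l:AdP}(2); no further obstacle arises, since everything else is a direct application of standard results on linear stacks attached to coherent sheaves on projective families.
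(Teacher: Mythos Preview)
Your proof is correct and follows essentially the same approach as the paper: both factor through the forgetful morphism $\calM_{\bP}\to\Bun_{\bP}$, show it is representable of finite type using the coherence of $\Ad_{\bP}$ from Lem.~\ref{l:AdP}(2), and then conclude via Cor.~\ref{c:Bunstack}. Your argument is in fact more careful than the paper's, which simply asserts that the fiber is a finite $R$-module without spelling out the EGA~III.7 representability step you supply.
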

\begin{proof}
Consider the forgetful morphism $\calM_{\bP}\to\Bun_{\bP}$. The fiber of this morphism over a point $(x,\calE,\tau_x\modo\bP)\in\Bun_{\bP}(R)$ is the finite $R$-module $\cohog{0}{\XR,\Ad_{\bP}(\calE)(D)}$ (the finiteness follows from the coherence of $\Ad_{\bP}(\calE)$ as proved in Lem. \ref{l:AdP}(2) and the properness of $X$). Therefore, the forgetful morphism $\calM_{\bP}\to\Bun_{\bP}$ is representable and of finite type. By Cor. \ref{c:Bunstack}, $\Bun_{\bP}$ is algebraic and locally of finite type, hence so is $\calM_{\bP}$.
\end{proof}

\begin{cons}\label{cons:ev} We claim that there is a natural morphism
\begin{equation}\label{eq:ev}
\ev_{\bP}:\calM_{\bP}\to[\unl_{\bP}/\unL_{\bP}]_D
\end{equation}
of ``evaluating the Higgs fields at the point of the $\bP$-level structure''. In fact, to construct $\ev_{\bP}$, it suffices to construct a morphism
\begin{equation*}
\tilev_{\bP}:\tilBun_{\infty}\times_{\Bun_{\bP}}\calM_{\bP}\to\frl\twtimes{\GG_m}\rho_D
\end{equation*}
which is equivariant under $G_{\bP}\rtimes\Aut_{\calO}$. Here the $G_{\bP}\rtimes\Aut_{\calO}$-action on the LHS is on the $\tilBun_{\infty}$-factor, and the action on the RHS factors through the $L_{\bP}\rtimes\Aut_{\calO}$-action on $\frl_{\bP}$, with $L_{\bP}$ acting by conjugation.

For any $k$-algebra $R$ and $(x,\alpha,\calE,\tau_x,\varphi)\in(\tilBun_{\infty}\times_{\Bun_{\bP}}\calM_{\bP})(R)$, by the definition of $\Ad_{\bP}(\calE)$, the maps in (\ref{eq:j}) give
\begin{equation*}
\Ad_{\bP}(\calE)\to\frg_{\bP}\otimes_kR\twoheadrightarrow\frl_{\bP}\otimes_kR.
\end{equation*}
Twisting by $\calO_X(D)$, we get
\begin{equation*}
\tilev_{\bP,x}:\cohog{0}{\XR,\Ad_{\bP}(\calE)(D)}\to\frl_{\bP}\otimes_kx^*\calO_X(D).
\end{equation*}
The assignment $(x,\alpha,\calE,\tau_x,\varphi)\mapsto\tilev_{\bP,x}(\varphi)$ gives the desired morphism $\tilev_{\bP}$. It is easy to check that $\tilev_{\bP}$ is equivariant under $G_{\bP}\rtimes\Aut_{\calO}$, hence giving the desired morphism $\ev_{\bP}$ in (\ref{eq:ev}).
\end{cons}

\subsubsection{Morphisms between two parahoric Hitchin stacks} For two standard parahoric subgroups $\bP\subset\bQ$, there is a unique parabolic subgroup $B\PQ\subset L_{\bQ}$, such that $\bP$ is the inverse image of $B\PQ$ under the natural quotient $\bQ\twoheadrightarrow L_{\bQ}$. There is a canonical $\Aut_{\calO}$-action on $B\PQ$ making the embedding $B\PQ\hookrightarrow L_{\bQ}$ equivariant under $\Aut_{\calO}$. Let $\frb\PQ$ be the Lie algebra of $B\PQ$ and let $\unB\PQ,\unb\PQ$ be the group scheme and Lie algebra over $X$ obtained by applying $\Coor(X)\twtimes{\Aut_{\calO}}(-)$.

Since $B\PQ$ is a quotient of $\bP$, the same construction as in Construction \ref{cons:ev} gives the {\em relative evaluation map}
\begin{equation*}
\ev\PQ:\calM_{\bP}\to[\unb\PQ/\unB\PQ]_D.
\end{equation*}
Similar to the morphism $\forg\PQ:\Bun_{\bP}\to\Bun_{\bQ}$ in (\ref{eq:omPQ}), there is a morphism
\begin{equation}\label{eq:tilomPQ}
\tforg\PQ:\calM_{\bP}\to\calM_{\bQ}
\end{equation}
lifting $\forg\PQ$.

\begin{lemma}\label{l:CartPQ}
We have a Cartesian diagram
\begin{equation*}
\xymatrix{\calM_{\bP}\ar[r]^{\ev\PQ}\ar[d]^{\tforg\PQ} & [\unb\PQ/\unB\PQ]_D\ar[d]^{\pi\PQ}\\
\calM_{\bQ}\ar[r]^{\ev_{\bQ}} & [\unl_{\bQ}/\unL_{\bQ}]_D}
\end{equation*}
In particular, the morphism $\tforg\PQ$ is proper and surjective.
\end{lemma}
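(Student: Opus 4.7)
The plan is to verify the square commutes, then show it is Cartesian by identifying both sides with the same moduli problem, and finally to deduce properness and surjectivity of $\tforg\PQ$ by base change from the corresponding properties of the right vertical map $\pi\PQ$.

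\textbf{Commutativity.} I would pull back along the presentation $\tilBun_{\infty}\times_{\Bun_{\bP}}\calM_{\bP}\to\calM_{\bP}$ and trace through Construction \ref{cons:ev}. For a point $(x,\alpha,\calE,\tau_x,\varphi)$, the Higgs field $\varphi$ sits inside $\frg\otimes_kR((t))$ via (\ref{eq:j}) and, since $\varphi\in\cohog{0}{\XR,\Ad_{\bP}(\calE)(D)}$, its image lies in $\frg_{\bP}\otimes_k x^*\calO_X(D)$. The two routes in the square then correspond to the two paths through the commutative square of $\bP$-equivariant maps
\begin{equation*}
\xymatrix{\frg_{\bP}\ar@{^{(}->}[r]\ar@{->>}[d] & \frg_{\bQ}\ar@{->>}[d]\\ \frb\PQ\ar@{^{(}->}[r] & \frl_{\bQ},}
\end{equation*}
where both rows reduce modulo the pro-nilpotent radical $\frg_{\bQ}^+$, and the commutativity just reflects the identification $\bP/\bQ^+=B\PQ$. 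This square is $\Aut_{\calO}$-equivariant, so twisting by $\Coor(X)$ yields the commutativity of the displayed square over $X$.

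\textbf{Cartesian property.} An $R$-point of the fiber product is $(\xi_{\bQ},\varphi)\in\calM_{\bQ}(R)$ together with a $B\PQ$-reduction $\calF$ of the Levi torsor $\calE^{L_{\bQ}}_x$ and a section of $\calF\twtimes{B\PQ}\frb\PQ\otimes x^*\calO_X(D)$ compatible with $\ev_{\bQ}(\varphi)$. The key identifications are: first, refining a $\bQ$-level structure to a $\bP$-level structure is the same datum as a $B\PQ$-reduction of $\calE^{L_{\bQ}}_x$ (since $\bP$ is the preimage of $B\PQ$ under $\bQ\twoheadrightarrow L_{\bQ}$); and second, $\Ad_{\bP}(\calE)\hookrightarrow\Ad_{\bQ}(\calE)$ is injective with cokernel a finite length sheaf at $x$ isomorphic, via the $B\PQ$-reduction, to $\frl_{\bQ}/\frb\PQ$. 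Consequently a Higgs field $\varphi\in\Ad_{\bQ}(\calE)(D)$ lifts (uniquely) to $\Ad_{\bP}(\calE)(D)$ if and only if its image at $x$ in $\frl_{\bQ}$ lands in $\frb\PQ$, which is exactly the fiber product compatibility. Hence the natural map from $\calM_{\bP}$ to the fiber product is an isomorphism.

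\textbf{Properness and surjectivity.} It suffices to show $\pi\PQ$ is proper and surjective; both properties then descend to $\tforg\PQ$ by base change. I factor $\pi\PQ$ as
\begin{equation*}
[\unb\PQ/\unB\PQ]_D\hookrightarrow[\unl_{\bQ}/\unB\PQ]_D\to[\unl_{\bQ}/\unL_{\bQ}]_D,
\end{equation*}
where the first map is a closed immersion induced by the $B\PQ$-equivariant closed embedding $\frb\PQ\hookrightarrow\frl_{\bQ}$, and the second is a fiber bundle with fiber the partial flag variety $L_{\bQ}/B\PQ$, hence proper and smooth. For surjectivity, one uses that every element of $\frl_{\bQ}$ is $L_{\bQ}$-conjugate to an element of $\frb\PQ$ (since $\frb\PQ$ contains a Borel subalgebra of $\frl_{\bQ}$ and Borel subalgebras cover $\frl_{\bQ}$).

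\textbf{Expected obstacle.} The routine work is mostly bookkeeping of the filtrations $\frg_{\bQ}^+\subset\frg_{\bP}^+\subset\frg_{\bP}\subset\frg_{\bQ}$ and the associated Levi/parabolic quotients, together with keeping track of the $\Aut_{\calO}$-equivariance needed to pass from the fiberwise picture to the $\Coor(X)$-twisted global picture. Neither step is deep; the main conceptual point is the identification $\bP/\bQ^+=B\PQ$, which makes the relative evaluation $\ev\PQ$ transparent and forces the Cartesian property.
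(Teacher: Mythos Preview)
Your proposal is correct and follows essentially the same approach as the paper. The paper's own proof is a two-sentence sketch (``trace down the constructions of the evaluation maps'' for the Cartesian square, and ``$\pi\PQ$ is locally the partial Grothendieck resolution, hence proper and surjective'' for the second claim); you have simply filled in the details of both steps faithfully, including the key identification $\bP/\bQ^+\cong B\PQ$ that drives the Cartesian property and the factorization of $\pi\PQ$ through the flag-variety bundle.
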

\begin{proof}
The first statement follows by tracing down the contructions of the evaluation maps. The morphism $\pi\PQ$ is (locally on $X$) the partial Grothendieck resolution associated to the parabolic subgroup $B\PQ$ of $L_{\bQ}$, hence proper and surjective. Therefore $\tforg\PQ$ is also proper and surjective.
\end{proof}

Now we define the parahoric Hitchin fibrations. Recall from \cite[Sec. 3.1]{GSI} that we have the usual Hitchin base space $\AHit=\cohog{0}{X,\frc_D}$. For any $k$-algebra $R$ and $(x,\calE,\tau_x\modo\bP)\in\Bun_{\bP}(R)$, the natural map of taking invariants $\Ad(\calE)(D)\to\frc_D$ gives a map
\begin{eqnarray*}
\chi_{\bP,\calE}:\cohog{0}{\XR,\Ad_{\bP}(\calE)(D)}&\hookrightarrow&\cohog{0}{\XR-\Gamma(x),\Ad(\calE)(D)}\\
&\to& \cohog{0}{\XR-\Gamma(x),\frc_D}.
\end{eqnarray*}

\begin{lemma}\label{l:fp}
The image of the map $\chi_{\bP,\calE}$ lands in $\cohog{0}{\XR,\frc_D}$, hence giving a morphism
\begin{equation*}
f_{\bP}:\calM_{\bP}\to\AHit\times X.
\end{equation*}
\end{lemma}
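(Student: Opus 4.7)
The lemma asserts that $\chi_{\bP,\calE}(\varphi)$, a priori only defined on $\XR-\Gamma(x)$, extends across the graph. My plan is a local check on the formal disk $\disk_x=\Spec\hatO_x$: since $\frc_D$ is a vector bundle on $\XR$, it suffices to exhibit a section of $\frc_D$ on $\disk_x$ that glues with $\chi_{\bP,\calE}(\varphi)|_{\pdisk_x}$.

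To produce it, I would work \'etale-locally on $\Spec R$ and lift $(x,\calE,\tau_x\modo\bP)$ to a point of $\tilBun_\infty(R)$, fixing a coordinate $\alpha:R[[t]]\isom\hatO_x$ and a trivialization $\tau_x:G\times\disk_x\isom\calE|_{\disk_x}$. By the defining property of $\Ad_{\bP}(\calE)$ in Construction \ref{cons:adP}, the restriction $\varphi|_{\disk_x}$ transports via $\alpha$ and $\tau_x$ to an element $\tilphi\in\frg_{\bP}\otimes_{\calO_F}\alpha^{-1}_*\hatO_x(D)$. Applying the Chevalley map $\chi_F:\frg\otimes F\to\frc\otimes F$ to $\tilphi$ and transporting back by $\alpha$ yields the desired section of $\frc_D|_{\disk_x}$, which agrees with $\chi_{\bP,\calE}(\varphi)$ on $\pdisk_x$ tautologically. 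Independence from the $\tilBun_\infty$-lift is built in: $\chi$ is $G(F)$-invariant and, being defined over $k$, is automatically $\Aut(\calO_F)$-equivariant, so the section descends from the \'etale cover.

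The whole argument hinges on a single integrality fact: $\chi_F$ must send $\frg_{\bP}$ into $\frc\otimes\calO_F$, not merely into $\frc\otimes F$. This is the main technical obstacle. By Rem. \ref{r:standard} it suffices to treat standard parahorics $\bP\supset\bI$, and for these I would argue directly using the affine root decomposition. Fixing the split maximal torus $T\otimes\calO_F\subset\bP$, one has a description $\frg_{\bP}=\frt\otimes\calO_F\oplus\bigoplus_{\alpha\in\Phi}\frg_\alpha\otimes t^{n_\alpha(\bP)}\calO_F$, where the exponents $n_\alpha(\bP)$ arise from a concave function on the root system. Any $G$-invariant polynomial on $\frg$, being in particular $T$-invariant, is a sum of monomials $\prod_i x_{\alpha_i}^{m_i}$ satisfying $\sum_i m_i\alpha_i=0$, so concavity forces $\sum_i m_i n_{\alpha_i}(\bP)\geq n_0(\bP)=0$ and hence each such monomial, evaluated on $\tilphi$, has non-negative $t$-valuation. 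Summing over monomials gives $\chi_F(\tilphi)\in\frc\otimes\calO_F$, completing the local extension and hence the proof.
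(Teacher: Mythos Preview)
Your proof is correct, but the paper takes a different and shorter route. Rather than proving the integrality $\chi_F(\frg_{\bP})\subset\frc\otimes\calO_F$ directly, the paper first observes that the case $\bP\subset\bG$ is trivial (since then $\Ad_{\bP}(\calE)\subset\Ad(\calE)$ globally and the Chevalley map already lands in $\frc_D$), and then reduces the general case to $\bP=\bI$ by lifting along the surjective forgetful map $\tforg^{\bP}_{\bI}:\Mpar\to\calM_{\bP}$ (surjectivity coming from Lem.~\ref{l:CartPQ}): any Higgs field of level $\bP$ is, after fpqc base change, the image of one of level $\bI$, and their invariants agree. Your approach instead establishes the local statement $\chi_F(\frg_{\bP})\subset\frc\otimes\calO_F$ head-on via the affine root decomposition; the inequality $\sum_i m_i n_{\alpha_i}(\bP)\geq0$ for $\sum_i m_i\alpha_i=0$ indeed follows from $n_{\alpha}(\bP)\geq-\alpha(x_0)$ for any point $x_0$ in the facet $\frF_{\bP}$, which is essentially the defining property of the parahoric filtration (your use of ``concavity'' is a slight abuse, but the content is right). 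The paper's argument is slicker and exploits the structural machinery already in place; yours is more self-contained and yields the local integrality statement as a byproduct, which has independent interest.
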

\begin{proof}
For $\bP\subset\bG$, $\Ad_{\bP}(\calE)\subset\Ad(\calE)$, hence the image of $\chi_{\bP,\calE}$ obvious lands in $\cohog{0}{\XR,\frc_D}$. In particular, the statement holds for $\bP=\bI$.

In general, we may assume $\bI\subset\bP$. By Lem. \ref{l:CartPQ}, for any point $(\xi,\varphi)\in\calM_{\bP}(R)$, after passing to a fpqc base change of $R$, there is always a point $(\tilxi,\tilphi)\in\Mpar(R)$ mapping to it under $\tforg^{\bQ}_{\bI}$. Since $\chi_{\bP,\calE}(\varphi)=\chi_{\bI,\calE}(\tilphi)$, we conclude that $\chi_{\bP,\calE}(\varphi)\in \cohog{0}{\XR,\frc_D}$.
\end{proof}

\begin{defn}\label{def:Pfib}
The morphism $f_{\bP}:\calM_{\bP}\to\AHit\times X$ constructed in Lem. \ref{l:fp} is called the {\em parahoric Hitchin fibration of type $\bP$}.
\end{defn}

Let $\frc_{\bP}=\frl_{\bP}\sslash L_{\bP}=\frt\sslash W_{\bP}$ be the GIT quotient of the reductive Lie algebra $\frl_{\bP}$ over $k$. Recall that the Weyl group $W_{\bP}$ of $L_{\bP}$ can be identified with a subgroup of $\tilW$. The projection $\tilW\to W$ restricted to $W_{\bP}$ induces an injection $W_{\bP}\hookrightarrow W$. Moreover, for $\bP\subset\bQ$, we have an inclusion $W_{\bP}\subset W_{\bQ}$. Passing to the invariant quotients, we have canonical finite flat morphisms
\begin{equation*}
\frt\xrightarrow{q^{\bP}_{\bI}}\frc_{\bP}\xrightarrow{q\PQ}\frc_{\bQ}\xrightarrow{q_{\bQ}}\frc.
\end{equation*}

\begin{defn}\label{def:tcAP}
The {\em enhanced Hitchin base} $\tcA_{\bP}$ of type $\bP$ is defined by the following Cartesian square
\begin{equation*}
\xymatrix{\tcA_{\bP}\ar[r]\ar[d]^{q_{\bP}} & \frc_{\bP,D}\ar[d]^{q_{\bP}}\\
\AHit\times X\ar[r]^{\ev} & \frc_D}
\end{equation*}
where ``$\ev$'' is the evaluation map. Note that $\tcA_{\bI}$ is the universal cameral cover $\tcA$ in \cite[Def. 3.1.7]{GSI}, and $\tcA_{\bG}=\AHit\times X$.
\end{defn}

\begin{lemma}
The projection $\Coor(X)\times[\frl_{\bP}/L_{\bP}]\to[\frl_{\bP}/L_{\bP}]\to\frc_{\bP}$ factors through the quotient $\Coor(X)\twtimes{\Aut_{\calO}}[\frl_{\bP}/L_{\bP}]$ so that we have a morphism
\begin{equation*}
\chi_{\bP}:[\unl_{\bP}/\unL_{\bP}]\to\frc_{\bP}.
\end{equation*}
\end{lemma}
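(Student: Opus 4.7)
My plan is to reduce the claim to showing that the $\Aut_{\calO}$-action on the adjoint quotient $\frc_{\bP}=\frl_{\bP}\sslash L_{\bP}$ inherited from its action on $\frl_{\bP}$ (via functoriality of the categorical quotient, the $L_{\bP}$-action being $\Aut_{\calO}$-equivariant) is trivial. Granted this triviality, the second arrow $[\frl_{\bP}/L_{\bP}]\to\frc_{\bP}$ becomes $\Aut_{\calO}$-invariant, hence the whole composition $\Coor(X)\times[\frl_{\bP}/L_{\bP}]\to\frc_{\bP}$ is invariant under the diagonal $\Aut_{\calO}$-action on the source and descends to the twisted quotient $\Coor(X)\twtimes{\Aut_{\calO}}[\frl_{\bP}/L_{\bP}]=[\unl_{\bP}/\unL_{\bP}]$, yielding the desired $\chi_{\bP}$.

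To show triviality of the induced action, I would first use Remark \ref{r:standard} to assume $\bP\supset\bI$ is standard. In that case the Cartan $T$ embeds canonically into $L_{\bP}$ as a maximal torus via the chain
\begin{equation*}
T(k)\hookrightarrow T(\calO_F)\subset\bI\subset\bP\twoheadrightarrow L_{\bP}.
\end{equation*}
Since $\Aut_{\calO}$ acts on $\calO_F$ fixing the constant subring $k$ pointwise, it fixes the constants $T(k)\subset T(\calO_F)$ pointwise, and each arrow above is $\Aut_{\calO}$-equivariant; hence the image $T\subset L_{\bP}$ is fixed pointwise by $\Aut_{\calO}$. Passing to Lie algebras, $\Aut_{\calO}$ acts trivially on $\frt\subset\frl_{\bP}$.

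Finally I would apply Chevalley's restriction theorem: the inclusion $\frt\hookrightarrow\frl_{\bP}$ is $\Aut_{\calO}$-equivariant, so the induced isomorphism $\frc_{\bP}=\frl_{\bP}\sslash L_{\bP}\isom\frt\sslash W_{\bP}$ is $\Aut_{\calO}$-equivariant (the $W_{\bP}$-action on $\frt$ commutes with the trivial $\Aut_{\calO}$-action on $\frt$); since $\Aut_{\calO}$ acts trivially on $\frt$ it also acts trivially on $\frt\sslash W_{\bP}$, hence on $\frc_{\bP}$, as required.

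The only delicate step is pinning down the canonical identification of $T$ as a Cartan inside $L_{\bP}$ in an $\Aut_{\calO}$-equivariant manner, but this reduces to the elementary observation that $\Aut_{\calO}$ fixes $k\subset\calO_F$ pointwise together with the standard description of the Levi quotient of a parahoric containing $\bI$; the rest is a formal consequence of the universal property of the GIT quotient and Chevalley restriction.
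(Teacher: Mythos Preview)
Your proof is correct. The paper takes a different, more conceptual route: since $\Aut_{\calO}$ is connected and the outer automorphism group $\Out(L_{\bP})$ is discrete, the action of $\Aut_{\calO}$ on $L_{\bP}$ must factor through the inner automorphism group $L^{\ad}_{\bP}$; consequently the induced action on the adjoint GIT quotient $\frc_{\bP}=\frl_{\bP}\sslash L_{\bP}$ is automatically trivial. Your approach trades this connectedness argument for an explicit computation: you locate a Cartan subalgebra $\frt\subset\frl_{\bP}$ on which $\Aut_{\calO}$ visibly acts trivially (because $\Aut_{\calO}$ fixes the constant subring $k\subset\calO_F$) and then invoke Chevalley restriction. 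The paper's argument is shorter and avoids both the reduction to the standard case and the choice of a maximal torus in $B$; your argument is more explicit and would still go through in variants where the acting group is not connected but continues to fix the constants.
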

\begin{proof}
Since $\Aut_{\calO}$ is connected, its action on $L_{\bP}$ factors through the adjoint action of $L^{ad}_{\bP}$ on $L_{\bP}$. Therefore the adjoint GIT quotient $[\frl_{\bP}/L_{\bP}]\to\frc_{\bP}$ is automatically $\Aut_{\calO}$-invariant and the conclusion follows.
\end{proof}

Using the morphism $\calM_{\bP}\xrightarrow{\ev_{\bP}}[\unl_{\bP}/\unL_{\bP}]_D\xrightarrow{\chi_{\bP}}\frc_{\bP,D}$, we get the {\em enhanced parahoric Hitchin fibration of type $\bP$}:
\begin{equation}\label{eq:enhancefP}
\tilf_{\bP}:\calM_{\bP}\xrightarrow{(\chi_{\bP}\circ\ev_{\bP},f_{\bP})}\frc_{\bP,D}\times_{\frc_D}(\calA\times X)=\tcA_{\bP}.
\end{equation}

% properties of M_P

\subsection{Properties of $\calM_{\bP}$}\label{ss:geomex}
In this subsection, we list a few properties of $\calM_{\bP}$ and $f_{\bP}$ parallel to the properties studied in \cite[Sec. 3.2-3.5]{GSI} for the parabolic Hitchin fibration. Most proofs will be omitted because they are almost the same as the proofs in the case of $\Mpar$.
We also give examples of parahoric Hitchin fibers.

Recall from \cite[Sec. 3.2]{GSI} that we have a Picard stack $\calP$ over $\AHit$ whose fiber over $a\in\AHit$ classifies $J_a$-torsors over $X$.

\begin{cons} For each standard parahoric subgroup $\bP\subset G(F)$, we construct an action of $\calP$ on $\calM_{\bP}$ such that the morphisms $\tforg\PQ:\calM_{\bP}\to\calM_{\bQ}$ are equivariant under $\calP$.

For any $k$-algebra $R$ and an object $(x,\calE,\tau_x\modo\bP,\varphi)\in\Bun_{\bP}(R)$, we can define the sheaf of automorphisms $\unAut_{\bP}(\calE,\tau_x,\varphi)$ of this object. More precisely, $\unAut_{\bP}(\calE,\tau_x,\varphi)$ is a fpqc sheaf of groups over $\XR$ such that for any $u:U\to\XR$, $\unAut_{\bP}(\calE,\tau_x,\varphi)(U)$ is the set of automorphisms of $u^*\calE$ which preserve the Higgs field $u^*\varphi$ and the trivialization $u^*\tau_x$ up to $G_{\bP}$. For $\bP\subset\bQ$ we have an inclusion of sheaves of groups
\begin{equation}\label{eq:Autincl}
\unAut_{\bP}(\calE,\tau_x,\varphi)\hookrightarrow\unAut_{\bQ}(\calE,\tau_x,\varphi).
\end{equation}
Moreover, we always have an inclusion
\begin{equation*}
\unAut_{\bP}(\calE,\tau_x,\varphi)\hookrightarrow j_*(\unAut(\calE,\varphi)|_{\XR-\Gamma(x)}),
\end{equation*}
where $j:\XR-\Gamma(x)\hookrightarrow\XR$ is the inclusion.

Let $a=f^{\Hit}(\calE,\varphi)\in\AHit(R)$. Recall from \cite[Sec. 3.2]{GSI} we see that the group scheme $J$ naturally maps to the universal centralizer group scheme over $\frg$. Another way to say this is that we have a natural homomorphism $J_a\in\unAut(\calE,\varphi)$. Consider the homomorphism of sheaves of groups
\begin{equation}\label{eq:JPaut}
J_a\to j_*j^*J_a\to j_*(\unAut(\calE,\varphi)|_{\XR-\Gamma(x)}).
\end{equation}
We claim that the image of this homomorphism lies in $\unAut_{\bP}(\calE,\tau_x,\varphi)$. In fact, since $\tforg^{\bP}_{\bI}$ is surjective by Lem. \ref{l:CartPQ}, we may assume that $(x,\calE,\tau_x\modo\bP,\varphi)$ comes from a point $(x,\calE,\tau_x\modo\bI,\varphi)\in\Mpar(R)$. From \cite[Lem. 3.2.2]{GSI}, we see that the group scheme $J$ naturally maps to the centralizer group scheme of $\frb$. Again, this can be rephrased as saying that the image of $J_a\in\unAut(\calE,\varphi)$ lies in $\unAut_{\bI}(\calE,\tau_x,\varphi)$. By the inclusion (\ref{eq:Autincl}) applied to $\bI\subset\bP$, the image of $J_a$ under (\ref{eq:JPaut}) also lies in $\unAut_{\bP}(\calE,\tau_x,\varphi)$.

With the homomorphism $J_a\to\unAut_{\bP}(\calE,\tau_x,\varphi)$, we can define the action of $Q^J\in\calP_a(R)$ by
\begin{equation*}
Q^J\cdot(x,\calE,\tau_x\modo\bP,\varphi)=(x,Q^J\twtimes{J_a}(\calE,\tau_x\modo\bP,\varphi)).
\end{equation*}
\end{cons}

\begin{lemma}
The action of $\calP$ on $\calM_{\bP}$ preserves the morphism $\tilf_{\bP}:\calM_{\bP}\to\tcA_{\bP}$.
\end{lemma}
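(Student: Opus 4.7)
The plan is to decompose $\tilf_{\bP}=(\chi_{\bP}\circ\ev_{\bP},f_{\bP})$ into its two components landing in $\frc_{\bP,D}$ and in $\AHit\times X$, and to verify $\calP$-invariance of each factor separately. The $\calP$-action is built from the homomorphism $J_a\to\unAut_{\bP}(\calE,\tau_x,\varphi)$ coming from the chain (\ref{eq:JPaut}), and both factors of $\tilf_{\bP}$ are easily seen to be compatible with the forgetful morphisms $\tforg\PQ$ through the explicit definition of the action, so I will freely use these morphisms to reduce to the already established Iwahori case and the classical Hitchin case.

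For the $\AHit\times X$ component, the $X$-coordinate is tautologically preserved because $J_a$ acts without changing $x$. For the $\AHit$-coordinate, I would factor $f_{\bP}$ as $\calM_{\bP}\xrightarrow{\tforg_{\bG}^{\bP}}\calM_{\bG}=\MHit\times X\to\AHit\times X$; the inclusion $\unAut_{\bP}(\calE,\tau_x,\varphi)\hookrightarrow\unAut_{\bG}(\calE,\varphi)=\unAut(\calE,\varphi)$ from (\ref{eq:Autincl}) shows this factorization is $\calP$-equivariant, so the statement reduces to the classical fact that $\calP$ preserves the ordinary Hitchin fibration $\MHit\to\AHit$. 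The latter is immediate from the construction of $Q^J\twtimes{J_a}(\calE,\varphi)$: over $\XR-\Gamma(x)$ one only twists by automorphisms of $(\calE,\varphi)$, and the characteristic polynomial of $\varphi$ is unchanged.

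For the $\frc_{\bP,D}$ component, I would exploit the surjectivity of $\tforg_{\bI}^{\bP}\colon\Mpar\to\calM_{\bP}$ (Lem.~\ref{l:CartPQ}) to reduce to $\bP=\bI$. Concretely, given a point of $\calM_{\bP}(R)$, after an fpqc base change of $R$ I choose a lift $(\tilxi,\tilphi)\in\Mpar(R)$; since the construction of the $\calP$-action is compatible with $\tforg_{\bI}^{\bP}$ (both use the same homomorphism $J_a\to\unAut$), and since the Cartesian diagram of Lem.~\ref{l:CartPQ} identifies $\chi_{\bP}\circ\ev_{\bP}$ (up to the finite map $q_{\bI}^{\bP}\colon\frc_{\bI}\to\frc_{\bP}$) with the $\bI$-version applied to the lift, invariance at the $\bP$-level follows from invariance at the $\bI$-level. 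The $\bI$-case is exactly the statement that $\calP$ preserves $\tilf^{\parab}\colon\Mpar\to\tcA$ in \cite[Sec.~3.2]{GSI}.

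The only potentially subtle point will be checking that the $\calP$-action on $\calM_{\bP}$ is genuinely compatible with $\tforg_{\bI}^{\bP}$ --- that is, that $Q^J\cdot\tforg_{\bI}^{\bP}(\tilxi,\tilphi)=\tforg_{\bI}^{\bP}(Q^J\cdot(\tilxi,\tilphi))$ --- which I expect to follow directly from the fact that the homomorphism (\ref{eq:JPaut}) was constructed precisely by composing the $\bI$-version with the inclusion (\ref{eq:Autincl}). Once this compatibility is in place, both halves of the verification are formal consequences of Lem.~\ref{l:CartPQ} and the results already proved for $\Mpar$ in \cite{GSI}, so no new geometric input is needed.
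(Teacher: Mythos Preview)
Your core idea---reducing to the already established Iwahori case via the surjectivity of $\tforg^{\bP}_{\bI}$---is exactly what the paper does. However, there is a gap in your treatment of the $\AHit\times X$ component: you propose to factor $f_{\bP}$ through $\tforg^{\bG}_{\bP}:\calM_{\bP}\to\calM_{\bG}$, but this morphism only exists when $\bP\subset\bG$, and not every standard parahoric is contained in $\bG$ (e.g., the maximal parahorics corresponding to removing a node other than the affine node from the extended Dynkin diagram). Indeed, Lemma~\ref{l:fp} already flags this: for general $\bP$ the Higgs field $\varphi$ need not extend to a global section of $\Ad(\calE)(D)$, so there is no underlying point of $\MHit$ to forget to.

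The fix is simply to drop the decomposition into two components and run your second argument for the full $\tilf_{\bP}$ at once. This is precisely the paper's proof: it sets up the commutative square relating $(\act_{\bI},\proj_{\bI})$ on $\calP\times\Mpar$ to $(\act_{\bP},\proj_{\bP})$ on $\calP\times\calM_{\bP}$ via $\id_{\calP}\times\tforg^{\bP}_{\bI}$, applies \cite[Lem.~3.2.5]{GSI} to get $q^{\bP}_{\bI}\circ\tilf\circ\act_{\bI}=q^{\bP}_{\bI}\circ\tilf\circ\proj_{\bI}$, and then cancels the surjective $\id_{\calP}\times\tforg^{\bP}_{\bI}$ on the right. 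Your compatibility check for the $\calP$-action with $\tforg^{\bP}_{\bI}$ is correct and is exactly what makes that square commute. So your argument becomes the paper's once you discard the unnecessary (and partially invalid) split into two halves.
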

\begin{proof}
We have a commutative diagram
\begin{equation}\label{d:twosq}
\xymatrix{\calP\times\Mpar\ar@<1ex>[r]^{\act_{\bI}}\ar@<-1ex>[r]_{\proj_{\bI}}\ar[d]_{\id_{\calP}\times\tforg^{\bP}_{\bI}} & \Mpar\ar[d]^{\tforg^{\bP}_{\bI}}\ar[r]^{\tilf} & \tcA\ar[d]^{q^{\bP}_{\bI}}\\
\calP\times\calM_{\bP}\ar@<1ex>[r]^{\act_{\bP}}\ar@<-1ex>[r]_{\proj_{\bP}} & \calM_{\bP}\ar[r]^{\tilf_{\bP}} & \tcA_{\bP}}
\end{equation}
By \cite[Lem. 3.2.5]{GSI}, the $\calP$-action on $\Mpar$ preserves $\tilf$, hence
\begin{equation*}
q^{\bP}_{\bI}\circ\tilf\circ\act_{\bI}=q^{\bP}_{\bI}\circ\tilf\circ\proj_{\bI}.
\end{equation*}
Combining with the diagram \ref{d:twosq}, we get
\begin{equation*}
\tilf_{\bP}\circ\act_{\bP}\circ(\id_{\calP}\times\tforg^{\bP}_{\bI})=\tilf_{\bP}\circ\proj_{\bP}\circ(\id_{\calP}\times\tforg^{\bP}_{\bI}).
\end{equation*}
Since $\tforg^{\bP}_{\bI}:\Mpar\to\calM_{\bP}$ is surjective, we conclude that
\begin{equation*}
\tilf_{\bP}\circ\act_{\bP}=\tilf_{\bP}\circ\proj_{\bP}.
\qedhere
\end{equation*}
\end{proof}

\subsubsection{Local counterpart of $\calM_{\bP}$} For a point $x\in X(k)$ and an element $\gamma\in\frg(F_x)$, we can define the {\em affine Springer fiber of type $\bP$} in a similar way as one defines affine Springer fibers in the affine flag varieties. It is a closed sub-ind-scheme $M_{\bP,x}(\gamma)\subset\Flag_{\bP,x}:=(\Res_{F_x/k}G)/\Res_{\hatO_x/k}\bP_x$, here $\bP_x\subset G(F_x)$ is the parahoric subgroup corresponding to $\bP$ under any choice of local coordinate at $x$. Again, the group ind-scheme $P_x(J_a)$ acts on $M_{\bP,x}(\gamma)$ whenever $\chi(\gamma)=a\in\frc(\hatO_x)$.

Fix $(a,x)\in\Ah(k)\times X(k)$. As in \cite[Sec. 3.3]{GSI}, from the Kostant section $\epsilon:\AHit\to\MHit$ and the choices of local trivializations we get local data $\gamma_{a,x}\in\frg(\hatO_x)$. Analogous to the product formula in \cite[Prop. 3.3.3]{GSI}, we have:

\begin{prop}[Product formula]\label{eq:parprod}
Let $(a,x)\in\Ah(k)\times X(k)$, and let $U_a$ be the dense open subset $a^{-1}\frc_D^{\rs}$ of $X$. We have a homeomorphism of stacks:
\begin{equation*}
\calP_a\twtimes{P_x^{\red}(J_a)\times P'}\left(M^{\red}_{\bP,x}(\gamma_{a,x})\times M'\right)\to\calM_{\bP,a,x}.
\end{equation*}
where
\begin{eqnarray*}
P'&=&\prod_{y\in X-U_a-\{x\}}P_y^{\red}(J_a);\\
M'&=&\prod_{y\in X-U_a-\{x\}}M^{\Hit,\red}_y(\gamma_{a,y}).
\end{eqnarray*}
\end{prop}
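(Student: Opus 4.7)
The plan is to mimic the proof of the Iwahori product formula \cite[Prop. 3.3.3]{GSI}, with the only real change being that at the distinguished point $x$ we must use the parahoric affine Springer fiber $M^{\red}_{\bP,x}(\gamma_{a,x})$ in place of $M^{\red}_{x}(\gamma_{a,x})$. I will first construct the map from left to right by a gluing / Beauville--Laszlo uniformization procedure, then verify that it is a bijection on $k$-points, and finally upgrade this to a homeomorphism.

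First I would set up the gluing map. A point of the LHS is (after a choice of representative) a triple $(Q, m_x, (m_y)_{y \in X - U_a - \{x\}})$, where $Q$ is a $J_a$-torsor on $X$, $m_x$ lies in $M^{\red}_{\bP,x}(\gamma_{a,x}) \subset \Flag_{\bP,x}$, and $m_y \in M^{\Hit,\red}_y(\gamma_{a,y}) \subset \Gr_{G,y}$ for the remaining bad points. Starting with the Kostant section $\epsilon(a)$, one modifies it at each bad point by the corresponding $m_y$ (respectively $m_x$ together with its $\bP$-reduction, encoded by the point of $\Flag_{\bP,x} = (\Res_{F_x/k}G)/\Res_{\hatO_x/k}\bP_x$), and then twists the resulting object by $Q$ using the map $J_a \to \unAut_{\bP}(\calE, \tau_x, \varphi)$ constructed after (\ref{eq:JPaut}). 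This is essentially the same construction as in \cite[Sec. 3.3]{GSI}; the only new feature is that at $x$ the modification prescribes both an underlying $G$-bundle and a $\bP$-level structure, which is precisely the data carried by a point of $\Flag_{\bP,x}$. One then checks that the result depends only on the orbit under $P_x^{\red}(J_a) \times P'$ acting diagonally, yielding the claimed map.

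Next I would check bijectivity on $k$-points. Given a point $(\calE, \tau_x \modo \bP, \varphi) \in \calM_{\bP,a,x}(k)$, restriction to the formal disk at each bad point $y$ (resp.\ at $x$) yields local data that differs from the Kostant section by an element of the affine Grassmannian (resp.\ of $\Flag_{\bP,x}$), and this element lies in the corresponding affine Springer fiber because $\varphi$ has the characteristic $\gamma_{a,y}$ (resp.\ $\gamma_{a,x}$). Over the open $U_a \cup \{y\text{-free punctured neighborhoods}\}$ the Higgs pair is regular semisimple, so the sheaf of automorphisms is canonically identified with $J_a$, and a standard Beauville--Laszlo argument recovers a unique $J_a$-torsor $Q$ from the gluing data. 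Surjectivity and injectivity of the assembly are then formal. One should treat this argument $R$-pointwise to get the map at the level of stacks, not just $k$-points; the main subtlety is the descent through the quotient on the LHS, which proceeds exactly as in the Iwahori case.

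Finally, to promote the map to a homeomorphism I would use, as in \cite[Prop. 3.3.3]{GSI}, that both sides are ind-schemes (or quotients thereof) of locally finite type and that bijectivity on $k$-points of a proper, radicial, surjective morphism of such objects suffices. The one step that is genuinely new, rather than a translation of the Iwahori case, is verifying that the homomorphism $J_a \to \unAut_{\bP}(\calE, \tau_x, \varphi)$ from (\ref{eq:JPaut}) is compatible with the parahoric twisting at $x$: concretely, that the image of $J_a$ inside $\Res_{\hatO_x/k} G$ lands in (and respects the action of) the parahoric group scheme $\bP$. This is ensured by Lem.~\ref{l:CartPQ} applied to $\bI \subset \bP$ together with the corresponding Iwahori statement \cite[Lem. 3.2.2]{GSI}, which is already used in the construction preceding the Proposition. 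Thus the main obstacle is organizational rather than conceptual: one must carefully bookkeep the parahoric reductions at $x$ under the gluing, and I expect no new ingredients beyond those in \cite[Sec. 3.2--3.3]{GSI}.
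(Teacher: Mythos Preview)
Your proposal is correct and matches the paper's intent: the paper does not give a proof of this proposition, stating only that the properties of $\calM_{\bP}$ are parallel to those of $\Mpar$ in \cite[Sec.~3.2--3.5]{GSI} and that ``most proofs will be omitted because they are almost the same as the proofs in the case of $\Mpar$.'' Your outline of adapting \cite[Prop.~3.3.3]{GSI} by replacing the affine flag variety at $x$ with $\Flag_{\bP,x}$, and verifying the $J_a$-compatibility via the construction around (\ref{eq:JPaut}), is exactly the intended argument.
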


Parallel to \cite[Prop. 3.4.1]{GSI}, we have

\begin{prop}\label{p:MPsmooth} Recall $\deg(D)\geq2g_X$. Then we have:
\begin{enumerate}
\item The stack $\calM_{\bP}|_{\Ah}$ is smooth;
\item The stack $\calM_{\bP}|_{\Aa}$ is Deligne-Mumford;
\item The morphism $f^{\ani}_{\bP}:\calM_{\bP}|_{\Aa}\to\Aa\times X$ is flat and proper.  
\end{enumerate}
\end{prop}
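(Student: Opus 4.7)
The plan is to parallel the proof of \cite[Prop. 3.4.1]{GSI} for $\Mpar$, leveraging the Cartesian square of Lemma \ref{l:CartPQ} applied to $\bI\subset\bP$ (which we may assume by Remark \ref{r:standard}). That square exhibits $\tforg^{\bP}_{\bI}:\Mpar\to\calM_{\bP}$ as the pullback of the partial Grothendieck--Springer resolution $\pi^{\bP}_{\bI}:[\unb^{\bP}_{\bI}/\unB^{\bP}_{\bI}]_D\to[\unl_{\bP}/\unL_{\bP}]_D$ along $\ev_{\bP}$, and in particular renders $\tforg^{\bP}_{\bI}$ proper and surjective.

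For (1), first observe that the assumption $a\in\Ah$ forces $\ev_{\bP}$, restricted to $\calM_{\bP}|_{\Ah}$, to land in the regular locus $[\unl^{\reg}_{\bP}/\unL_{\bP}]_D$: the $L_{\bP}$-invariant of $\varphi(x)$ equals the restriction $a(x)\in\frc^{\heartsuit}_D$, whose lifts to $\frl_{\bP}$ are necessarily regular. I would then compute the deformation-obstruction complex of $\calM_{\bP}$ at a point $(x,\calE,\tau_x\modo\bP,\varphi)$ directly, in analogy with \cite[Sec. 3.4]{GSI}. The short exact sequence from the proof of Lemma \ref{l:AdP}(2), $0\to\Ad_{\bI}(\calE)\to\Ad_{\bP}(\calE)\to i_{x,*}\bigl(\frl_{\bP,x}/\frb^{\bP}_{\bI,x}\bigr)\to 0$, together with the Higgs bracket $[\cdot,\varphi]$, relates the deformation theories for $\Mpar$ and $\calM_{\bP}$, and the vanishing of the $H^2$-obstruction for $\Mpar|_{\Ah}$ from \cite[Prop. 3.4.1]{GSI} feeds via the resulting long exact sequence into the analogous vanishing for $\calM_{\bP}|_{\Ah}$.

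For (2), the automorphism group of any $(x,\calE,\tau_x\modo\bP,\varphi)\in\calM_{\bP}|_{\Aa}$ is by construction a subgroup of the automorphism group of the underlying Hitchin pair $(\calE,\varphi)$ (corresponding to $\bP=\bG$), which is finite on $\Aa$ by the definition of the anisotropic locus. For (3), properness follows from the factorization $\calM_{\bP}|_{\Aa}\xrightarrow{\tforg^{\bG}_{\bP}}\MHit|_{\Aa}\times X\to\Aa\times X$, in which the first arrow is proper (its geometric fibers over the regular locus of $\ev_{\bG}$ are partial flag varieties, by Lemma \ref{l:CartPQ} with $\bQ=\bG$) and the second is the classical anisotropic Hitchin fibration. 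Flatness then follows from miracle flatness: the source is smooth by (1), the base $\Aa\times X$ is regular, and a standard Riemann--Roch computation using $\Ad_{\bP}(\calE)$ shows the fibers have constant dimension.

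The most delicate step is the smoothness in (1). Since $\pi^{\bP}_{\bI}$ is only \'etale on the regular semisimple part of $\unl^{\reg}_{\bP}$ (not smooth on the full regular locus), one cannot deduce smoothness of $\calM_{\bP}|_{\Ah}$ from smoothness of $\Mpar|_{\Ah}$ purely by smooth descent along $\tforg^{\bP}_{\bI}$. Carrying out the honest obstruction-theoretic calculation, paying careful attention to how the bracket $[\cdot,\varphi]$ interacts with the filtration $\Ad_{\bI}(\calE)\subset\Ad_{\bP}(\calE)\subset\Ad(\calE)$, is the real content of the proof.
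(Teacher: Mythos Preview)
Your outline for (1) via the short exact sequence of two-term complexes is sound: since the quotient $\Ad_{\bP}(\calE)/\Ad_{\bI}(\calE)$ is supported at $x$, its contribution to $\mathbb{H}^2$ vanishes, so the obstruction space for $\calM_{\bP}$ surjects from that for $\Mpar$, which is already zero over $\Ah$ by \cite[Prop.~3.4.1]{GSI}. Note, however, that your opening observation---that $\ev_{\bP}$ lands in the $L_{\bP}$-regular locus over $\Ah$---is false (already for $\bP=\bG$ the value $\varphi(x)$ can be any element with invariant $a(x)$, regular or not); fortunately you do not use it. The paper omits the proof and simply declares it parallel to \cite[Prop.~3.4.1]{GSI}; the most literal parallel would rerun the Serre-duality obstruction calculation verbatim with $\Ad_{\bP}(\calE)$ in place of $\Ad_{\bI}(\calE)$, which works because $\frg_{\bP}\subset\frg\otimes F$ is an $\calO_F$-Lie subalgebra.

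The real gap is in (3). Your properness argument factors $f_{\bP}$ through $\tforg^{\bG}_{\bP}:\calM_{\bP}\to\MHit\times X$, but the morphism (\ref{eq:tilomPQ}) and Lemma~\ref{l:CartPQ} are only available for $\bP\subset\bQ$; hence $\tforg^{\bG}_{\bP}$ exists only when $\bP\subset\bG$. A standard parahoric whose type contains the affine node $\alpha_0$---for instance $\bP_0$ of Sec.~\ref{ss:simplerefl}---is \emph{not} contained in $\bG$, and for such $\bP$ there is no ``underlying Hitchin pair'' in $\MHit$: the section $\varphi\in\cohog{0}{X,\Ad_{\bP}(\calE)(D)}$ genuinely has a pole at $x$. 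The same issue makes your phrasing of (2) imprecise. The fix is to route both arguments through $\tforg^{\bP}_{\bI}:\Mpar\to\calM_{\bP}$, which exists for every standard $\bP$. For properness, $f^{\parab}=f_{\bP}\circ\tforg^{\bP}_{\bI}$ is proper over $\Aa$ by \cite[Prop.~3.4.1]{GSI} and $\tforg^{\bP}_{\bI}$ is proper surjective by Lemma~\ref{l:CartPQ}, whence $f_{\bP}$ is proper. For (2), one argues directly that an automorphism of a point of $\calM_{\bP}$ is a global section over $X$ of the centralizer of $\varphi$ in $\unAut(\calE)$; over the dense open $a^{-1}(\frc^{\rs}_D)$ this is a form of the torus $J_a$, and for $a\in\Aa$ its global sections are finite.
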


% small maps; this leads to another construction of affine Weyl action, at least for Waff.

\subsubsection{Small maps} In classical Springer theory, the morphisms between the various partial Grothendieck resolutions are small, cf. \cite{BM}. According to Lem. \ref{l:CartPQ}, the morphisms $\tforg\PQ$ between the parahoric Hitchin moduli stacks are base changes of the morphisms between partial Grothendieck resolutions; however, it it not clear that $\ev_{\bQ}$ is flat, so that we cannot conclude immediately that $\tforg\PQ$ is also small. In the following proposition, we prove the smallness of $\tforg\PQ$ over the locus $\calA\subset\Aa$ where the codimension estimate in \cite[Prop. 3.5.5]{GSI} holds (see \cite[Rem. 3.5.6]{GSI}).

\begin{prop}\label{p:forsmall} Let $\bP\subset\bQ$ be standard parahoric subgroups. Then
\begin{enumerate}
\item The morphism $\tforg\PQ:\calM_{\bP}|_{\calA}\to\calM_{\bQ}|_{\calA}$ is small.
\item The morphism $\nu\PQ:\calM_{\bP}|_{\calA}\to\calM_{\bQ}|_{\calA}\times_{\tcA_{\bQ}}\tcA_{\bP}$ is small and birational (i.e., a small resolution of singularities).
\end{enumerate}
\end{prop}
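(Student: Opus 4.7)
The plan is to reduce both parts to a combination of (a) the classical smallness of the partial Grothendieck resolution $\pi\PQ$ in the reductive fiber $L_\bQ$, and (b) the codimension bound $\codim_\calA(\calA_\delta)\geq\delta$ built into $\calA$ from \cite[Prop. 3.5.5]{GSI}. Throughout, the Cartesian square of Lem. \ref{l:CartPQ} is the principal technical tool, since it identifies the fibers of $\tforg\PQ$ with fibers of $\pi\PQ$.

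First I would handle birationality in (2). Restricting to the open subset of $\calA\times X$ where the evaluation $\ev_\bQ(m)$ lies in the regular semisimple part of $\unl_\bQ$, the partial Grothendieck resolution $\pi\PQ$ becomes a finite \'etale cover by $W_\bQ/W_\bP$. Via the Cartesian square of Lem. \ref{l:CartPQ}, the same is true for $\tforg\PQ$ on this open; moreover the cameral lift recorded in $\tcA_\bP$ is precisely what distinguishes the points in a $W_\bQ/W_\bP$-orbit, so $\nu\PQ$ restricts to an isomorphism on this dense open subset, proving birationality.

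Next I would prove smallness of $\nu\PQ$, which is the heart of the argument. The fiber of $\nu\PQ$ at a point $(m,\tilde a)$ is identified, through the Cartesian square, with the fiber of $\pi\PQ$ over $\ev_\bQ(m)\in[\unl_\bQ/\unL_\bQ]_D$ further constrained by the cameral lift $\tilde a$, i.e., a partial Springer-type fiber in $L_\bQ/B\PQ$. Borho--MacPherson \cite{BM} provide the classical smallness: the locus in $[\unl_\bQ/\unL_\bQ]$ where this fiber has dimension $\geq d$ has codimension $\geq 2d$. Combining this with the smoothness of $\calM_\bQ|_\calA$ on the anisotropic part (Prop. \ref{p:MPsmooth}) and the codimension estimate $\codim_\calA(\calA_\delta)\geq\delta$ on the base, I would conclude that the locus in $\calM_\bQ|_\calA\times_{\tcA_\bQ}\tcA_\bP$ over which $\nu\PQ$ has $\geq d$-dimensional fiber has codimension $>2d$. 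Part (1) then follows by factoring $\tforg\PQ=p\circ\nu\PQ$, where $p:\calM_\bQ|_\calA\times_{\tcA_\bQ}\tcA_\bP\to\calM_\bQ|_\calA$ is the base change of the finite flat morphism $q\PQ:\tcA_\bP\to\tcA_\bQ$; since $p$ is finite it preserves fiber dimensions and propagates smallness.

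The main obstacle will be the codimension bookkeeping in the middle step. The codimension estimate built into $\calA$ is only $\geq\delta$, so the extra factor of two needed for smallness must come from the classical partial Springer estimate applied fiberwise in $\frl_\bQ$ at the point $x\in X$ where the parahoric level is placed. Making this rigorous requires the product formula of Prop. \ref{eq:parprod} to identify the local fibers of $\tforg\PQ$ with the projection between affine Springer fibers $M_{\bP,x}(\gamma_{a,x})\to M_{\bQ,x}(\gamma_{a,x})$, and then matching the valuation-theoretic stratification of $\calA_\delta$ against the dimensions of these partial Springer fibers, in the spirit of the parabolic argument in \cite[Sec. 3.5]{GSI}.
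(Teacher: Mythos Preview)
Your overall plan has a genuine gap at the point you yourself flag as the ``main obstacle,'' and the paper's proof resolves it by a different (and simpler) route than the one you propose.

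The difficulty you correctly identify is that the Borho--MacPherson codimension estimate lives on $[\unl_\bQ/\unL_\bQ]_D$, but the evaluation map $\ev_\bQ$ is not known to be flat---the paper says this explicitly just before the proposition---so you cannot pull back the stratification by partial Springer fiber dimension to obtain a codimension estimate on $\calM_\bQ$. Your proposed workaround (product formula plus matching valuation-theoretic stratifications to get ``the extra factor of two'' from the classical partial Springer estimate) does not actually close the gap: the classical estimate is about codimension in $\frl_\bQ$, and you still have no mechanism to transport it to $\calM_\bQ$. Also, proving (2) first and then deducing (1) is the harder direction, since the target $\calM_\bQ\times_{\tcA_\bQ}\tcA_\bP$ need not be smooth, making dimension bookkeeping there unpleasant.

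The paper avoids $\ev_\bQ$ entirely and never invokes Borho--MacPherson. The key observation is that the fiber of $\tforg\PQ$ over a point of $\calM_\bQ$ embeds (via Lem.~\ref{l:CartPQ}) into the affine Springer fiber $M_{\bP,x}(\gamma)$, whose dimension is bounded by $\delta(a,x)$. Hence if the fiber has dimension $\geq d$ then $(a,x)\in(\calA\times X)_{\geq d}$, which has codimension $\geq d+1$ in $\calA\times X$ by \cite[Cor.~3.5.7]{GSI}. Now one uses flatness of $f_\bP$ (not of $\ev_\bQ$) from Prop.~\ref{p:MPsmooth} to conclude that the preimage $Y_{\geq d}\subset\calM_\bP$ has codimension $\geq d+1$; since $Y_{\geq d}\to Z_{\geq d}$ has fibers of dimension $\geq d$, one gets $\dim Z_{\geq d}\leq\dim\calM_\bQ-2d-1$, which is exactly smallness of $\tforg\PQ$. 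Part (2) is then deduced from (1): since $q\PQ$ is finite and $\tforg\PQ$ is small, $\nu\PQ$ is automatically small; birationality follows as you described. So the logical order is (1)$\Rightarrow$(2), the ``extra factor of $d$'' comes from the fiber dimension of $Y_{\geq d}\to Z_{\geq d}$ rather than from any classical Springer estimate, and the crucial flatness input is $f_\bP$, not $\ev_\bQ$.
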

\begin{proof} Let us restrict all stacks over $\AHit$ to the open subset $\calA$ without changing notations.

(1) For each integer $d\geq1$, let $Z_{\geq d}$ be the closed subschemes of $\calM_{\bQ}$ over which the fibers of $\tforg\PQ$ have dimension $\geq d$. By Lem. \ref{l:CartPQ}, the fiber of $\tforg\PQ$ over a point $(x,\calE,\tau_x\modo\bQ,\varphi)\in\calM_{\bQ}(k)$ is the partial Springer fiber corresponding to the conjugacy class $\varphi(x)\in[\frl_{\bQ}/L_{\bQ}]$, and is contained in the affine Springer fiber $M_{\bP,x}(\gamma)$ (here $\gamma\in\frg(\hatO_x)$ is a representative of $\varphi$ at $x$ after choosing a trivialization of $\calE$ over $\disk_x$). Therefore, if $(x,\calE,\tau_x\modo\bQ,\varphi)\in Z_{\geq d}$, then
\begin{equation*}
\delta(a,x)=\dim M^{\parab}_{x}(\gamma)\geq\dim M_{\bQ,x}(\gamma)\geq\tforg^{\bQ,-1}_{\bP}(x,\calE,\tau_x\modo\bQ,\varphi)\geq d
\end{equation*}
where $a=f^{\Hit}(\calE,\varphi)\in\calA(k)$. Therefore the image of $Z_{\geq d}\to\calA\times X$ lies in $(\calA\times X)_{\geq d}$, which has codimension $\geq d+1$ by \cite[Cor. 3.5.7]{GSI}. 

On the other hand, let $Y_{\geq d}=\tforg^{\bQ,-1}_{\bP}(Z_{\geq d})$, which maps to $(\calA\times X)_{\geq d}$ by the above discussion. Since $f_{\bP}:\calM_{\bP}\to\calA\times X$ is flat by Prop. \ref{p:MPsmooth}(3), we have
\begin{equation*}
\codim_{\calM_{\bP}}(Y_{\geq d})\geq\codim_{\calA\times X}((\calA\times X)_{\geq d})\geq d+1.
\end{equation*}
Therefore
\begin{equation*}
\dim(Z_{\geq d})\leq\dim(Y_{\geq d})-d\leq\dim(\calM_{\bP})-(d+1)-d=\dim(\calM_{\bQ})-2d-1.
\end{equation*}
This proves the smallness of $\tforg\PQ$ (we know $\tforg\PQ$ is surjective by Lem. \ref{l:CartPQ}).

(2) The commutative diagram
\begin{equation*}
\xymatrix{[\unb\PQ/\unB\PQ]\ar[d]^{\pi\PQ}\ar[r] & [\unl_{\bP}/\unL_{\bP}]\ar[r]^{\chi_{\bP}} & \frc_{\bP}\ar[d]\\
[\unl_{\bQ}/\unL_{\bQ}]\ar[rr]^{\chi_{\bQ}} && \frc_{\bQ}}
\end{equation*}
is Cartesian over $\frc^{\rs}_{\bQ}$. Therefore by Lem. \ref{l:CartPQ}, the morphism $\nu\PQ$ is also an isomorphism over $(\calA\times X)^{\rs}$, hence birational. Since $q\PQ:\tcA_{\bP}\to\tcA_{\bQ}$ is finite and $\tforg\PQ$ is small by (1), we conclude that $\nu\PQ$ is also small. This proves (2).
\end{proof}

\subsection{Examples in classical groups}
In this subsection, we describe the fibers of parahoric Hitchin fibrations for classical groups of type A, B and C.

\begin{exam} Let $G=\SL(n)$. According to Example \ref{ex:BunA}, a standard parahoric subgroup $\bP\subset\SL(n,F)$ corresponds to a sequence of integers
\begin{equation*}
\undi=(0\leq i_0<\cdots<i_m<n), m\geq0.
\end{equation*}
The Hitchin base is
\begin{equation*}
\AHit=\bigoplus_{i=2}^n\cohog{0}{X,\calO_X(iD)}.
\end{equation*}
For $a=(a_2,\cdots,a_n)\in\Ah(k)$ (where $a_i\in\cohog{0}{X,\calO_X(iD)}$), define the spectral curve $Y_a$ as in \cite[Example 3.1.10]{GSI}. Fix a point $x\in X$. Then the parahoric Hitchin fiber $\calM_{\bP,a,x}$ classifies the data
\begin{equation*}
(\calF_{i_0}\supset\calF_{i_1}\supset\cdots\supset\calF_{i_m}\supset\calF_{i_0}(-x),\delta)
\end{equation*}
where $\calF_{i_j}\in\overline{\stPic}(Y_a)$ such that $\calF_{i_0}/\calF_{i_j}$ has length $i_0-i_j$ for $j=0,1,\cdots,m$, and $\delta$ is an isomorphism $\det(p_{a,*}\calF_{i_0})\cong\calO_X(-i_0x)$.
\end{exam}

\begin{exam}\label{ex:MBC} Let $G=\SO(2n+1)$ (resp. $G=\Sp(2n)$). According to Example \ref{ex:BunBC}, a standard parahoric subgroup $\bP$ corresponds to a sequence of integers
\begin{equation*}
\undi=(0\leq i_0<\cdots<i_m\leq n), m\geq0.
\end{equation*}
The Hitchin base is
\begin{equation*}
\AHit=\bigoplus_{i=1}^n\cohog{0}{X,\calO_X(2iD)}.
\end{equation*}
For $a=(a_1,\cdots,a_n)\in\Ah(k)$ (where $a_i\in\cohog{0}{X,\calO_X(2iD)}$), we have the spectral curve $Y_a$ in the total space of $\calO_X(D)$ defined by the equation
\begin{equation*}
t\sum_{i=0}^{n}a_it^{2(n-i)}=0;\hspace{1cm}\left(\textup{resp. }\sum_{i=0}^{n}a_it^{2(n-i)}=0\right)
\end{equation*}
where $a_0=1$. The curve $Y_a$ is equipped with the involution $\tau$ sending $t$ to $-t$. Fix a point $x\in X$. Then the parahoric Hitchin fiber $\calM_{\bP,a,x}$ classifies the data
\begin{equation*}
(\calF_{i_0}\supset\cdots\supset\calF_{i_m}\supset\calF^{\bot}_{i_m}(-x)\supset\cdots\supset\calF^{\bot}_{i_0}(-x)\supset\calF_{i_0}(-x),\sigma)
\end{equation*}
where
\begin{itemize}
\item $\calF_{i_j}\in\overline{\stPic}(Y_a)$ such that $\calF_{i_0}/\calF_{i_j}$ has length $i_0-i_j$ for $j=0,1,\cdots,m$;
\item $\sigma:\tau^*\calF_{i_0}\to\calF_{i_0}^\vee$ is a map of coherent sheaves on $Y_a$ such that $\tau^*\sigma=\sigma^\vee$ (resp. $\tau^*\sigma=-\sigma^\vee$). Here $(-)^\vee$ means the relative Grothendieck-Serre duality for coherent sheaves on $Y_a$ with respect to the $X$;
\item For $j=0,1,\cdots,m$, define $\calF_{i_j}^\bot:=(\sigma(\tau^*\calF_{i_j}))^\vee$, which naturally contains $\calF_{i_0}$;
\item $\coker(\sigma)$ has length $i_0$.
\end{itemize}
\end{exam}

% daha action

\section{The graded double affine Hecke algebra action}\label{s:DAHA}
In this section, we enrich the affine Weyl group action constructed in \cite[Sec. 4]{GSI} into an action of the graded double affine Hecke algebra (DAHA). We also generalize the graded DAHA action to the case of parahoric Hitchin stacks. To save notations, {\em we assume that $G$ is almost simple throughout this section}.

%------------------------------------------

% Ch: double affine

%------------------------------------------

% eqn clear

\subsection{The Kac-Moody group}\label{ss:KM}
In this subsection, we recall the construction of the Kac-Moody group associated to the loop group $G((t))$.

\subsubsection{The determinant line bundle} For any $k$-algebra $R$, we have an additive functor 
\begin{equation*}
\det:D_{perf}(R)\to\Pic(R) 
\end{equation*}
Here $D_{perf}(R)$ is the derived category of perfect complexes of $R$-modules and $\Pic(R)$ is the Picard category of invertible $R$-modules. The functor $\det$ sends a projective $R$-module $M$ of finite rank $m$ the invertible $R$-module $\wedge^{m}M$.

We may define a line bundle $\Lcan$ on $G((t))$, which is pulled back from $\Grass_G=G((t))/G[[t]]$. For any $R[[t]]$-submodule $\Xi$ of $\frg\otimes_{k}R((t))$ which is commensurable with the standard $R[[t]]$-submodule $\Xi_0:=\frg\otimes_kR[[t]]$ (i.e., $t^N\Xi_0\subset\Xi\subset t^{-N}\Xi_0$ for some $N\in\ZZ_{\geq0}$ and $t^{-N}\Xi_0/\Xi$ and $\Xi/t^N\Xi_0$ are both projective $R$-modules), define the {\em relative determinant line} of $\Xi$ with respect to $\Xi_0$ to be:
\begin{equation*}
\det(\Xi:\Xi_0)=(\det(\Xi/\Xi\cap\Xi_0))\otimes_R(\det(\Xi_0/\Xi\cap\Xi_0))^{\otimes-1}.
\end{equation*}
For any $g\in G(R((t)))$, consider its action on $\frg\otimes_{k}R((t))$ by the adjoint representation. The functor $\Lcan$ then sends $g$ to the invertible $R$-module $\det(\Ad(g)\Xi_0:\Xi_0)$. Since $\Ad(g)\Xi_0$ only depends on the image of $g$ in $\Grass_G(R)$, the line bundle $\Lcan$ is descends to $\Grass_G$.

Let $\hatgloop=\rho_{\Lcan}\to G((t))$ be the total space of the $\GG_m$-torsor associated to the line bundle $\Lcan$. The set $\hatgloop(R)$ consists of pairs $(g,\gamma)$ where $g\in G(R((t)))$ and $\gamma$ is an $R$-linear isomorphism $R\isom\det(\Ad(g)\Xi_0:\Xi_0)$. There is a natural group structure on $\hatgloop$: for $(g_1,\gamma_1)$ and $(g_2,\gamma_2)\in\hatgloop$, their product $(g_1,\gamma_1)\cdot(g_2,\gamma_2)$ is $(g_1g_2,\gamma)$, where $\gamma$ is the isomorphism
\begin{eqnarray*}
\gamma_1\otimes\Ad(g_1)(\gamma_2):R\otimes_RR&\isom&\det(\Ad(g_1)\Xi_0:\Xi_0)\otimes_R\det(\Ad(g_1g_2)\Xi_0:\Ad(g_1)\Xi_0)\\
&=&\det(\Ad(g_1g_2)\Xi_0:\Xi_0).
\end{eqnarray*}
The group $\hatgloop$ is in fact a central extension
\begin{equation}\label{eq:cenext}
1\to\gcen\to\hatgloop\to G((t))\to1.
\end{equation}
Here we use $\gcen$ to denote the one-dimensional central torus of $\hatgloop$, which can be identified as the fiber of $\hatgloop$ over the identity element $1\in G((t))$. When $g\in G[[t]]$, we have $\Ad(g)\Xi_0=\Xi_0$, hence a canonical trivialization of $\det(\Ad(g)\Xi_0:\Xi_0)$. This gives a canonical splitting of the central extension (\ref{eq:cenext}) over the subgroup $G[[t]]\subset G((t))$.

\subsubsection{The completed Kac-Moody group} From the construction it is clear that the action of $\Aut_{\calO}$ on $G((t))$ lifts to an action on $\hatgloop$, hence we can form the semi-direct product
\begin{equation}\label{eq:calG}
\calG:=\hatgloop\rtimes\Aut_{\calO}.
\end{equation}
We call this object the {\em (complete) Kac-Moody group} associated to the loop group $G((t))$.
 
Let $\bI^u\subset\bI$ be the unipotent radical and $G^u_{\bI}\subset G_{\bI}$ be the corresponding pro-unipotent radical. Let $\Aut^u_{\calO}\subset\Aut_{\calO}$ be the pro-unipotent radical. Consider the subgroups
\begin{eqnarray*}
\calG_{\bI}&:=&\gcen\times G_{\bI}\rtimes\Aut_{\calO}\subset\calG;\\
\calG^u_{\bI}&:=&G^u_{\bI}\rtimes\Aut^u_{\calO}\subset\calG_{\bI}.
\end{eqnarray*}
We define the {\em universal Cartan torus} for the Kac-Moody group $\calG$ to be
\begin{equation}\label{eq:deftilT}
\tilT:=\calG_{\bI}/\calG^u_{\bI}=\gcen\times T\times\grot.
\end{equation}
We will denote the canonical generators of $\xcoch(\gcen),\xch(\gcen),\xcoch(\grot)$ and $\xch(\grot)$ by $K_{\can},\Lambda_{\can},d$ and $\delta$.  Let
\begin{equation*}
\langle\cdot,\cdot\rangle:\xch(\tilT)\times\xcoch(\tilT)\to\ZZ
\end{equation*}
be the natural pairing.

Let $(\cdot|\cdot)_{\can}$ be the Killing form on $\xcoch(T)$:
\begin{equation}\label{eq:cankill}
(x|y)_{\can}:=\sum_{\alpha\in\Phi}\jiao{\alpha,x}\jiao{\alpha,y}.
\end{equation}
where $\Phi\subset\xch(T)$ is the set of roots of $G$. Let $\theta\in\Phi$ be highest root and $\theta^\vee\in\Phi^\vee$ be the corresponding coroot. Let $\rho$ be half of the sum of the positive roots in $\Phi$. Let $h^\vee$ be the dual Coxeter number of $\frg$, which is one plus the sum of coefficients of $\theta^\vee$ written as a linear combination of simple coroots. We have the following fact:

\begin{lemma}\label{l:dCox}
$\dfrac{1}{2}(\theta^\vee|\theta^\vee)_{\can}=2(\jiao{\rho,\theta^\vee}+1)=2h^\vee.$
\end{lemma}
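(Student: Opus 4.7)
\textbf{Proof proposal for Lemma \ref{l:dCox}.} The plan is to treat the two equalities separately. The second equality, $2(\langle\rho,\theta^\vee\rangle+1)=2h^\vee$, is immediate from the definitions: writing $\theta^\vee=\sum_i a_i^\vee \alpha_i^\vee$ in the basis of simple coroots, the identity $\langle\rho,\alpha_i^\vee\rangle=1$ for every simple coroot gives $\langle\rho,\theta^\vee\rangle=\sum_i a_i^\vee=h^\vee-1$.

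For the first equality, I would exploit $\theta$-strings to show that the function $\alpha\mapsto\langle\alpha,\theta^\vee\rangle$ on $\Phi^+$ only takes the values $0$, $1$, $2$, with $2$ occurring exactly for $\alpha=\theta$. Concretely, for $\alpha\in\Phi^+\setminus\{\theta\}$ consider the $\theta$-string through $\alpha$, namely $\alpha-p\theta,\dots,\alpha,\dots,\alpha+q\theta$, and recall $p-q=\langle\alpha,\theta^\vee\rangle$. Since $\theta$ is the unique highest root, every positive $\alpha\ne\theta$ has $\mathrm{ht}(\alpha)\le h-2$ (where $h-1=\mathrm{ht}(\theta)$), and then $\alpha+\theta$ has height exceeding $h-1$, forcing $q=0$. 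Similarly, if $p\ge2$ were possible, the root $\alpha-2\theta$ would have height $\le(h-2)-2(h-1)=-h$, contradicting the fact that root heights lie in $\{-(h-1),\dots,h-1\}$. Hence $\langle\alpha,\theta^\vee\rangle\in\{0,1\}$ for $\alpha\in\Phi^+\setminus\{\theta\}$, and equals $2$ for $\alpha=\theta$.

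Let $N_1$ be the number of $\alpha\in\Phi^+$ with $\langle\alpha,\theta^\vee\rangle=1$. Then
\begin{equation*}
\sum_{\alpha\in\Phi^+}\langle\alpha,\theta^\vee\rangle=2+N_1
\quad\text{and}\quad
\sum_{\alpha\in\Phi^+}\langle\alpha,\theta^\vee\rangle^2=4+N_1.
\end{equation*}
The first sum equals $\langle 2\rho,\theta^\vee\rangle=2\langle\rho,\theta^\vee\rangle$, so $N_1=2\langle\rho,\theta^\vee\rangle-2$. Substituting gives
\begin{equation*}
\tfrac12(\theta^\vee|\theta^\vee)_{\can}=\sum_{\alpha\in\Phi^+}\langle\alpha,\theta^\vee\rangle^2=4+N_1=2\langle\rho,\theta^\vee\rangle+2=2(\langle\rho,\theta^\vee\rangle+1),
\end{equation*}
which is the first equality, completing the proof when combined with the first paragraph.

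The main obstacle is the bound $\langle\alpha,\theta^\vee\rangle\le 1$ for $\alpha\in\Phi^+\setminus\{\theta\}$; everything else is bookkeeping. The height argument ruling out $p\ge 2$ is clean because $G$ is assumed almost simple, so the highest root is unique and the root-height interval $[-(h-1),h-1]$ is tight.
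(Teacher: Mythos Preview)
Your proof is correct and follows essentially the same route as the paper: both arguments reduce to the key fact that $\langle\alpha,\theta^\vee\rangle\in\{0,1\}$ for $\alpha\in\Phi^+\setminus\{\theta\}$, then use $\langle\alpha,\theta^\vee\rangle^2=\langle\alpha,\theta^\vee\rangle$ on those terms together with $\langle 2\rho,\theta^\vee\rangle=\sum_{\alpha\in\Phi^+}\langle\alpha,\theta^\vee\rangle$. The only difference is that the paper cites Bourbaki \cite[Chap.~VI, 1.8, Prop.~25(iv)]{Bour} for this fact, whereas you supply a self-contained $\theta$-string height argument; your derivation of the second equality from $\langle\rho,\alpha_i^\vee\rangle=1$ is identical to the paper's.
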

\begin{proof}
Since $\theta$ is the highest root, for any positive root $\alpha\neq\theta$, we have $\jiao{\alpha,\theta^\vee}=0$ or $1$ (see \cite[Chap VI, 1.8, Prop. 25(iv)]{Bour}). Hence $\jiao{\alpha,\theta^\vee}^2=\jiao{\alpha,\theta^\vee}$ for $\alpha\in\Phi^+-\{\theta\}$. Therefore
\begin{eqnarray*}
\dfrac{1}{2}(\theta^\vee|\theta^\vee)_{\can}=\sum_{\alpha\in\Phi^+}\jiao{\alpha,\theta^\vee}^2&=&\jiao{\theta,\theta^\vee}^2+\sum_{\alpha\in\Phi^+-\{\theta\}}\jiao{\alpha,\theta^\vee}\\
&=&4+\jiao{2\rho-\theta,\theta^\vee}=2(\jiao{\rho,\theta^\vee}+1).
\end{eqnarray*}
Since $\jiao{\rho,\alpha^\vee_i}=1$ for every simple coroot $\alpha^\vee_i\in\Phi$, we get
\begin{equation*}
\jiao{\rho,\theta^\vee}+1=h^\vee.
\qedhere
\end{equation*}
\end{proof}

\subsubsection{The $\tilW$-action on $\tilT$} For any section $\iota$ of the quotient $B\to T$, we can consider the normalizer $\calN$ of $\gcen\times\iota(T)\times\grot$ in $\hatgloop\rtimes\grot$, and we have a canonical isomorphism $\calN/(\gcen\times\iota(T)[[t]]\times\grot)\isom\tilW$. The conjugation action of $\calN$ on $\gcen\times\iota(T)\times\grot$ induces an action of $\tilW$ on $\tilT$, which is independent of the choice of the section $\iota$. Therefore, we get canonical actions of $\tilW$ on $\xcoch(\tilT)$ and $\xch(\tilT)$, denoted by $\eta\mapsto\leftexp{\tilw}{\eta}$ and $\xi\mapsto\leftexp{\tilw}{\xi}$. The natural pairing $\jiao{\cdot,\cdot}$ is invariant under $\tilW$: i.e.,  $\jiao{\xi,\eta}=\jiao{\leftexp{\tilw}{\xi},\leftexp{\tilw}{\eta}}$.

\begin{lemma}\label{l:Kacact}
The actions of $\tilW$ on $\xcoch(\tilT)$ and $\xch(\tilT)$ are given by:
\begin{enumerate}
\item $w\in W$ fixes $K_{\can},d,\Lambda_{\can}$ and $\delta$, and acts in the usual way on $\xcoch(T)$ and $\xch(T)$;
\item $\lambda\in\xcoch(T)$ acts on $\eta\in\xcoch(\tilT)$ and $\xi\in\xch(\tilT)$ by
\begin{eqnarray*}
\leftexp{\lambda}{\eta}=\eta-\jiao{\delta,\eta}\lambda+\left((\eta|\lambda)_{\can}-\dfrac{1}{2}(\lambda|\lambda)_{\can}\jiao{\delta,\eta}\right)K_{\can};\\
\leftexp{\lambda}{\xi}=\xi-\jiao{\xi,K_{\can}}\lambda^*+\left(\jiao{\xi,\lambda}-\dfrac{1}{2}(\lambda|\lambda)_{\can}\jiao{\xi,K_{\can}}\right)\delta.
\end{eqnarray*}
here $\lambda\mapsto\lambda^*$ is the isomorphism $\xcoch(T)_{\QQ}\isom\xch(T)_{\QQ}$ induced by the form $(\cdot|\cdot)_{\can}$. 
\end{enumerate}
\end{lemma}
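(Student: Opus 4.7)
The approach is to compute the conjugation action of representatives of $\tilW$ on the universal Cartan $\tilT=\gcen\times T\times\grot$ directly, using the explicit construction of the Kac--Moody group $\calG=\hatgloop\rtimes\Aut_{\calO}$. Part (1) is almost immediate: the finite Weyl group $W = N_G(T)/T$ lifts to constant loops in $G(k) \subset G[[t]]$, and any constant loop $w$ preserves the standard lattice $\Xi_0 = \frg\otimes_k\calO_F$ under the adjoint action (since $\Xi_0$ is $G$-invariant). Hence $\det(\Ad(w)\Xi_0:\Xi_0)$ carries a canonical multiplication-compatible trivialization, so $W$ lies in the canonical splitting of (\ref{eq:cenext}) and acts trivially on $\gcen$. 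Constant loops also commute with $\Aut_{\calO}$, so $W$ fixes $\grot$, and hence fixes $d$, $\delta$, $K_{\can}$ and $\Lambda_{\can}$; its action on $T\subset\tilT$ is the usual one, inherited from $N_G(T)\hookrightarrow G(k)$.

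For Part (2), the plan is to represent $\lambda\in\xcoch(T)\subset\tilW$ by the loop $\lambda(t)\in T(F)\subset G((t))$ and take an arbitrary lift to $\hatgloop$; the conjugation action on $\tilT$ is independent of this lift because $\gcen$ is central. The action on $T\subset\tilT$ is trivial since $T(F)$ is abelian. For the $\grot$-component, the identity $\sigma_s(\lambda(t)) = \lambda(st) = \lambda(s)\cdot\lambda(t)$, which uses only that $\lambda\colon\GG_m\to T$ is a group homomorphism, gives
\begin{equation*}
\lambda(t)\cdot\sigma_s\cdot\lambda(t)^{-1} = \lambda(s)^{-1}\sigma_s \pmod{\gcen}
\end{equation*}
inside $(T(\calO_F)\rtimes\grot)\subset\calG_{\bI}$. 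Projecting to $\tilT$ via $T(\calO_F)\twoheadrightarrow T$ (evaluation at $t=0$) sends the constant $\lambda(s)^{-1}$ to itself, yielding the translation term $-\jiao{\delta,\eta}\lambda$ in $\leftexp{\lambda}{\eta}$.

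The remaining, most delicate step is to identify the $K_{\can}$-coefficients, which measure the $2$-cocycle defining the central extension $\hatgloop$. Using the root-space decomposition $\frg\otimes F = \frt\otimes F \oplus \bigoplus_{\alpha\in\Phi}\frg_\alpha\otimes F$ and the fact that $\Ad(\lambda(t))$ acts on $\frg_\alpha\otimes F$ by $t^{\jiao{\alpha,\lambda}}$, one can compute $\det(\Ad(\lambda(t))\Xi_0:\Xi_0)$ root-space by root-space; comparing with the defining formula (\ref{eq:cankill}) of the canonical form should produce both the cross term $(\eta|\lambda)_{\can}K_{\can}$ (from how lifts of $\lambda(t)$ commute with lifts of $T$) and the quadratic correction $-\tfrac12(\lambda|\lambda)_{\can}\jiao{\delta,\eta}K_{\can}$ (from the interaction with the rotation $\sigma_s$). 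The formula for $\leftexp{\lambda}{\xi}$ on $\xch(\tilT)$ then follows by dualizing through the pairing $\jiao{\cdot,\cdot}$. I expect the main obstacle to be the sign-and-normalization bookkeeping in this cocycle computation; these are the standard Kac--Moody formulas (cf.\ \cite[Chapter 6]{Kac}), but require a careful translation between the geometric conventions here (built from $\Lcan$ via relative determinants) and the algebraic ones used there.
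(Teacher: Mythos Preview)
Your proposal is correct and follows essentially the same route as the paper: lift $\lambda$ to the loop $t^{\lambda}\in T(F)$, compute the conjugation on $T$ and $\grot$ directly (giving the $-\jiao{\delta,\eta}\lambda$ term), and extract the $\gcen$-coordinate by computing the $(T\times\grot)$-weight of the relative determinant line $\det(\Ad(t^{-\lambda})\Xi_0:\Xi_0)$ via the root-space decomposition. The paper carries out this last determinant computation explicitly rather than leaving it as a plan, but the strategy and all the ingredients are identical to yours.
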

\begin{proof}
(1) is clear from the fact that $W\subset G(k)\subset G(\calO_F)$.

To check (2), we choose a maximal torus in $B$ and call it $T$. We write any element in $\tilT$ as $(c,x,\sigma)$ for $c\in\gcen,x\in T$ and $\sigma\in\grot$. We need to check that
\begin{eqnarray}\notag
\Ad(t^\lambda)(c,1,1)=(c,1,1);\\
\label{eq:commt}\Ad(t^\lambda)(1,x,1)=(\prod_{\alpha\in R}x^{\jiao{\alpha,\lambda}\alpha},x,1);\\
\label{eq:commrot}\Ad(t^\lambda)(1,1,\sigma)=(\sigma^{-\frac{1}{2}(\lambda|\lambda)_{\can}},\sigma^{-\lambda},\sigma)
\end{eqnarray}
Here, $x^\alpha$ is the image of $x$ under $\alpha:T\to\GG_m$; similarly $\sigma^{-\lambda}$ is the image of $\sigma$ under $-\lambda:\GG_m\to T$.

To verify the $\gcen$-coordinates in (\ref{eq:commt}) and (\ref{eq:commrot}), notice that the $\gcen$-coordinate of $\Ad(t^\lambda)(1,x,\sigma)$ is the same as the (scalar) action of $(x,\sigma)\in T\times\grot$ on the line $\det(\Ad(t^{-\lambda})\Xi_0:\Xi_0)$, via the adjoint representation. In terms of the root space decomposition $\frg=\frt\oplus(\oplus_{\alpha\in\Phi}\frg_\alpha)$, we have
\begin{equation*}
\det(\Ad(t^{-\lambda})\Xi_0:\Xi_0)=\left(\bigotimes_{\jiao{\alpha,\lambda}>0}\bigotimes_{i=-\jiao{\alpha,\lambda}}^{-1}t^i\frg_\alpha\right)\otimes\left(\bigotimes_{\jiao{\alpha,\lambda}<0}\bigotimes_{i=0}^{-\jiao{\alpha,\lambda}-1}t^i\frg_{\alpha}\right)^{\otimes-1}
\end{equation*}
Therefore, as a $T\times\grot$-module, $\det(\Ad(t^{-\lambda})\Xi_0:\Xi_0)$ has weight
\begin{eqnarray*}
\left(\sum_{\alpha\in\Phi}\jiao{\alpha,\lambda}\alpha,-\sum_{\alpha\in\Phi}\frac{1}{2}\jiao{\alpha,\lambda}^2-\frac{1}{2}\jiao{\alpha,\lambda}\right)=\left(\sum_{\alpha\in\Phi}\jiao{\alpha,\lambda}\alpha,-\frac{1}{2}(\lambda|\lambda)_{\can}\right).
\qedhere
\end{eqnarray*}
\end{proof}

\begin{remark}\label{rm:norm} Let
\begin{equation*}
K:=2h^\vee K_{\can};\hspace{1cm}\Lambda_0:=\dfrac{1}{2h^\vee}\Lambda_{\can}.
\end{equation*}
We see from Lem. \ref{l:Kacact} that our definitions of $K,\Lambda_0,d$ and $\delta$ are consistent (up to changing $\lambda$ to $-\lambda$) with the notation for Kac-Moody algebras in \cite[6.5]{Kac}. The simple roots of the complete Kac-Moody group $\calG$ are $\{\alpha_0=\delta-\theta,\alpha_1,\cdots,\alpha_n\}\subset\xch(T\times\grot)$; the simple coroots are $\{\alpha_0^\vee=K-\theta^\vee,\alpha_1^\vee,\cdots,\alpha_n^\vee\}\subset\xcoch(\gcen\times T)$.
\end{remark}

\subsection{Line bundles on $\Bunpar_G$}\label{ss:linebd}
Let $\canb$ be the canonical bundle of $\Bun_G$. Since the tangent complex at a point $\calE\in\Bun_G(R)$ is $\bR\Gamma(\XR,\Ad(\calE))[1]$, the value of the canonical bundle $\canb$ at the point $\calE$ is the invertible $R$-module $\det\bR\Gamma(\XR,\Ad(\calE))$. Let $\hatBun_{\infty}\to\tilBun_{\infty}$ be the total space of the $\GG_m$-torsor associated to the pull-back of $\canb$. More concretely, for any $k$-algebra $R$, $\hatBun_{\infty}(R)$ classifies tuples $(x,\alpha,\calE,\tau_x,\epsilon)$ where $(x,\alpha,\calE,\tau_x)\in\tilBun_{\infty}(R)$ and $\epsilon$ is an $R$-linear isomorphism $R\isom\det\bR\Gamma(\XR,\Ad(\calE))$.

\begin{cons}\label{cons:hataction}
There is a natural action of $\calG$ on $\hatBun_{\infty}$, lifting the action of $G((t))\rtimes\Aut_{\calO}$ on $\tilBun_{\infty}$ in Construction \ref{cons:gloopact}. In fact, for $(x,\alpha,\calE,\tau_x)\in\tilBun_{\infty}(R)$ and $g\in G(R((t)))$, the $G$-torsor $\calE^g$ is obtained by gluing the trivial $G$-torsor on $\disk_x\cong\Spec R[[t]]$ (using $\alpha$) with $\calE|_{\XR-\Gamma(x)}$ via the identification $\tau_x\circ g$. Hence $\Ad(\calE^g)$ is obtained by gluing $\frg(\hatO_x)\cong\frg\otimes_kR[[t]]=\Xi_0$ with $\Ad(\calE)|_{\XR-\Gamma(x)}$ via the identification $\Ad(\tau_x)\circ\Ad(g)$. In other words, $\Ad(\calE^g)$ is obtained by gluing $\Ad(g)\Xi_0$ with $\Ad(\calE)|_{\XR-\Gamma(x)}$ via $\Ad(\tau_x)$. Thus we have a canonical isomorphism of invertible $R$-modules
\begin{equation*}
\left(\det\bR\Gamma(\XR,\Ad(\calE^g))\right)\otimes_R\left(\det\bR\Gamma(\XR,\Ad(\calE))\right)^{\otimes-1}\cong\det(\Ad(g)\Xi_0:\Xi_0).
\end{equation*}
Therefore, for trivializations $\epsilon:R\isom\det\bR\Gamma(\XR,\Ad(\calE))$ and $\gamma:R\isom\det(\Ad(g)\Xi_0:\Xi_0)$, $\epsilon\otimes\gamma$ defines a trivialization of $\det\bR\Gamma(\XR,\Ad(\calE^g))$. We then define the action of $\hatg=(g,\gamma,\sigma)\in(\hatgloop\rtimes\Aut_{\calO})(R)=\calG(R)$ on $(x,\alpha,\calE,\tau_x,\epsilon)\in\hatBun_{\infty}(R)$ by
\begin{equation*}
R_{\hatg}(x,\alpha,\calE,\tau_x,\epsilon)=(x,\alpha\circ\sigma,\calE^g,\tau_x^g,\epsilon\otimes\gamma).
\end{equation*}
\end{cons}

\begin{cons}\label{cons:linebd}
We define a natural $\tilT$-torsor $\calL^{\tilT}$ on $\Bunpar_G$, hence line bundles $\calL(\xi)$ for $\xi\in\xch(\tilT)$. Consider the quotient $\calL^{\tilT}=\hatBun_{\infty}/\calG^u_{\bI}$ (as a fpqc sheaf). The right translation of $\calG_{\bI}$ on $\hatBun_{\infty}$ descends to a right action of $\tilT=\calG_{\bI}/\calG^u_{\bI}$ on $\calL^{\tilT}$, and realizes the natural projection $\calL^{\tilT}\to\Bunpar_G$ as a $\tilT$-torsor. For each character $\xi\in\xch(\tilT)$, we define $\calL(\xi)$ to be the line bundle on $\Bunpar_G$ associated to $\calL^{\tilT}$ and the character $\xi$.
\end{cons}

We can easily identify the line bundles $\calL(\xi)$ for various $\xi\in\xch(\tilT)$:

\begin{lemma}\label{l:easyline}
\begin{enumerate}
\item []
\item $\calL(\Lambda_{\can})$ is the pull-back of $\canb$ via the forgetful morphism $\Bunpar_G\to\Bun_G$;
\item For $\xi\in\xch(T)$, the value of the line bundle $\calL(\xi)$ at a point $(x,\calE,\calE^B_x)\in\Bunpar_G(R)$ is the invertible $R$-module associated to the $B$-torsor $\calE^B_x$ over $\Gamma(x)\cong\Spec R$ and the character $B\to T\xrightarrow{\xi}\GG_m$;
\item $\calL(\delta)$ is isomorphic to the pull-back of $\omega_X$ via the morphism $\Bunpar_G\to X$ (cf. Lem. \ref{c:canX}).
\end{enumerate}
\end{lemma}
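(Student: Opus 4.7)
The three statements are three identifications of the line bundle $\calL(\xi)$ for different characters $\xi \in \xch(\tilT) = \xch(\gcen)\oplus\xch(T)\oplus\xch(\grot)$. For each, the strategy is to push out the $\tilT$-torsor $\calL^{\tilT} = \hatBun_{\infty}/\calG^u_{\bI}$ along the corresponding one-dimensional quotient $\tilT \to \GG_m$ and identify the resulting $\GG_m$-torsor on $\Bunpar_G$ directly from the definitions. Since the preimage of $\calG_{\bI}^u$ and the various kernels assemble into identifiable subgroups of $\calG_{\bI} = \gcen \times G_{\bI} \rtimes \Aut_{\calO}$, one can take the quotients factor by factor.

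For (1), the plan is to observe that by Construction \ref{cons:hataction}, $\hatBun_{\infty} \to \tilBun_{\infty}$ is, as a $\gcen = \GG_m$-torsor, the pullback of $\rho_{\canb}$ from $\Bun_G$, and that the lifted $\calG$-action extends the natural $G_{\bI}\rtimes\Aut_{\calO}$-action on $\tilBun_{\infty}$ in a way compatible with this $\GG_m$-structure (precisely, this is the content of the formula for the lift using the canonical isomorphism $\det\bR\Gamma(\Ad(\calE^g))\otimes\det\bR\Gamma(\Ad(\calE))^{-1}\cong\det(\Ad(g)\Xi_0:\Xi_0)$). The character $\Lambda_{\can}:\tilT\to\gcen$ is the identity on $\gcen$ and trivial on $T\times\grot$, so pushing $\calL^{\tilT}$ out along $\Lambda_{\can}$ amounts to quotienting $\hatBun_{\infty}$ by $G_{\bI}\rtimes\Aut_{\calO}$, which by the above gives the pullback of $\rho_{\canb}$ to $\Bunpar_G$.

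For (2), pushing $\calL^{\tilT}$ along $\tilT\to T$ is equivalent to taking the quotient $\tilBun_{\infty}/(G^u_{\bI}\rtimes\Aut_{\calO})$, having first killed $\gcen$ to reduce $\hatBun_{\infty}$ to $\tilBun_{\infty}$. The plan is to identify the resulting $T$-torsor pointwise: a class $\tau_x \modo G^u_{\bI}$ determines, after restriction to $\Gamma(x)$, the $B$-reduction $\calE^B_x$ together with a trivialization of $\calE^B_x\twtimes{B}T$, since the Iwahori $G^u_{\bI}$ reduces modulo $t$ to $B^u$; the further $\Aut_{\calO}$-quotient removes the coordinate $\alpha$ without affecting this $T$-structure (because $\Aut_{\calO}$ acts trivially on the fiber of $\calE^B_x\twtimes{B}T$ at $\Gamma(x)$). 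Thus the pushout is the $T$-torsor associated to the universal $B$-reduction on $\Bunpar_G$, and $\calL(\xi)$ is the associated line bundle via $\xi$.

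For (3), the plan is to appeal to Corollary \ref{c:canX}. The character $\delta:\tilT\to\grot$ corresponds to quotienting $\hatBun_{\infty}$ by $\gcen \times G_{\bI}\rtimes\Aut^u_{\calO}$; taking the $\gcen$- and $G_{\bI}$-quotients first yields $\tilBun_{\infty}/G_{\bI}\cong\Coor(X)\times_X\Bunpar_G$ (the forgetful map $\tilBun_{\infty}/G_{\bI}\to\Bunpar_G$ has fiber $\Coor(X)$ over each $(x,\calE,\calE^B_x)$). The subsequent $\Aut^u_{\calO}$-quotient acts only on the $\Coor(X)$-factor and produces $\rho_{\omega_X}\times_X\Bunpar_G$ by Corollary \ref{c:canX}, i.e., the pullback of $\omega_X$. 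The main bookkeeping obstacle throughout is verifying that the various subgroup quotients can be performed in the chosen order; this reduces to the evident fact that $G_{\bI}$ acts trivially on $\Coor(X)$ and that $\gcen$ lies centrally in $\calG$, so that all relevant actions commute.
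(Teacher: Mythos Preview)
Your proof is correct. The paper states this lemma without proof, introducing it with the phrase ``We can easily identify the line bundles $\calL(\xi)$ for various $\xi\in\xch(\tilT)$,'' and your argument---pushing out the $\tilT$-torsor $\calL^{\tilT}=\hatBun_{\infty}/\calG^u_{\bI}$ along each coordinate character and unwinding the resulting quotient using the explicit descriptions of $\hatBun_{\infty}$, $\tilBun_{\infty}$, and Corollary~\ref{c:canX}---is exactly the routine verification the paper leaves to the reader.
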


% DAHA

\subsection{The graded double affine Hecke algebra and its action}\label{ss:daha}

\subsubsection{Comparison of $\tilW$ and $\Wa$} Recall that $\tilW=\xcoch(T)\rtimes W$ is the extended affine Weyl group associated to $G$ and $\Wa=\ZZ\Phi^\vee\rtimes W$ is the affine Weyl group, where $\ZZ\Phi^\vee$ is the coroot lattice. It is well-known that $\Wa$ is a Coxeter group with simple reflections $\Sigma_{\aff}=\{s_0,s_1,\cdots,s_n\}$, where $s_1,\cdots,s_n$ are simple reflections of the finite Weyl group $W$ corresponding to our choice of the Borel $B\subset G$. We have an exact sequence
\begin{equation}\label{eq:extweyl}
1\to\Wa\to\tilW\to\Omega\to1.
\end{equation}
where $\Omega=\xcoch(T)/\ZZ\Phi^\vee$. Let $\tilI$ be the normalizer of $\bI$ in $G(F)$, which defines an extension $G_{\tilI}$ of $G_{\bI}$ by $\Omega_{\bI}=\tilI/\bI$. Let $\hatG_{\tilI}$ be the preimage of $G_{\tilI}$ in $\hatgloop$ and $\calG_{\tilI}:=\hatG_{\tilI}\rtimes\Aut_{\calO}$. Then $\calG_{\tilI}$ normalizes $\calG_{\bI}$ and the conjugation action of $\calG_{\tilI}$ on $\calG_{\bI}$, after passing to the quotient $\calG_{\bI}\twoheadrightarrow\tilT$, induces an action of $\Omega_{\bI}$ on $\tilT$. It is easy to see that this action is a sub-action of the $\tilW$-action on $\tilT$, hence we can naturally view $\Omega_{\bI}$ as a subgroup of $\tilW$. It is easy to verify that the composition $\Omega_{\bI}\hookrightarrow\tilW\twoheadrightarrow\Omega$ is an isomorphism. Therefore we can write $\tilW$ as a semi-direct product $\tilW=\Wa\rtimes\Omega_{\bI}$.

\begin{defn}\label{def:daha}
The {\em graded double affine Hecke algebra} (or {\em graded DAHA} for short) is an evenly graded $\Ql$-algebra $\HH$ which, as a vector space, is the tensor product
\begin{equation*}
\HH=\Ql[\tilW]\otimes\Sym_{\Ql}(\xch(\tilT)_{\Ql})\otimes\Ql[u].
\end{equation*}
Here $\Ql[\tilW]$ is the group ring of $\tilW$, and $\Ql[u]$ is a polynomial algebra in the indeterminate $u$. The grading on $\HH$ is given by
\begin{itemize}
\item $\deg(\tilw)=0$ for $\tilw\in\tilW$;
\item $\deg(u)=\deg(\xi)=2$ for $\xi\in\xch(\tilT)$.
\end{itemize}
The algebra structure on $\HH$ is determined by
\begin{enumerate}
\item\label{eq:WT} $\Ql[\tilW]$, $\Sym_{\Ql}(\xch(\tilT)_{\Ql})$ and $\Ql[u]$ are subalgebras of $\HH$;
\item\label{eq:ucen} $u$ is in the center of $\HH$;
\item\label{eq:commrefl} For any simple reflection $s_i\in\Sigma_{\aff}$ (corresponding to a simple root $\alpha_i$) and $\xi\in\xch(\tilT)$, we have
\begin{equation*}
s_i\xi-\leftexp{s_i}{\xi}s_i=\langle\xi,\alpha_i^\vee\rangle u;
\end{equation*}
\item\label{eq:commomega} For any $\omega\in\Omega_{\bI}$ and $\xi\in\xch(\tilT)$, we have 
\begin{equation*}
\omega\xi=\leftexp{\omega}{\xi}\omega.
\end{equation*}
\end{enumerate}
\end{defn}

\begin{remark} When $\tilW$ is replaced by the finite Weyl group $W$, and $\tilT$ is replaced by $T$, the corresponding algebra is the equal-parameter case of the ``graded affine Hecke algebras'' considered by Lusztig in \cite{L88}.
\end{remark}

Our main goal in this section is to construct an action of $\HH$ on the parabolic Hitchin complex $\fQl\in D^b_c(\calA\times X)$.
 
\begin{cons}\label{cons:action} We define the action of the generators of $\HH$ on $\fQl$.
\begin{itemize}
\item The action of $\tilW$ has been constructed in \cite[Th. 4.4.3]{GSI}.

\item The action of $u$. The Chern class of the line bundle $\calO_X(D)$ (after pulling back to $\calA\times X$) gives a morphism in $D^b_c(\calA\times X)$:
\begin{equation*}
c_1(D):\const{\calA\times X}\to\const{\calA\times X}[2](1).
\end{equation*}
In general, for any object $K\in D^b_c(\calA\times X)$, the cup product with $c_1(D)$ defines a map $\cup c_1(D):K\to K[2](1)$. In particular, for $K=\fQl$, we get the action of $u$:
\begin{equation*}
u=\cup c_1(D):\fQl\to\fQl[2](1).
\end{equation*}

\item The action of $\xch(\tilT)$. Recall from Construction \ref{cons:linebd} that we have a $\tilT$-torsor $\calL^{\tilT}$ over $\Bunpar_G$, and the associated line bundle $\calL(\xi)$ for $\xi\in\xch(\tilT)$. We also use $\calL(\xi)$ to denote its pull back to $\Mpar$. The Chern class of $\calL(\xi)$ gives a map:
\begin{equation*}
c_1(\calL(\xi)):\const{\Mpar}\to\const{\Mpar}[2](1).
\end{equation*}
We define the action of $\xi$ on $\fQl$ to be
\begin{equation*}
\xi=\fpar_*(c_1(\calL(\xi))):\fQl\to\fQl[2](1).
\end{equation*}
By Lem. \ref{l:easyline}, the action of $\Lambda_{\can}\in\xch(\gcen)$ is given by the cup product with (the pull-back of) $c_1(\canb)$; the action of $\delta\in\xch(\grot)$ is given by the cup product with (the pull-back of) $c_1(\omega_X)$.
\end{itemize}
\end{cons}

\begin{theorem}\label{th:daction}
The actions of $\tilW,u$ and $\xch(\tilT)$ on $\fQl$ given in Construction \ref{cons:action} extends to an action of $\HH$ on $\fQl$. More precisely, we have a graded algebra homomorphism
\begin{equation*}
\HH\to\bigoplus_{i\in\ZZ}\End^{2i}_{\calA\times X}(\fQl)(i)
\end{equation*}
such that the image of the elements in $\tilW\cup\{u\}\cup\xch(\tilT)\subset\HH$ are the same as the ones given in Construction \ref{cons:action}.
\end{theorem}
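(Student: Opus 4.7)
The plan is to verify the defining relations of $\HH$ (Definition \ref{def:daha}) for the operators of Construction \ref{cons:action}. The three subalgebra requirements in (\ref{eq:WT}) are essentially automatic: the $\Ql[\tilW]$-part comes from \cite[Th.~4.4.3]{GSI}, while the operators attached to $\Sym(\xch(\tilT)_{\Ql})$ and to $\Ql[u]$ are all cup products with first Chern classes of line bundles, hence commute among themselves. It therefore remains to check relations (\ref{eq:ucen}), (\ref{eq:commrefl}) and (\ref{eq:commomega}).

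For (\ref{eq:ucen}), centrality of $u$: the operator $u$ is cup product with the pull-back of $c_1(\calO_X(D))$ from the base $\calA\times X$. Since the $\tilW$-action is realised by cohomological correspondences living over $\calA\times X$, the projection formula (packaged in Appendix \ref{s:complcorr}) shows that these correspondences commute with cup product against any class pulled back from $\calA\times X$; commutativity with cup products against $c_1(\calL(\xi))$ is automatic. For (\ref{eq:commomega}), with $\omega\in\Omega_{\bI}\subset\tilW$, one needs that the action of $\omega$ on $\Bunpar_G$ (and hence on $\Mpar$) pulls $\calL(\xi)$ back to $\calL({}^\omega\xi)$; this follows from the presentation of $\calL^{\tilT}$ as the $\tilT=\calG_{\bI}/\calG^u_{\bI}$-torsor $\hatBun_\infty/\calG^u_{\bI}$, combined with the normalising action of $\calG_{\tilI}$ on $\calG_{\bI}$ discussed in Section \ref{ss:daha}, which by construction induces the $\Omega_{\bI}\subset\tilW$-action on $\tilT$.

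The main content is relation (\ref{eq:commrefl}), $s_i\xi-{}^{s_i}\xi\,s_i=\jiao{\xi,\alpha_i^\vee}\,u$ for each simple affine reflection $s_i\in\Sigma_{\aff}$. The plan is to reduce it to a local $\SL(2)$-computation. The operator $s_i$ is defined in \cite[Sec.~4]{GSI} by a codimension-one Hecke correspondence in $\Mpar\times_{\calA\times X}\Mpar$, whose two projections étale-locally realise a $\PP^1$-bundle attached to the affine rank-one parahoric of type $\{s_i\}$ (in the language of Section \ref{s:parahoric}, this is the partial parahoric Hitchin stack for the Levi of type $\alpha_i$, sitting above $\Mpar$ as in Lemma \ref{l:CartPQ}). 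Along these $\PP^1$-fibres, the two pull-backs of $\calL(\xi)$ differ by a line bundle of relative degree $\jiao{\xi,\alpha_i^\vee}$, which is the geometric source of that integer coefficient. Using the push/pull compatibility of cup products with cohomological correspondences from Appendix \ref{s:complcorr}, the commutator $s_i\xi-{}^{s_i}\xi\,s_i$ is then rewritten as a push-forward along this $\PP^1$-bundle of a difference of twists.

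The remaining identification of that push-forward with $\jiao{\xi,\alpha_i^\vee}\,u=\jiao{\xi,\alpha_i^\vee}\cup c_1(\calO_X(D))$ is an equivariant Euler-class calculation on the Steinberg-type variety for the Levi of type $\SL(2)$ attached to $\alpha_i$: the normal bundle of the relevant diagonal carries a twist by $\calO_X(D)$ coming from the Higgs field direction, which is precisely what produces the factor $u$. This local $\SL(2)$ calculation, carried out in Section \ref{ss:simplerefl}, is the main technical obstacle of the proof; the rest is bookkeeping for cohomological correspondences and the Kac--Moody presentation of Section \ref{ss:KM}.
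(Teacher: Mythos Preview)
Your overall strategy matches the paper's: relations (\ref{eq:WT}), (\ref{eq:ucen}) and (\ref{eq:commomega}) go exactly as you say, and the core of (\ref{eq:commrefl}) is indeed the $\SL(2)$ Steinberg calculation of Section~\ref{ss:simplerefl}. There is, however, a genuine gap in your treatment of (\ref{eq:commrefl}).

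The reduction to the local Steinberg variety works only for $\xi\in\xch(T\times\grot)$. The reason is that the comparison of $\overleftarrow{c_i}^*\calL(\xi)$ and $\overrightarrow{c_i}^*\calL(\xi)$ on $C_i=\Mpar\times_{\calM_{\bP_i}}\Mpar$ is done by pulling both back from the evaluation morphism $\Mpar\to[\till_i/\Ln_i]$ (this is the content of Lemma~\ref{l:redcomm}). That evaluation morphism only sees the Levi quotient $L_i$ together with the $\grot$ and dilation twists; the central direction $\Lambda_{\can}$ corresponds to the global determinant line bundle $\canb$ on $\Bun_G$, which does \emph{not} factor through this local picture. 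In particular, for the affine reflection $s_0$ the parahoric $\bP_0$ is not contained in $\bG$, so $\canb$ is not pulled back from $\calM_{\bP_0}$, and your ``relative degree $\jiao{\xi,\alpha_i^\vee}$ along the $\PP^1$-fibre'' heuristic does not apply (the coefficient there is $\jiao{\Lambda_{\can},\alpha_0^\vee}=2h^\vee$, which must come from somewhere global).

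The paper closes this gap in Section~\ref{ss:pfdaha} by a separate argument: one shows (Lemma~\ref{l:pullcan}, Corollary~\ref{c:pullcan}) that the parahoric determinant $\calL_{\bP_i,\can}$ pulls back to $\calL(\Lambda_{\can})\otimes\calL(2\rho-\alpha_i)$ on $\Bunpar_G$. Since $\calH_{s_i}$ lives over $\calM_{\bP_i}$, the class $c_1(\calL_{\bP_i,\can})$ commutes with $s_i$, hence $s_i$ commutes with the operator $\Lambda_{\can}+2\rho-\alpha_i$. One then checks directly that $\jiao{\Lambda_{\can}+2\rho-\alpha_i,\alpha_i^\vee}=0$ for all $i$ (for $i=0$ this uses $\jiao{\Lambda_{\can},K}=2h^\vee$ and Lemma~\ref{l:dCox}), so relation (\ref{eq:commrefl}) holds for this combination; subtracting the already-established case $\xi=2\rho-\alpha_i\in\xch(T)$ yields the relation for $\xi=\Lambda_{\can}$. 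You should add this step explicitly.
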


If we fix a point $(a,x)\in(\calA\times X)(k)$, we can specialize the above theorem to the action of $\HH$ on the stalk of $\fQl$ at $(a,x)$, i.e., $\cohog{*}{\Mpar_{a,x}}$.

\begin{cor}
For $(a,x)\in(\calA\times X)(k)$, Construction \ref{cons:action} gives an action of $\HH/(\delta,u)$ on $\cohog{*}{\Mpar_{a,x}}$. In other words, the actions of $\bxi\in\xch(\gcen\times T)$ and $\tilw\in\tilW$ satisfy the following simple relation:
\begin{equation*}
\tilw\bxi=\leftexp{\tilw}{\bxi}\tilw.
\end{equation*}
Here $\bxi\mapsto\leftexp{\tilw}{\bxi}$ is the action of $\tilw$ on $\xch(\gcen\times T)=\xch(\tilT)/\xch(\grot)$.
\end{cor}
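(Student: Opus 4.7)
The plan is to apply Theorem \ref{th:daction} and then observe that, at the level of the stalk at $(a,x)$, two of the generators of $\HH$ --- namely $u$ and $\delta$ --- act as zero. The corollary will then follow from a quick inspection of the defining relations of $\HH/(u,\delta)$.

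First I would verify the vanishing of these two actions on the stalk. By Construction \ref{cons:action}, $u$ acts on $\fQl$ as cup product with $c_1(\calO_X(D))$, pulled back along $\calA\times X\to X$. The restriction of any positive-degree cohomology class on $X$ to a closed point is zero, so $u$ acts as zero on the stalk $\cohog{*}{\Mpar_{a,x}}$. For $\delta$, Construction \ref{cons:action} gives the action as $\fpar_*(c_1(\calL(\delta)))$; invoking Lem. \ref{l:easyline}(3), $\calL(\delta)$ is the pull-back of $\omega_X$ along $\Mpar\to X$, so by the projection formula this endomorphism is cup product with the pull-back of $c_1(\omega_X)$ to $\calA\times X$, which again vanishes on stalks.

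Once $u$ and $\delta$ act trivially, the $\HH$-action on $\cohog{*}{\Mpar_{a,x}}$ factors through $\HH/(u,\delta)$. In this quotient, the relation (\ref{eq:introDAHA}) becomes $s_i\xi=\leftexp{s_i}{\xi}s_i$ for every simple reflection $s_i$ and every $\xi\in\xch(\tilT)$. Combining with Def. \ref{def:daha}(\ref{eq:commomega}) and the decomposition $\tilW=\Wa\rtimes\Omega_{\bI}$, this promotes to $\tilw\xi=\leftexp{\tilw}{\xi}\tilw$ for arbitrary $\tilw\in\tilW$ and $\xi\in\xch(\tilT)$. It remains to descend this relation from $\xch(\tilT)$ to the quotient $\xch(\gcen\times T)=\xch(\tilT)/\Ql\delta$, for which one reads off from Lem. \ref{l:Kacact} that $\Ql\delta$ is $\tilW$-stable: $W$ fixes $\delta$ by (1), and every translation $\lambda\in\xcoch(T)$ fixes $\delta$ because $\jiao{\delta,K_{\can}}=0$ in the formulas of (2). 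The $\tilW$-action therefore descends to the quotient, and the desired identity $\tilw\bxi=\leftexp{\tilw}{\bxi}\tilw$ holds in $\HH/(u,\delta)$.

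No step looks delicate; the only nontrivial verification is the $\tilW$-stability of $\Ql\delta$, which is immediate from the explicit formulas in Lem. \ref{l:Kacact}.
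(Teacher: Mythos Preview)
Your proposal is correct and follows the same approach as the paper: the paper's proof consists of the single observation that the restrictions of $\calO_X(D)$ and $\omega_X$ to $\Mpar_{a,x}$ are trivial, whence $u$ and $\delta$ act by zero. Your version spells out the projection-formula justification for $\delta$ and the algebraic passage from the simple-reflection relations to arbitrary $\tilw$, but the argument is the same in substance.
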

\begin{proof}
Since the restrictions of $\calO_X(D)$ and $\omega_X$ to $\Mpar_{a,x}$ are trivial, the actions of $\delta$ and $u$ on $\cohog{*}{\Mpar_{a,x}}$ are zero.
\end{proof}

Sec. \ref{ss:conncomp} through Sec. \ref{ss:pfdaha} are devoted to the proof of Th. \ref{th:daction}. We will check that the four conditions in Def. \ref{def:daha} hold for the actions defined in Construction \ref{cons:action}. 

The condition (\ref{eq:WT}) in Def. \ref{def:daha} is trivial from construction. The condition (\ref{eq:ucen}) is also easy to check. In fact, since the $\tilW$-action is constructed from self-correspondences of $\Mpar$ over $\calA\times X$, it commutes with the cup product with any class in $\cohog{*}{\calA\times X}$. In particular, the $\tilW$-action commutes with the $u$-action and the $\xch(\grot)$-action. On the other hand, the action of $\xi\in\xch(\tilT)$ is defined as the cup product with the Chern class $c_1(\calL(\xi))\in\cohog{2}{\Mpar}(1)$, which certainly commutes with the cup product with the pull-back of $c_1(D)\in\cohog{2}{X}(1)$. Therefore the $\xch(\tilT)$-action also commutes with the $u$-action. This verifies the condition (\ref{eq:ucen}) in Def. \ref{def:daha}.

We will verify the condition (\ref{eq:commomega}) in Sec. \ref{ss:conncomp} and the condition (\ref{eq:commrefl}) in Sec. \ref{ss:simplerefl} and Sec. \ref{ss:pfdaha}.

\subsection{Remarks on Hecke correspondences}\label{ss:rmHecke}
In this subsection, we study the relation between the reduced Hecke correspondence $\calH_{\tilw}$ introduced in \cite[Def. 4.3.9]{GSI} and the stratum $\Hecke^{\Bun}_{\tilw}$ in the Hecke correspondence $\Hecke^{\Bun}$ (see \cite[Section 4.2]{GSI}), for the same subscript $\tilw\in\tilW$.

\subsubsection{Rewriting the Hecke correspondences} Let us first write $\Hecke^{\Bun}_{\tilw}$ more precisely using the identification $\Bunpar_G=\tilBun_{\infty}/G_{\bI}\rtimes\Aut_{\calO}$ in Sec. \ref{ss:bunpar}. We have
\begin{equation*}
\Hecke^{\Bun}=\tilBun_{\infty}\twtimes{G_{\bI}\rtimes\Aut_{\calO}}\left(G((t))\rtimes\Aut_{\calO}/G_{\bI}\rtimes\Aut_{\calO}\right)
\end{equation*}
with the two projections given by
\begin{eqnarray*}
\overleftarrow{b}(\xi,\widetilde{g})&=&\xi\modo G_{\bI}\rtimes\Aut_{\calO};\\
\overrightarrow{b}(\xi,\widetilde{g})&=&R_{\widetilde{g}}(\xi)\modo G_{\bI}\rtimes\Aut_{\calO}.
\end{eqnarray*}
for $\xi\in\tilBun_{\infty}(S)$ and $\widetilde{g}\in (G((t))\rtimes\Aut_{\calO})(S)/(G_{\bI}\rtimes\Aut_{\calO})(S)$. The Bruhat decomposition gives
\begin{eqnarray*}
G((t))&=&\bigsqcup_{\tilw\in\tilW}G_{\bI}\tilw G_{\bI};\\
G((t))\rtimes\Aut_{\calO}&=&\bigsqcup_{\tilw\in\tilW}(G_{\bI}\rtimes\Aut_{\calO})\tilw(G_{\bI}\rtimes\Aut_{\calO}).
\end{eqnarray*}
Hence we can write
\begin{equation}\label{eq:Heckeloop}
\Hecke^{\Bun}_{\tilw}=\tilBun_{\infty}\twtimes{G_{\bI}\rtimes\Aut_{\calO}}\left((G_{\bI}\rtimes\Aut_{\calO})\tilw(G_{\bI}\rtimes\Aut_{\calO})/G_{\bI}\rtimes\Aut_{\calO}\right).
\end{equation}

Recall that we have a morphism (see \cite[Diagram (4.3)]{GSI})
\begin{equation}\label{eq:betaHH}
\beta:\Heckep\to\Hecke^{\Bun}.
\end{equation}

\begin{lemma}\label{l:hhecke}
\begin{enumerate}
\item []
\item The image of $\calH_{\tilw}^{\rs}$ in $\Hecke^{\Bun}$ is contained in $\Hecke^{\Bun}_{\tilw}$;
\item The image of $\calH_{\tilw}$ in $\Hecke^{\Bun}$ is contained in $\Hecke^{\Bun}_{\leq\tilw}$.
\end{enumerate}
\end{lemma}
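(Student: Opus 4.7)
The plan is to reduce both statements to an explicit computation of relative positions at the marked point $x$, using the rewriting \eqref{eq:Heckeloop} of the Bruhat strata $\Hecke^{\Bun}_{\tilw}$ in the loop-group language, and then unpacking the definition of $\calH_{\tilw}$ from \cite[Def. 4.3.9]{GSI}. Concretely, a point in $\Hecke^{\Bun}$ is a triple $((x,\calE_1,\calE_1^B),(x,\calE_2,\calE_2^B),\iota)$ where $\iota:\calE_1|_{X-x}\isom\calE_2|_{X-x}$, and the $\tilw$-stratum consists of exactly those triples for which, after choosing local trivializations of $\calE_1$ and $\calE_2$ near $x$ (and a local coordinate at $x$), the element of $G((t))$ measuring the difference between $\iota$ and the local trivializations lies in the double coset $G_{\bI}\tilw G_{\bI}$ (equivalently, the relative position of the two Iwahori reductions in the affine flag variety is $\tilw$).

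For part (1), the strategy is: starting with a point of $\calH_{\tilw}^{\rs}$, I would use the description of the reduced Hecke correspondence as coming from the $\tilW$-action on the universal cameral cover restricted to the regular semisimple locus—so that the two parabolic Higgs bundles $(x,\calE_i,\calE_i^B,\varphi_i)$ ($i=1,2$) share a common Hitchin image, and the isomorphism between them over $X\setminus\{x\}$ is the one induced by $J_a$-torsor modifications. In local coordinates, one passes to the Kostant section (or any local trivialization adapted to the regular semisimple Higgs field) and checks that the $B$-reduction $\calE_2^B$ is related to $\calE_1^B$ by an element of $T(F)\cdot W\subset\tilW$-lift inside $G((t))$. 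Identifying this element modulo $G_{\bI}$ on both sides gives exactly the double coset $G_{\bI}\tilw G_{\bI}$, proving the inclusion $\beta(\calH_{\tilw}^{\rs})\subset\Hecke^{\Bun}_{\tilw}$.

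For part (2), since $\calH_{\tilw}$ is by definition the closure of $\calH_{\tilw}^{\rs}$ in $\Heckep$ and $\beta$ is a morphism of stacks (hence continuous), we have
\[
\beta(\calH_{\tilw})\subset\overline{\beta(\calH_{\tilw}^{\rs})}\subset\overline{\Hecke^{\Bun}_{\tilw}}=\Hecke^{\Bun}_{\leq\tilw},
\]
the last equality being the standard fact that Bruhat cells in the affine flag variety (for either the left or right $\bI$-orbits) close up according to the Bruhat order on $\tilW$. Part (2) then follows from part (1) by passing to closures.

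The main obstacle is the first step: tracing through the construction of $\calH_{\tilw}^{\rs}$ in \cite[Sec. 4.3]{GSI} carefully enough to certify that the relative position is $\tilw$ and not, say, $\tilw^{-1}$ or an $\Omega_{\bI}$-translate. This is a matter of bookkeeping about which side the $\tilW$-action operates on and how the identifications $\xi\mapsto R_{\tilw}(\xi)$ in \eqref{eq:Heckeloop} match the modification procedure defining $\calH_{\tilw}^{\rs}$; once the convention in GSI is pinned down, the verification becomes a direct check in the local picture using the affine Grassmannian/Bruhat decomposition.
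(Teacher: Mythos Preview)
Your proposal is correct and follows essentially the same approach as the paper: part (2) follows from (1) by closure, and part (1) is reduced to a local computation of relative position at the point $x$ over the regular semisimple locus. The paper carries out precisely the ``bookkeeping'' you identify as the remaining obstacle: it uses the product formula \cite[Prop.~3.3.3]{GSI} to reduce to the local affine Springer fiber $M^{\parab}_x(\gamma)$ (whose reduced structure, for $\gamma\in\frt(\hatO_x)$, consists of the $T$-fixed points $\tilw_1\bI_x/\bI_x$), and then verifies directly from the explicit formula for the right $\tilW$-action in \cite[Cor.~4.3.8]{GSI} that $\tilw$ sends $\tilw_1\bI_x/\bI_x$ to $\tilw_1\tilw\bI_x/\bI_x$, which manifestly has relative position $\tilw$.
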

\begin{proof}
Since $\calH_{\tilw}$ is the closure of $\calH^{\rs}_{\tilw}$ by definition, (2) follows from (1). To check (1), it suffices to check the geometric points (or even the $k$-points). Fix $(a,x)\in(\calA\times X)^{\rs}(k)$. Using the local-global product formula \cite[Prop. 3.3.3]{GSI}, $\Mpar_{a,x}$ is homeomorphic to
\begin{equation}\label{eq:prodx}
\calP_a\twtimes{P^{\red}_x(J_a)\times P'}(M^{\parab,\red}_x(\gamma)\times M'),
\end{equation}
where $M'$ and $P'$ are the products of local terms over $y\in X-\{x\}$, and $\gamma\in\frg(\hatO_x)$ lifting $a(x)\in\frc(\hatO_x)$. Since $a(x)$ has regular semisimple reduction in $\frc$, we can conjugate $\gamma$ by $G(\hatO_x)$ so that $\gamma\in\frt(\hatO_x)$ (here $\frt\subset\frb$ is a Cartan subalgebra). The choice of $\gamma$ gives a point $\tilx\in q_a^{-1}(x)$. In particular, we can use $\tilx$ to get an isomorphism $P_x(J_a)=T(F_x)/T(\hatO_x)$.

Fix a uniformizing parameter $t\in\hatO_x$. Now the reduced structure of $M^{\parab}_x(\gamma)\subset\Flag_{G,x}$ consists of the $T$-fixed points $\tilw\bI_x/\bI_x=t^{\lambda_1}w_1\bI_x/\bI_x$ for $\tilw_1=(\lambda_1,w_1)\in\tilW$. Under the isomorphism $M^{\parab}_{x}(\gamma)=M^{\Hit}_x(\gamma)\times q_a^{-1}(x)$, $\tilw\bI_x/\bI_x$ corresponds to the pair $(t^{\lambda_1}G(\hatO_x)/G(\hatO_x),w_1^{-1}\tilx)$.

We claim that the action of $\tilw=(\lambda,w)\in\tilW$ on $\Mpar_{a,x}$, under the product formula (\ref{eq:prodx}), is trivial on $M'$ and sends $\tilw_1\bI_x/\bI_x\in M^{\parab}_{x}(\gamma)$ to $\tilw_1\tilw\bI_x/\bI_x$. In fact, using the definition of the right $\tilW$-action in \cite[Cor. 4.3.8]{GSI}, we have
\begin{eqnarray*}
\tilw_1\bI_x/\bI_x\cdot\tilw
&=&(t^{\lambda_1}G(\hatO_x)/G(\hatO_x),w_1^{-1}\tilx)\cdot(\lambda,w)\\
&=&(\tils_{\lambda}(a,w_1^{-1}\tilx)t^{\lambda_1}G(\hatO_x)/G(\hatO_x),w^{-1}w_1^{-1}\tilx)\\
&=&(s_{w_1\lambda}(a,\tilx)t^{\lambda_1}G(\hatO_x)/G(\hatO_x),w^{-1}w_1^{-1}\tilx)\\
&=&(t^{w_1\lambda+\lambda_1}G(\hatO_x)/G(\hatO_x),(w_1w)^{-1}\tilx)\\
&=&\tilw_1\tilw\bI_x/\bI_x.
\end{eqnarray*}

Clearly, the pair $(\tilw_1\bI_x/\bI_x,\tilw_1\tilw\bI_x/\bI_x)\in\Flag_{G,x}\times\Flag_{G,x}$ is in relative position $\tilw$, hence the image of $\calH^{\rs}_{\tilw}$ in $\Hecke^{\Bun}$ is contained in $\Hecke^{\Bun}_{\tilw}$.
\end{proof}

% components

\subsection{Connected components of $\Mpar$}\label{ss:conncomp}
In this subsection, we check the condition (\ref{eq:commomega}) in Def. \ref{def:daha}. 

\subsubsection{Connected components of $\Bunpar$ and $\Mpar$} It is well-known that the set of connected components of $\Bun_G$ or $\Bunpar_G$ is naturally identified with $\Omega=\xcoch(T)/\ZZ\Phi^\vee$, such that the component containing the image of $\Bun_{G^{\textup{sc}}}\to \Bun_{G}$ is indexed by the identity element in $\Omega$ (here $G^{\textup{sc}}$ is the simply-connected form of the derived group $G^{\textup{der}}$ of $G$). For any $\omega\in\Omega$, let $\Bunpar_\omega$ be the corresponding component. We also write $\Mpar_\omega$ for the preimage of $\Bunpar_\omega$.

Recall that $\tilI$ is the normalizer of $\bI$ in $G(F)$, and $\Omega_{\bI}=\tilI/\bI$ can be identified with $\Omega$ via $\Omega_{\bI}\hookrightarrow\tilW\twoheadrightarrow\Omega$. For any $\omega\in\Omega_{\bI}$, by Lem. \ref{l:Pclass}, we have an automorphism $R_{\omega}$ of $\Bunpar_G$, which sends the connected component $\Bunpar_{\omega_1}$ to $\Bunpar_{\omega_1+\omega}$. Similarly remark applies to the action of $\Omega_{\bI}$ on $\Mpar$.

On the other hand, we can view $\omega\in\Omega_{\bI}$ as an element of $\tilW$. Therefore $\omega$ gives a double coset in $\bI\backslash G(F)/\bI$, hence a Hecke correspondence (see the beginning of \cite[Sec. 4.1]{GSI})
\begin{equation*}
\xymatrix{ & \Hecke^{\Bun}_{\omega}\ar[dl]_{\overleftarrow{b}_{\omega}}\ar[dr]^{\overrightarrow{b}_{\omega}} & \\
 \Bunpar_G\ar[dr] & & \Bunpar_G\ar[dl]\\
& X & }
\end{equation*}
classifying pairs of $G$-torsors with Borel reductions at a point of $X$ which are in relative position $\omega$.

\begin{lemma}\label{l:bunomega}
For $\omega\in\Omega_{\bI}$, the correspondence $\Hecke^{\Bun}_{\omega}$ is the graph of the automorphism $R_\omega:\Bunpar_G\to\Bunpar_G$.
\end{lemma}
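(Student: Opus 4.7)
The plan is to exploit the description \eqref{eq:Heckeloop} of the Bruhat strata and the fact that $\Omega_{\bI}$ normalizes the Iwahori, so that the stratum indexed by $\omega$ becomes a single point rather than a nontrivial partial flag variety. Concretely, pick any lift $\widetilde{\omega} \in \tilI$ of $\omega \in \Omega_{\bI} = \tilI/\bI$. Because $\widetilde{\omega}$ normalizes $\bI$ and $\Aut(\calO_F)$ preserves $\bI$, the proof of Lem.~\ref{l:Pclass} shows that $\widetilde{\omega}$ normalizes the subgroup $G_{\bI} \rtimes \Aut_{\calO}$ of $G((t)) \rtimes \Aut_{\calO}$ (up to elements of $G_{\bI}$, which is harmless for the argument below).

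First I would rewrite the double coset in \eqref{eq:Heckeloop} for $\tilw = \omega$: since $\widetilde{\omega}$ normalizes $G_{\bI}\rtimes\Aut_{\calO}$, we have
\begin{equation*}
(G_{\bI}\rtimes\Aut_{\calO})\,\widetilde{\omega}\,(G_{\bI}\rtimes\Aut_{\calO}) \;=\; \widetilde{\omega}\,(G_{\bI}\rtimes\Aut_{\calO}),
\end{equation*}
so the right-hand quotient by $G_{\bI}\rtimes\Aut_{\calO}$ collapses to a single point represented by $\widetilde{\omega}$. Plugging this into \eqref{eq:Heckeloop} yields
\begin{equation*}
\Hecke^{\Bun}_{\omega} \;=\; \tilBun_{\infty}\twtimes{G_{\bI}\rtimes\Aut_{\calO}}\{\widetilde{\omega}\} \;=\; \tilBun_{\infty}/(G_{\bI}\rtimes\Aut_{\calO}) \;=\; \Bunpar_G,
\end{equation*}
and this identification is precisely the left projection $\overleftarrow{b}_\omega$; in particular $\overleftarrow{b}_\omega$ is an isomorphism.

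Next I would identify $\overrightarrow{b}_\omega$ in these coordinates. Unwinding the formulas recalled in Sec.~\ref{ss:rmHecke}, the right projection sends $(\xi,\widetilde{\omega}) \mapsto R_{\widetilde{\omega}}(\xi) \bmod G_{\bI}\rtimes\Aut_{\calO}$. But this is exactly the definition of the automorphism $R_\omega$ of $\Bunpar_G$ given in the proof of Lem.~\ref{l:Pclass} (and it is independent of the choice of lift $\widetilde{\omega}$, up to the trivial action of $\bI$). Hence $\overrightarrow{b}_\omega \circ \overleftarrow{b}_\omega^{-1} = R_\omega$, i.e.\ $\Hecke^{\Bun}_\omega$ is the graph of $R_\omega$, as claimed.

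I do not expect a genuine obstacle here; the only minor subtlety is to verify the normalization claim that $\widetilde{\omega}$ stabilizes $G_{\bI}\rtimes\Aut_{\calO}$ as a subgroup of $G((t))\rtimes\Aut_{\calO}$. This follows from the fact that $\bI$ is stable under $\Aut(\calO_F)$ and $\widetilde{\omega}$ normalizes $\bI$, so the semidirect product structure is preserved; this is essentially the content of diagram~\eqref{d:sigmaPQ} in the proof of Lem.~\ref{l:Pclass}. Once that is in hand, everything is formal bookkeeping with \eqref{eq:Heckeloop}.
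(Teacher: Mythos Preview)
Your proposal is correct and follows essentially the same approach as the paper's proof: both arguments boil down to the observation that the Schubert cell $G_{\bI}\omega G_{\bI}/G_{\bI}$ is a single point because $\omega\in\Omega_{\bI}$ normalizes $\bI$, and then read off from the description \eqref{eq:Heckeloop} that $\overleftarrow{b}_\omega$ is an isomorphism and $\overrightarrow{b}_\omega\circ\overleftarrow{b}_\omega^{-1}=R_\omega$. The paper compresses all of this into two sentences, while you spell out the identification of the two projections and the (slightly superfluous) normalization of $G_{\bI}\rtimes\Aut_{\calO}$ explicitly; but the content is the same.
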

\begin{proof}
It is clear that the Schubert cell $G_{\bI}\omega G_{\bI}/G_{\bI}$ consists of one point for any $\omega\in\Omega_{\bI}$. Therefore the lemma follows from the description (\ref{eq:Heckeloop}) of $\Hecke^{\Bun}_{\omega}$.
\end{proof}

\begin{cor}\label{c:omaction}
The reduced Hecke correspondence $\calH_{\omega}$ for the parabolic Hitchin stack $\Mpar$ is the graph of the automorphism $R_{\omega}:\Mpar\to\Mpar$. In particular, the action of $\omega\in\Omega_{\bI}\subset\tilW$ on $\fQl$ defined in \cite[Th. 4.4.3]{GSI} is the same as $R_\omega^*$.  
\end{cor}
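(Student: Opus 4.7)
The plan is to combine Lemmas \ref{l:hhecke} and \ref{l:bunomega} to identify $\beta(\calH_{\omega})$ with the graph of $R_{\omega}$ in $\Bunpar_G\times\Bunpar_G$, and then lift this identification to $\Mpar\times_{\calA\times X}\Mpar$. Since $\omega\in\Omega_{\bI}$ has length $0$ in $\tilW$, we have $\Hecke^{\Bun}_{\leq\omega}=\Hecke^{\Bun}_{\omega}$; Lem. \ref{l:hhecke}(2) therefore places $\beta(\calH_{\omega})$ inside $\Hecke^{\Bun}_{\omega}$, which by Lem. \ref{l:bunomega} is precisely the graph of $R_{\omega}:\Bunpar_G\to\Bunpar_G$.

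Next I lift this to the parabolic Hitchin stack. Take a geometric point $(m_1,m_2)=((\calE_1,\calE^B_{1,x},\varphi_1),(\calE_2,\calE^B_{2,x},\varphi_2))$ of $\calH_{\omega}$ lying over $(a,x)\in\calA\times X$. The previous step forces $(\calE_2,\calE^B_{2,x})=R_{\omega}(\calE_1,\calE^B_{1,x})$. Because the Hecke modification $\omega$ takes place only over $x$, the torsors $\calE_1$ and $\calE_2$ are canonically identified away from $x$, and the defining condition of $\Heckep$ requires $\varphi_1|_{X-\{x\}}$ and $\varphi_2|_{X-\{x\}}$ to agree under this identification. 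The sheaf $\Ad_{\bI}(\calE_2)(D)$ is torsion-free on $X$, since it embeds into the locally free sheaf $\Ad(\calE_2)(D)$ (cf.\ the proof of Lem. \ref{l:AdP}(2)); hence a global section is uniquely determined by its restriction to any dense open, and we deduce $\varphi_2=R_{\omega}(\varphi_1)$. Thus $m_2=R_{\omega}(m_1)$, so $\calH_{\omega}\subset\Gamma_{R_{\omega}}$, where $\Gamma_{R_{\omega}}\subset\Mpar\times_{\calA\times X}\Mpar$ denotes the graph of $R_{\omega}:\Mpar\to\Mpar$.

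For the reverse inclusion I would invoke the local computation in the proof of Lem. \ref{l:hhecke}(1): under the product formula for $\Mpar_{a,x}^{\rs}$, the right action of $\tilw\in\tilW$ sends $\tilw_1\bI_x/\bI_x$ to $\tilw_1\tilw\bI_x/\bI_x$, and for $\tilw=\omega$ this coincides with the restriction of $R_{\omega}$ on each regular semisimple fiber, so $\calH^{\rs}_{\omega}=\Gamma^{\rs}_{R_{\omega}}$. Since $\Mpar^{\rs}$ is dense in $\Mpar$ and $\calH_{\omega}$ is by definition the closure of $\calH^{\rs}_{\omega}$, I conclude $\calH_{\omega}=\Gamma_{R_{\omega}}$. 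The second assertion of the corollary then follows immediately: by \cite[Th. 4.4.3]{GSI} the $\tilw$-action on $\fQl$ is the cohomological correspondence attached to $\calH_{\tilw}$, and when this correspondence is the graph of an automorphism the induced endomorphism of $\fQl$ is precisely pullback by that automorphism. The delicate step is the rigidity argument in the second paragraph: once the $\Bunpar_G$-data match under $R_{\omega}$, one must be sure that no ``cokernel freedom'' in the Higgs field is introduced at the modification point $x$, and this is exactly what torsion-freeness of $\Ad_{\bI}(\calE)(D)$ supplies.
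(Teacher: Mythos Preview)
Your proof is correct and uses the same ingredients (Lemmas \ref{l:hhecke} and \ref{l:bunomega}, equality on the rs locus, then closure), but the organization differs from the paper's. The paper first embeds $\Gamma(R_\omega)\hookrightarrow\Heckep$ via the canonical identification of Hitchin pairs away from $x$, then proves the inclusion in the \emph{opposite} direction: $\Gamma(R_\omega)^{\rs}\subset\beta^{-1}(\Hecke^{\Bun}_\omega)^{\rs,\red}\subset\calH^{\rs}_\omega$, the last containment coming from the decomposition $(\Heckep)^{\rs,\red}=\coprod_{\tilw}\calH^{\rs}_{\tilw}$ together with Lem.~\ref{l:hhecke}(1), which forces only the $\omega$-piece to land in $\Hecke^{\Bun}_\omega$. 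Since both sides are graphs over $\Mparrs$, inclusion forces equality, and one closes up.

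Your torsion-freeness argument for $\Ad_{\bI}(\calE)(D)$ is a clean, direct route to $\calH_\omega\subset\Gamma(R_\omega)$ that the paper does not use. Note that it already renders your third paragraph superfluous: once $\calH^{\rs}_\omega\subset\Gamma(R_\omega)^{\rs}$ and both project isomorphically to $\Mparrs$ via $\overleftarrow{h}$, equality on the rs locus is automatic, and closure finishes. Your local-computation step is also the least justified part, since asserting that $R_\omega$ agrees with the right $\omega$-action fiberwise under the product formula requires unwinding the definition of $R_\omega$ in local coordinates, which you do not carry out. One minor sloppiness: you place $\Gamma_{R_\omega}$ inside $\Mpar\times_{\calA\times X}\Mpar$ rather than $\Heckep$; to compare with $\calH_\omega$ as closed substacks you should keep track of the isomorphism $\alpha$ and embed $\Gamma(R_\omega)$ into $\Heckep$, as the paper does.
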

\begin{proof}
By the construction of the $\Omega_{\bI}$-action on $\Mpar$, for any $m\in\Mpar(R)$ with image $x\in X(R)$, the Hitchin pairs on $\XR-\Gamma(x)$ given by restrictions of $m$ and $R_\omega m$ are canonically identified. Therefore, there is a natural embedding $\Gamma(R_{\omega})\hookrightarrow\Heckep$, where $\Gamma(R_\omega)$ is the graph of $R_\omega$.

Recall the morphism $\beta:\Heckep\to\Hecke^{\Bun}$ in (\ref{eq:betaHH}). We know from Lem. \ref{l:bunomega} that the $\beta(\Gamma(R_{\omega}))=\Hecke^{\Bun}_\omega$. In other words,
\begin{equation*}
\Gamma(R_\omega)\subset\beta^{-1}(\Hecke^{\Bun}_\omega)^{\red}.
\end{equation*}
On the other hand, by Lem. \ref{l:hhecke}, the reduced structure of $\beta^{-1}(\Hecke^{\Bun}_{\omega})^{\rs}$ is contained in $\calH^{\rs}_\omega$, hence $\Gamma(R_\omega)^{\rs}\subset\calH^{\rs}_\omega$. Since both $\Gamma(R_\omega)^{\rs}$ and $\calH^{\rs}_{\omega}$ are graphs, we must have $\Gamma(R_{\omega})^{\rs}=\calH^{\rs}_{\omega}$. Taking closures, we get $\Gamma(R_{\omega})=\calH_{\omega}$. 
\end{proof}

By Construction \ref{cons:action}, the action of $\xch(\tilT)$ on $\fQl$ is defined by the cup product with the Chern classes of the pull-back of the line bundles $\calL(\xi)$ from $\Bunpar_G$. Now Lem. \ref{c:omaction} reduces the verification of the condition (\ref{eq:commomega}) in Def. \ref{def:daha} to the following fact:

\begin{lemma}
For each $\omega\in\Omega_{\bI}$ and $\xi\in\xch(\tilT)$, there is an isomorphism of line bundles on $\Bunpar_G$:
\begin{equation*}
R_{\omega}^*\calL(\xi)\cong\calL(\leftexp{\omega}{\xi}).
\end{equation*}
\end{lemma}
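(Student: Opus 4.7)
The idea is to lift the automorphism $R_\omega$ of $\Bunpar_G$ to an automorphism of the $\tilT$-torsor $\calL^{\tilT}$ that intertwines the right $\tilT$-action via the automorphism of $\tilT$ induced by $\omega$. First I pick a lift $\dot\omega \in \tilI$ of $\omega \in \Omega_{\bI}$, and then a further lift $\hat{\dot\omega} \in \calG_{\tilI} = \hatG_{\tilI} \rtimes \Aut_{\calO}$ to the Kac-Moody central extension. Because $\hat{\dot\omega}$ normalizes $\calG_{\bI}$, and the pro-unipotent radical $\calG^u_{\bI}$ is characteristic in $\calG_{\bI}$, conjugation by $\hat{\dot\omega}$ preserves $\calG^u_{\bI}$ as well. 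Consequently the right translation by $\hat{\dot\omega}$ on $\hatBun_\infty$ (Construction \ref{cons:hataction}) descends to an automorphism $\widehat R_{\hat{\dot\omega}}$ of $\calL^{\tilT} = \hatBun_\infty/\calG^u_{\bI}$ that lifts $R_\omega:\Bunpar_G\isom\Bunpar_G$.

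Next I analyze how $\widehat R_{\hat{\dot\omega}}$ interacts with the right $\tilT$-action on $\calL^{\tilT}$. For any $t\in\tilT(R)$ with a lift $\tilde t \in\calG_{\bI}(R)$, the composition $R_{\tilde t}\circ R_{\hat{\dot\omega}}= R_{\hat{\dot\omega}\tilde t}=R_{(\hat{\dot\omega}\tilde t\hat{\dot\omega}^{-1})\hat{\dot\omega}}$ on $\hatBun_\infty$ descends to the identity
\begin{equation*}
\widehat R_{\hat{\dot\omega}}(L\cdot t)=\widehat R_{\hat{\dot\omega}}(L)\cdot \leftexp{\omega^{-1}}{t}
\end{equation*}
on $\calL^{\tilT}$, where I have used the definition of the $\Omega_{\bI}$-action on $\tilT$ from Sec.\ \ref{ss:KM}. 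In other words, the pullback $\tilT$-torsor $R_\omega^*\calL^{\tilT}$ is isomorphic, as a $\tilT$-torsor on $\Bunpar_G$, to the twist of $\calL^{\tilT}$ by the automorphism $\omega^{-1}$ of $\tilT$.

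Finally, taking the associated line bundle for $\xi\in\xch(\tilT)$ converts this twist into a change of character: if one identifies the two torsors via $\widehat R_{\hat{\dot\omega}}^{-1}$ and then contracts with $\xi$, the resulting character of $\tilT$ is $t\mapsto \xi(\leftexp{\omega^{-1}}{t})=(\leftexp{\omega}{\xi})(t)$. Hence
\begin{equation*}
R_\omega^*\calL(\xi)=R_\omega^*\calL^{\tilT}\twtimes{\tilT,\xi}\AA^1 \cong \calL^{\tilT}\twtimes{\tilT,\leftexp{\omega}{\xi}}\AA^1 = \calL(\leftexp{\omega}{\xi}),
\end{equation*}
as required. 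The construction depends on the choice of the lift $\hat{\dot\omega}$, but changing the lift alters $\widehat R_{\hat{\dot\omega}}$ by the action of some $t_0\in\tilT$, which only rescales the resulting isomorphism by $\leftexp{\omega}{\xi}(t_0)\in\GG_m$; thus the isomorphism of line bundles itself is canonical up to scalar, which is all that is needed.

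No step looks like a serious obstacle: the only subtlety is keeping left/right actions and the sign of $\omega$ straight when computing the twist, and confirming that the central-extension lift $\hat{\dot\omega}$ exists and normalizes $\calG^u_{\bI}$---both of which are immediate from Sec.\ \ref{ss:KM} and the fact that $\calG_{\tilI}$ normalizes $\calG_{\bI}$.
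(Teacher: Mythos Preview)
Your proof is correct and follows essentially the same route as the paper's: both lift $R_\omega$ to $\calL^{\tilT}=\hatBun_\infty/\calG^u_{\bI}$ via the right action of a lift in $\calG_{\tilI}$, compute the commutation relation $R_{\Ad(\hatom^{-1})g}\circ R_{\hatom}=R_{\hatom}\circ R_g$ on $\hatBun_\infty$, descend to the quotient, and read off the twist of characters. Your write-up is slightly more explicit (you spell out why $\calG^u_{\bI}$ is preserved under conjugation and you track the dependence on the choice of lift), but there is no substantive difference in strategy.
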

\begin{proof}
Recall from Construction \ref{cons:linebd} that the right action of $\tilT$ on $\calL^{\tilT}$ comes from the right action of $\calG_{\bI}$ on $\hatBun_{\infty}$. On the other hand, the right action of $\Omega_{\bI}$ on $\Bunpar_G$ comes from the right action of $\calG_{\tilI}$ on $\hatBun_{\infty}$ (see the discussion in the beginning of Sec. \ref{ss:daha}). For any $\hatg\in\calG_{\bI}$ and $\hatom\in\calG_{\tilI}$, it is clear that:
\begin{equation*}
R_{\Ad(\hatom^{-1})\hatg}\circ R_{\hatom}=R_{\hatg\hatom}=R_{\hatom}\circ R_{\hatg}.
\end{equation*}
Taking the quotient by $\calG^u_{\bI}$, we get an equality of actions on $\calL^{\tilT}=\hatBun_{\infty}/\calG^u_{\bI}$:
\begin{equation*}
R_{\Ad(\omega^{-1})g}\circ R_{\omega}=R_{\omega}\circ R_{g},\textup{ for }\omega\in\Omega_{\bI},g\in\tilT.
\end{equation*}
Therefore the $\tilT$-torsor $R_{\omega}^*\calL^{\tilT}$ on $\Bunpar_G$ is the $\Ad(\omega)$-twist of $\calL^{\tilT}$. This proves the lemma.
\end{proof}

% SL(2)

\subsection{Simple reflections---a calculation in $\mathfrak{sl}_2$}\label{ss:simplerefl}

In this subsection, we check the condition (\ref{eq:commrefl}) in Def. \ref{def:daha} for $\xi\in\xcoch(T\times\grot)$. The idea is to reduce the problem to a calculation for the Steinberg variety of $\SL_2$. For $i=0,\cdots,n$, let $\bP_i$ be the standard parahoric subgroup whose Lie algebra $\frg_{\bP_i}$ is spanned by $\frg_{\bI}$ and the root space of $-\alpha_i$. We will abbreviate $L_{\bP_i},\frl_{\bP_i},B^{\bP_i}_{\bI},\frb^{\bP_i}_{\bI}$, etc. by $L_{i},\frl_i,B^i,\frb^i$, etc. 

\begin{lemma}\label{l:Hsi}
The reduced Hecke correspondence $\calH_{s_i}$ is a closed substack of $C_i=\Mpar\times_{\calM_{\bP_i}}\Mpar$.
\end{lemma}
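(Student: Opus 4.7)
The strategy is to construct a morphism $\calH_{s_i} \to C_i$ from the two projections $\calH_{s_i} \rightrightarrows \Mpar$ and show it is a closed immersion.

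The first task is to verify that the two projections become equal after composition with $\tforg^{\bP_i}_{\bI}:\Mpar \to \calM_{\bP_i}$. Since $\calH_{s_i}$ is by definition the reduced closure of its regular semisimple part $\calH^{\rs}_{s_i}$ and $\calM_{\bP_i}$ is separated, it suffices to check this identity on $\calH^{\rs}_{s_i}$. By Lem. \ref{l:hhecke}(1), the image of $\calH^{\rs}_{s_i}$ in $\Hecke^{\Bun}$ lies in the $s_i$-stratum $\Hecke^{\Bun}_{s_i}$, so the two Borel reductions at $x$ differ by an element of $\bI s_i \bI$. Because $\bP_i = \bI \sqcup \bI s_i \bI$ exhausts the $\bI$-double cosets inside $\bP_i$, both Borel reductions at $x$ induce the same $\bP_i$-reduction. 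The underlying Higgs data on $X - \{x\}$ are identified along the Hecke modification, and the compatibility of the Higgs field at $x$ with the $\bP_i$-reduction is implied by its compatibility with either Borel reduction. This produces the desired factorization $\calH_{s_i} \to C_i$.

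Next, I would verify this factorization is a closed immersion. Because $\tforg^{\bP_i}_{\bI}$ is proper by Lem. \ref{l:CartPQ}, the inclusion $C_i \hookrightarrow \Mpar \times \Mpar$ is already a closed immersion, so it is enough to show that $\calH_{s_i} \to \Mpar \times \Mpar$ is a closed immersion. On the rs locus this follows from the local-global product formula \ref{eq:parprod}: the fiber of $\calH^{\rs}_{s_i}$ over $(a,x) \in (\calA \times X)^{\rs}$ is identified with the set of pairs $(\tilw_1 \bI_x / \bI_x,\, \tilw_1 s_i \bI_x / \bI_x)$ among the $\tilW$-indexed $T$-fixed points of the affine Springer fiber, which inject into $\Mpar_{a,x} \times \Mpar_{a,x}$. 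Combined with the reducedness of $\calH_{s_i}$ and the properness of its two legs, this rs-injection extends to a closed immersion on the closure.

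\textbf{Main obstacle.} The most delicate point is extending the injectivity from the rs locus to the non-rs closure, since in principle the non-rs fibers of $\calH_{s_i}$ could acquire extra components that fail to inject into $\Mpar \times \Mpar$. Reducedness of $\calH_{s_i}$ together with the smallness of $\tforg^{\bP_i}_{\bI}$ from Prop. \ref{p:forsmall}(1) should control the fiber dimensions sufficiently, but the cleanest verification will likely proceed by an explicit local computation at non-regular points, reducing to a Steinberg-variety statement for $\SL(2)$ as foreshadowed by the section heading.
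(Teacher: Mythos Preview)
Your approach goes in the \emph{opposite direction} from the paper's, and the gap you yourself identify as the ``main obstacle'' is real and not resolved by the tools you invoke.

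The paper does not try to map $\calH_{s_i}$ into $C_i$. Instead it embeds $C_i$ into $\Heckep$: two points of $\Mpar$ with the same image in $\calM_{\bP_i}$ share the same underlying Hitchin pair up to a \emph{canonical} isomorphism, so the datum $\alpha$ required by $\Heckep$ is tautologically supplied, giving a canonical embedding $\gamma_i : C_i \hookrightarrow \Heckep$. Now $\calH_{s_i}$ is already a closed substack of $\Heckep$ by definition, so the question becomes whether it lies in $\gamma_i(C_i)$. Over the rs locus the paper observes that both $C_i^{\rs}$ and $\calH^{\rs}_{\leq s_i} = \calH^{\rs}_e \sqcup \calH^{\rs}_{s_i}$ are \'etale double covers of $\Mparrs$ via either projection (the former because $\tforg^{\bP_i,\rs}_{\bI}$ is an \'etale double cover by Lem.~\ref{l:CartPQ}, the latter because each $\calH^{\rs}_{\tilw}$ is a graph), and $\gamma_i(C_i^{\rs})^{\red}$ sits inside $\calH^{\rs}_{\leq s_i}$ by relative-position considerations. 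An inclusion of \'etale double covers is an isomorphism, so $\gamma_i(C_i^{\rs}) \cong \calH^{\rs}_{\leq s_i}$; taking closures gives $\calH_{s_i} \subset \gamma_i(C_i)$.

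Your route instead produces a map $\calH_{s_i} \to C_i$ from the universal property of the fiber product and then tries to prove it is a closed immersion. The reduction you make --- ``it is enough to show that $\calH_{s_i} \to \Mpar \times \Mpar$ is a closed immersion'' --- does not work: $\calH_{s_i}$ is a closed substack of $\Heckep$, not of $\Mpar \times_{\calA\times X} \Mpar$, and the forgetful map $\Heckep \to \Mpar \times_{\calA\times X} \Mpar$ that drops the isomorphism $\alpha$ is not a monomorphism (distinct gluings $\alpha$ can yield the same pair of parabolic Hitchin data). Even granting rs-injectivity from the product formula, reducedness plus properness of the legs does not by itself force a closed immersion on the closure; neither smallness of $\tforg^{\bP_i}_{\bI}$ nor a Steinberg-variety computation for $\SL(2)$ addresses this, since the issue is about the map of correspondences, not about fiber dimensions. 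The paper's reversal of direction sidesteps the problem entirely: once $C_i$ is embedded in $\Heckep$, no further immersion needs to be checked, only containment, and that follows from the degree-count on the rs locus plus taking closures.
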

\begin{proof}
For two point $(x,\calE_i,\varphi_i,\calE^B_{x,i})\in\Mpar(R)$ ($i=1,2$) with the same image in $\calM_{\bP_i}$, we have a canonical isomorphism $(\calE_1,\varphi_1)|_{\XR-\Gamma(x)}\cong(\calE_2,\varphi_2)|_{\XR-\Gamma(x)}$. Therefore, we have a canonical embedding of self-correspondences of $\Mpar$:
\begin{equation*}
\gamma_i:C_i:=\Mpar\times_{\calM_{\bP_i}}\Mpar\hookrightarrow\Heckep.
\end{equation*}

Note that $\Hecke^{\Bun}_{\leq s_i}=\Bunpar_G\times_{\Bun_{\bP_i}}\Bunpar_G$, therefore the image of $\gamma_i(C_i)$ in $\Hecke^{\Bun}$ lies in $\Hecke^{\Bun}_{\leq s_i}$.  Then by Lem. \ref{l:hhecke}, the reduced structure of $\gamma_i(C_i^{\rs})$ must lie in $\calH^{\rs}_{\leq s_i}=\calH^{\rs}_{e}\coprod\calH^{\rs}_{s_i}$. By Lem. \ref{l:CartPQ} applied to $\bI\subset\bP_i$, we see that $\tforg^{\bP_i,\rs}_{\bI}:\Mparrs\to\calM^{\rs}_{\bP_i}$ is an \'etale double cover. Therefore the two projections $C_i^{\rs}\rightrightarrows\Mparrs_{\bI}$ are also \'etale double covers. Since the two projections $\calH^{\rs}_{\leq s_i}\rightrightarrows\calM^{\rs}_{\bI}$ are also \'etale double covers, $\gamma_i$ must induce an isomorphism $C_i^{\rs}\isom\calH^{\rs}_{\leq s_i}$. Taking closures, we conclude that $\calH_{s_i}$ lies in $\gamma_i(C_i)$. This proves the lemma.
\end{proof}

By Lem. \ref{l:CartPQ} and Lem. \ref{l:Hsi}, we have a Cartesian diagram of correspondences
\begin{equation}\label{d:cst}
\xymatrix{C_i\ar[r]\ar@<-1ex>@/_/[d]_{\overleftarrow{c_i}}\ar@<1ex>@/^/[d]^{\overrightarrow{c_i}}\ar[r] & [\unSt_i/\unL_{i}]_D\ar@<-1ex>@/_/[d]\ar@<1ex>@/^/[d]\ar[r] & [\St_i/\Ln_i]\ar@<-1ex>@/_/[d]_{\overleftarrow{\st_i}}\ar@<1ex>@/^/[d]^{\overrightarrow{\st_i}}\\
\Mpar\ar[d]^{\tforg^i}\ar[r] & [\unb^i/\unB^i]_D\ar[d]\ar[r] & [\till_i/\Ln_i]\ar[d]^{\pi^i}\\
\calM_{\bP_i}\ar[r]^{} & [\unl_i/\unL_i]_D\ar[r] & [\frl_i/\Ln_i]}
\end{equation}
We explain the notations. Here $\St_i$ is the Steinberg variety $\till_i\times_{\frl_i}\till_i$ of $\frl_i$ and $\till_i$ is the Grothendieck simultaneous resolution of $\frl_i$. Recall that the action of $\Aut_{\calO}$ on $L_i$ factors through a finite dimensional quotient $Q$. We assume that $Q$ surjects to $\grot$. The conjugation action of $L_i$ on $L_i$ and the action of $\Aut_{\calO}$ on $L_i$ gives an action of $L_i\rtimes Q$ on $L_i$, and hence on $\frl_i,\till_i$ and $\St_i$. The group $\Ln_i$ in the diagram (\ref{d:cst}) is defined as
\begin{equation*}
\Ln_i=(L_i\rtimes Q)\times\GG_m,
\end{equation*}
which acts on $\frl_i,\till_i$ and $\St_i$ with $\GG_m$ acting by dilation.

The natural projection $B^i\to T$ extends to the projection $B^i\rtimes Q\to T\times\grot$. Therefore we have a morphism $[\till_i/\Ln_i]=[\frb^i/(B^i\rtimes Q\times\GG_m)]\to\BB (T\times\grot)$, which gives a $T\times\grot$-torsor on $[\till_i/\Ln_i]$. The associated line bundles on $[\till_i/\Ln_i]$ are denoted by $\calN(\xi)$, for $\xi\in\xch(T\times\grot)$.

Let $\St_{i}=\St^{+}_{i}\cup\St^-_{i}$ be the decomposition into two irreducible components, where $\St^+_i$ is the diagonal copy of $\till_i$, and $\St^-_i$ is the non-diagonal component. Let $\epsilon$ be the composition
\begin{equation*}
\epsilon:C_i\to[\unSt_i/\unL_i]_D\to[\St_i/\Ln_i].
\end{equation*}

\begin{lemma}\label{l:redcomm}
For $\xi\in\xch(T\times\grot)$, the action of $s_i\xi-\leftexp{s_i}{\xi}s_i-\jiao{\xi,\alpha^\vee_i}u$ on $\fQl$ is given by the following cohomological correspondence in $\Corr(C_i;\Ql[2](1),\Ql)$:
\begin{equation*}
\epsilon^*\left([\St^-_i/\Ln_i]\cup\left(\overrightarrow{\st_i}^*c_1(\calN(\xi))-\overleftarrow{\st_i}^*c_1(\calN(\leftexp{s_i}{\xi}))\right)-[\St^+_i/\Ln_i]\cup\jiao{\xi,\alpha_i^\vee}v\right)
\end{equation*}
where $v\in\cohog{2}{[\St_i/\Ln_i]}(1)$ is the image of the generator of $\cohog{2}{\BB\GG_m}(1)$ (for the $\GG_m$ factor in $\Ln_i$).
\end{lemma}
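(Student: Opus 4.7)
The plan is to decompose $C_i$ into its two irreducible components, to write each of the three operators $s_i\xi$, $\leftexp{s_i}{\xi}s_i$ and $\langle\xi,\alpha_i^\vee\rangle u$ as an explicit cohomological correspondence on $C_i$, and then to match the resulting total class with the asserted pullback from $[\St_i/\Ln_i]$. By Lem. \ref{l:Hsi}, $\calH_{s_i}$ is a closed substack of $C_i$; and since $\tforg^{\bP_i}_{\bI}:\Mpar\to\calM_{\bP_i}$ is a partial Grothendieck resolution for the rank-one Levi $L_i$ (generically a double cover), $C_i$ is the reduced union of the diagonal $C_i^+\cong\Mpar$ and a second irreducible component $C_i^-$, which must then coincide with $\calH_{s_i}$. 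Tracking preimages under $\epsilon$ one gets $\epsilon^{-1}([\St_i^{\pm}/\Ln_i])=C_i^{\pm}$.

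Each term is then easy to read off. The operator $s_i$ is the fundamental class of $\calH_{s_i}=C_i^-$, so by the projection formula (with the convention that $AB$ means ``apply $B$, then $A$''), $s_i\xi$ becomes the correspondence on $C_i^-$ with class $\overrightarrow{c_i}^*c_1(\calL(\xi))$, while $\leftexp{s_i}{\xi}s_i$ becomes the one on $C_i^-$ with class $\overleftarrow{c_i}^*c_1(\calL(\leftexp{s_i}\xi))$. The operator $\langle\xi,\alpha_i^\vee\rangle u$ is cup product by $\langle\xi,\alpha_i^\vee\rangle c_1(\calO_X(D))$, supported on the diagonal $C_i^+$. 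Matching to the stated RHS then requires two line bundle identifications. First, for $\xi\in\xch(T\times\grot)$, I need $\calL(\xi)|_{\Mpar}\cong\ev^*\calN(\xi)$, where $\ev:\Mpar\to[\unb^i/\unB^i]_D\to[\till_i/\Ln_i]$ is the composition of $\ev^{\bP_i}_{\bI}$ (Construction \ref{cons:ev}) with the natural quotient; for $\xi\in\xch(T)$ this follows from Lem. \ref{l:easyline}(2) (both sides are the line bundle associated to the $B$-reduction at the marked point via $\xi$), and for $\xi=\delta$ it follows from Lem. \ref{l:easyline}(3) together with Cor. \ref{c:canX} (both sides being the pullback of $\omega_X$ from $X$, via the $\grot$-factor in $\Ln_i$). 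Second, the scaling $\GG_m$-factor of $\Ln_i$ was introduced in Construction \ref{cons:ev} precisely to account for the $\calO_X(D)$-twist in $[\unl_i/\unL_i]_D$, so its Chern class generator $v\in\cohog{2}{\BB\GG_m}(1)$ satisfies $\epsilon^*v=c_1(\calO_X(D))|_{C_i^+}$ on the diagonal.

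Assembling these pieces, the total cohomological correspondence for $s_i\xi-\leftexp{s_i}{\xi}s_i-\langle\xi,\alpha_i^\vee\rangle u$ is $\overrightarrow{c_i}^*c_1(\calL(\xi))-\overleftarrow{c_i}^*c_1(\calL(\leftexp{s_i}\xi))$ on $C_i^-=\epsilon^{-1}[\St_i^-/\Ln_i]$ and $-\langle\xi,\alpha_i^\vee\rangle\epsilon^*v$ on $C_i^+=\epsilon^{-1}[\St_i^+/\Ln_i]$, which by Step 3 is exactly the pullback via $\epsilon$ of the claimed class on $[\St_i/\Ln_i]$. The main technical obstacle is the first line-bundle matching: one must carefully trace how the global $\tilT$-torsor on $\Bunpar_G$ (built from the determinant of $\mathbf{R}\Gamma(X,\Ad(\calE))$ together with the $\Aut_\calO$-torsor $\Coor(X)$) restricts under evaluation to the local $T\times\grot$-torsor on $[\till_i/\Ln_i]$ coming from the projection $B^i\rtimes Q\twoheadrightarrow T\times\grot$. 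This is a compatibility between the Kac-Moody-theoretic and Lie-algebra-theoretic constructions; care is needed with the conventions for $\delta$ versus $\omega_X$ and with confirming that the central $\Lambda_{\can}$-direction (which lies outside $\xch(T\times\grot)$) contributes no correction.
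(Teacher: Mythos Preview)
Your overall strategy is the paper's: express $s_i\xi-\leftexp{s_i}{\xi}s_i-\jiao{\xi,\alpha^\vee_i}u$ as the correspondence
\[
[\calH_{s_i}]\cup\bigl(\overrightarrow{c_i}^*c_1(\calL(\xi))-\overleftarrow{c_i}^*c_1(\calL(\leftexp{s_i}{\xi}))\bigr)-\jiao{\xi,\alpha^\vee_i}[\Delta(\Mpar)]\cup c_1(D)
\]
via Lemma~\ref{l:cup}, then match each piece with a pullback along $\epsilon$. Your line-bundle identifications $\calL(\xi)\cong\ev^*\calN(\xi)$ and $\epsilon^*v=c_1(D)$ are correct and in fact more explicit than the paper's one-line treatment.

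The gap is in your first paragraph. You assert that $C_i$ is the reduced union of two irreducible components $C_i^{\pm}$ with $C_i^-=\calH_{s_i}$, and that the scheme-theoretic preimages $\epsilon^{-1}([\St_i^{\pm}/\Ln_i])$ recover $C_i^{\pm}$. Neither claim is justified. The fiber product $C_i=\Mpar\times_{\calM_{\bP_i}}\Mpar$ arises by base-changing $\St_i$ along $\ev_{\bP_i}:\calM_{\bP_i}\to[\frl_i/\Ln_i]$, and this evaluation map is not known to be flat (the paper explicitly flags exactly this concern in the discussion preceding Prop.~\ref{p:forsmall}). Without flatness one cannot conclude that the preimage of $\St_i^-$ is reduced, irreducible, or equal to $\calH_{s_i}$, nor that pullback of the fundamental class of $\St_i^-$ equals the fundamental class of its preimage. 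What you actually need is the equality $\epsilon^*[\St^-_i/\Ln_i]=[\calH_{s_i}]$ as elements of $\Corr(C_i;\Ql,\Ql)$ (at least at the level of induced actions on $\fQl$). The paper does not attempt a scheme-theoretic argument; instead it observes that $C_i$, as a finite-type substack of $\Heckep$, satisfies the graph-like condition (G-2) of \cite[Def.~A.5.1]{GSI} with respect to $(\calA\times X)^{\rs}\subset\calA\times X$ (via \cite[Lem.~4.4.4]{GSI}), and then invokes \cite[Lem.~A.5.2]{GSI} to reduce the comparison to $(\calA\times X)^{\rs}$, where both classes visibly restrict to the graph $\calH^{\rs}_{s_i}$. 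You should replace your component argument by this (G-2) reduction.
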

\begin{proof}
By Construction \ref{cons:action} and Lem. \ref{l:cup} about the cup product action on cohomological correspondences, the action of $s_i\xi-\leftexp{s_i}{\xi}s_i-\jiao{\xi,\alpha^\vee_i}u$ on $\fQl$ is given by the following cohomological correspondence in $\Corr(C_i;\Ql[2](1),\Ql)$:
\begin{equation*}
[\calH_{s_i}]\cup\left(\overrightarrow{c_i}^*c_1(\calL(\xi))-\overleftarrow{c_i}^*c_1(\calL(\leftexp{s_i}{\xi}))\right)-\jiao{\xi,\alpha^\vee_i}[\Delta(\Mpar)]\cup c_1(D).
\end{equation*}
where $[\calH_{s_i}]\in\Corr(C_i;\Ql,\Ql)$ is the image of fundamental class of $\calH_{s_i}$ via the natural closed embedding $\calH_{s_i}\hookrightarrow C_i$ (see Lem. \ref{l:Hsi}), and $\Delta(\Mpar)\subset C_i$ is the diagonal.

Therefore, to prove the lemma, we have to check
\begin{eqnarray}
\label{eq:LNxi1}\epsilon^*\overrightarrow{\st_i}^*\calN(\xi)&=&\overrightarrow{c_i}^*\calL(\xi);\\
\label{eq:LNxi2}\epsilon^*\overleftarrow{\st_i}^*\calN(\leftexp{s_i}{\xi})&=&\overleftarrow{c_i}^*\calL(\leftexp{s_i}{\xi});\\
\label{eq:vcD}\epsilon^*v&=&c_1(D)\in\cohog{2}{C_i}(1);\\
\label{eq:pullHsi}\epsilon^*[\St^-_i/\Ln_i]&=&[\calH_{s_i}]\in\Corr(C_i;\Ql,\Ql);
\end{eqnarray}

(\ref{eq:LNxi1}) Let $\ev:\Mpar\to[\till_i/\Ln_i]$ be the evaluation morphism. Then we have $\calL(\xi)=\ev^*\calN(\xi)$. By the first two rows of the diagram (\ref{d:cst}), we have
\begin{equation*}
\epsilon^*\overrightarrow{\st_i}^*\calN(\xi)=\overrightarrow{c_i}^*\ev^*\calN(\xi)=\overrightarrow{c_i}^*\calL(\xi).
\end{equation*}

(\ref{eq:LNxi2}) is proved in a similar way as (\ref{eq:LNxi1}).

(\ref{eq:vcD}) By definition, we have a commutative diagram
\begin{equation*}
\xymatrix{[\unSt_i/\unL_i]_D\ar[r]\ar[d] & [\St_i/L_i\rtimes Q\times\GG_m]\ar[d]\\
X\ar[r]^{\rho_D} & \BB\GG_m}
\end{equation*}
Therefore, the generator $v\in\cohog{2}{\BB\GG_m}(1)$ pulls back to $c_1(D)\in\cohog{2}{X}(1)$, which further pulls back to $c_1(D)\in\cohog{2}{C_i}(1)$.

(\ref{eq:pullHsi}) As a finite type substack of $\Heckep$, $C_i$ satisfies (G-2) in \cite[Def. A.5.1]{GSI} with respect to $(\calA\times X)^{\rs}\subset\calA\times X$ (see \cite[Lem. 4.4.4]{GSI}). By \cite[Lem. A.5.2]{GSI}, we only need to verify the equality (\ref{eq:pullHsi}) over $(\calA\times X)^{\rs}$, which is obvious.
\end{proof}

By Lem. \ref{l:redcomm}, the condition (\ref{eq:commrefl}) for $\xi\in\xch(T\times\grot)$ reduces to the following identity.

\begin{prop}
For each $\xi\in\xch(T\times\grot)$, the following identity hold in $\Corr([\St_i/\Ln_i];\Ql[2](1),\Ql)$:
\begin{equation}\label{eq:corr2}
[\St^-_i/\Ln_i]\cup\left(\overrightarrow{\st_i}^*c_1(\calN(\xi))-\overleftarrow{\st_i}^*c_1(\calN(\leftexp{s_i}{\xi}))\right)=[\St^+_i/\Ln_i]\cup\jiao{\xi,\alpha_i^\vee}u.
\end{equation}
\end{prop}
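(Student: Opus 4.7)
The identity (\ref{eq:corr2}) is a universal equality of cohomological correspondences on the Steinberg variety $\St_i$ of the rank-one Levi $L_i$; since $L_i$ is almost simple of rank one (a finite $\SL_2$/$\PGL_2$ subgroup of $G$ for $i\geq 1$, or an ``affine'' $\SL_2$ with nontrivial $\grot$-action for $i=0$), the identity reduces to a universal calculation on the classical $\SL_2$-Steinberg variety equipped with additional $T\times\grot\times\GG_m$-equivariance. My plan is to verify it by pushing both sides forward via the proper projection $\overrightarrow{\st_i}\colon\St_i\to\till_i$ and comparing the resulting identity in $\cohog{*}{[\till_i/\Ln_i]}$.

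The geometric setup is as follows: $\St_i=\St^+_i\cup\St^-_i$ has two irreducible components, both smooth of dimension $\dim\till_i$, with $\St^+_i\cong\till_i$ the diagonal and $\St^-_i$ the anti-diagonal component. They meet along the smooth divisor $\widetilde{\frt}_i\subset\till_i$ of pairs $(x,b)$ with $x$ in the Cartan of $b$. The key local input is that the normal bundle of $\widetilde{\frt}_i$ in $\till_i$ is an $\Ln_i$-equivariant line bundle of $T\times\grot$-weight $\alpha_i$ and $\GG_m$-dilation weight $1$, so its equivariant first Chern class equals $c_1(\calN(\alpha_i))+u$. This is the geometric origin of the $u$-correction in the DAHA relation.

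The RHS pushes forward to $\jiao{\xi,\alpha_i^\vee}u$ since $\overrightarrow{\st_i}|_{\St^+_i}$ is the identity. For the LHS, the projection formula together with $\overrightarrow{\st_i}_*[\St^-_i]=1$ gives
\begin{equation*}
\overrightarrow{\st_i}_*\bigl([\St^-_i]\cup(\overrightarrow{\st_i}^*c_1(\calN(\xi))-\overleftarrow{\st_i}^*c_1(\calN(\leftexp{s_i}{\xi})))\bigr)=c_1(\calN(\xi))-T_i\bigl(c_1(\calN(\leftexp{s_i}{\xi}))\bigr),
\end{equation*}
where $T_i:=\overrightarrow{\st_i}_*\overleftarrow{\st_i}^*|_{\St^-_i}$ is the convolution operator associated to the anti-diagonal component. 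The heart of the proof is the graded Demazure--Lusztig identity
\begin{equation*}
T_i(c_1(\calN(\eta)))=c_1(\calN(\leftexp{s_i}{\eta}))+\jiao{\eta,\alpha_i^\vee}u\qquad(\eta\in\xch(T\times\grot)),
\end{equation*}
which upon specializing $\eta=\leftexp{s_i}{\xi}$ (and using $\jiao{\leftexp{s_i}{\xi},\alpha_i^\vee}=-\jiao{\xi,\alpha_i^\vee}$) yields $T_i(c_1(\calN(\leftexp{s_i}{\xi})))=c_1(\calN(\xi))-\jiao{\xi,\alpha_i^\vee}u$, so the pushforwards of both sides agree.

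The main obstacle is establishing the Demazure--Lusztig formula for $T_i$. Over the regular semisimple locus $\till_i^{\rs}$, the component $\St^-_i$ is the graph of the Weyl involution and $T_i$ acts simply by $s_i^*$, with no $u$-term; the $\jiao{\eta,\alpha_i^\vee}u$ correction arises from the excess intersection along $\widetilde{\frt}_i$, whose computation is parallel to Lusztig's analysis in \cite{L88} but must be carried out with the additional $\GG_m$-dilation equivariance that supplies the factor of $u$. Once the Demazure--Lusztig formula is in hand, one concludes the full identity on $\St_i$ either by checking that $\overrightarrow{\st_i}_*$ is injective on the relevant subspace of correspondences (which follows from the smallness of $\overrightarrow{\st_i}$ together with the two-component decomposition), or by separately restricting both sides to $\St^-_i$ and verifying compatibility via the Gysin image from $\widetilde{\frt}_i$.
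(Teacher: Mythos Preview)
Your push-forward/Demazure--Lusztig approach is a legitimate alternative to the paper's direct argument, and the formula $T_i(c_1(\calN(\eta)))=c_1(\calN(\leftexp{s_i}{\eta}))+\jiao{\eta,\alpha_i^\vee}u$ is in fact correct. However the execution contains concrete geometric errors and a real logical gap.

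\textbf{Geometric errors.} Your description of $\St^+_i\cap\St^-_i$ is wrong. As $b_2\to b_1$ the common Cartan $\frb_1\cap\frb_2$ degenerates to the \emph{nilradical} $\frn_{b_1}$, not to a Cartan; indeed a Borel carries no canonical Cartan, so ``$x$ in the Cartan of $b$'' is ill-defined. The intersection is $\St^{\nil}_i=\tau^{-1}(0)$ for $\tau:\till_i\to\frt^{\ad}$. Consequently the normal line to this divisor in $\till_i=\St^+_i$ has $T\times\grot$-weight $0$ (the adjoint action on the Cartan is trivial) and dilation weight $1$; its Euler class is $u$, not $c_1(\calN(\alpha_i))+u$ as you assert.

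\textbf{The gap.} The push-forward $\overrightarrow{\st_i}_*:\upH^{\BM,\Ln_i}_{2d-2}(\St_i)\to\upH^{2}_{\Ln_i}(\till_i)$ is \emph{not} injective --- a dimension count already shows the source is strictly larger. The map $\overrightarrow{\st_i}|_{\St^-_i}$ is only semi-small (fibre $\PP^1$ over the codimension-$2$ zero-section locus), so ``smallness'' cannot be invoked. What \emph{would} close the argument is to show that both sides of the identity lie in the one-dimensional image of $[\St^{\nil}_i]$ under the Gysin map, on which push-forward is visibly injective; but proving that the LHS is such a multiple is precisely the excess-intersection computation you defer.

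The paper avoids all of this by working directly on $\St_i$: one decomposes $\xi$ into its $s_i$-invariant part (where both sides vanish trivially) and the direction $\xi=\alpha_i$, and for the latter one identifies each side separately with $2[\St^{\nil}_i]$ --- the LHS via the equivariant line-bundle identity $\calI_{\Delta}^{\otimes2}\cong\omega_{\PP^1_i\times\PP^1_i}$ pulled back along the smooth projection $h^-:\St^-_i\to\PP^1_i\times\PP^1_i$, the RHS via $u\cup[\St^+_i]=[\tau^{-1}(0)]$. No push-forward is needed, so no injectivity question arises.
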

\begin{proof}
Since the reductive group $L_i$ has semisimple rank one, we can decompose $\xch(T\times\grot)_{\QQ}=\xch(T\times\grot)\otimes_{\ZZ}\QQ$ into $\pm1$-eigenspaces of the reflection $s_i$:
\begin{equation*}
\xch(T\times\grot)_{\QQ}=\xch(T\times\grot)^{s_i}_{\QQ}\oplus\QQ\alpha_i,
\end{equation*}
where $\alpha_i$ spans the $-1$-eigenspace of $s_i$.

To prove (\ref{eq:corr2}), it suffices to prove it for $\xi\in\xch(T\times\grot)^{s_i}$ and $\xi=\alpha_i$ separately. In the first case, taking Chern class induces an isomorphism
\begin{equation*}
c_1:\xch(T\times\grot)^{s_i}_{\Ql}\isom\cohog{2}{\BB(L_i\rtimes Q)}(1)\hookrightarrow\cohog{2}{\BB(B^i\rtimes Q)}(1),
\end{equation*}
Hence $c_1(\calN(\xi))$ lies in the image of the pull-back map
\begin{equation*}
\pi^{i,*}:\cohog{2}{\BB\Ln_i}(1)\to\cohog{2}{[\frl_i/\Ln_i]}(1)\to\cohog{2}{[\till_i/\Ln_i]}(1).
\end{equation*}
Since $\pi^i\circ\overleftarrow{\st_i}=\pi^i\circ\overrightarrow{\st_i}$, we conclude that
\begin{equation*}
\overrightarrow{\st_i}^*c_1(\calN(\xi))=\overleftarrow{\st_i}^*c_1(\calN(\xi))=\overleftarrow{\st_i}^*c_1(\calN(\leftexp{s_i}{\xi}))
\end{equation*}
Therefore, the LHS of (\ref{eq:corr2}) is zero. On the other hand, since $\leftexp{s_i}{\xi}=\xi$, we have $\jiao{\xi,\alpha_i^\vee}=0$, hence the RHS of (\ref{eq:corr2}) is also zero. This proves the identity (\ref{eq:corr2}) in the case $\xi\in\xch(T\times\grot)^{s_i}$.

Finally we treat the case $\xi=\alpha_i$. Since $L_i\rtimes Q$ is connected, the action of $L_i\rtimes Q$ on $L_i$ factors through a homomorphism $L_i\rtimes Q\to L^{\ad}_{i}$, where $L^{\ad}_i$ is the adjoint from of $L_i$ (isomorphic to $\textup{PGL}(2)$). Let $\PP^1_i=L_i/B^i=L_i\rtimes Q/B^i\rtimes Q=L^{\ad}_i/B^{\ad,i}$ be the flag variety of $L_i$ or $L^{\ad}_i$. The pull-back
\begin{equation*}
\upH^2_{L^{\ad}_i}(\PP^1_i)\to \upH^2_{L_i\rtimes Q}(\PP^1_i)=\cohog{2}{\BB(B^i\rtimes Q)}(1)=\xch(T\times\grot)_{\Ql}
\end{equation*}
has image $\Ql\alpha_i$, and the line bundle $\calN(\alpha_i)$ on $\till_i$ is the pull-back of the canonical bundle $\omega_{\PP^1_i}$ on $\PP^1_i$. We can therefore only consider the $L^{\ad}_i\times\GG_m$-action on $\St_i$. The equality (\ref{eq:corr2}) then reduces to the following identity in the $L^{\ad}_i\times\GG_m$-equivariant Borel-Moore homology group $\upH^{\BM,L^{\ad}_i\times\GG_m}_{2d-2}(\St_i)(1)$ ($d=\dim\St_i$):
\begin{equation}\label{eq:corrSt}
h^{-,*}c_1(\omega_{\PP^1_i\times\PP^1_i})\cup[\St^-_i]=2v\cup[\St^+_i],
\end{equation}
where $h^-$ is the $L^{\ad}_i\times\GG_m$-equivariant morphism $h^-:\St^-_i\to\PP^1_i\times\PP^1_i$.

We claim that both sides of (\ref{eq:corrSt}) are equal to the fundamental class $2[\St^{\nil}]\in\upH^{\BM,L^{\ad}_i\times\GG_m}_{2d-2}(\St_i)(1)$, where $\St^{\nil}$ is the preimage of the nilpotent cone in $\frl_i$ under $\St_i\to\frl_i$.

On one hand, $\St^{\nil}$ is the preimage of the diagonal $\Delta(\PP^1_i)\subset\PP^1_i\times\PP^1_i$ under $h^-$. Let $\calI_{\Delta}$ be the ideal sheaf of the diagonal $\Delta(\PP^1_i)$, viewed as an $L^{\ad}_i$-equivariant line bundle on $\PP^1_i\times\PP^1_i$. We claim that
\begin{equation}\label{eq:2line}
\calI_{\Delta}^{\otimes2}\cong\omega_{\PP^1_i\times\PP^1_i}\in\Pic_{L^{\ad}_i}(\PP^1_i\times\PP^1_i).
\end{equation}
In fact, since $L^{\ad}_i$ does not admit nontrivial characters, we have an isomorphism $\Pic_{L^{\ad}_i}(\PP^1_i\times\PP^1_i)\cong\ZZ\oplus\ZZ$ given by taking the degrees along the two rulings of $\PP^1_i\times\PP^1_i$. Then (\ref{eq:2line}) follows by comparing the degrees along the rulings.

Since the Poincar\'e dual of $c_1(\calI_\Delta)$ is the cycle class $[\Delta(\PP^1_i)]\in \upH^{\BM,L^{\ad}_i}_2(\PP^1_i\times\PP^1_i)(1)$, we get from (\ref{eq:2line}) that
\begin{equation}\label{eq:classdel}
c_1(\omega_{\PP^1_i\times\PP^1_i})\cup[\PP^1_i\times\PP^1_i]=2[\Delta(\PP^1_i)].
\end{equation}
Since the morphism $h^-$ is smooth ($\St^-_i$ is in fact the total space of a line bundle over $\PP^1_i\times\PP^1_i$), we can pull-back (\ref{eq:classdel}) along $h^-$ to get
\begin{equation}\label{eq:Stnil1}
h^{-,*}c_1(\omega_{\PP^1_i\times\PP^1_i})\cup[\St^-_i]=2[\St^{\nil}_i]\in \upH^{\BM,L^{\ad}_i\times\GG_m}_{2d-2}(\St^-_i)(1).
\end{equation}

On the other hand, consider the projection $\tau:\till_i\to\frt\to\frt^{\ad}$ ($\frt^{\ad}$ is the universal Cartan for $L^{\ad}_i$), then $\St^{\nil}_i=\tau^{-1}(0)$. The class $[0]\in \upH^{\BM,\GG_m}_0(\frt^{\ad})(1)$ is the Poincar\'e dual of $u$ ($\GG_m$ acts on the affine line $\frt^{\ad}$ by dilation). Since $\tau$ is $L^{\ad}_i\times\GG_m$-equivariant and $L^{\ad}_i$ acts trivially on $\frt^{\ad}$, we conclude that
\begin{equation}\label{eq:Stnil2}
[\St^{\nil}_i]=u\cup[\St^+_i]\in \upH^{\BM,L^{\ad}_i\times\GG_m}_{2d-2}(\St^+_i)(1).
\end{equation}

If we view both identities (\ref{eq:Stnil1}) and (\ref{eq:Stnil2}) as identities in $\upH^{\BM,L^{\ad}_i\times\GG_m}_{2d-2}(\St_i)(1)$, we get the identity (\ref{eq:corrSt}). This completes the proof.
\end{proof}

% Pf

\subsection{Completion of the proof of Theorem \ref{th:daction}}\label{ss:pfdaha}
To prove Th. \ref{th:daction}, it only remains to verify the relation (\ref{eq:commrefl}) in Def. \ref{def:daha} for $\xi=\Lambda_{\can}$.

For each standard parahoric subgroup $\bP\subset G(F)$, define a line bundle $\calL_{\bP,\can}$ on $\Bun_{\bP}$ which to every point $(x,\calE,\tau_x\modo\bP)\in\Bun_{\bP}(R)$ assigns the invertible $R$-module $\det\bR\Gamma(\XR,\Ad_{\bP}(\calE))$. In particular, $\calL_{\bG,\can}$ is the pull-back of $\canb$ from $\Bun_G$ to $\Bun_{\bG}=\Bun_{G}\times X$.

\begin{lemma}\label{l:pullcan} For each standard parahoric subgroup $\bP\subset G(F)$, we have
\begin{equation}\label{eq:IP}
\calL_{\bI,\can}\otimes\calL(-2\rho_{\bP})\cong\forg^{\bP,*}_{\bI}\calL_{\bP,\can}\in\Pic(\Bunpar_G).
\end{equation}
Here $2\rho_{\bP}$ is the sum of positive roots in $L_{\bP}$ (with respect to the Borel $B^{\bP}_{\bI}$).
\end{lemma}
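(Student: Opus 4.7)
The plan is to produce a short exact sequence of coherent sheaves on $\XR$ relating $\Ad_{\bI}(\calE)$ and the pullback of $\Ad_{\bP}(\calE)$, apply $\det\bR\Gamma(\XR,-)$ to pass to line bundles on $\Bunpar_G$, and identify the resulting determinant contribution with $\calL(-2\rho_{\bP})$ via Lem.~\ref{l:easyline}(2).

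The inclusion $\Ad_{\bI}(\calE)\hookrightarrow\forg^{\bP,*}_{\bI}\Ad_{\bP}(\calE)$ already appears in the proof of Lem.~\ref{l:AdP}(2); I would argue it fits into a short exact sequence
\[
0\to\Ad_{\bI}(\calE)\to\forg^{\bP,*}_{\bI}\Ad_{\bP}(\calE)\to Q\to 0
\]
with $Q$ a coherent sheaf supported on $\Gamma(x)$, and identify $Q\cong i_*(\calE^B_x\times^B(\frg_{\bP}/\frg_{\bI}))$. Here $\calE^B_x$ is the $B$-reduction at $x$ encoded by the $\bI$-level structure (using $\Bun_{\bI}=\Bunpar_G$), and $B$ acts on the finite-dimensional quotient $\frg_{\bP}/\frg_{\bI}$ via $B\hookrightarrow\bI$ and the adjoint action. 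This identification is essentially tautological from Construction~\ref{cons:adP}: any local lift of the $\bI$-structure to a full trivialization $\tau_x$ identifies $\Ad_{\bI}(\calE)|_{\disk_x}$ with $\frg_{\bI}\otimes_k\hatO_x$ and $\Ad_{\bP}(\calE)|_{\disk_x}$ with $\frg_{\bP}\otimes_k\hatO_x$, so the cokernel is the $k$-vector space $\frg_{\bP}/\frg_{\bI}$, and tracking the ambiguity of the lift gives the twist by the $B$-torsor stated above.

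Applying $\det\bR\Gamma(\XR,-)$ to the exact sequence would then yield
\[
\forg^{\bP,*}_{\bI}\calL_{\bP,\can}\cong\calL_{\bI,\can}\otimes\det\bR\Gamma(\XR,Q).
\]
Since $Q$ is locally free of finite rank on $\Gamma(x)\cong\Spec R$, we have $\det\bR\Gamma(\XR,Q)=\det(\calE^B_x\times^B(\frg_{\bP}/\frg_{\bI}))$. The unipotent radical of $B$ must act trivially on the one-dimensional space $\det(\frg_{\bP}/\frg_{\bI})$, so this determinant is the line bundle associated to the $B$-torsor $\calE^B_x$ and the $T$-character $\det(\frg_{\bP}/\frg_{\bI})\in\xch(T)$. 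By Lem.~\ref{l:easyline}(2), it equals $\calL(\det(\frg_{\bP}/\frg_{\bI}))$.

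The computation therefore reduces to identifying the $T$-character $\det(\frg_{\bP}/\frg_{\bI})$. Since $\frg_{\bP}/\frg_{\bI}$ is the tangent space at the identity coset of the smooth projective variety $\bP/\bI\cong L_{\bP}/B^{\bP}_{\bI}$, its $T$-weights are precisely the roots of $L_{\bP}$ opposite to $B^{\bP}_{\bI}$, i.e., the negative roots of $L_{\bP}$. Summing over these gives $\det(\frg_{\bP}/\frg_{\bI})=-2\rho_{\bP}$, and the result follows. The one place that deserves care is the weight bookkeeping when the affine simple reflection $s_0$ lies in $W_{\bP}$, so that $\bP\not\subset\bG$ and one contribution to $\frg_{\bP}/\frg_{\bI}$ is spanned by $t^{-1}e_{\theta}$ with $T$-weight $\theta$; one needs to verify that $-\theta$ is the simple root of $L_{\bP}$ coming from $\alpha_0$, so that $\theta$ counts as a negative root of $L_{\bP}$ with respect to $B^{\bP}_{\bI}$ and the sum formula remains consistent.
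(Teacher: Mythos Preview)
Your argument is essentially the same as the paper's: form the short exact sequence
\[
0\to\Ad_{\bI}(\calE)\to\Ad_{\bP}(\calE)\to i_*\calQ\to0,
\]
take $\det\bR\Gamma$, and identify $\det\calQ$ with $\calL(-2\rho_{\bP})$ via the weight of $\det(\frg_{\bP}/\frg_{\bI})$. The one substantive difference is that the paper identifies $\calQ$ as the bundle associated to the full $G_{\bI}\rtimes\Aut_{\calO}$-torsor $\tilBun_{\infty}$ and the representation $\frg_{\bP}/\frg_{\bI}$, hence reads off the character of $\det(\frg_{\bP}/\frg_{\bI})$ as an element of $\xch(T\times\grot)$, not just $\xch(T)$. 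Your use of the $B$-torsor $\calE^B_x$ alone only sees the $\xch(T)$-component.

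This is exactly the point you flag at the end, and it is not merely cosmetic. When $s_0\in W_{\bP}$, the line $k\cdot t^{-1}e_\theta$ carries $\grot$-weight $-1$, so the full character is $\theta-\delta=-\alpha_0$, not $\theta$. The paper's framing absorbs this $\delta$-contribution into $2\rho_{\bP}$ (so that $2\rho_{\bP_0}=\alpha_0$), and this is precisely what is needed for Cor.~\ref{c:pullcan} and the computation in Sec.~\ref{ss:pfdaha}. Your argument via Lem.~\ref{l:easyline}(2) gives only $\calL(\theta)$ in that case; to match the paper you must also invoke Lem.~\ref{l:easyline}(3) (or equivalently work with the $\tilT$-torsor $\calL^{\tilT}$ directly) to pick up the missing $\calL(-\delta)$ factor. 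Once you do, the two proofs coincide.
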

\begin{proof}
For $(x,\alpha,\calE,\tau_x\modo\bI)\in\tilBun_{\bI}(R)$, we have an exact sequence of vector bundles on $\XR$
\begin{equation}\label{eq:Q}
0\to\Ad_{\bI}(\calE)\to\Ad_{\bP}(\calE)\to i_*\calQ(\calE)\to0
\end{equation}
where $\calQ(\calE)$ is a coherent sheaf supported on $\Gamma(x)$. As $\calE$ varies, we can view $\calQ$ as a vector bundle over $\Bunpar_G$. Via the local coordinate $\alpha$ and the full level structure $\tau_x$, we can identify $\calQ(\calE)$ with the $R$-module $(\frg_{\bP}/\frg_{\bI})\otimes_kR$. In other words, we have
\begin{equation*}
\calQ\cong\tilBun_{\infty}\twtimes{G_{\bI}\rtimes\Aut_{\calO}}(\frg_{\bP}/\frg_{\bI}).
\end{equation*}
Taking the determinant, we get
\begin{equation*}
\det\calQ\cong\tilBun_{\infty}\twtimes{G_{\bI}\rtimes\Aut_{\calO}}\det(\frg_{\bP}/\frg_{\bI}).
\end{equation*}
Since the action of $G_{\bI}\rtimes\Aut_{\calO}$ on $\det(\frg_{\bP}/\frg_{\bI})$ factors through the quotient $G_{\bI}\rtimes\Aut_{\calO}\to T\times\grot\xrightarrow{-2\rho_{\bP}}\GG_m$, we conclude that 
\begin{equation}\label{eq:detQ}
\det\calQ\cong\calL(-2\rho_{\bP}).
\end{equation}

Taking the determinant of the exact sequence (\ref{eq:Q}), we get
\begin{equation*}
\det\bR\Gamma(\XR,\Ad_{\bP}(\calE))\cong\det\bR\Gamma(\XR,\Ad_{\bI}(\calE))\otimes\det\calQ(\calE)
\end{equation*}
Plugging in (\ref{eq:detQ}), we get the isomorphism (\ref{eq:IP}).
\end{proof}

\begin{cor}\label{c:pullcan}
For each $i=0,\cdots,n$, there is an isomorphism of line bundles on $\Bunpar_G$:
\begin{equation*}
\forg^{\bP_i,*}_{\bI}\calL_{\bP_i,\can}\cong\forg^{\bG,*}_{\bI}\canb\otimes\calL(2\rho-\alpha_i)
\end{equation*}
where $2\rho$ is the sum of positive roots in $G$. 
\end{cor}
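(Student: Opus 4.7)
The plan is to apply Lemma \ref{l:pullcan} twice---once for $\bP=\bG$ and once for $\bP=\bP_i$---and subtract to eliminate the universal determinant bundle $\calL_{\bI,\can}$.

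First I would apply Lemma \ref{l:pullcan} with $\bP=\bG$. Since $\calL_{\bG,\can}$ is by definition the pullback of $\canb$ along $\Bun_{\bG}=\Bun_G\times X\to\Bun_G$, and since $L_{\bG}=G$ has sum of positive roots equal to $2\rho$, the lemma yields
\begin{equation*}
\calL_{\bI,\can}\cong\forg^{\bG,*}_{\bI}\canb\otimes\calL(2\rho). \tag{$*$}
\end{equation*}
Next I would apply the lemma to $\bP=\bP_i$ to get
\begin{equation*}
\forg^{\bP_i,*}_{\bI}\calL_{\bP_i,\can}\cong\calL_{\bI,\can}\otimes\calL(-2\rho_{\bP_i}).
\end{equation*}
Substituting $(*)$ into this and using the additivity of $\calL(\cdot)$ in its argument gives
\begin{equation*}
\forg^{\bP_i,*}_{\bI}\calL_{\bP_i,\can}\cong\forg^{\bG,*}_{\bI}\canb\otimes\calL(2\rho-2\rho_{\bP_i}).
\end{equation*}

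It then remains to identify $2\rho_{\bP_i}=\alpha_i$ as an element of $\xch(T\times\grot)$. Inspecting the proof of Lemma \ref{l:pullcan}, the character $-2\rho_{\bP}$ is precisely the $T\times\grot$-weight on $\det(\frg_{\bP}/\frg_{\bI})$. Since $L_{\bP_i}$ has semisimple rank one, the quotient $\frg_{\bP_i}/\frg_{\bI}$ is one-dimensional, spanned by the root vector of the affine simple root $-\alpha_i$. For $i\geq1$ this is the finite root space $\frg_{-\alpha_i}$, of $T$-weight $-\alpha_i$ and trivial $\grot$-weight. For $i=0$ it is $t^{-1}\frg_{\theta}$, which carries $T$-weight $\theta$ and $\grot$-weight $-1$, giving total $T\times\grot$-weight $\theta-\delta=-\alpha_0$. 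In either case $2\rho_{\bP_i}=\alpha_i$, which combined with the displayed isomorphism gives the corollary.

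The only genuinely delicate point is the bookkeeping for the $\grot$-component in the case $i=0$, where one has to check that the natural definition of $2\rho_{\bP_0}$ coming from Lemma \ref{l:pullcan} really does pick up the $-\delta$ contribution; once this is observed the identification with the affine simple root $\alpha_0=\delta-\theta$ is automatic. No further obstacles arise.
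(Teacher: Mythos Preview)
Your argument is correct and is exactly the derivation the paper has in mind: the corollary is stated without proof immediately after Lemma \ref{l:pullcan}, and applying that lemma once for $\bP=\bG$ (giving $\calL_{\bI,\can}\cong\forg^{\bG,*}_{\bI}\canb\otimes\calL(2\rho)$) and once for $\bP=\bP_i$, then identifying $2\rho_{\bP_i}=\alpha_i$ via the $T\times\grot$-weight of $\det(\frg_{\bP_i}/\frg_{\bI})$, is the intended route. Your care with the $\grot$-component in the case $i=0$ is the only subtle point, and you handle it correctly.
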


Since the self-correspondence $\calH_{s_i}$ is over $\calM_{\bP_i}$, hence over $\Bun_{\bP_i}$, the action of $[\calH_{s_i}]$ commutes with $\cup c_1(\calL_{\bP_i,\can})$. Using Lem. \ref{c:pullcan}, we conclude that
\begin{equation}\label{eq:commLam}
s_i(\Lambda_{\can}+2\rho-\alpha_i)=(\Lambda_{\can}+2\rho-\alpha_i)s_i\in\Hom_{\calA\times X}(\fQl,\fQl[2](1)).
\end{equation}
Observe that for $i=1,\cdots,n$, we have
\begin{equation*}
\jiao{\Lambda_{\can}+2\rho-\alpha_i,\alpha_i^\vee}=2\jiao{\rho,\alpha_i^\vee}-2=0.
\end{equation*}
For $i=0$, we have
\begin{eqnarray*}
\jiao{\Lambda_{\can}+2\rho-\alpha_0,\alpha_0^\vee}&=&\jiao{\Lambda_{\can}+2\rho-\delta+\theta,K-\theta^\vee}\\
&=&\jiao{\Lambda_{\can},K}-2\jiao{\rho,\theta^\vee}-2=2h^\vee-2h^\vee=0.
\end{eqnarray*}
Here we have used the fact that $\jiao{\Lambda_{\can},K}=2h^\vee$ (see Rem. \ref{rm:norm}) and  $h^\vee=\jiao{\rho,\theta^\vee}+1$ (see Lem. \ref{l:dCox}). In any case, we have $\jiao{\Lambda_{\can}+2\rho-\alpha_i,\alpha_i^\vee}=0$ for $i=0,\cdots,r$. This, together with (\ref{eq:commLam}) means that the relation (\ref{eq:commrefl}) in Def. \ref{def:daha} holds for $s_i$ and $\xi=\Lambda_{\can}+2\rho-\alpha_i$. Since we have already proved the relation (\ref{eq:commrefl}) in Def. \ref{def:daha} for $s_i$ and $\xi=2\rho-\alpha_i\in\xch(T\times\grot)$ in Sec. \ref{ss:simplerefl}, we can subtract this relation from the one for $\xi=\Lambda_{\can}+2\rho-\alpha_i$, and conclude that the same relation also holds for $s_i$ and $\xi=\Lambda_{\can}$. This completes the proof of relation (\ref{eq:commrefl}) in Def. \ref{def:daha}, and hence the proof of Th. \ref{th:daction}.

\section{Generalizations to parahoric Hitchin moduli stacks}\label{s:genaction}

In this section, we generalize the main results in \cite[Sec. 4]{GSI} and Sec. \ref{s:DAHA} to the case of parahoric Hitchin moduli stacks of arbitrary type $\bP$. In particular, in the case $\bP=\bG$, we get an $\Ql$-analogue of the so-called {\em 'tHooft operators} considered by Kapustin-Witten in their gauge-theoretic approach to the geometric Langlands program (see \cite{KW}).

\subsection{The action of the convolution algebra}\label{ss:parconv}
We give two approaches to the parahoric version of \cite[Th. 4.4.3]{GSI}. One using Hecke correspondences (Construction \ref{cons:HPQ}) and the other using the smallness of $\tforg^{\bP}_{\bI}$ (Construction \ref{cons:alterPWaction}).

As in the case of $\Mpar$, we can define the Hecke correspondence between two parahoric Hitchin moduli stacks $\calM_{\bP}$ and $\calM_{\bQ}$ over $\calA\times X$:
\begin{equation}\label{d:HeckePQ}
\xymatrix{ & \Hk{\bP}{\bQ} \ar[dl]_{\overleftarrow{h}}\ar[dr]^{\overrightarrow{h}} & \\
\calM_{\bP}\ar[dr]_{f_{\bP}} & & \calM_{\bQ}\ar[dl]^{f_{\bQ}} \\
& \calA\times X &}
\end{equation}
For any scheme $S$, $\Hk{\bP}{\bQ}(S)$ is the groupoid of tuples 
\begin{equation*}
(x,\calE_1,\tau_{1,x}\modo\bP,\varphi_1,\calE_2,\tau_{2,x}\modo\bQ,\varphi_2,\alpha)
\end{equation*}
where
\begin{itemize}
\item $(x,\calE_1,\tau_{1,x}\modo\bP,\varphi_1)\in\calM_{\bP}(S)$;
\item $(x,\calE_2,\tau_{2,x}\modo\bQ,\varphi_2)\in\calM_{\bQ}(S)$;
\item $\alpha$ is an isomorphism of Hitchin pairs $(\calE_1,\varphi_1)|_{\XS-\Gamma(x)}\isom(\calE_2,\varphi_2)|_{\XS-\Gamma(x)}$.
\end{itemize}

\begin{cons}\label{cons:HPQ} For every double coset $W_{\bP}\tilw W_{\bQ}\subset\tilW$, we will construct a graph-like closed sub-correspondence $\calH_{W_{\bP}\tilw W_{\bQ}}$ of $\Hk{\bP}{\bQ}$. Let $\Hrs{\bP}{\bQ}$ be the reduced structure of the restriction of $\Hk{\bP}{\bQ}$ to $(\calA\times X)^{\rs}$. By definition, we have an isomorphism
\begin{equation}\label{eq:twobasechange}
\Mparrs\times_{\calM^{\rs}_{\bP}}\Hrs{\bP}{\bQ}\times_{\calM^{\rs}_{\bQ}}\Mparrs\cong\calH^{\rs}=\bigsqcup_{\tilw\in\tilW}\calH^{\rs}_{\tilw}.
\end{equation}
Recall that each $\calH^{\rs}_{\tilw}$ is the graph of the right $\tilw$-action on $\Mparrs$. Moreover, by Lem. \ref{l:CartPQ}, the projections $\tforg^{\bP}_{\bI}:\Mparrs\to\calM^{\rs}_{\bP}$ and $\tforg^{\bQ}_{\bI}:\Mparrs\to\calM^{\rs}_{\bQ}$ are the quotients under the right actions of $W_{\bP}\subset\tilW$ and $W_{\bQ}\subset\tilW$ on $\Mparrs$. If we identify $\calH^{\rs}_{\tilw}$ with $\Mparrs$ via $\overleftarrow{h_{\tilw}}$, we also get a right $\tilW$-action on $\calH^{\rs}_{\tilw}$. By (\ref{eq:twobasechange}), the projection $\calH^{\rs}_{\tilw}\to\Hrs{\bP}{\bQ}$ factors through the quotient
\begin{equation*}
\calH^{\rs}_{\tilw}\to\calH^{\rs}_{\tilw}/(W_{\bP}\cap\tilw W_{\bQ}\tilw^{-1})\hookrightarrow\Hrs{\bP}{\bQ}.
\end{equation*}
We define $\calH_{W_{\bP}\tilw W_{\bQ}}$ to be the closure of $\calH^{\rs}_{\tilw}/(W_{\bP}\cap\tilw W_{\bQ}\tilw^{-1})$ in $\Hk{\bP}{\bQ}$. Clearly, $\calH_{W_{\bP}\tilw W_{\bQ}}$ only depends on the double coset $W_{\bP}\tilw W_{\bQ}\subset\tilW$. The projections from $\calH^{\rs}_{W_{\bP}\tilw W_{\bQ}}$ to $\calM^{\rs}_{\bP}$ and $\calM^{\rs}_{\bQ}$ are finite \'etale, hence $\calH_{W_{\bP}\tilw W_{\bQ}}$ is graph-like.
\end{cons}

\subsubsection{The convolution algebras} To state a generalization of \cite[Th. 4.4.3]{GSI} to parahoric Hitchin moduli stacks, we first need to introduce certain convolution algebras. For standard parahoric subgroups $\bP,\bQ$, let
\begin{equation*}
\Ql[\dquot{\bP}{\bQ}]\subset\Ql[\tilW]
\end{equation*}
be the subspace of $\Ql$-valued functions on $\tilW$ which are nonzero only at finitely many elements of $\tilW$, left invariant under $W_{\bP}$ and right invariant under $W_{\bQ}$. We define the {\em convolution product}:
\begin{eqnarray}\label{eq:convQ}
\conv{\bQ}:\Ql[\dquot{\bP}{\bQ}]\otimes\Ql[\dquot{\bQ}{\bR}]\to\Ql[\dquot{\bP}{\bR}]\\
f_1\otimes f_2\mapsto(f_1\conv{\bQ}f_2)(\tilw)=\sum_{\tilv\in\tilW/W_{\bQ}}f_1(\tilv)f_2(\tilv^{-1}\tilw).
\end{eqnarray}
In particular, $\Ql[\dquot{\bP}{\bP}]$ becomes a unital algebra under $\conv{\bP}$ with identity element $\one_{W_{\bP}}$, the characteristic function of the double coset $W_{\bP}\subset\tilW$. However, the natural embedding $\Ql[\dquot{\bP}{\bP}]\subset\Ql[\tilW]$ is {\em not} an algebra homomorphism; it becomes an algebra homomorphism if we divide the inclusion map by $\#W_{\bP}$.

\begin{theorem}\label{th:paraction}
\begin{enumerate}
\item []
\item For each pair of standard parahoric subgroups $(\bP,\bQ)$, the assignment
\begin{equation*}
\one_{W_{\bP}\tilw W_{\bQ}}\mapsto[\calH_{W_{\bP}\tilw W_{\bQ}}]_\#
\end{equation*}
defines a map
\begin{equation*}
\Ql[\dquot{\bP}{\bQ}]\to\Corr(\Hk{\bP}{\bQ};\Ql,\Ql)\xrightarrow{(-)_\#}\Hom_{\calA\times X}(f_{\bQ,*}\Ql,f_{\bP,*}\Ql).
\end{equation*}
Then these maps are compatible with the convolution product in (\ref{eq:convQ}) and the composition of maps between the complexes $f_{\bP,*}\Ql$ for standard parahoric subgroups $\bP$.
\item In particular, for each standard parahoric subgroup $\bP$, there is an algebra homomorphism
\begin{equation*}
\Ql[\dquot{\bP}{\bP}]\to\Corr(\Hk{\bP}{\bP};\Ql,\Ql)\xrightarrow{(-)_\#}\End_{\calA\times X}(f_{\bP,*}\Ql).
\end{equation*}
sending $\one_{W_{\bP}\tilw W_{\bP}}$ to $[\calH_{W_{\bP}\tilw W_{\bP}}]_\#$. In other words, the convolution algebra $\Ql[\dquot{\bP}{\bP}]$ acts on the complex $f_{\bP,*}\Ql$.
\end{enumerate}
\end{theorem}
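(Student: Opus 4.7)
The plan is to reduce the identity to the regular semisimple locus and extend by a closure argument. Since part (2) is the specialization $\bP=\bQ=\bR$ of part (1), I focus on (1).

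First I would check the identity over $(\calA\times X)^{\rs}$. By Lem. \ref{l:CartPQ}, the morphism $\tforg^{\bP,\rs}_{\bI}:\Mparrs\to\calM^{\rs}_{\bP}$ is finite \'etale with Galois group $W_{\bP}$, so $f^{\rs}_{\bP,*}\Ql$ is canonically the $W_{\bP}$-invariants of $f^{\rs,\parab}_*\Ql$ with respect to the right $\tilW$-action of \cite[Th. 4.4.3]{GSI}. Using (\ref{eq:twobasechange}) and the identification $\calH^{\rs}_{W_{\bP}\tilw W_{\bQ}}\cong\calH^{\rs}_{\tilw}/(W_{\bP}\cap\tilw W_{\bQ}\tilw^{-1})$, I would identify the cohomological correspondence $[\calH^{\rs}_{W_{\bP}\tilw W_{\bQ}}]_\#$ with the operator
\begin{equation*}
T_{\tilw}^{\bP,\bQ}:(f^{\rs,\parab}_*\Ql)^{W_{\bQ}}\to(f^{\rs,\parab}_*\Ql)^{W_{\bP}},\qquad v\mapsto\sum_{\tilw'\in W_{\bP}\tilw W_{\bQ}/W_{\bQ}}\tilw'\cdot v.
\end{equation*}
An elementary double-coset counting then yields
\begin{equation*}
T^{\bP,\bQ}_{\tilw_1}\circ T^{\bQ,\bR}_{\tilw_2}=\sum_{W_{\bP}\tilw W_{\bR}\subset\tilW}\bigl(\one_{W_{\bP}\tilw_1 W_{\bQ}}\conv{\bQ}\one_{W_{\bQ}\tilw_2 W_{\bR}}\bigr)(\tilw)\,T^{\bP,\bR}_{\tilw},
\end{equation*}
which is the convolution identity in the RS locus.

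Next I would promote the identity to the full base $\calA\times X$. The composition $[\calH_{W_{\bP}\tilw_1 W_{\bQ}}]_\#\circ[\calH_{W_{\bQ}\tilw_2 W_{\bR}}]_\#$ is realized by a cohomological correspondence supported on the fibered product $\calH_{W_{\bP}\tilw_1 W_{\bQ}}\times_{\calM_{\bQ}}\calH_{W_{\bQ}\tilw_2 W_{\bR}}\subset\Hk{\bP}{\bR}$. The properness of $f^{\ani}_{\bP}$ from Prop. \ref{p:MPsmooth}(3) (and hence of the two projections from the composition) ensures that condition (G-2) of \cite[Def. A.5.1]{GSI} is satisfied, so \cite[Lem. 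A.5.2]{GSI} reduces any equality of cohomological correspondences to its restriction to $(\calA\times X)^{\rs}$. Applying this to both sides of the asserted identity yields (1); taking $\bP=\bQ=\bR$ gives (2).

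The main obstacle is controlling the fibered composition outside the regular semisimple locus: a priori it may acquire excess components of dimension larger than those of the closures of the RS pieces, which would spoil the match with $\sum_\tilw n(\tilw)[\calH_{W_{\bP}\tilw W_{\bR}}]$. The key input that rules this out is the codimension estimate $\codim_{\calA\times X}((\calA\times X)_{\geq d})\geq d+1$ valid on the open subset $\calA$ (\cite[Cor. 3.5.7]{GSI}) used in the proof of Prop. \ref{p:forsmall}, which forces any such excess component to have strictly smaller dimension than the top stratum and hence contribute nothing in top Borel--Moore homology. This is the parahoric counterpart of the smallness argument underlying the construction of the $\tilW$-action in \cite[Th. 4.4.3]{GSI}.
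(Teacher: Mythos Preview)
Your approach matches the paper's, which simply refers back to \cite[Th.~4.4.3]{GSI} and states that the key ingredient is the parahoric analogue of \cite[Lem.~4.4.4]{GSI} for $\Hk{\bP}{\bQ}$. One correction: properness of $f^{\ani}_{\bP}$ alone does not yield condition (G-2); what is needed is the graph-like nature of the closures $\calH_{W_{\bP}\tilw W_{\bQ}}$ together with the codimension estimate, which is exactly the content of \cite[Lem.~4.4.4]{GSI} and its parahoric analogue---you identify this correctly in your final paragraph, so the argument stands once that attribution is fixed.
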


The proof of this theorem is similar to that of \cite[Th. 4.4.3]{GSI}. The key ingredient is an analogue of \cite[Lem. 4.4.4]{GSI} for $\Hk{\bP}{\bQ}$.

We give another way to construct the convolution algebra action in Th. \ref{th:paraction}, using the smallness of the forgetful morphisms $\tforg^{\bP}_{\bI}$.

\begin{cons}\label{cons:WP} We mimic the construction of the classical Springer action reviewed in \cite[Construction 4.1.1]{GSI}. Fix a standard parahoric $\bP$. By Prop. \ref{p:forsmall} that the morphism $\tforg^{\bP}_{\bI}:\Mpar\to\calM_{\bP}$ is small, therefore the shifted perverse sheaf $\tforg^{\bP}_{\bI,*}\Ql$ is the middle extension of its restriction to $\calM^{\rs}_{\bP}$. Over $\calM^{\rs}_{\bP}$, the morphism $\tforg^{\bP}_{\bI}$ is a right $W_{\bP}$-torsor by Lem. \ref{l:CartPQ}, therefore we get a left action of $W_{\bP}$ on $\tforg^{\bP}_{\bI,*}\Ql|_{\calM^{\rs}_{\bP}}$, and hence on $\tforg^{\bP}_{\bI,*}\Ql$ by middle extension. Taking direct image along $f_{\bP}$, we get a left action of $W_{\bP}$ on $f_{\bP,*}\tforg^{\bP}_{\bI,*}\Ql=\fQl$.
\end{cons}

\begin{lemma}\label{eq:twoconssame}
The $W_{\bP}$-action on $\fQl$ in Construction \ref{cons:WP} coincides with the restriction of the $\tilW$-action on $\fQl$ in \cite[Th. 4.4.3]{GSI} to $W_{\bP}$.
\end{lemma}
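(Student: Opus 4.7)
The plan is to lift both actions to endomorphisms of the complex $\tforg^{\bP}_{\bI,*}\Ql$ on $\calM_{\bP}$ and then use the smallness of $\tforg^{\bP}_{\bI}$ to reduce the comparison to the regular semisimple locus, where both will become the tautological deck-transformation action on a $W_{\bP}$-torsor.

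First I would show that for each $w\in W_{\bP}\subset\tilW$, the reduced Hecke correspondence $\calH_w$ is contained in the closed substack $\Mpar\times_{\calM_{\bP}}\Mpar$ of $\Mpar\times_{\calA\times X}\Mpar$. Over the regular semisimple locus this is immediate from Lem. \ref{l:CartPQ}, which presents $\tforg^{\bP,\rs}_{\bI}$ as a $W_{\bP}$-torsor and hence gives
\begin{equation*}
\Mparrs \times_{\calM^{\rs}_{\bP}} \Mparrs \;=\; \bigsqcup_{w\in W_{\bP}} \calH^{\rs}_w,
\end{equation*}
where $\calH^{\rs}_w$ is the graph of the right $w$-action on $\Mparrs$ (cf. the proof of Lem. \ref{l:hhecke}). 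The containment for $\calH_w$ then follows by taking closures, and is the natural generalization of Lem. \ref{l:Hsi} from simple reflections to arbitrary elements of $W_{\bP}$.

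As a consequence, for each $w\in W_{\bP}$ the cohomological correspondence $[\calH_w]_{\#}$ factors through the proper morphism $\Mpar\times_{\calM_{\bP}}\Mpar\to\calM_{\bP}$ and yields an endomorphism of $\tforg^{\bP}_{\bI,*}\Ql$ in $D^b_c(\calM_{\bP})$, whose further pushforward along $f_{\bP}$ is precisely the $w$-action coming from \cite[Th. 4.4.3]{GSI}. On the other hand, Construction \ref{cons:WP} also produces a $W_{\bP}$-action on $\tforg^{\bP}_{\bI,*}\Ql$ that pushes forward under $f_{\bP}$ to the smallness action on $\fQl$. Hence it suffices to compare the two actions already on $\tforg^{\bP}_{\bI,*}\Ql\in D^b_c(\calM_{\bP})$.

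By Prop. \ref{p:forsmall}(1), $\tforg^{\bP}_{\bI}$ is small, so $\tforg^{\bP}_{\bI,*}\Ql[\dim\Mpar]$ is a semisimple perverse sheaf on $\calM_{\bP}$, equal to the middle extension of its restriction to $\calM^{\rs}_{\bP}$. In particular, the restriction map
\begin{equation*}
\End_{\calM_{\bP}}(\tforg^{\bP}_{\bI,*}\Ql)\;\hookrightarrow\;\End_{\calM^{\rs}_{\bP}}(\tforg^{\bP,\rs}_{\bI,*}\Ql)
\end{equation*}
is injective, reducing the comparison to $\calM^{\rs}_{\bP}$. Over this open substack the map $\tforg^{\bP,\rs}_{\bI}$ is a finite \'etale $W_{\bP}$-torsor, and both actions are manifestly the tautological deck-transformation action of $W_{\bP}$: for Construction \ref{cons:WP} by definition, and for the $[\calH_w]_\#$-action because $\calH^{\rs}_w$ is the graph of $w$ acting as a deck transformation. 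The main obstacle is really the smallness/middle-extension input (which is already in place via Prop. \ref{p:forsmall}) together with a careful bookkeeping of left versus right conventions so that the graph of the right $w$-action on $\Mparrs$ and Lusztig's left $W_{\bP}$-action on the middle extension are identified with matching signs; once the conventions are aligned, the argument reduces to the simple observations above.
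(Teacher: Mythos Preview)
Your proposal is correct and follows essentially the same approach as the paper's proof: both show that $\calH_w\subset\Mpar\times_{\calM_{\bP}}\Mpar$ by a closure argument from the regular semisimple locus, lift the two actions to endomorphisms of $\tforg^{\bP}_{\bI,*}\Ql$ on $\calM_{\bP}$, and then invoke the middle-extension property coming from smallness to reduce the comparison to the deck-transformation action over $\calM^{\rs}_{\bP}$. The paper packages the last step by citing the analogous argument in \cite[Lem.~4.1.3]{GSI}, whereas you spell out the injectivity of restriction explicitly; otherwise the arguments are the same.
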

\begin{proof}
Over $(\calA\times X)^{\rs}$, it is easy to check that the right $W_{\bP}$-action on $\Mparrs$ given by Lem. \ref{l:CartPQ} coincides with the restriction of the right $\tilW$-action on $\Mparrs$ constructed in \cite[Cor. 4.3.8]{GSI}. Let $w\in W_{\bP}$. Since $\calH_w$ is the closure of the $w$-action on $\Mparrs$, we have an embedding
\begin{equation*}
\bigsqcup_{w\in W_{\bP}}\calH_{w}\subset\Mpar\times_{\calM_{\bP}}\Mpar\subset\Heckep.
\end{equation*}
Therefore we can view $\calH_w$ as a self correspondence of $\Mpar$ over $\calM_{\bP}$. The cohomological correspondence $[\calH_w]$ then gives an endomorphism $[\calH_w]_\#$ of $\tforg^{\bP}_{\bI,*}\Ql$, which coincides with the $W_{\bP}$-action given in Construction \ref{cons:WP} by the same argument as \cite[Lem. 4.1.3]{GSI}, using the fact that $\tforg^{\bP}_{\bI,*}\Ql$ is a middle extension. On the other hand, taking the direct image of $[\calH_w]_\#$ along $f_{\bP,*}$, we get the action of $[\calH_w]_\#$ on $\fQl$ considered in \cite[Th. 4.4.3]{GSI}. This proves the lemma.
\end{proof}

Let $A$ be a finite group acting on an object $\calF$ in a Karoubi complete $\Ql$-linear category $\calC$, then we have a canonical decomposition
\begin{equation}\label{eq:Aeigendecomp}
\calF=\bigoplus_{\rho\in\Irr(A)}\calF_{\rho}\otimes V_\rho
\end{equation}
where $\Irr(A)$ is the set of isomorphism classes of irreducible $\Ql$-representations of $A$. For each $\rho\in\Irr(A)$, $V_\rho$ is the vector space on which $A$ acts as $\rho$. In fact, the decomposition (\ref{eq:Aeigendecomp}) is given by the images of the simple idempotents under the map $\Ql[A]\to\End_{\calC}(\calF)$. In particular, we have a canonical direct summand of $\calF$ corresponding to the trivial representation of $A$, which we denote by $\calF^A$, and call it the {\em $A$-invariants of $\calF$}.

\begin{lemma}\label{l:WPinv}
There is a canonical isomorphism in $D^b_c(\calA\times X)$:
\begin{equation}\label{eq:WPinv}
f_{\bP,*}\Ql\cong(\fQl)^{W_{\bP}}
\end{equation}
\end{lemma}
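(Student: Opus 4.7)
The plan is to exhibit the isomorphism on $\calM_{\bP}$ at the level of $\tforg^{\bP}_{\bI,*}\Ql$, and then push forward along $f_{\bP}$. Concretely, I will show the stronger statement
\begin{equation*}
(\tforg^{\bP}_{\bI,*}\Ql)^{W_{\bP}} \cong \Ql_{\calM_{\bP}} \quad \text{in } D^b_c(\calM_{\bP}),
\end{equation*}
which by applying $f_{\bP,*}$ and using that taking $W_{\bP}$-invariants (being given by the central idempotent $\frac{1}{|W_{\bP}|}\sum_{w\in W_{\bP}} w \in \Ql[W_{\bP}]$) commutes with $f_{\bP,*}$, will yield the desired isomorphism.

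First I will reduce to a perverse-sheaf statement. Let $d = \dim\Mpar = \dim\calM_{\bP}$, where the equality holds because $\tforg^{\bP}_{\bI}$ is surjective and generically finite by Lem. \ref{l:CartPQ}. Over the regular semisimple locus, $\tforg^{\bP,\rs}_{\bI}$ is a $W_{\bP}$-torsor (again Lem. \ref{l:CartPQ}), so its pushforward of the constant sheaf is the local system $\calR$ of the regular representation. Decomposing $\calR$ into isotypic components under $W_{\bP}$ yields $\calR^{W_{\bP}} = \Ql_{\calM^{\rs}_{\bP}}$, which is the (shifted) IC sheaf of the smooth stack $\calM^{\rs}_{\bP}$.

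Next I will invoke smallness: by Prop. \ref{p:forsmall}(1), $\tforg^{\bP}_{\bI}$ is small (and proper by Lem. \ref{l:CartPQ}), so $\tforg^{\bP}_{\bI,*}\Ql[d]$ is a perverse sheaf on $\calM_{\bP}$ which equals the middle extension of its restriction to $\calM^{\rs}_{\bP}$. Since $W_{\bP}$-invariants is an exact functor (given by an idempotent in the group algebra), it commutes with middle extension from an open dense smooth substack. Hence
\begin{equation*}
(\tforg^{\bP}_{\bI,*}\Ql[d])^{W_{\bP}} \;=\; j_{!*}\bigl(\calR^{W_{\bP}}[d]\bigr) \;=\; j_{!*}\bigl(\Ql_{\calM^{\rs}_{\bP}}[d]\bigr),
\end{equation*}
where $j: \calM^{\rs}_{\bP} \hookrightarrow \calM_{\bP}$. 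Because $\calM_{\bP}|_{\calA}$ is smooth of dimension $d$ by Prop. \ref{p:MPsmooth}(1), the latter middle extension is simply $\Ql_{\calM_{\bP}}[d]$, giving the claimed isomorphism on $\calM_{\bP}$ after shifting back.

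Finally, I need to verify that the $W_{\bP}$-action on $\fQl = f_{\bP,*}\tforg^{\bP}_{\bI,*}\Ql$ used to form the invariants on the right-hand side of (\ref{eq:WPinv}) is precisely the one obtained by applying $f_{\bP,*}$ to the $W_{\bP}$-action on $\tforg^{\bP}_{\bI,*}\Ql$ of Construction \ref{cons:WP}; but this is exactly the content of Lem. \ref{eq:twoconssame}, which identifies the latter with the restriction of the $\tilW$-action from \cite[Th. 4.4.3]{GSI} to $W_{\bP}$. The only genuinely delicate point is the interchange of $W_{\bP}$-invariants with middle extension; this rests on $W_{\bP}$-invariants being realised by an idempotent in $\Ql[W_{\bP}] \subset \End(\tforg^{\bP}_{\bI,*}\Ql)$, so it is an exact endofunctor of $\Perv(\calM_{\bP})$ and commutes with $j_{!*}$.
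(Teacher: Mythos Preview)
Your proof is correct and follows essentially the same strategy as the paper: establish the isomorphism $(\tforg^{\bP}_{\bI,*}\Ql)^{W_{\bP}}\cong\const{\calM_{\bP}}$ on $\calM_{\bP}$ by checking it over $\calM^{\rs}_{\bP}$ (where $\tforg^{\bP}_{\bI}$ is a $W_{\bP}$-torsor) and then using that both sides are middle extensions, and finally push forward along $f_{\bP}$. The paper's version is slightly terser---it phrases the argument in terms of the adjunction map $\const{\calM_{\bP}}\to\tforg^{\bP}_{\bI,*}\Ql$ and identifies it with the inclusion of invariants---whereas you spell out more explicitly the roles of smallness (Prop.~\ref{p:forsmall}), smoothness of $\calM_{\bP}$ (Prop.~\ref{p:MPsmooth}), and the compatibility of the $W_{\bP}$-actions (Lem.~\ref{eq:twoconssame}), but the substance is the same.
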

\begin{proof}
The morphism $\tforg^{\bP}_{\bI}:\Mpar\to\calM_{\bP}$ gives a map of shifted perverse sheaves
\begin{equation}\label{eq:mapIP}
\const{\calM_{\bP}}\to\tforg^{\bP}_{\bI,*}\Ql.
\end{equation}
Since $\tforg^{\bP,\rs}_{\bI}$ is a $W_{\bP}$-torsor, it is clear that the restriction of the map (\ref{eq:mapIP}) to $\calM^{\rs}_{\bP}$ is the embedding of the $W_{\bP}$-invariants of the RHS. Since both sides of (\ref{eq:mapIP}) are middle extensions from $\calM^{\rs}_{\bP}$, we conclude that the map (\ref{eq:mapIP}) can be identified with the inclusion of the $W_{\bP}$-invariants on the RHS. Taking $f_{\bP,*}$ we get the isomorphism (\ref{eq:Winv}).
\end{proof}

\begin{cons}\label{cons:alterPWaction}
Now it is easy to give another proof of Th. \ref{th:paraction}. From the $\tilW$-action on $\fQl$, we clearly have a map
\begin{equation*}
\Ql[\dquot{\bP}{\bQ}]\to\End_{\calA\times X}((\fQl)^{W_{\bQ}},(\fQl)^{W_{\bP}}).
\end{equation*}
By Lem. \ref{l:WPinv}, this gives a map
\begin{equation*}
\Ql[\dquot{\bP}{\bQ}]\to\End_{\calA\times X}(f_{\bQ,*}\Ql,f_{\bP,*}\Ql).
\end{equation*}
It is easy to check that this map is the same as the one constructed in Th. \ref{th:paraction}, and its compatibility with convolutions and compositions is clear from the fact that $\tilW$ acts on $\fQl$.
\end{cons}

\subsection{The enhanced actions}\label{ss:parenhance}
We can also define the {\em enhanced action} on $\tilf_{\bP,*}\Ql$ as we did for $\tfQl$ in \cite[Prop. 4.4.6]{GSI}. Consider the two projections of $\Hk{\bP}{\bP}$ to $\tcA_{\bP}$:
\begin{equation*}
\Hk{\bP}{\bP}\xrightarrow{(\overleftarrow{h_{\bP}},\overrightarrow{h_{\bP}})}\calM_{\bP}\times_{\calA\times X}\calM_{\bP}\xrightarrow{(\tilf_{\bP},\tilf_{\bP})}\tcA_{\bP}\times_{\calA\times X}\tcA_{\bP}.
\end{equation*}

Let $\Hecke_{\bP,[e]}$ be the preimage of the diagonal $\tcA_{\bP}\subset\tcA_{\bP}\times_{\calA\times X}\tcA_{\bP}$, viewed as a self-correspondence of $\calM_{\bP}$ over $\tcA_{\bP}$.

\begin{cons}\label{cons:Heckelambda} For $\lambda\in\xcoch(T)$, let $|\lambda|_{\bP}$ denote its $W_{\bP}$-orbit in $\xcoch(T)$. For each $W_{\bP}$-orbit $|\lambda|_{\bP}$, we will construct a graph-like closed substack $\calH_{|\lambda|_{\bP}}\subset\Hecke_{\bP,[e]}$. By the definition of $\Hecke_{\bP,[e]}$, we have a morphism
\begin{equation*}
\Heckep_{[e]}\to\Hecke_{\bP,[e]}
\end{equation*}
as self-correspondences of $\calM_{\bP}$ over $\tcA_{\bP}$. Then we define $\calH_{|\lambda|_{\bP}}$ to be the reduced image of $\calH_{\lambda}$. Clearly, this image only depends on the $W_{\bP}$-orbit of $\lambda\in\xcoch(T)$. The two projections from $\calH^{\rs}_{|\lambda|_{\bP}}$ to $\calM^{\rs}_{\bP}$ are finite \'etale, hence $\calH_{|\lambda|_{\bP}}$ is graph-like.
\end{cons} 

\begin{prop}\label{p:parenhance}
There is a unique algebra homomorphism
\begin{equation}\label{eq:alpha}
\Ql[\xcoch(T)]^{W_{\bP}}\to\End_{\tcA_{\bP}}(\tilf_{\bP,*}\Ql),
\end{equation}
such that $\Av_{W_{\bP}}(\lambda):=\sum_{\lambda'\in|\lambda|_{\bP}}\lambda'$ acts by $[\calH_{|\lambda|_{\bP}}]_\#$ for any $\lambda\in\xcoch(T)$.
\end{prop}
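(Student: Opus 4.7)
The plan is to descend the enhanced $\xcoch(T)$-action on $\tilf_*\Ql$ constructed in \cite[Prop. 4.4.6]{GSI} along the finite morphism $q^{\bP}_{\bI}:\tcA\to\tcA_{\bP}$, then take $W_{\bP}$-invariants.

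First, I would establish the enhanced analogue of Lem. \ref{l:WPinv}. From the commutative diagram $\tilf_{\bP}\circ\tforg^{\bP}_{\bI}=q^{\bP}_{\bI}\circ\tilf$, functoriality gives
\begin{equation*}
q^{\bP}_{\bI,*}\tilf_*\Ql=\tilf_{\bP,*}\,\tforg^{\bP}_{\bI,*}\Ql.
\end{equation*}
By Prop. \ref{p:forsmall}(1), $\tforg^{\bP}_{\bI}$ is small, so $\tforg^{\bP}_{\bI,*}\Ql$ is the middle extension of its restriction to $\calM^{\rs}_{\bP}$, where by Lem. \ref{l:CartPQ} it is the local system attached to the \'etale $W_{\bP}$-torsor $\tforg^{\bP,\rs}_{\bI}:\Mparrs\to\calM^{\rs}_{\bP}$. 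Taking $W_{\bP}$-invariants on the $\rs$-locus picks out $\const{\calM^{\rs}_{\bP}}$, which extends to $\const{\calM_{\bP}}$, giving $(q^{\bP}_{\bI,*}\tilf_*\Ql)^{W_{\bP}}\cong\tilf_{\bP,*}\Ql$ over $\tcA_{\bP}$.

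Next, I would check $W_{\bP}$-equivariance of the pushed-forward $\xcoch(T)$-action. The $\Ql[\xcoch(T)]$-action on $\tilf_*\Ql$ pushes forward to an action on $q^{\bP}_{\bI,*}\tilf_*\Ql$ by endomorphisms over $\tcA_{\bP}$; combined with Lem. \ref{eq:twoconssame} this places $W_{\bP}$ alongside the translation action. For $w\in W_{\bP}\subset\tilW$ and $\lambda\in\xcoch(T)$, the relation $w\cdot\lambda=\leftexp{w}{\lambda}\cdot w$ in $\tilW$ is visible on $\Mparrs$ through the explicit right $\tilW$-action recalled in the proof of Lem. \ref{l:hhecke}; since $\tilf_*\Ql$ is a middle extension from the $\rs$-locus, the identity $w\circ\lambda=\leftexp{w}{\lambda}\circ w$ extends globally on $q^{\bP}_{\bI,*}\tilf_*\Ql$. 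Consequently any $f\in\Ql[\xcoch(T)]^{W_{\bP}}$ commutes with the $W_{\bP}$-action and preserves the invariant summand $\tilf_{\bP,*}\Ql$. This produces the homomorphism (\ref{eq:alpha}); uniqueness is immediate because the averages $\Av_{W_{\bP}}(\lambda)$ span $\Ql[\xcoch(T)]^{W_{\bP}}$.

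It remains to identify the action of $\Av_{W_{\bP}}(\lambda)$ with $[\calH_{|\lambda|_{\bP}}]_{\#}$. By the above descent, this action is represented by the image in $\End_{\tcA_{\bP}}(\tilf_{\bP,*}\Ql)$ of the cohomological correspondence $\sum_{\lambda'\in|\lambda|_{\bP}}[\calH_{\lambda'}]$ on $\Mpar$, followed by $W_{\bP}$-averaging. Since $\calH_{|\lambda|_{\bP}}$ is graph-like (Construction \ref{cons:Heckelambda}), the pushforward to $\calM_{\bP}\times_{\tcA_{\bP}}\calM_{\bP}$ is controlled by the behavior over $(\calA\times X)^{\rs}$, where $\calH^{\rs}_{|\lambda|_{\bP}}$ is precisely the quotient of $\bigsqcup_{\lambda'\in|\lambda|_{\bP}}\calH^{\rs}_{\lambda'}$ by the natural $W_{\bP}$-action on both source and target. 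Invoking \cite[Lem. A.5.2]{GSI} (the analogue used already in the proof of Lem. \ref{l:redcomm}) to reduce to the $\rs$-locus, the two cohomological correspondences agree. The main obstacle I anticipate is this last bookkeeping step: ensuring the normalization of the $(-)_{\#}$ operation under small pushforward and $W_{\bP}$-averaging matches the class $[\calH_{|\lambda|_{\bP}}]_{\#}$ on the nose rather than up to $\#W_{\bP}$-factors, which requires the same care as in the relation between $\Ql[\dquot{\bP}{\bP}]\subset\Ql[\tilW]$ and the convolution algebra structure discussed after (\ref{eq:convQ}).
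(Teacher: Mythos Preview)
Your approach differs from the paper's and parallels Construction~\ref{cons:alterPWaction}: you descend the enhanced $\xcoch(T)$-action on $\tilf_*\Ql$ through $W_{\bP}$-invariants, while the paper works entirely inside the correspondence algebra $\Corr(\Hecke_{\bP,[e]};\Ql,\Ql)$. Concretely, the paper verifies that finite-type substacks of $\Hecke_{\bP,[e]}$ satisfy condition (G-2), invokes \cite[Prop.~A.6.2]{GSI} to reduce to the $\rs$-locus, and there uses the base-change square~(\ref{d:heckecl}) to embed $\Corr(\calH^{\rs}_{\bP,[e]};\Ql,\Ql)\hookrightarrow\Ql[\xcoch(T)]$ via $q^*_{\calH}$; since $q^{-1}_{\calH}(\calH^{\rs}_{|\lambda|_{\bP}})=\bigsqcup_{\lambda'\in|\lambda|_{\bP}}\calH^{\rs}_{\lambda'}$ scheme-theoretically, the classes $[\calH^{\rs}_{|\lambda|_{\bP}}]$ span a copy of $\Ql[\xcoch(T)]^{W_{\bP}}$ on the nose, and the normalization you worry about never arises.

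Your construction of the homomorphism is sound, but two justifications fail. In Step~3 you assert that $\tilf_*\Ql$ is a middle extension; nothing in the paper says this, and it is not needed: the relation $w\lambda=\leftexp{w}{\lambda}w$ already holds on $\fQl=q_{\bP,*}(q^{\bP}_{\bI,*}\tilf_*\Ql)$ as part of the $\tilW$-action, and the finite pushforward $q_{\bP,*}$ is faithful. More seriously, the final identification has a genuine gap. Your appeal to \cite[Lem.~A.5.2]{GSI} is illicit: that lemma compares two cohomological \emph{correspondences} on a graph-like substack, not two abstract endomorphisms of $\tilf_{\bP,*}\Ql$. Your descended action is, a priori, only an endomorphism; the phrase ``push forward $\sum[\calH_{\lambda'}]$ and $W_{\bP}$-average'' does not define an element of $\Corr(\Hecke_{\bP,[e]};\Ql,\Ql)$, and you have not shown that $\End_{\tcA_{\bP}}(\tilf_{\bP,*}\Ql)$ injects into its restriction over $\tcArs_{\bP}$. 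The paper sidesteps this by never leaving the correspondence world, proving the algebra relations among the $[\calH_{|\lambda|_{\bP}}]$ themselves via the injective pullback $q^*_{\calH}$.
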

\begin{proof}
The uniqueness is clear because $\{\Av_{W_{\bP}}(\lambda)|\lambda\in\xcoch(T)\}$ span $\cent$. 

By definition, we have an obvious associative convolution structure $\Hecke_{\bP,[e]}*\Hecke_{\bP,[e]}\to\Hecke_{\bP,[e]}$ given by forgetting the middle $\calM_{\bP}$. By the discussion in \cite[App. A.6]{GSI}, this gives algebra structures on $\Corr(\Hecke_{\bP,[e]};\Ql,\Ql)$ and $\Corr(\Hecke^{\rs}_{\bP,[e]};\Ql,\Ql)$. The same argument as \cite[Lem. 4.4.4]{GSI} shows that any finite type substack of $\Hecke_{\bP,[e]}$ satisfies the condition (G-2) with respect to $\tcArs_{\bP}\subset\tcA_{\bP}$. Therefore by \cite[Prop. A.6.2]{GSI}, it suffices to establish an algebra homomorphism $\Ql[\xcoch(T)]^{W_{\bP}}\to\Corr(\calH^{\Hit,\rs};\Ql,\Ql)$ sending $\Av_{W_{\bP}}(\lambda)$ to $[\calH^{\rs}_{|\lambda|_{\bP}}]$.

We have a commutative diagram of correspondences
\begin{equation}\label{d:heckecl}
\xymatrix{\Heckep_{[e]}\ar@<-1ex>@/_/[d]_{\overleftarrow{h}_{[e]}}\ar@<1ex>@/^/[d]^{\overrightarrow{h}_{[e]}}\ar[r]^{q_{\calH}} & \Hecke_{\bP,[e]}\ar@<-1ex>@/_/[d]_{\overleftarrow{h_{\bP,[e]}}}\ar@<1ex>@/^/[d]^{\overrightarrow{h_{\bP,[e]}}}\\
\Mpar\ar[d]^{\tilf}\ar[r] & \calM_{\bP}\ar[d]^{\tilf_{\bP}}\\
\tcA\ar[r]^{q^{\bP}_{\bI}} & \tcA_{\bP}}
\end{equation}
which is a base change diagram over $\tcArs_{\bP}$. Let $\calH^{\rs}_{\bP,[e]}$ is the reduced structure of $\Hecke^{\rs}_{\bP,[e]}$. Then $q^*_{\calH}$ gives an embedding of algebras (it is injective because $q_{\calH}$ is surjective)
\begin{equation}\label{eq:Winv}
q^*_{\calH}:\Corr(\calH^{\rs}_{\bP,[e]};\Ql,\Ql)\cong\cohog{0}{\calH^{\rs}_{\bP,[e]}}\hookrightarrow \cohog{0}{\calH^{\rs}_{[e]}}\cong\Corr(\calH^{\rs}_{[e]};\Ql,\Ql)\cong\Ql[\xcoch(T)].
\end{equation}
Here we used the fact that $\overleftarrow{h_{[e]}}^{\rs}$ and $\overleftarrow{h_{\bP,[e]}}^{\rs}$ are \'etale, so that we can identify their dualizing complexes with the constant sheaf.

By Construction \ref{cons:Heckelambda}, we have scheme-theoretically that
\begin{equation*}
q^{-1}_{\calH}(\calH^{\rs}_{|\lambda|_{\bP}})=\bigsqcup_{\lambda'\in|\lambda|_{\bP}}\calH^{\rs}_{\lambda'}.
\end{equation*}
Therefore $q^*_{\calH}[\calH^{\rs}_{|\lambda|_{\bP}}]$ is the element $\Av_{W_{\bP}}(\lambda)\in\Ql[\xcoch(T)]$ under the embedding (\ref{eq:Winv}). Since the elements $\{\Av_{W_{\bP}}(\lambda)|\lambda\in\xcoch(T)\}$ span the subalgebra $\Ql[\xcoch(T)]^{W_{\bP}}$ of $\Ql[\xcoch(T)]^{W_{\bP}}$, we conclude from (\ref{eq:Winv}) that the elements $[\calH^{\rs}_{|\lambda|_{\bP}}]$ also span a subalgebra of $\Corr(\calH^{\rs}_{\bP,[e]};\Ql,\Ql)$ isomorphic to $\Ql[\xcoch(T)]^{W_{\bP}}$. This completes the proof.
\end{proof}

\begin{remark}
The action of $\Ql[\dquot{\bP}{\bP}]$ on $f_{\bP,*}\Ql$ constructed in Th. \ref{th:paraction} and the action of $\Ql[\xcoch(T)]^{W_{\bP}}$ on $\tilf_{\bP,*}\Ql$ (hence on $f_{\bP,*}\Ql$) constructed in Prop. \ref{p:parenhance} are related by the embedding of algebras
\begin{eqnarray*}
\Ql[\xcoch(T)]^{W_{\bP}}&\hookrightarrow&\Ql[\dquot{\bP}{\bP}]\\
\Av_{W_{\bP}}(\lambda)&\mapsto&\one_{W_{\bP}\lambda W_{\bP}}.
\end{eqnarray*}
\end{remark}

\begin{remark}\label{rm:Hitaction}
In the special case $\bP=\bG$, Th. \ref{th:paraction} and Prop. \ref{p:parenhance} both give the same action of $\Ql[\xcoch(T)]^W$ on the complex $\fHQl\boxtimes\Ql$ on $\calA\times X$. This can be viewed as a realization of {\em 'tHooft operators} in the algebraic setting.
\end{remark}

\subsection{Parahoric version of the DAHA action}\label{ss:pardaction}

We also have a version of Th. \ref{th:daction} for general parahoric Hitchin fibrations. 

\begin{cons}\label{cons:charP} Fix a standard parahoric subgroup $\bP\subset G(F)$.  Let $\HH_{\bP}$ be the subalgebra of $\HH$ generated by $\Ql[\dquot{\bP}{\bP}]\subset\Ql[\tilW]$, $\Sym_{\Ql}(\xch(\tilT)_{\Ql})^{W_{\bP}}\subset\Sym_{\Ql}(\xcoch(\tilT)_{\Ql})$ and $\Ql[u]$. We now define the actions of generators of this algebra on $f_{\bP,*}\Ql$.
\begin{itemize}
\item In Th. \ref{th:paraction}(2), we have constructed an action of $\Ql[\dquot{\bP}{\bP}]$ on $f_{\bP,*}\Ql$. 
\item $u$ still acts by cup product with the pull-back of $c_1(\calO_X(D))$.
\item Let $\bP^u\subset\bP$ be the pro-unipotent radical and let $\calG^u_{\bP}=G_{\bP^u}\rtimes\Aut^u_{\calO}\subset G_{\bP}\rtimes\Aut_{\calO}\subset\calG$. Let $\hatG_{\bP}$ be the preimage of $G_{\bP}$ in $\hatgloop$. Consider the map $\hatBun_\infty/\calG^u_{\bP}\to\hatBun_{\infty}/\hatG_{\bP}\rtimes\Aut_{\calO}=\Bun_{\bP}$, which is a right torsor under the reductive group $\tilL_{\bP}:=\hatG_{\bP}\rtimes\Aut_{\calO}/\calG^u_{\bP}$. The group $\tilL_{\bP}$ is isomorphic to $\gcen\times L_{\bP}\times\grot$. The characteristic classes of this $\tilL_{\bP}$-torsor are given by
\begin{equation*}
\cohog{*}{\BB\tilL_{\bP}}\cong\cohog{*}{\BB\tilT}^{W_{\bP}}\cong\Sym(\xch(\tilT)_{\Ql}[-2](-1))^{W_{\bP}}.
\end{equation*}
These characteristic classes give a graded action of $\Sym(\xch(\tilT)_{\Ql})^{W_{\bP}}$ on $f_{\bP,*}\Ql$ by cup product.
\end{itemize}
\end{cons}

\begin{theorem}\label{th:pardaction}
There is a unique graded algebra homomorphism:
\begin{equation*}
\HH_{\bP}\to\bigoplus_{i\in\ZZ}\End^{2i}_{\calA\times X}(f_{\bP,*}\Ql)(i)
\end{equation*}
such that $\Ql[\dquot{\bP}{\bP}]$, $u$ and $\Sym(\xch(\tilT)_{\Ql})^{W_{\bP}}$ acts as in Construction \ref{cons:charP}.
\end{theorem}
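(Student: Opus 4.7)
The plan is to deduce the theorem from Theorem \ref{th:daction} via the identification $f_{\bP,*}\Ql \cong (\fQl)^{W_{\bP}}$ of Lemma \ref{l:WPinv}. Once we know that the subalgebra $\HH_{\bP}\subset\HH$ commutes with $W_{\bP}$ inside $\HH$, the $\HH$-action on $\fQl$ from Theorem \ref{th:daction} restricts to an action of $\HH_{\bP}$ on the $W_{\bP}$-invariant direct summand, i.e.\ on $f_{\bP,*}\Ql$.

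\emph{Step 1: $\HH_{\bP}$ commutes with $W_{\bP}$ in $\HH$.} The element $u$ is central. For the generator $\one_{W_{\bP}\tilw W_{\bP}}\in\Ql[\dquot{\bP}{\bP}]$, bi-$W_{\bP}$-invariance gives $w\cdot\one_{W_{\bP}\tilw W_{\bP}}=\one_{W_{\bP}\tilw W_{\bP}}=\one_{W_{\bP}\tilw W_{\bP}}\cdot w$ for all $w\in W_{\bP}$, so in particular these elements commute with $W_\bP$. For $p\in\Sym(\xch(\tilT))^{W_{\bP}}$ and a simple reflection $s_i\in\Sigma_{\aff}$, I propagate the DAHA relation in Def.\ \ref{def:daha}(\ref{eq:commrefl}) by induction on $\deg p$ to obtain a Demazure-style identity
\begin{equation*}
s_i\, p \;=\; \leftexp{s_i}{p}\, s_i \;+\; u\,\Delta_i(p),\qquad \Delta_i(p):=(p-\leftexp{s_i}{p})/\alpha_i.
\end{equation*}
When $s_i\in W_{\bP}$ and $p$ is $W_{\bP}$-invariant, $\leftexp{s_i}{p}=p$ and $\Delta_i(p)=0$, so $s_i p=p s_i$; iterating along a reduced word for any $w\in W_{\bP}$ yields $wp=pw$.

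\emph{Step 2: Transport along Lemma \ref{l:WPinv}.} Theorem \ref{th:daction} gives the $\HH$-action on $\fQl$. By Step 1, $\HH_{\bP}$ preserves the $W_{\bP}$-invariant direct summand, and transporting through the isomorphism $f_{\bP,*}\Ql\cong(\fQl)^{W_{\bP}}$ produces a graded algebra homomorphism $\HH_{\bP}\to\bigoplus_i\End^{2i}_{\calA\times X}(f_{\bP,*}\Ql)(i)$. Uniqueness is automatic since $\HH_{\bP}$ is by definition generated by the three families $\Ql[\dquot{\bP}{\bP}]$, $\Ql[u]$ and $\Sym(\xch(\tilT))^{W_{\bP}}$ whose images are prescribed.

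\emph{Step 3: Matching with Construction \ref{cons:charP}.} It remains to verify the agreement of the transported action with the prescribed one on each of the three families. For $u$ both actions are cup product with $c_1(\calO_X(D))$, and for $\Ql[\dquot{\bP}{\bP}]$ this is precisely the content of Construction \ref{cons:alterPWaction} together with Lemma \ref{eq:twoconssame}: the Hecke correspondence action of Theorem \ref{th:paraction} and the action inherited from the $\tilW$-action on $\fQl$ coincide under $f_{\bP,*}\Ql=(\fQl)^{W_{\bP}}$. For $p\in\Sym(\xch(\tilT))^{W_{\bP}}$, one must compare cup product with $p(c_1(\calL))$ on $\Mpar$ (from Construction \ref{cons:action}) with cup product by the $\tilL_{\bP}$-characteristic class associated to $p$ on $\Bun_{\bP}$ (from Construction \ref{cons:charP}), pulled back to $\calM_{\bP}$. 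This is a splitting-principle statement: since $\bP^u\subset\bI^u$ we have $\calG^u_{\bP}\subset\calG^u_{\bI}$, so the $\tilL_{\bP}$-torsor $\hatBun_\infty/\calG^u_{\bP}$ pulls back along $\forg^{\bP}_{\bI}:\Bunpar_G\to\Bun_{\bP}$ to a $\tilL_{\bP}$-torsor whose canonical $B^{\bP}_{\bI}$-reduction (coming from the universal parabolic reduction on the flag-variety bundle with fiber $\bP/\bI=L_{\bP}/B^{\bP}_{\bI}$) further reduces along $B^{\bP}_{\bI}\twoheadrightarrow T$ to the $\tilT$-torsor $\calL^{\tilT}$ of Construction \ref{cons:linebd}. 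Hence $W_{\bP}$-invariant characteristic classes of $\calL^{\tilT}$ coincide with pullbacks of characteristic classes of the $\tilL_{\bP}$-torsor, and pushing forward along $f_{\bP}$ gives the required identification.

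The main obstacle is Step 3 for $\Sym(\xch(\tilT))^{W_{\bP}}$: unwinding Constructions \ref{cons:hataction}, \ref{cons:linebd} and \ref{cons:charP} to check the splitting-principle compatibility between the $\tilT$-torsor on $\Bunpar_G$ and the $\tilL_{\bP}$-torsor on $\Bun_{\bP}$. Everything else reduces formally to Theorem \ref{th:daction} and the Iwahori-level results already established.
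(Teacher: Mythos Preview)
Your argument is correct. It coincides with the alternative proof the paper records in the Remark immediately following the theorem: identify $\HH_{\bP}$ with the subalgebra of $\HH$ commuting with the idempotent $\frac{1}{\#W_{\bP}}\one_{W_{\bP}}$ (equivalently, with $W_{\bP}$), and transport the $\HH$-action of Theorem~\ref{th:daction} to $(\fQl)^{W_{\bP}}\cong f_{\bP,*}\Ql$ via Lemma~\ref{l:WPinv}. Your Step~1 Demazure computation is exactly the verification behind the paper's claim that $\HH_{\bP}=\one_{W_{\bP}}\HH\one_{W_{\bP}}$, and your Step~3 splitting-principle check (that the $\tilL_{\bP}$-characteristic classes on $\Bun_{\bP}$ pull back to the $W_{\bP}$-invariant polynomials in $c_1(\calL(\xi))$ on $\Bunpar_G$) fills in what the Remark leaves implicit.

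The paper's \emph{primary} proof, by contrast, is indicated only by the sentence ``This can be proved by a similar argument as Th.~\ref{th:daction}'': i.e.\ redo the correspondence-theoretic verification of the DAHA relations directly at level $\bP$, using the evaluation map $\ev_{\bP}$ to a Steinberg-type variety for $L_{\bP}$ and repeating the $\mathfrak{sl}_2$ calculation of Sec.~\ref{ss:simplerefl} for each simple reflection in $W_{\bP}$-cosets. That route is self-contained at level $\bP$ but duplicates work; your route (and the paper's Remark) is cleaner because it reuses Theorem~\ref{th:daction} wholesale, at the cost of the compatibility check in Step~3.
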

This can be proved by a similar argument as Th. \ref{th:daction}. We omit the proof here.

\begin{remark}
Let $\one_{W_{\bP}}\in\Ql[\tilW]$ be the characteristic function of the subset $W_{\bP}\subset\tilW$, then $\frac{1}{\#W_{\bP}}\one_{W_{\bP}}$ is an idempotent in $\HH$. It is not hard to check that
\begin{equation*}
\HH_{\bP}=\one_{W_{\bP}}\HH\one_{W_{\bP}}.
\end{equation*}
Therefore, $\HH_{\bP}$ naturally acts on $f_{\bP,*}\Ql=(\fQl)^{W_{\bP}}$ (see Lem. \ref{l:WPinv}). This gives another proof of Th. \ref{th:pardaction}.
\end{remark}

% with cap

\section{Relation with the Picard stack action}\label{s:cap}
In this section, we study the interaction between the graded DAHA action on $\fQl$ and the cap product action on it by the homology complex of the Picard stack $\calP$. In Sec. \ref{ss:appcap}, we study the commutation relation between the cap product action and the graded DAHA action. In Sec. \ref{ss:compcent}, we relate the central part of the $\Ql[\tilW]$-action on the parabolic Hitchin complex to the action by the component groups of the Picard stack. For background on the cap product, we refer the readers to App. \ref{s:capapp}.

% Cap and correspondence

\subsection{The cap product and the double affine action}\label{ss:appcap}

We apply the general discussions in Sec. \ref{ss:cap} to the situation of the $\calP$-action on $\Mpar$ over $\tcA$ or over $\calA\times X$. Define the morphisms $p$ and $\tilp$ as:
\begin{equation}\label{eq:tilp}
\xymatrix{\tcA\ar[r]_{q}\ar@/^1pc/[rr]^{\tilp} & \calA\times X\ar[r]_{p} & \calA}
\end{equation}
Then we have the cap product actions
\begin{eqnarray}\label{eq:capP}
\cap&:&p^*\homo{*}{\calP/\calA}\otimes\fQl\to\fQl,\\
\label{eq:captilP}
\cap&:&\tilp^*\homo{*}{\calP/\calA}\otimes\tfQl\to\tfQl.
\end{eqnarray}

In this section, we study the relationship between these cap product actions and the graded double affine Hecke algebra action constructed in Th. \ref{th:daction}.

\begin{prop}\label{p:capW}
The cap product action of $p^*\homo{*}{\calP/\calA}$ on $\fQl$ commutes with the $\tilW$-action defined in \cite[Th. 4.4.3]{GSI}; the action of $\tilp^*\homo{*}{\calP/\calA}$ on $\tfQl$ commutes with the $\tilW$-equivariant structure defined in \cite[Prop. 4.4.6]{GSI}.
\end{prop}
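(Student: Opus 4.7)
The plan is to show that each Hecke correspondence entering the construction of the $\tilW$-action on $\fQl$ is stable under the diagonal action of $\calP$, and then to apply a general formal lemma (which I expect to be supplied by App.~\ref{s:complcorr}) relating $\calP$-equivariant cohomological correspondences to the cap product action of $\homo{*}{\calP/\calA}$.

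Recall from \cite[Cor. 4.3.8]{GSI} and Construction \ref{cons:HPQ} that the $\tilW$-action on $\fQl$ is realized by the cohomological correspondences $[\calH_{\tilw}]_{\#}$ coming from the closures of the graphs $\calH^{\rs}_{\tilw}\subset\Mparrs\times_{(\calA\times X)^{\rs}}\Mparrs$ of the right $\tilW$-action on $\Mparrs$. The first task is to check that the diagonal action of $\calP$ on $\Mpar\times_{\calA\times X}\Mpar$ preserves $\calH_{\tilw}$. Over the regular semisimple locus the right $\tilw$-action is defined through sections of the local sheaf $J_a(F_x)/J_a(\hatO_x)$ of modifications at the marked point, while $\calP_a$ acts by twisting by global $J_a$-torsors; since both operations are governed by the same commutative group scheme $J_a$, they commute fibrewise, so the diagonal $\calP$-action preserves $\calH^{\rs}_{\tilw}$. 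To upgrade this to the closure, it suffices to exhibit a $\calP$-action on the ambient parabolic Hecke stack $\Heckep$ lifting the two $\calP$-actions on $\Mpar$ via $\overleftarrow{h}$ and $\overrightarrow{h}$ — the natural one obtained by acting simultaneously on both Hitchin pairs, which preserves the identification away from the marked point — and to observe that this $\calP$-action is continuous and hence sends $\calH_{\tilw}=\overline{\calH^{\rs}_{\tilw}}$ to itself.

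The second step is to invoke the formal lemma: if a smooth Picard $S$-stack $\calP\to\calA$ acts on an $S$-stack $Y\to S$ and $C\subset Y\times_S Y$ is a self-correspondence equivariant for the two induced $\calP$-actions via $\overleftarrow{c}$ and $\overrightarrow{c}$, then the endomorphism $[C]_{\#}$ of the pushforward complex on $S$ commutes with the cap product action of $\homo{*}{\calP/\calA}$. Applied with $C=\calH_{\tilw}$, $Y=\Mpar$ and $S=\calA\times X$, this immediately yields the desired commutation for each $\tilw\in\tilW$, proving the first assertion. For the enhanced statement, the same $\calP$-equivariance of $\calH_{\tilw}$, combined with the fact that each $\calH_{\tilw}$ is a self-correspondence of $\Mpar$ over $\tcA$ (see \cite[Prop. 4.4.6]{GSI}), allows one to apply the same formal lemma over $\tcA$ to obtain commutation with the cap product (\ref{eq:captilP}).

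The main obstacle I anticipate is the first step: making the $\calP$-equivariance of $\calH_{\tilw}$ rigorous, rather than merely that of $\calH^{\rs}_{\tilw}$. The cleanest route is the one sketched above, constructing a diagonal $\calP$-action on $\Heckep$ so that both projections become $\calP$-equivariant, after which stability of closures is automatic. Once that is in hand, the equivariant formalism of cohomological correspondences does the rest without further computation.
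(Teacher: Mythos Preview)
Your proposal is correct and matches the paper's proof essentially step for step: construct the diagonal $\calP$-action on $\Heckep$, verify $\calP$-stability of $\calH^{\rs}_{\tilw}$ via the explicit formula in \cite[Cor. 4.3.8]{GSI}, pass to the closure, and invoke Lem.~\ref{l:capcorr}. The only place where the paper is more explicit than your sketch is the closure step: ``stability of closures is automatic'' requires knowing that $\calP\times_{\calA\times X}\calH^{\rs}_{\tilw}$ is dense in $\calP\times_{\calA\times X}\calH_{\tilw}$, which the paper deduces from the smoothness (hence flatness) of $\calP\to\calA$; and the paper separately notes that $\calP$-stability of $\calH_{\tilw}$ implies $\calP$-invariance of the fundamental class $[\calH_{\tilw}]$ in the sense of Def.~\ref{def:Pinv} (both $\act^!$ and $\proj^!$ pull it back to $[\calP\times_{\calA\times X}\calH_{\tilw}]$), which is the actual hypothesis of Lem.~\ref{l:capcorr}.
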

\begin{proof}
We give the proof of the first statement; the proof of the second one is similar. In Lem. \ref{l:capcorr} of the Appendix, we give a sufficient condition for a cohomological correspondence to commute with the cap product. We want to apply this lemma to our situation.

The Picard stack $\calP$ acts on $\Heckep$ by twisting the two parabolic Hitchin data by the same $J_a$-torsor. This makes $\overleftarrow{h}$ and $\overrightarrow{h}$ both $\calP$-equivariant. We want to apply Lem. \ref{l:capcorr} to the correspondences $\calH_{\tilw}$ which are of finite type. To this end we have to check two things
\begin{enumerate}
\item The correspondence $\calH_{\tilw}$ is stable under the action of $\calP$;
\item The fundamental class $[\calH_{\tilw}]$, viewed as an element of $\Corr(\calH_{\tilw};\Ql,\Ql)$, is $\calP$-invariant (cf. Def. \ref{def:Pinv}).
\end{enumerate}
We show (1). We first show that $\calH^{\rs}_{\tilw}$ is stable under the $\calP$-action. Since $\calH^{\rs}_{\tilw}$ is the graph of the right $\tilw$-action on $\Mparrs$, it suffices to show that the right $\tilW$-action is $\calP$-equivariant. But this follows immediately from the explicit formula of the right $\tilW$-action defined in \cite[Cor. 4.3.8]{GSI}.

We then show that $\calH_{\tilw}$ is stable under $\calP$. Since $\calP$ is smooth over $\calA\times X$, $\calP\times_{\calA\times X}\calH_{\tilw}$ is smooth, and in particular flat over $\calH_{\tilw}$. Since $\calH^{\rs}_{\tilw}$ is dense in $\calH_{\tilw}$, we conclude that $\calP\times_{\calA\times X}\calH^{\rs}_{\tilw}$ is dense in $\calP\times_{\calA\times X}\calH_{\tilw}$. Consider the action map $\act:\calP\times_{\calA\times X}\calH_{\tilw}\to\Heckep$. We already know from above that $\act(\calP\times_{\calA\times X}\calH^{\rs}_{\tilw})$ scheme-theoretically lands in $\calH^{\rs}_{\tilw}$, therefore by the density property we just observed, $\act(\calP\times_{\calA\times X}\calH_{\tilw})$ also lands scheme-theoretically in $\calH_{\tilw}$. This proves (1).

(2) follows from (1): both $\act^![\calH_{\tilw}]$ and $\proj^![\calH_{\tilw}]$ are the fundamental class $[\calP\times_{\calA\times X}\calH_{\tilw}]$.
\end{proof}

Next we study the relation between the cap product by $\homo{*}{\calP/\calA}$ and the cup product
by the Chern classes of $\calL(\xi)$. 

\subsubsection{Rewriting the Chern classes} Suppose $\xi\in\xch(T)$. Recall from \cite[Construction 3.2.8]{GSI} that we have a tautological $T$-torsor $\calQ^T$ over $\tcP$. Let $\calQ(\xi)$ be the line bundle associated to $\calQ^T$ and the character $\xi:T\to\GG_m$. The Chern class of $\calQ(\xi)$ gives a map
\begin{equation}\label{eq:chernstar}
c_1(\calQ(\xi)):\const{\tcA}\to\coho{*}{\tcP/\tcA}[2](1)
\end{equation}
Since $\tcA$ and $\calA$ are both smooth, we have
\begin{equation}\label{eq:repulltcA}
\const{\tcA}\cong\tilp^!\const{\calA}[-2](-1)
\end{equation}
Let $\beta:\tcP\to\calP$ be the projection. Since both $\tcP$ and $\calP$ are smooth, we have $\beta^!\const{\calP}\cong\const{\tcP}[2](1)$. By proper base change, we have $\widetilde{g}_*\Ql\cong\widetilde{g}_*\beta^!\Ql[-2](-1)\cong\tilp^!g_*\Ql[-2](-1)$, where $\widetilde{g}:\tcP\to\tcA$ and $g:\calP\to\calA$ are the structure morphisms. Therefore
\begin{equation}\label{eq:repulltcP}
\coho{*}{\tcP/\tcA}\cong\tilp^!\coho{*}{\calP/\calA}[-2](-1).
\end{equation}
Using (\ref{eq:repulltcA}) and (\ref{eq:repulltcP}), we can rewrite (\ref{eq:chernstar}) as
\begin{equation*}
c_1(\calQ(\xi)):\tilp^!\const{\calA}\to\tilp^!\coho{*}{\calP/\calA}[2](1).
\end{equation*}
By adjunction, this gives a map
\begin{equation*}
c_1(\calQ(\xi))^{\natural}:\homo{*}{\tcA/\calA}\to\coho{*}{\calP/\calA}[2](1).
\end{equation*}

\begin{lemma}\label{l:clQnat}
Under the natural decomposition (\ref{eq:decompcohop}), the map $c_1(\calQ(\xi))^\natural$ factors through
\begin{equation}\label{eq:c1Qnat}
c_1(\calQ(\xi))^\natural:\homo{*}{\tcA/\calA}\to\coho{1}{\calP/\calA}_{\st}[1](1)\subset\coho{*}{\calP/\calA}[2](1).
\end{equation}
\end{lemma}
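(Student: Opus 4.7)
The strategy is to exhibit $\calQ(\xi)$ as a $\calP$--equivariant line bundle on $\tcP$ and then analyse its first Chern class through this equivariance. Since $\tcP \cong \tcA \times_{\calA} \calP$ by the Cartesian square implicit in (\ref{eq:repulltcP}), the $\calP$--action on $\calP$ extends to $\tcP$ by acting trivially on the $\tcA$--factor. The key preliminary step is to verify that the tautological $T$--torsor $\calQ^T$ from \cite[Construction 3.2.8]{GSI} carries a canonical $\calP$--linearization: the $\calP$--action on $\tcP$ should intertwine the $T$--torsor structure via the homomorphism $J_{\tcA} \to T$ obtained from the cameral identification of the regular centralizer with the abstract Cartan over $\tcA$. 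Composing with $\xi \colon T \to \GG_m$ then exhibits $\calQ(\xi)$ as $\calP$--linearized by the character $\chi_\xi \colon J_{\tcA} \to \GG_m$.

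Given such a linearization, the Chern class $c_1(\calQ(\xi)) \in \cohog{2}{\tcP}(1)$, when restricted to any $\calP$--fibre $\widetilde{g}^{-1}(\tilde{a}) \cong \calP_a$, equals the translation--invariant first Chern class on $\calP_a$ associated to $\chi_\xi|_{J_a}$. In particular it is a class in the $H^1$ of the fibre rather than a genuine $H^2$--class. Repackaging via the base change $\coho{*}{\tcP/\tcA} \cong \tilp^!\coho{*}{\calP/\calA}[-2](-1)$ together with the adjunction $\tilp_! \dashv \tilp^!$, this translates into the statement that $c_1(\calQ(\xi))^\natural$ factors through the shifted subcomplex $\coho{1}{\calP/\calA}[1](1) \subset \coho{*}{\calP/\calA}[2](1)$ singled out by the natural decomposition (\ref{eq:decompcohop}); the potential contributions in degrees $0$ and $2$ are killed by the fibrewise vanishing discussion above.

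To upgrade to the stable summand I would use the definition of $\homo{1}{\calP/\calA}_{\st}$ (Def. \ref{def:stP}) and its dual picture in cohomology: the stable part captures precisely the contribution of the maximal abelian--variety quotient of $\calP_a$, equivalently the translation--invariant cohomology along $\calP_a$. Since $c_1(\calQ(\xi))|_{\calP_a}$ is translation--invariant by construction (it is pulled back from $B\GG_m$ along the character $\chi_\xi|_{J_a}$), it must land in $\coho{1}{\calP/\calA}_{\st}[1](1)$. The main obstacle I anticipate is verifying the canonical $\calP$--linearization of $\calQ^T$ with the correct cocycle: this requires unwinding the cameral construction of $\calQ^T$ in \cite{GSI} and matching it with the $J$--torsor data defining $\calP$. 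Once this compatibility is in hand, both the degree--$1$ and the stability assertions follow from standard equivariant Chern--class formalism for line bundles on commutative group stacks.
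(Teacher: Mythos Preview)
Your strategy diverges from the paper's, and as written it has a real gap in the degree--$1$ step.

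The paper does not use any $\calP$--linearization of $\calQ(\xi)$. For stability it identifies $\calQ(\xi)$ as the pullback of the Poincar\'e line bundle along $\calP\to\stPic_T(\tcA/\calA)\to\stPic(\tcA/\calA)$, and then observes (via Lem.~\ref{l:Picconn}) that $\pi_0(\stPic(X_a))$ is torsion-free while $\pi_0(\calP_a)$ is finite, so this morphism factors through the neutral component $\stPic(\tcA/\calA)^0$; connectedness of the fibres then forces the image into the stable summand. For degree~$1$ it simply uses $(\id_{\tcA}\times[N])^*\calQ(\xi)\cong\calQ(\xi)^{\otimes N}$, so $c_1(\calQ(\xi))^\natural$ lands in the $N$--eigenspace of $[N]^*$ on $\coho{*}{\calP/\calA}_{\st}[2](1)$, which by the very definition of the decomposition~(\ref{eq:decompcohop}) is $\coho{1}{\calP/\calA}_{\st}[1](1)$. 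This single eigenvalue argument disposes of the $\coho{0}$-- and $\coho{\geq2}$--components simultaneously.

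Your formulation has two problems. First, ``$\calP$--linearized by the character $\chi_\xi$'' is not a linearization in the usual sense: what is actually true is that $\calQ(\xi)$ is \emph{multiplicative} along the $\calP$--direction, i.e.\ $\mult^*\calQ(\xi)\cong p_1^*\calQ(\xi)\otimes p_2^*\calQ(\xi)$, which is a different (and here more useful) structure; a genuine linearization would make $\calQ(\xi)$ fibrewise trivial, and your character $\chi_\xi$ would then have nowhere to live. Second, and more seriously, your degree--$1$ argument is not correct as stated: the sentence ``it is a class in the $H^1$ of the fibre rather than a genuine $H^2$--class'' does not make sense for a first Chern class, and even granting that the fibrewise restriction of $c_1$ vanishes (which kills the $\coho{2}$--component), you give no reason why the $\coho{0}$--component vanishes. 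Fibrewise vanishing says nothing about that piece, since a map $\homo{*}{\tcA/\calA}\to\coho{0}{\calP/\calA}[2](1)$ can be nonzero while restricting to zero on every fibre. The clean way to kill it is exactly the paper's $[N]^*$--eigenvalue argument, which your multiplicativity observation would in fact yield immediately---but you did not make that step.
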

\begin{proof}
By construction, the line bundle $\calQ(\xi)$ on $\tcA\times_{\calA}\calP$ is the pull-back of the Poincar\'e line bundle on $\tcA\times_{\calA}\stPic(\tcA/\calA)$ using the morphism
\begin{equation}\label{eq:jxi}
\calP\xrightarrow{\jmath_{\calP}}\stPic_T(\tcA/\calA)\xrightarrow{I_{\xi}}\stPic(\tcA/\calA).
\end{equation}
where $I_\xi$ sends a $T$-torsor to the induced line bundle associated to the character $\xi$. By Lem. \ref{l:Picconn} below, the component group of $\stPic(X_a)$ is torsion-free for any $a\in\calA$. Since $\pi_0(\calP_a)$ is finite for $a\in\calA$, the morphism (\ref{eq:jxi}) necessarily factors through the neutral component $\stPic(\tcA/\calA)^0$ of $\stPic(\tcA/\calA)$. Therefore $c_1(\calQ(\xi))^\natural$ factors through
\begin{equation*}
\homo{*}{\tcA/\calA}\to\coho{*}{\stPic(\tcA/\calA)^0/\calA}[2](1)\to\coho{*}{\calP/\calA}[2](1).
\end{equation*}
Since $\stPic(\tcA/\calA)^0\to\calA$ has connected fibers, the above map has to land in the stable part of $\coho{*}{\calP/\calA}[2](1)$. 

By the definition of the tautological line bundle $\calQ(\xi)$, for each integer $N\in\ZZ$, we have
\begin{equation*}
(\id_{\tcA}\times[N])^*\calQ(\xi)\cong\calQ(N\xi)\cong{\calQ(\xi)}^{\otimes N}.
\end{equation*}
Therefore we have a commutative diagram 
\begin{equation*}
\xymatrix{\homo{*}{\tcA/\calA}\ar[rr]^{c_1(\calQ(\xi))}\ar@/_1.5pc/[rrr]_{Nc_1(\calQ(\xi))} && \coho{*}{\calP/\calA}[2](1)\ar[r]^{[N]^*} & \coho{*}{\calP/\calA}[2](1)}
\end{equation*}
This implies that $c_1(\calQ(\xi))$ factors through the eigen-subcomplex of $\coho{*}{\calP/\calA}_{\st}[2](1)$ with eigenvalue $N$ under the endomorphism $[N]^*$, i.e., $\coho{1}{\calP/\calA}_{\st}[1](1)$ (cf. Rem. \ref{rm:cohoP}).
\end{proof}

\begin{lemma}\label{l:Picconn}
For any reduced projective curve $C$ over $k$, the component group $\pi_0(\stPic(C))$ is a free abelian group of finite rank.
\end{lemma}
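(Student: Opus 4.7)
The plan is to reduce to the case of a smooth curve by normalization. Let $\nu\colon\widetilde{C}\to C$ be the normalization map, and write $\widetilde{C}=\coprod_{i=1}^{r}\widetilde{C}_i$ where $\widetilde{C}_1,\dots,\widetilde{C}_r$ are smooth connected projective curves over $k$ corresponding to the irreducible components of $C$. Pullback gives a morphism of commutative group schemes
\begin{equation*}
\nu^{*}\colon \stPic(C)\to\stPic(\widetilde{C})=\prod_{i=1}^{r}\stPic(\widetilde{C}_i).
\end{equation*}
Each factor $\stPic(\widetilde{C}_i)$ is an extension of $\ZZ$ by the Jacobian $\stPic^{0}(\widetilde{C}_i)$, so $\pi_0(\stPic(\widetilde{C}))\cong\ZZ^{r}$. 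The strategy is to show that $\nu^{*}$ induces an isomorphism on component groups, which then immediately yields the claim.

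To analyze $\nu^{*}$, I would use the short exact sequence of sheaves on $C$
\begin{equation*}
1\to\calO_{C}^{*}\to\nu_{*}\calO_{\widetilde{C}}^{*}\to\calF\to 1,
\end{equation*}
where $\calF:=\nu_{*}\calO_{\widetilde{C}}^{*}/\calO_{C}^{*}$ is a skyscraper sheaf supported on the non-normal locus of $C$. Since $H^{1}(C,\calF)=0$, the associated long exact cohomology sequence reads
\begin{equation*}
1\to H^{0}(\nu_{*}\calO_{\widetilde{C}}^{*})/H^{0}(\calO_{C}^{*})\to H^{0}(C,\calF)\to \stPic(C)\xrightarrow{\nu^{*}}\stPic(\widetilde{C})\to 1,
\end{equation*}
so $\nu^{*}$ is surjective with kernel $K$ a quotient of $H^{0}(C,\calF)$ by a subtorus (the image of $H^{0}(\nu_{*}\calO_{\widetilde{C}}^{*})\cong\GG_{m}^{r}$).

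The key step is to show $K$ is connected. At each non-normal point $x\in C$, the stalk $\calF_{x}=\widetilde{\calO}_{x}^{*}/\calO_{C,x}^{*}$ is a commutative affine algebraic group, and the filtration by powers of the conductor realizes it as a successive extension of copies of $\GG_{a}$ by a torus of rank equal to the number of branches at $x$ minus one; in particular each $\calF_{x}$ is connected. Hence $H^{0}(C,\calF)=\bigoplus_{x}\calF_{x}$ is a connected algebraic group, and $K$ is a quotient of it by a connected subgroup, so $K$ is connected. Consequently $K\subset\stPic(C)^{0}$, and the short exact sequence $1\to K\to\stPic(C)\to\stPic(\widetilde{C})\to 1$ restricts to a surjection $\stPic(C)^{0}\twoheadrightarrow\stPic(\widetilde{C})^{0}$ with the same kernel $K$, giving an isomorphism $\pi_{0}(\stPic(C))\isom\pi_{0}(\stPic(\widetilde{C}))\cong\ZZ^{r}$. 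The only real obstacle is the local structural description of $\calF_{x}$ and the verification that the image of the global units is a subtorus; both are standard but need to be spelled out carefully to conclude connectedness of $K$.
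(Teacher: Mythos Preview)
Your proof is correct and follows essentially the same route as the paper: both reduce to the normalization via the exact sequence $1\to\calO_C^*\to\nu_*\calO_{\widetilde C}^*\to\calF\to 1$, observe that the cokernel sheaf $\calF$ (the paper's $\bigoplus_c\calR_c$) has connected stalks, and conclude that the kernel of $\nu^*$ is connected so that $\pi_0(\stPic(C))\cong\pi_0(\stPic(\widetilde C))\cong\ZZ^r$. You give a bit more detail on the local structure of $\calF_x$ than the paper does, but the argument is the same.
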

\begin{proof}
Let $\pi:\tilC\to C$ be the normalization, then we have an exact sequence of groups over $C$:
\begin{equation*}
1\to\GG_m \to\pi_*\GG_m\to\bigoplus_{c\in C^{\sing}}\calR_c\to1
\end{equation*}
where each $\calR_c$ is a {\em connected} commutative algebraic group, viewed as an \'etale sheaf supported at the singular point $c\in C^{\sing}$. This gives an exact sequence of Picard stacks
\begin{equation*}
\prod_{c\in C^{\sing}}\calR_c\to\stPic(C)\xrightarrow{\pi^*}\stPic(\tilC)\to1
\end{equation*}
Since $\prod_{c\in C^{\sing}}\calR_c$ is connected, we have $\pi_0(\stPic(C))\isom\pi_0(\stPic(\tilC))$. Since $\tilC$ is a disjoint union of smooth connected projective curves, $\pi_0(\stPic(\tilC))\isom\ZZ^{\Irr(C)}$ is a free abelian group of finite rank.
\end{proof}

Dually, we can also write the map $c_1(\calQ(\xi))^\natural$ in(\ref{eq:c1Qnat}) as
\begin{equation}\label{eq:dualc1Q}
c_\xi:=\DD(c_1(\calQ(\xi))^\natural):\homo{1}{\calP/\calA}_{\st}\to\coho{*}{\tcA/\calA}[1](1).
\end{equation}

\begin{prop}\label{p:hxi}
\begin{enumerate}
\item []
\item The cap product action of $p^*\homo{*}{\calP/\calA}$ on $\fQl$ commutes with the actions of $u$ and $\delta$. 
\item Suppose $h$ is a section of $\homo{1}{\calP/\calA}$ over an \'etale chart $U\to\calA$, which acts on $\fQl|_U$ via cap product $\cap$. Then we have the following commutation relation between the $h$-action and the $\xi$-action on $\fQl|_{U}$:
\begin{equation}\label{eq:hxicomm}
[\xi,h]:=\xi(h\cap)-(h\cap)\xi=c_\xi(h_{\st})
\end{equation}
where $h_{\st}$ is the stable part of $h$ and $c_\xi(h_{\st})\in\cohog{1}{\tcA_U/U}(1)$ acts on $\fQl$ by cup product.
\end{enumerate}
\end{prop}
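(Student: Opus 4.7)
For (1), the actions of $u$ and $\delta$ on $\fQl$ are given by cup products with the pullbacks of $c_1(\calO_X(D))$ and $c_1(\omega_X)$ from $X$ to $\calA\times X$. Both are cohomology classes on $\calA\times X$, and cup product by any such class commutes with the cap action by $p^*\homo{*}{\calP/\calA}$: naturality of the cap pairing $p^*\homo{*}{\calP/\calA}\otimes\fQl\to\fQl$ forces tensoring the source with a cohomology class and then applying the cap to equal first capping and then cupping with that class on the target.

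For (2), the plan is to realize the cap product by $h$ as a cohomological correspondence and reduce the commutator to a difference of pullbacks of $\calL(\xi)$. Over an \'etale chart $U\to\calA$, the section $h\in\homo{1}{\calP/\calA}(U)$ yields, in the sense of App.~\ref{s:capapp}, a cohomological correspondence $\mathbf{c}_h$ on the self-correspondence
\[
\Mpar\xleftarrow{\proj}\calP\times_\calA\Mpar\xrightarrow{\act}\Mpar
\]
of $\Mpar$ over $\calA\times X$, whose pushforward recovers $h\cap$ on $\fQl|_U$. Invoking the general compatibility between cup products and cohomological correspondences from App.~\ref{s:complcorr}, the commutator $[\xi,h\cap]$ equals $(\mathbf{c}_h\cup\delta_\xi)_\#$ where
\[
\delta_\xi:=\act^*c_1(\calL(\xi))-\proj^*c_1(\calL(\xi))\in\cohog{2}{\calP\times_\calA\Mpar}(1).
\]

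The crux is to identify the difference line bundle
\[
\act^*\calL(\xi)\otimes\proj^*\calL(\xi)^{\otimes-1}\cong\pi^*\calQ(\xi),
\]
where $\pi:\calP\times_\calA\Mpar\to\calP\times_\calA\tcA=\tcP$ uses the enhanced Hitchin map on the second factor. For $(Q,m)$ with $m=(x,\calE,\varphi,\calE^B_x)$ and $\tilx:=\tilf(m)\in\tcA_a$, the $\calP$-action modifies the $B$-reduction at $x$ through the canonical homomorphism $J_a|_{\tilx}\to T$ available over $\tcA$; this produces exactly the $T$-torsor classified by the Abel--Jacobi map $\jmath_{\calP}:\calP\to\stPic_T(\tcA/\calA)$ evaluated at $\tilx$, and inducing along $\xi$ recovers $\pi^*\calQ(\xi)$.

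With this identification, the commutator $(\mathbf{c}_h\cup\pi^*c_1(\calQ(\xi)))_\#$ is a pairing of $h$ with $c_1(\calQ(\xi))$ under the Poincar\'e-type duality of Lem.~\ref{l:clQnat}. Since $c_1(\calQ(\xi))^\natural$ factors through the stable summand, only $h_{\st}$ contributes, and the definition $c_\xi=\DD(c_1(\calQ(\xi))^\natural)$ produces $c_\xi(h_{\st})\in\cohog{1}{\tcA_U/U}(1)$, whose cup action on $\fQl$ (via $\tilf:\Mpar\to\tcA$ and pushforward) is the desired commutator. The principal obstacle is Step~2---the line-bundle identification---which demands a careful unwinding of the modification of $\calE^B_x$ under the $\calP$-action in terms of $J_a\to T$ over $\tcA$ and a direct comparison with the tautological $T$-torsor $\calQ^T$ built from $\jmath_\calP$. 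A secondary, but more routine, task is tracing shifts and twists in the Verdier-duality step to confirm that the correspondence calculation yields precisely $c_\xi(h_{\st})\cup$.
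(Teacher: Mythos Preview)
Your proposal is correct and follows essentially the same route as the paper. Part (1) is identical. For part (2), the paper also reduces the commutator to the difference $\act^*\calL(\xi)\otimes\proj^*\calL(\xi)^{-1}$ and identifies it with $\calQ(\xi)$, then reads off the result from the definition of $c_\xi$; the only difference is that the paper phrases this as a direct unfolding of the cap product (a Leibniz-type identity $(h\cup c_1(\calQ(\xi)))\cap\gamma+h\cap(\gamma\cup c_1(\calL(\xi)))=(h\cap\gamma)\cup c_1(\calL(\xi))$) rather than in the language of cohomological correspondences, and the line-bundle identification you flag as the ``principal obstacle'' is already recorded as \cite[Lem.~3.2.9]{GSI}, so no new unwinding is required.
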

\begin{proof}
(1) For $\xi=\delta$ or $u$, $\calL(\xi)$ is the pull-back of a line bundle on $X$. Since the action of $\calP$ preserves the base $\calA\times X$, the cap product by $p^*\homo{*}{\calP/\calA}$ commutes with $\cup c_1(\calL(\xi))$ on $\fQl$.

(2) By the commutative diagram in \cite[Lem. 3.2.9]{GSI}, we have an isomorphism of line bundles on $\tcP\times_{\tcA}\Mpar$:
\begin{equation*}
\act^*\calL(\xi)\cong\calQ(\xi)\boxtimes_{\tcA}\calL(\xi).
\end{equation*}

Unfolding the definition of the cap product, we get a commutative diagram
\begin{equation*}
\xymatrix{(g\times\fpar)_!(\Ql\boxtimes\Ql)\ar[rrr]^{c_1(\calQ(\xi))\otimes\id+\id\otimes c_1(\calL(\xi))}\ar[d]^{\wr} &&& (g\times\fpar)_!(\Ql\boxtimes\Ql)[2](1)\ar[d]^{\wr}\\
\fpar_!\act_!\proj^!\Ql &&& \fpar_!\act_!\proj^!\Ql[2](1)\\
\fpar_!\act_!\act^!\Ql\ar[u]^{\phi}_{\wr}\ar[d]^{\adj}\ar[rrr]^{\fpar_!\act_!c_1(\act^*\calL(\xi))}&&& \fpar_!\act_!\act^!\Ql[2](1)\ar[u]^{\phi[2](1)}_{\wr}\ar[d]^{\adj}\\
\fpar_!\Ql\ar[rrr]^{\fpar_{!}c_1(\calL(\xi))}&&&\fpar_!\Ql[2](1)}
\end{equation*}
In other words, for a local section $h$ of $\homo{1}{\calP/\calA}$ and a local section $\gamma$ of $\fQl$, we have
\begin{equation*}
(h\cup c_1(\calQ(\xi)))\cap\gamma+h\cap(\gamma\cup c_1(\calL(\xi)))=(h\cap\gamma)\cup c_1(\calL(\xi)).
\end{equation*}
This, together with (\ref{eq:dualc1Q}) implies (\ref{eq:hxicomm}).
\end{proof}

\begin{cor}\label{c:stalkcomm} For $(a,x)\in\calA(k)\times X(k)$, the cup-product action of $\homog{*}{\calP_a}$ on $\cohog{*}{\Mpar_{a,x}}$ commutes with the action of the subalgebra $\Ql[\tilW]\otimes\Sym(\xch(T)_{\Ql})\subset\HH/(\delta,u)$.
\end{cor}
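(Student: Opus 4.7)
The plan is to reduce the corollary to a direct specialization of Propositions \ref{p:capW} and \ref{p:hxi} at the stalk $(a,x)$. Since $\homog{*}{\calP_a}$ is generated under the Pontryagin product by $\homog{0}{\calP_a}$ and $\homog{1}{\calP_a}$, and since for any fixed $z\in\End(\cohog{*}{\Mpar_{a,x}})$ the commutator $[z,\cdot]$ is a derivation of the endomorphism algebra, it suffices to verify that each generator $\tilw\in\tilW$ and $\xi\in\xch(T)$ commutes with every $h\in\homog{0}{\calP_a}\cup\homog{1}{\calP_a}$. The commutativity of $\tilw\in\tilW$ with all of $\homog{*}{\calP_a}$ follows immediately by specializing Proposition \ref{p:capW} at $(a,x)$. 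It remains to treat $\xi\in\xch(T)$.

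For $h\in\homog{1}{\calP_a}$, lift $h$ to a local section $\widetilde{h}$ of $\homo{1}{\calP/\calA}$ on an \'etale chart $U\to\calA$ of $a$. By Proposition \ref{p:hxi}(2), the commutator $[\xi,\widetilde{h}]$ acts on $\fQl|_U$ as cup product with the class $c_\xi(\widetilde{h}_{\st})\in\cohog{1}{\tcA_U/U}(1)$. The induced action on the stalk at $(a,x)$ factors through the restriction of this class to $\tcA_{(a,x)}:=q^{-1}(a,x)$, which is a finite $k$-scheme since $q:\tcA\to\calA\times X$ is a finite morphism. Therefore $\cohog{1}{\tcA_{(a,x)}}=0$ and the commutator $[\xi,h]$ annihilates $\cohog{*}{\Mpar_{a,x}}$.

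For $h=[c]\in\homog{0}{\calP_a}=\Ql[\pi_0(\calP_a)]$, the cap-product action of $h$ is the pull-back $c^*$ along the translation automorphism $c:\Mpar_{a,x}\isom\Mpar_{a,x}$ induced by any lift of $c$ in $\calP_a(k)$ (the result is independent of the lift, as $\calP_a^0$ is connected and hence acts trivially on cohomology). Restricting the isomorphism $\act^*\calL(\xi)\cong\calQ(\xi)\boxtimes_{\tcA}\calL(\xi)$ of \cite[Lem. 3.2.9]{GSI} to a lift $\tilx$ of $c$ in $\tcP_{(a,x)}$ shows that $c^*\calL(\xi)|_{\Mpar_{a,x}}$ is isomorphic to $\calL(\xi)|_{\Mpar_{a,x}}$ up to tensoring with the one-dimensional vector space $\calQ(\xi)|_{\tilx}$, which defines a trivializable line bundle on $\Mpar_{a,x}$. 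Taking Chern classes yields $c^*c_1(\calL(\xi))=c_1(\calL(\xi))$ in $\cohog{2}{\Mpar_{a,x}}(1)$, so $c^*$ commutes with the $\xi$-action. The main subtlety lies in this last case: one must pass carefully from the $\tcP$-level identity of line bundles to the $\calP$-action on cohomology of a single fiber, exploiting the fact that line bundles pulled back from zero-dimensional schemes are trivial.
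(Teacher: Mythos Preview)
Your proof is correct and follows the same approach as the paper for the $\tilW$-part (via Proposition~\ref{p:capW}) and for $h\in\homog{1}{\calP_a}$ (via Proposition~\ref{p:hxi}(2), using that the fiber $q^{-1}(a,x)\subset\tcA$ is a finite scheme so its $\cohog{1}$ vanishes). You are in fact more thorough than the paper: the paper's proof only treats $h\in\homog{1}{\calP_a}$ explicitly and does not spell out the reduction to generators via the derivation property, nor the case $h\in\homog{0}{\calP_a}=\Ql[\pi_0(\calP_a)]$, whereas you handle the latter cleanly via the translation automorphism and the line-bundle identity $\act^*\calL(\xi)\cong\calQ(\xi)\boxtimes_{\tcA}\calL(\xi)$ restricted to a $k$-point of $\tcP$.
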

\begin{proof} By Prop. \ref{p:capW} and Prop. \ref{p:hxi}(1), it only remains to show that the cup product commutes with $\xi\in\xch(T)$. Let $h\in\homog{1}{\calP_a}$. Restricting to a point $(a,x)$, the cohomology class $c_\xi(h_{\st})\in\cohog{1}{X_a}(1)\in\cohog{1}{q_a^{-1}(x)}(1)=0$ must be 0. Therefore $h$ also commutes with $\xi\in\xch(T)$ by (\ref{eq:hxicomm}).
\end{proof}

\begin{ques}
The commutation relation between the cap product by $\homo{*}{\calP/\calA}$ and the cup product by $c_1(\Lcan)$ on $\fQl$ remains unclear to the author.
\end{ques}

% comparison for Hitchin

\subsection{Comparison of the $\cent$-action and the $\pi_0(\calP/\calA)$-action}\label{ss:compcent}
The cap product (\ref{eq:capP}) in particular gives an action of $p^*\homo{*}{\calP/\calA}=p^*\Ql[\pi_0(\calP/\calA)]$ on the complex $\fQl$. On the other hand, the center $\cent$ of the group algebra $\Ql[\tilW]$ also acts on the complex $\fQl$ by \cite[Th. 4.4.3]{GSI}. The idea of relating the $\pi_0(\calP/\calA)$-action to the $\cent$-action is suggested to the author by B-C.Ng\^{o}.

\subsubsection{Comparison for the Hitchin complex}
Recall from Prop. \ref{p:parenhance} that we have a $\cent$-action on $\fHQl\boxtimes\Ql\in D^b_c(\calA\times X)$, which can be written as (since $p$ is smooth)
\begin{equation*}
\cent\otimes p^!\fHQl\to p^!\fHQl.
\end{equation*}
Applying the adjunction $(p_!,p^!)$ and the projection formula, we get
\begin{equation*}
\alpha:\cent\otimes\homog{*}{X}\otimes\fHQl\to\fHQl.
\end{equation*}
Decomposing $\alpha$ according to $\homog{*}{X}=\oplus_{i=0}^2\homog{i}{X}[i]$, we get
\begin{equation*}
\alpha_i:\cent\otimes\homog{i}{X}\otimes\fHQl\to\fHQl[-i].
\end{equation*}
The goal of this subsection is to describe the effects of $\alpha_i$ in terms of the cap product of $\homo{*}{\calP/\calA}$.

% \begin{remark}
% The fact that $\alpha$ is an action enables us write $\alpha_2$ in terms of $\alpha_1$. In fact, if we switch from homology to cohomology of $X$, then for any $\zeta\in\cent$, $\alpha_2(\zeta)$ can be written as a composition:
% \begin{eqnarray*}
% \fHQl\xrightarrow{\alpha_1(\zeta)}\cohog{1}{X}\otimes\fHQl[-1]&\xrightarrow{\id\otimes\alpha_1(\zeta)[-1]}&\cohog{1}{X}\otimes\cohog{1}{X}\otimes\fHQl[-2]\\
% &\xrightarrow{\cup\otimes\id}& \cohog{2}{X}\otimes\fHQl[-2].
% \end{eqnarray*}
% Therefore it is enough to understand the effects of $\alpha_0$ and $\alpha_1$.
% \end{remark}

\begin{prop}\label{p:comph}
There is a natural map
\begin{equation*}
\sigma:\cent\otimes\homo{*}{(\calA\times X)^{\rs}/\calA}\to\homo{*}{\calP/\calA} 
\end{equation*}
such that the following diagram is commutative
\begin{equation}\label{d:effcomp}
\xymatrix{\cent\otimes\homo{*}{(\calA\times X)^{\rs}/\calA}\otimes\fHQl\ar[r]^(.7){\sigma\otimes\id}\ar[d]^{\id\otimes j_!\otimes\id} & \homo{*}{\calP/\calA}\otimes\fHQl\ar[d]^{\cap}\\
\cent\otimes\homog{*}{X}\otimes\fHQl\ar[r]^(.7){\alpha} & \fHQl}
\end{equation}
where $j:(\calA\times X)^{\rs}\hookrightarrow\calA\times X$ is the open inclusion. 
\end{prop}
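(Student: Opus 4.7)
The construction and proof both hinge on the content of \cite[Cor. 4.3.8]{GSI}: for each $\lambda \in \xcoch(T) \subset \tilW$, the right action of $\lambda$ on $\Mparrs$ is given by twisting by a canonical section $s_\lambda:\tcArs\to\calP$ of the Picard stack over $\calA$, and these sections satisfy the equivariance $s_{w\lambda}(\tilx) = s_{\lambda}(\tilx\cdot w^{-1})$ for $w\in W$.

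Using the $W$-torsor $\pi:\tcArs\to(\calA\times X)^{\rs}$, I would first construct $\sigma$ as follows. The graph of $s_\lambda$ inside $\tcArs\times_\calA\calP$ gives, via the standard correspondence formalism, a map $(s_\lambda)_*:\homo{*}{\tcArs/\calA}\to\homo{*}{\calP/\calA}$. Combining with the \'etale pullback $\pi^*$ and summing (up to normalization) over the Weyl orbit yields
\begin{equation*}
\sigma(\Av_W(\lambda),c) := \sum_{\lambda'\in W\lambda}(s_{\lambda'})_*\pi^*(c),
\end{equation*}
which extends linearly to all of $\cent$. The equivariance $s_{w\lambda}=s_\lambda\circ w^{-1}$ makes the right hand side $W$-invariant, so $\sigma$ is well defined and depends only on $\Av_W(\lambda)$.

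For commutativity, recall from Prop \ref{p:parenhance} and Rem \ref{rm:Hitaction} that $\Av_W(\lambda)\in\cent$ acts on $\fHQl\boxtimes\Ql$ via the cohomological correspondence $[\calH_{|\lambda|_\bG}]_\#$, and $\alpha$ is obtained from this action under $(p_!,p^!)$-adjunction for $p:\calA\times X\to\calA$. Since $j_!(c)$ is supported on the regular locus, it suffices to compare the two sides over $(\calA\times X)^{\rs}$. There, Construction \ref{cons:Heckelambda} gives the decomposition $q_\calH^{-1}(\calH^{\rs}_{|\lambda|_\bG})=\sqcup_{\lambda'\in W\lambda}\calH^{\rs}_{\lambda'}$, and each $\calH^{\rs}_{\lambda'}$ (after pullback to $\tcArs$) is the graph of the $\calP$-translation by $s_{\lambda'}$. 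Thus the restricted correspondence $\calH^{\rs}_{|\lambda|_\bG}$ factors, after the $W$-cover, through the Picard action map. Applying Lem \ref{l:capcorr}, which expresses a cohomological correspondence coming from a group-scheme action as a cap product, identifies $[\calH^{\rs}_{|\lambda|_\bG}]_\#\circ j_!$ with $\cap\circ(\sigma\otimes\id)$.

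The main obstacle will be the bookkeeping in the last step: matching the normalization of the $W$-averaging in the definition of $\sigma$ against the $W$-cover $q_\calH$ of Construction \ref{cons:Heckelambda} and the graph-to-action identification in Lem \ref{l:capcorr}. Once the combinatorial factors are tracked correctly, commutativity reduces to the geometric identification of $\calH^{\rs}_{\lambda}$ with the graph of the $s_{\lambda}$-translation on the regular locus, followed by standard functoriality of $j_!$ for relative BM homology.
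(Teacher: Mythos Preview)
Your approach is essentially the same as the paper's: both construct $\sigma$ from the morphism $s:\xcoch(T)\times\tcArs\to\calP$ of \cite[Rem.~4.3.7, Cor.~4.3.8]{GSI}, use $W$-equivariance to descend to $\cent\otimes\homo{*}{(\calA\times X)^{\rs}/\calA}$, and then observe that on the regular locus the Hecke action literally factors through the Picard translation, which is what makes the diagram commute.

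Two points of caution. First, your identification of $\homo{*}{(\calA\times X)^{\rs}/\calA}$ with something upstairs is via the \emph{pullback} $\pi^*$, whereas the paper uses the \emph{coinvariant} identification $(\homo{*}{\tcArs/\calA})_W\cong\homo{*}{(\calA\times X)^{\rs}/\calA}$ coming from $\pi_!$. Since $\pi_!\pi^*=\#W$, your $\sigma$ differs from the paper's by a factor of $\#W$; this is exactly the ``bookkeeping'' worry you flag, and it is real. The coinvariant route is cleaner because $s_!$ is already $W$-invariant for the diagonal action, so it visibly factors through coinvariants without any averaging on your side.

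Second, your invocation of Lem.~\ref{l:capcorr} is a misreading: that lemma says a $P$-\emph{invariant} cohomological correspondence \emph{commutes} with the cap product, not that the graph of a $P$-translation \emph{is} a cap product. What you actually need is the simpler tautology that if a self-correspondence of $M$ over $S$ is the graph of $\act\circ(s\times\id_M)$ for some section $s:S'\to P$, then the induced endomorphism of $f_!\Ql$ over $S'$ equals $s_*[S']\cap(-)$; this is immediate from the definition of the cap product in Sec.~\ref{ss:cap}. The paper phrases this by saying that the $\xcoch(T)$-action on $\tilp^{\rs,!}\fHQl$ comes from the composite $\xcoch(T)\times\tcArs\times_{\calA}\MHit\xrightarrow{s}\calP\times_{\calA}\MHit\xrightarrow{\act}\MHit$, which is exactly this tautology. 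Replace the appeal to Lem.~\ref{l:capcorr} by this direct unwinding and your argument goes through.
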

\begin{proof}
Recall from \cite[Rem. 4.3.7]{GSI} that we have a morphism:
\begin{equation*}
s:\xcoch(T)\times\tcA^0\to\Grass_J\to\calP.
\end{equation*}
This gives a push-forward map on homology
\begin{equation*}
s_!:\Ql[\xcoch(T)]\otimes\homo{*}{\tcArs/\calA}\to\homo{*}{\calP/\calA}
\end{equation*}
which is $W$-invariant ($W$ acts diagonally on the two factors on the LHS and acts trivially on the RHS). Therefore, it factors through the $W$-coinvariants of $\Ql[\xcoch(T)]\otimes\homo{*}{\tcArs/\calA}$. In particular, if we restrict to $\cent$, the map $s_!$ factors through a map
\begin{equation}\label{eq:th2}
\cent\otimes(\homo{*}{\tcArs/\calA})_W\to\homo{*}{\calP/\calA}
\end{equation}
Since $q^{\rs}:\tcArs\to(\calA\times X)^{\rs}$ is an \'etale $W$-cover, we have $(\homo{*}{\tcArs/\calA})_W=\homo{*}{(\calA\times X)^{\rs}/\calA}$. Therefore the map (\ref{eq:th2}) gives the desired map $\sigma$. The diagram (\ref{d:effcomp}) is commutative because the $\xcoch(T)$-action on $\tilp^{\rs,!}\fHQl$ comes from the following morphism
\begin{equation*}
\xcoch(T)\times\tcArs\times_{\calA}\MHit\xrightarrow{s}\calP\times_{\calA}\MHit\xrightarrow{\act}\MHit.
\qedhere
\end{equation*}
\end{proof}

Passing to the level of (co)homology sheaves, we get
\begin{cor}\label{c:comph}
The map $\sigma$ induces maps $\sigma_i$ ($i=0,1,2$) on homology sheaves:
\begin{equation*}
\sigma_i:\cent\otimes\homo{i}{(\calA\times X)^{\rs}/\calA}\to\homo{i}{\calP/\calA}
\end{equation*}
such that the following diagram is commutative for each $m\in\ZZ,i=0,1,2$:
\begin{equation}\label{d:effcoh}
\xymatrix{\cent\otimes \homo{i}{(\calA\times X)^{\rs}/\calA}\otimes\bR^m\fHQl\ar[r]^(.65){\sigma_i\otimes\id}\ar[d]^{\id\otimes j_!\otimes\id} & \homo{i}{\calP/\calA}\otimes\bR^m\fHQl\ar[d]^{\cap_i^n}\\
\cent\otimes\homog{i}{X}\otimes\bR^m\fHQl\ar[r]^(.7){\alpha_i^m} & \bR^{m-i}\fHQl}
\end{equation}
\end{cor}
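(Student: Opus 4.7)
My plan is to deduce the corollary from Proposition \ref{p:comph} by passing to cohomology sheaves. I define $\sigma_i$ as the $(-i)$-th cohomology sheaf of the morphism of complexes $\sigma$ on $\calA$. Since $\cent$ is flat (being a vector space), we have $H^{-i}(\cent\otimes C)\cong\cent\otimes H^{-i}(C)$ naturally for any $C\in D^b_c(\calA)$, so that $\sigma_i:\cent\otimes\homo{i}{(\calA\times X)^{\rs}/\calA}\to\homo{i}{\calP/\calA}$ is well-defined.

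For the commutativity of (d:effcoh), I would apply the $(-i-m)$-th cohomology sheaf functor to (d:effcomp). The main point is that each arrow in (d:effcomp) decomposes canonically into the $(i,m)$-graded pieces appearing in (d:effcoh). For $\alpha$, this is immediate: since $X$ is a smooth projective curve, $\homog{*}{X}\cong\bigoplus_{i=0}^{2}\homog{i}{X}[i]$ as a complex, so $\alpha$ splits canonically into $\alpha_i^m$ by restriction to each summand and each cohomology sheaf $\bR^m\fHQl$. For the cap product $\cap$, I would invoke the structural fact (recalled before Theorem C in the introduction) that $\homo{*}{\calP/\calA}$ is the exterior algebra over $\homo{0}{\calP/\calA}$ on the generator $\homo{1}{\calP/\calA}$; in particular, the complex $\homo{*}{\calP/\calA}$ is formal (quasi-isomorphic to $\bigoplus_i\homo{i}{\calP/\calA}[i]$), so that $\cap$ breaks up into the individual pieces $\cap_i^m$ after a K\"unneth-type identification of cohomology sheaves of the tensor product.

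The main obstacle will be justifying this K\"unneth identification for $\homo{*}{\calP/\calA}\otimes\fHQl$ rigorously, so as to extract the $\cap_i^m$ unambiguously. Once the formality of $\homo{*}{\calP/\calA}$ is upgraded to a splitting at the level of $D^b_c(\calA)$, the tensor product decomposes and the extraction is forced. The pushforward $j_!$ similarly induces the required map $\homo{i}{(\calA\times X)^{\rs}/\calA}\to\homog{i}{X}$ on cohomology sheaves by functoriality of $H^{-i}$. With these compatibilities in place, (d:effcoh) is exactly the $(-i-m)$-th cohomology sheaf of (d:effcomp) and hence commutes.
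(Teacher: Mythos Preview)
Your approach is correct and matches the paper's, whose entire argument is the phrase ``passing to the level of (co)homology sheaves'' preceding the corollary. The canonical splitting of $\homo{*}{\calP/\calA}$ in $D^b_c(\calA)$ that you need is exactly Lemma~\ref{l:decomphomop} (the $[N]_*$-eigenspace argument), not the exterior-algebra remark from the introduction; once you invoke that lemma your ``main obstacle'' disappears, since after splitting off $\homo{i}{\calP/\calA}[i]$ the extraction of $\cap_i^m$ only uses the always-available K\"unneth edge map $\homo{i}{\calP/\calA}\otimes\bR^m\fHQl\to H^m(\homo{i}{\calP/\calA}\otimes\fHQl)$ together with functoriality of $H^{m-i}$ applied to the commuting square (\ref{d:effcomp}).
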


Since $\homo{0}{(\calA\times X)^{\rs}/\calA}=\const{\calA}$, $\sigma_0$ gives a homomorphism of sheaves of algebras
\begin{equation}\label{eq:theta}
\sigma_0:\cent\to\Ql[\pi_0(\calP/\calA)].
\end{equation}

\begin{cor}\label{c:Hitfactor}
The action of $\cent$ on $\bR^m\fHQl\boxtimes\const{X}$ constructed in Prop. \ref{p:parenhance} factors through the $\Ql[\pi_0(\calP/\calA)]$-action on $\bR^m\fHQl$ via the map $\sigma_0$ in (\ref{eq:theta}).
\end{cor}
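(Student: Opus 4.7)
The plan is to extract Cor.~\ref{c:Hitfactor} directly from Cor.~\ref{c:comph} at $i=0$. The key identification to make is that, for each fixed $m$, the $\cent$-action on $\bR^m\fHQl$ named in the statement coincides with $\alpha_0^m(-\otimes 1\otimes-)$, where $1\in\homog{0}{X}=\Ql$ is the canonical generator. Granting this, the factorization follows by commutativity of the $i=0$ instance of (\ref{d:effcoh}), provided the left vertical arrow $j_!$ in that row is an isomorphism.

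For the identification, recall that the action from Prop.~\ref{p:parenhance} applied to $\bP=\bG$ lives on $f_{\bG,*}\Ql=\fHQl\boxtimes\const{X}\cong p^*\fHQl$ over $\calA\times X$. By the adjunction $(p_!,p^!)$ together with the projection formula, (derived) endomorphisms of $p^*\fHQl$ are parametrized by $\homog{*}{X}\otimes\End_\calA(\fHQl)$, and the decomposition $\alpha=\bigoplus_i\alpha_i$ of Prop.~\ref{p:comph} is precisely the cohomological decomposition along $\homog{*}{X}=\bigoplus_i\homog{i}{X}[-i]$. The $\homog{0}{X}=\Ql$ component $\alpha_0^m$ is, on each cohomology sheaf $\bR^m\fHQl$, the endomorphism obtained by restricting the ambient action on $p^*\bR^m\fHQl$ along $\calA\times\{x\}\hookrightarrow\calA\times X$ for any (equivalently, every) $x\in X$; this restricted endomorphism is exactly the $\cent$-action named in the corollary.

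Finally, the vertical map $j_!$ in the $i=0$ row of (\ref{d:effcoh}) is the pushforward on relative $\homo{0}$ induced by $j\colon(\calA\times X)^{\rs}\hookrightarrow\calA\times X$. For every $a\in\calA$, the fiber $X_a^{\rs}$ is a non-empty (hence dense) open subset of the irreducible smooth projective curve $X$, so $\homo{0}{X_a^{\rs}}\to\homog{0}{X}$ is the canonical isomorphism $\Ql\isom\Ql$ of point classes. Consequently $\homo{0}{(\calA\times X)^{\rs}/\calA}\cong\const{\calA}$, the vertical map is an isomorphism, and commutativity of (\ref{d:effcoh}) gives
\[
\alpha_0^m(z\otimes 1\otimes -)=\cap_0^m\big((\sigma_0\otimes\id)(z\otimes 1)\otimes -\big)=\sigma_0(z)\cap -
\]
for every $z\in\cent$, where $\sigma_0(z)\in\homo{0}{\calP/\calA}=\Ql[\pi_0(\calP/\calA)]$ is the image under (\ref{eq:theta}). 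This is the statement of Cor.~\ref{c:Hitfactor}. The only even mildly delicate step is the adjunction-based identification in the second paragraph; this carries no mathematical content beyond careful unwinding of definitions.
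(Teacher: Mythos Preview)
Your argument is correct and is exactly the unwinding the paper has in mind: the corollary is stated without proof immediately after Cor.~\ref{c:comph}, and your extraction of the $i=0$ case (using that $j_!$ on $\homo{0}$ is an isomorphism because each $X_a^{\rs}$ is a nonempty open in the connected curve $X$, and that $\alpha_0^m(-\otimes 1\otimes -)$ recovers the descended $\cent$-action on $\bR^m\fHQl$) is the intended deduction.
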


% comparison for parabolic Hitctin

Our final goal in this subsection is to prove:

\begin{theorem}\label{th:comp}
The action of $\cent$ on $\bR^m\fQl$ constructed in \cite[Th. 4.4.3]{GSI} factors through the $p^*\Ql[\pi_0(\calP/\calA)]$ action on $\bR^m\fQl$ via the map $p^*\sigma_0:\cent\to p^*\Ql[\pi_0(\calP/\calA)]$.
\end{theorem}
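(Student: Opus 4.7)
The idea is to prove an enhanced analogue of Cor \ref{c:Hitfactor} for the \emph{full} group algebra $\Ql[\xcoch(T)]$ acting on the enhanced parabolic complex $\tfQl$, and then deduce the theorem by pushing forward along the finite map $q:\tcA\to\calA\times X$. Note that $q$ is indeed finite: by Def \ref{def:tcAP} for $\bP=\bI$, $q$ is the base change of the finite flat quotient $\frt_D\to\frc_D$, so $\bR^m\fQl=q_*\bR^m\tfQl$ and the projection formula $q_*(\tilp^*A\otimes B)=p^*A\otimes q_*B$ will convert cap products by $\tilp^*\homo{0}{\calP/\calA}$ on $\tfQl$ into cap products by $p^*\homo{0}{\calP/\calA}$ on $\fQl$.

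\textbf{Step 1 (enhanced comparison).} By Prop \ref{p:parenhance} applied to $\bP=\bI$ (where $W_\bI=\{e\}$), the full algebra $\Ql[\xcoch(T)]$ acts on $\tfQl$ over $\tcA$ with $\lambda$ acting via $[\calH_\lambda]_\#$. I will establish an enhanced variant of Prop \ref{p:comph}: there is a morphism of sheaves on $\tcA$
\begin{equation*}
\widetilde\sigma_0:\Ql[\xcoch(T)]\to\tilp^*\Ql[\pi_0(\calP/\calA)]
\end{equation*}
such that the $\Ql[\xcoch(T)]$-action on each cohomology sheaf $\bR^m\tfQl$ factors through $\widetilde\sigma_0$ followed by cap product from (\ref{eq:captilP}). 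The proof mirrors that of Prop \ref{p:comph}: the section morphism $s:\xcoch(T)\times\tcA^0\to\calP$ of \cite[Rem.\ 4.3.7]{GSI} defines $\widetilde\sigma_0$, and the crucial commutative diagram holds because, by the explicit description of the $\tilW$-action in \cite[Cor.\ 4.3.8]{GSI}, $\lambda\in\xcoch(T)$ acts on $\Mpar|_{\tcA^0}$ by translation by $s(\lambda,-)\in\calP$.

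\textbf{Step 2 (pushforward and restriction to the center).} Pushing Step 1 forward via $q_*$ and applying the projection formula gives a factorization of the induced $\Ql[\xcoch(T)]$-action on $\bR^m\fQl$ through $p^*\Ql[\pi_0(\calP/\calA)]\otimes q_*\const{\tcA}\to p^*\Ql[\pi_0(\calP/\calA)]$ composed with the cap product on $\bR^m\fQl$. On the other hand, by Construction \ref{cons:alterPWaction} this pushed-forward $\Ql[\xcoch(T)]$-action agrees with the restriction of the global Springer $\tilW$-action of \cite[Th.\ 4.4.3]{GSI} to $\xcoch(T)\subset\tilW$ (both are produced from the Hecke correspondences $\calH_\lambda$). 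Restricting to $\cent=\Ql[\xcoch(T)]^W$, the $W$-equivariance of $\widetilde\sigma_0$ (trivial action on $\pi_0(\calP/\calA)$, usual action on $\xcoch(T)$) and the identification $(q_*\tilp^*\Ql[\pi_0(\calP/\calA)])^W\cong p^*\Ql[\pi_0(\calP/\calA)]$ (since $q$ is generically a $W$-étale cover) reduce the composed map $\cent\to p^*\Ql[\pi_0(\calP/\calA)]$ to $p^*\sigma_0$, matching the construction of $\sigma$ in Prop \ref{p:comph}.

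\textbf{Main obstacle.} The subtle point is extending the comparison in Step 1 from $\tcArs$, where the product formula \cite[Prop.\ 3.3.3]{GSI} makes both the Springer action and the cap product completely transparent, to all of $\tcA$. I expect this to follow from the same device used in the proofs of Prop \ref{p:parenhance} and Prop \ref{p:capW}: any finite-type substack of $\Hk{\bI}{\bI}$ satisfies condition (G-2) of \cite[App.\ A.5]{GSI} with respect to $\tcArs\subset\tcA$, so the two cohomological correspondences (Hecke versus translation by $s(\lambda,-)$) on $\Mpar\times_{\tcA}\Mpar$ are determined by their restrictions to the rs locus, where they manifestly coincide. A secondary subtlety is checking the $W$-equivariance of $\widetilde\sigma_0$ on the level of sheaves (not just fiberwise), which reduces to the $W$-equivariance of $s$ modulo the equivalence relation identifying two cameral points over the same $\calA\times X$-point, and is again most easily verified on $\tcArs$ and extended by density.
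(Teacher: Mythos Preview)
Your Step 1 is actually \emph{false} as stated, and this is exactly what the Remark immediately following Theorem \ref{th:comp} warns about: the full lattice $\xcoch(T)$ acting on $\bR^m\fQl$ does \emph{not} factor through a finite quotient --- the action can be nontrivially unipotent. Since $\bR^m\fQl=q_*\bR^m\tfQl$ and the enhanced $\xcoch(T)$-action on $\tfQl$ pushes forward under $q_*$ to the $\xcoch(T)\subset\tilW$-action on $\fQl$ (both are given by the same correspondences $\calH_\lambda$, viewed over $\tcA$ and over $\calA\times X$ respectively), your claimed factorization of the full $\Ql[\xcoch(T)]$-action on $\bR^m\tfQl$ through $\tilp^*\Ql[\pi_0(\calP/\calA)]$ would force each $\lambda$ to act semisimply with finite-order eigenvalues on every stalk of $\bR^m\fQl$, contradicting the Remark. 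So the enhanced lattice action simply does not factor through $\pi_0$; only its $W$-invariant part does.

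The (G-2) argument you propose cannot rescue this. Condition (G-2) lets you compare two cohomological correspondences that are both defined over all of $\tcA$ by checking them on $\tcArs$. But ``translation by $s(\lambda,-)$'' is only a correspondence over $\tcArs$: the section $s(\lambda,-)$ of $\calP$ does not extend over the ramification locus of the cameral cover. Its closure in $\Heckep_{[e]}$ is $\calH_\lambda$ itself, so you learn nothing new. Likewise the would-be section $\widetilde\sigma_0(\lambda)$ of $\tilp^*\pi_0(\calP/\calA)$ is only defined on $\tcArs$, and there is no reason for it to extend to $\tcA$ for individual $\lambda$.

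The paper avoids this obstruction by a two-point deformation (after Gaitsgory): introduce a second point $y\in X$ where the Hecke modification happens, obtaining a $\cent$-action $\alpha'$ on $\fQl\boxtimes\const{X}$ over $\calA\times X^2$. Since this action is built from modifications at $y$ (not at the parabolic point $x$), the argument of Prop.~\ref{p:comph} and Cor.~\ref{c:Hitfactor} applies verbatim to show $\alpha'$ factors through $p'^*\sigma_0$ on cohomology sheaves --- here $\sigma_0$ is already globally defined on $\calA$ because $(\calA\times X)^{\rs}$ has connected fibers over $\calA$. Restricting to the diagonal $x=y$ recovers the original $\cent$-action (checked via (G-2) over $(\calA\times X)^{\rs}$, where both sides are visibly $q_\calH^*[\calH^{\rs}_{|\lambda|}]$), and the factorization follows. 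The point is that working over $\calA$ rather than $\tcA$ lets you average over the whole curve and bypass the non-extendability of $s(\lambda,-)$.
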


\begin{remark}
On the other hand, we will see in \cite[Sec. 5.2]{GSIII} that the action of the whole lattice $\xcoch(T)$ on $\bR^n\fQl$ does not factor through a finite quotient: the action can be unipotent.
\end{remark}

\subsubsection{Hecke modifications at two points} To prove the Th. \ref{th:comp}, we consider a more general Hecke correspondence which combines the two situations we considered in \cite[Sec. 4.1]{GSI} and Sec. \ref{ss:parenhance}:
\begin{equation}\label{d:twoptmod}
\xymatrix{ & \Hecke' \ar[dl]_{\overleftarrow{h'}}\ar[dr]^{\overrightarrow{h'}} & \\
\Mpar\times X\ar[dr]_{\fpar\times \id_X} & & \Mpar\times X\ar[dl]^{\fpar\times \id_X} \\
& \calA\times X^2 &}
\end{equation} 
For any scheme $S$, $\Hecke'(S)$ is the groupoid of tuples $(x,y,\calE_1,\varphi_1,\calE^B_{1,x},\calE_2,\varphi_2,\calE^B_{2,x},\alpha)$ where
\begin{itemize}
\item $(x,\calE_i,\varphi_i,\calE^B_{i,x})\in\Mpar(S)$;
\item $y\in X(S)$ with graph $\Gamma(y)$;
\item $\alpha$ is an isomorphism of Hitchin pairs $(\calE_1,\varphi_1)|_{S\times X-\Gamma(y)})\isom(\calE_2,\varphi_2)|_{S\times X-\Gamma(y)}$
\end{itemize}

For a point $(a,x,y)\in(\calA\times X^2)(k)$ such that $x\neq y$, the fibers of $\overleftarrow{h'}$ and $\overrightarrow{h'}$ over $(a,x,y)$ are isomorphic to the product of $M^{\Hit}_y(\gamma_{a,y})$ and a Springer fiber in $G/B$ corresponding to $\gamma_{a,x}$ (see the discussion in \cite[Sec. 3.3]{GSI}); while if we restrict to the diagonal $\Delta_X:\calA\times X\subset\calA\times X^2$, $\Hecke'|_{\Delta_X}$ is the same as $\Heckep$. The reader may notice the analogy between our situation and the situation considered by Gaitsgory in \cite{Ga}, where he uses Hecke modifications at two points to deform the product $\Grass_G\times G/B$ to $\Flag_G$.

As in the case of $\Heckep$, we have a morphism
\begin{equation*}
\Hecke'\to\Mpar\times_{\calA\times X}\Mpar\to\tcA\times_{\calA\times X}\tcA.
\end{equation*}
Let $\Hecke'_{[e]}$ be the preimage of the diagonal $\tcA\subset\tcA\times_{\calA\times X}\tcA$. We have a commutative diagram of correspondences
\begin{equation}\label{d:twoptbch}
\xymatrix{\Hecke'_{[e]}\ar@<-1ex>@/_/[d]_{\overleftarrow{h'_{[e]}}}\ar@<1ex>@/^/[d]^{\overrightarrow{h'_{[e]}}}\ar[r]^{q'} & \Hk{\bG}{\bG}\ar@<-1ex>@/_/[d]_{\overleftarrow{h_{\bG}}}\ar@<1ex>@/^/[d]^{\overrightarrow{h_{\bG}}}\\
\Mpar\times X\ar[d]^{\fpar}\ar[r] & \calM_{\bG}=\MHit\times X\ar[d]^{f^{\Hit}\times\id_X}\\
\tcA\times X\ar[r]^{q\times\id_X} & \calA\times X}
\end{equation}

By \cite[Lem. 3.5.4]{GSI}, this is a base change diagram if we restrict the base spaces to $\tcA^0\times X\to\calA\times X$. Recall from Construction \ref{cons:Heckelambda} that for each $W$-orbit $|\lambda|$ in $\xcoch(T)$, we have a graph-like closed substack $\calH_{|\lambda|}\subset\Hk{\bG}{\bG}$. Let $\calH'_{|\lambda|}\subset\Hecke'_{[e]}$ be closure of the preimage of $\calH^{\rs}_{|\lambda|}$ under $q'$. By the same argument as Prop. \ref{p:parenhance}, we can prove:

\begin{lemma}\label{l:twoptaction}
There is a unique action $\alpha'$ of $\cent$ on the complex $(f^{\parab}\times\id_X)_*\Ql=\fQl\boxtimes\const{X}$ on $\calA\times X^2$ such that $\Av_{W}(\lambda)$ acts as $[\calH'_{|\lambda|}]_\#$.
\end{lemma}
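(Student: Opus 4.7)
The plan is to mimic the proof of Proposition \ref{p:parenhance} verbatim, with $\Hecke_{\bP,[e]}$ replaced by $\Hecke'_{[e]}$ and the base $\tcA_{\bP}$ replaced by $\calA \times X^2$. Uniqueness of $\alpha'$ is immediate since $\{\Av_W(\lambda) \mid \lambda \in \xcoch(T)\}$ spans $\cent$.

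For existence, observe first that $\Hecke'_{[e]}$ carries an associative convolution structure over $\calA \times X^2$ given by composition of Hecke modifications at the common second point $y$. This endows $\Corr(\Hecke'_{[e]};\Ql,\Ql)$ with an algebra structure, and the map $(-)_\#$ becomes an algebra homomorphism into $\End_{\calA \times X^2}((\fpar\times\id_X)_*\Ql)$ by \cite[App. A.6]{GSI}. An argument parallel to \cite[Lem. 4.4.4]{GSI} shows that any finite-type substack of $\Hecke'_{[e]}$ satisfies condition (G-2) with respect to the dense open $U \subset \calA \times X^2$ obtained as the image of $\tcA^0 \times X$ under $q \times \id_X$. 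Consequently, by \cite[Prop. A.6.2]{GSI}, it suffices to construct a compatible algebra homomorphism at the level of cohomological correspondences supported on the generic locus, sending $\Av_W(\lambda)$ to $[\calH'_{|\lambda|}]$ restricted to that locus.

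For the latter, I exploit the base change property of diagram (\ref{d:twoptbch}) over $\tcA^0 \times X$, guaranteed by \cite[Lem. 3.5.4]{GSI}. Pullback along the top horizontal arrow $q'$ induces an embedding of algebras of cohomological correspondences from those supported on the generic part of $\Hk{\bG}{\bG}$ into those supported on the generic part of $\Hecke'_{[e]}$. Remark \ref{rm:Hitaction} (equivalently, Proposition \ref{p:parenhance} for $\bP = \bG$) identifies the source with $\cent$, with $\Av_W(\lambda)$ corresponding to $[\calH_{|\lambda|}]$, where $\calH_{|\lambda|} \subset \Hk{\bG}{\bG}$ is the substack from Construction \ref{cons:Heckelambda}. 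By the very definition of $\calH'_{|\lambda|}$ as the closure of $q'^{-1}(\calH^{\rs}_{|\lambda|})$, pullback by $q'$ sends $[\calH_{|\lambda|}]$ to $[\calH'_{|\lambda|}]$ as correspondence classes over the generic locus, yielding the desired homomorphism.

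The main technical obstacle is verifying condition (G-2) for finite-type substacks of $\Hecke'_{[e]}$ in a neighborhood of the diagonal $\Delta_X : \calA \times X \hookrightarrow \calA \times X^2$, where the fiber geometry of $\overleftarrow{h'_{[e]}}$ transitions between ``modification at $y$ alone'' (for $y \neq x$) and the full parabolic Hecke correspondence $\Heckep_{[e]}$ (on the diagonal itself). Controlling the dimensions of the non-generic fibers uniformly in both variables requires a two-variable refinement of the dimension/codimension estimates in \cite[Lem. 4.4.4]{GSI} and \cite[Rem. 3.5.6]{GSI}, applied simultaneously at the two marked points $x$ and $y$.
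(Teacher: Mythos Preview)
Your approach is exactly what the paper does: it states that the lemma follows ``by the same argument as Prop.~\ref{p:parenhance}'', and your proposal spells out precisely that argument using the convolution structure, condition (G-2), and the base change diagram~(\ref{d:twoptbch}).

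Two small clarifications. First, your choice of open set $U$ (the image of $\tcA^0\times X$) imposes the regular-semisimple condition on the \emph{first} point $x$, which is what makes diagram~(\ref{d:twoptbch}) Cartesian; but the fibers of $\overleftarrow{h'_{[e]}}$ are controlled by the affine Springer fiber at the \emph{second} point $y$ (where the modification occurs), so for (G-2) the relevant open is where $(a,y)$ is regular semisimple. Taking both conditions simultaneously handles both needs. Second, your final paragraph overstates the difficulty near the diagonal: whether $y\neq x$ or $y=x$, the fiber of $\overleftarrow{h'_{[e]}}$ over a point with image $(a,x,y)$ has dimension bounded by $\delta(a,y)$ (in the first case it is a Hitchin-type affine Springer fiber at $y$; in the second, $\Hecke'_{[e]}$ restricts to $\Heckep_{[e]}$ and one gets the parabolic affine Springer fiber at $x=y$). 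Hence the single-variable codimension estimate $\codim\{(a,y):\delta(a,y)\geq d\}\geq d+1$ pulled back along the projection $\calA\times X^2\to\calA\times X$ onto $(a,y)$ already suffices, with no genuinely two-variable refinement required.
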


On the other hand, $p'^*\Ql[\pi_0(\calP/\calA)]$ acts on $\fQl\boxtimes\const{X}$ via its action on the first factor, here $p':\calA\times X^2\to\calA$ is the projection. Similar to the proof of Prop. \ref{p:comph} and Cor. \ref{c:Hitfactor}, we have

\begin{lemma}\label{l:twofactor}
The action $\alpha'$ of $\cent$ on $\coho{m}{\fQl\boxtimes\const{X}}=\bR^m\fQl\boxtimes\const{X}$ constructed in Lem. \ref{l:twoptaction} factors through the $p'^*\Ql[\pi_0(\calP/\calA)]$ action on $\fQl\boxtimes\const{X}$ via the homomorphism $p'^*\sigma_0:\cent\to p'^*\Ql[\pi_0(\calP/\calA)]$.
\end{lemma}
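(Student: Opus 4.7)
The strategy is to imitate the proof of Proposition \ref{p:comph} and Corollary \ref{c:Hitfactor}, treating $y\in X_2$ (the site of the Hecke modification) as playing the role that $x$ played in the single-point setting, while $x\in X_1$ (the site of the Iwahori) is now a spectator. Write $\calA\times X^2=\calA\times X_1\times X_2$ accordingly, and let $\tcA^{(2),0}$ denote the open subset of the universal cameral cover of $\calA\times X_2$ over its regular semisimple locus. The crucial input is again the section $s:\xcoch(T)\times\tcA^0\to\Grass_J\to\calP$ of \cite[Rem. 4.3.7]{GSI}, now applied at the point $y$.

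First I would construct a two-point analog $\sigma'$ of $\sigma$. Using $s$ pulled back over $X_1$, the push-forward gives a $W$-equivariant map
\begin{equation*}
s'_!:\Ql[\xcoch(T)]\otimes\homo{*}{\tcA^{(2),0}\times X_1/\calA\times X_1}\to\homo{*}{\calP\times X_1/\calA\times X_1},
\end{equation*}
where $W$ acts on $\tcA^{(2),0}$ and trivially on $X_1$ and on $\calP$. Taking $W$-coinvariants and restricting to $\cent$, and using the identification of $(\homo{*}{\tcA^{(2),0}\times X_1/\calA\times X_1})_W$ with the homology of the image in $(\calA\times X_2)^{\rs}\times X_1$ over $\calA\times X_1$, yields
\begin{equation*}
\sigma':\cent\otimes\homo{*}{(\calA\times X_2)^{\rs}\times X_1/\calA\times X_1}\to\homo{*}{\calP\times X_1/\calA\times X_1}.
\end{equation*}

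Second, I would establish the analog of the commutative diagram (\ref{d:effcomp}). Over the open locus where $(a,y)$ is regular semisimple, the correspondence $\calH'_{|\lambda|}$ has the following description, which follows exactly as in Construction \ref{cons:Heckelambda} and \cite[Cor. 4.3.8]{GSI}: the Hecke modification at $y$ by $\lambda$ on $\Mpar\times X_2$ is given by the action of $s(\lambda,\tilde y)\in\calP_a$ on $\Mpar$ (pulled back via the forgetful morphism from $\calA\times X_1\times X_2$ to $\calA\times X_1$). Thus $[\calH'_{|\lambda|}]_\#$ acts on $(f^\parab\times\id_X)_*\Ql$ over this open locus through the cap product with $\sigma'(\mathrm{Av}_W(\lambda)\otimes[\cdot])$, which is exactly the content of the analog of (\ref{d:effcomp}).

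Third, passing to $\bR^0$ cohomology sheaves, the resulting map
\begin{equation*}
\sigma'_0:\cent\to\homo{0}{\calP\times X_1/\calA\times X_1}=p'^*\Ql[\pi_0(\calP/\calA)]
\end{equation*}
is the pullback $p'^*\sigma_0$, by construction. Composing with the cap product by $\homo{0}{\calP/\calA}$ on $\bR^m\fQl\boxtimes\const{X}$ (which acts through the first factor $\bR^m\fQl$), we obtain the desired factorization over the open locus $\{(a,y)\in(\calA\times X)^{\rs}\}\times X_1$.

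The remaining step, which I expect to be the main obstacle, is extending this equality of endomorphisms from the open locus to all of $\calA\times X^2$. Both actions are endomorphisms of $\bR^m\fQl\boxtimes\const{X}$, and $\bR^m\fQl$ is obtained by pushforward from the parabolic Hitchin stack $\Mpar$ over $\calA\times X$; the same middle-extension arguments used to construct the $\tilW$-action in \cite[Th. 4.4.3]{GSI} (and invoked again in Proposition \ref{p:parenhance} via the smoothness/density of $\tcArs$ in $\tcA$) allow us to conclude that an endomorphism of $\bR^m\fQl\boxtimes\const{X}$ is determined by its restriction to $(\calA\times X)^{\rs}\times X$. This completes the proof.
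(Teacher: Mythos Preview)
Your proposal is correct and follows exactly the approach the paper indicates (the paper simply says ``similar to the proof of Prop.~\ref{p:comph} and Cor.~\ref{c:Hitfactor}''): construct the two-point analogue $\sigma'$ of $\sigma$ via the section $s$ at the modification point $y$, establish the analogue of diagram~(\ref{d:effcomp}), and take $i=0$ in the cohomology-sheaf version.

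One comment: your Step~4 is unnecessary, and the middle-extension justification you give for it is not quite the right mechanism. The analogue of diagram~(\ref{d:effcomp}) is already a diagram of complexes over the full base $\calA\times X_1$ (the restriction to the locus where $(a,y)$ is regular semisimple is encoded in the object $\homo{*}{(\calA\times X_2)^{\rs}\times X_1/\calA\times X_1}$ and the map $j_!$, not in the base). Passing to $i=0$ as in Cor.~\ref{c:comph} then gives the factorization directly. Alternatively, even from your formulation, the extension is immediate: any endomorphism of the sheaf $\bR^m\fQl\boxtimes\const{X_2}=p_2^*(\bR^m\fQl)$ is pulled back from an endomorphism of $\bR^m\fQl$ on $\calA\times X_1$, and since your open locus surjects onto $\calA\times X_1$, agreement there forces agreement everywhere by a stalk check. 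No middle-extension argument is needed.
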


Now we are ready to prove the theorem.

\begin{proof}[Proof of Th. \ref{th:comp}.]
We denote by $\alpha$ the action of $\cent$ on $\fQl$ given by restricting the $\Ql[\tilW]$-action. We will define another action of $\cent$ on $\fQl$.
 
Restricting the correspondence diagram (\ref{d:twoptmod}) to the diagonal $\Delta_X:\calA\times X\hookrightarrow\calA\times X^2$, we recover the correspondence $\Heckep$. Restricting the commutative diagram (\ref{d:twoptbch}) to the diagonal, we recover the commutative diagram (\ref{d:heckecl}). The $\Delta_X$-restriction of the $\cent$-action $\alpha'$ on $\fQl\boxtimes\const{X}$ constructed in Lem. \ref{l:twoptaction} gives an action of $\cent$ on $\fQl=\Delta_X^*(\fQl\boxtimes\const{X})$. We denote this action by $\alpha'_\Delta$. 

We claim that the actions $\alpha$ and $\alpha'_\Delta$ are the same. In fact, by \cite[Lem. A.2.1]{GSI}, the action of $\alpha'_\Delta(\Av_W(\lambda))$ is given by the class $\Delta_X^*[\calH'_{|\lambda|}]\in\Corr(\Heckep;\Ql,\Ql)$. On the other hand, the action of $\alpha(\Av_W(\lambda))$ is given by the class $\sum_{\lambda'\in|\lambda|}[\calH_{\lambda'}]\in\Corr(\Heckep;\Ql,\Ql)$. When restricted to $(\calA\times X)^{\rs}$ both classes coincide with the fundamental class of $q^{*}_{\calH}[\calH^{\rs}_{|\lambda|}]$ (cf. diagram (\ref{d:heckecl})). Since both classes are supported on a graph-like substack of $\Heckep$ (see \cite[Lem. 4.4.4]{GSI}), their coincidence over $(\calA\times X)^{\rs}$ ensures that their actions on $\fQl$ are the same, by \cite[Lem. A.5.2]{GSI}.

By Lem. \ref{l:twofactor}, the action $\alpha'_\Delta$ of $\cent$ on $\bR^m\fQl$ factors through $p^*\sigma_0:\cent\to\Delta_X^*p'^*\Ql[\pi_0(\calP/\calA)]=p^*\Ql[\pi_0(\calP/\calA)]$. Since this action is the same as $\alpha$, the theorem is proved.
\end{proof}

\appendix

\section{Generalities on the cap product}\label{s:capapp}
In this appendix, we recall the formalism of cap product by the homology sheaf of a commutative smooth group scheme, partially following \cite[7.4]{NgoFL}.

\subsection{The Pontryagin product on homology}\label{ss:Pon}

Let $P$ be a commutative smooth group scheme (or Deligne-Mumford Picard stack such as $\calP$) of finite type over a scheme $S$. Let $g:P\to S$ be the structure map and let $\homo{*}{P/S}$ be the homology complex of $P$ on $S$.

\begin{lemma}\label{l:decomphomop} There is a canonical decomposition in $D^b(S)$:
\begin{equation}\label{eq:decomphomop}
\homo{*}{P/S}\cong\bigoplus_{i\geq0}\homo{i}{P/S}[i].
\end{equation}
\end{lemma}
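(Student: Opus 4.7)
The plan is to construct canonical spectral idempotents in $\End_{D^b(S)}(K)$ where $K := \homo{*}{P/S}$, using the multiplication-by-$N$ endomorphism $[N] : P \to P$. The pushforward $[N]_*$ induces a functorial endomorphism of $K$ for every integer $N$.

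The essential input is the identity
\[
[N]_*\big|_{\homo{i}{P/S}} = N^i \cdot \id.
\]
For an abelian scheme this is classical (Weil): $[N]^*$ acts by $N^i$ on $H^i$, and dualizing gives the statement on homology. For a general smooth commutative group scheme of finite type, one reduces to the abelian case via the Chevalley decomposition as an extension by a torus and a vector group; on these affine parts the homology is concentrated in degrees $0$ and $1$, where the claim is elementary. For the Deligne-Mumford Picard stacks of interest (such as $\calP$), the fiberwise structure is an extension of an abelian variety by an affine group and a finite-group quotient, and over $\Ql$ the finite stabilizers are handled by averaging, preserving the eigenvalue identity. A more uniform approach proceeds via the Pontryagin product structure (developed in the next subsection): $\homo{*}{P/S}$ carries a graded-commutative Hopf algebra structure, and the interaction of the $[N]$-action with the primitive/grouplike decomposition forces the $N^i$ scaling in degree $i$.

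Granted the eigenvalue identity, the splitting is mechanical. Since $K$ is bounded, fix a range $[-d,0]$ containing the support of its cohomology. Choose any integer $N \geq 2$, so that $N^0, N^1, \ldots, N^d$ are pairwise distinct, and let $p_i(x) \in \Ql[x]$ be the Lagrange interpolating polynomials with $p_i(N^j) = \delta_{ij}$ for $0 \leq i,j \leq d$. Setting $\pi_i := p_i([N]_*) \in \End_{D^b(S)}(K)$ yields orthogonal idempotents with $\sum_i \pi_i = \id_K$. Since $D^b(S)$ is Karoubi complete, each $\pi_i$ splits off a direct summand $K_i$ of $K$ whose cohomology is concentrated in degree $-i$ with value $\homo{i}{P/S}$; a bounded complex supported in a single degree is canonically quasi-isomorphic to the shift of its cohomology sheaf, so $K_i \cong \homo{i}{P/S}[i]$. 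Summing these gives the decomposition, which is canonical since the $\pi_i$ are characterized as the unique spectral projectors onto the $N^i$-eigenspaces of $[N]_*$ and are thus independent of the choice of $N$.

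The main obstacle is the eigenvalue identity $[N]_*|_{\homo{i}{P/S}} = N^i$ in the needed generality: while classical for abelian varieties over a field, producing a clean fiberwise-uniform statement over the base $S$ and over Deligne-Mumford Picard stacks requires either the Hopf-algebraic argument indicated above or a careful assembly of the abelian, affine, and finite-stabilizer contributions. Once this is in hand, the remaining construction is a purely formal application of Lagrange interpolation in the Karoubian triangulated category $D^b(S)$.
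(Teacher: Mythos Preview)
Your approach via spectral idempotents for $[N]_*$ is the same strategy as the paper's, but there is a real gap: the claimed identity $[N]_*|_{\homo{i}{P/S}} = N^i\cdot\id$ is \emph{false} when the fibers of $P$ are disconnected --- and this is precisely the case of interest, since the paper's $\calP$ has nontrivial $\pi_0(\calP/\calA)$. Already in degree zero one sees the problem: $\homo{0}{P/S}\cong\Ql[\pi_0(P/S)]$, and $[N]_*$ acts as the linearization of the map $\gamma\mapsto N\gamma$ on $\pi_0(P_s)$. For $N$ coprime to $|\pi_0(P_s)|$ this is a nontrivial permutation whose eigenvalues are various roots of unity, not $N^0=1$. (Take $P_s=A\times\ZZ/3\ZZ$ with $A$ an abelian variety and $N=2$: the permutation is a transposition and $-1$ occurs as an eigenvalue on $\homo{0}$.) More generally, under $\homo{*}{P_s}\cong\homo{*}{P^0_s}\otimes\Ql[\pi_0(P_s)]$ the operator $[N]_*$ is $N^i$ on $\bigwedge^i V_\ell(P^0_s)$ tensored with this permutation, so its eigenvalues on $\homo{i}$ are $N^i$ times roots of unity. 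The Hopf-algebra heuristic you invoke does not rescue this: the grouplike elements in $\Ql[\pi_0(P_s)]$ are permuted, not fixed. Consequently your Lagrange polynomials $p_i$, which only separate the scalars $N^0,\dots,N^d$, do not produce the correct idempotents.

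The paper's proof addresses exactly this point: one first chooses $N$ coprime to all $|\pi_0(P_s)|$ (possible since only finitely many isomorphism types occur), and then defines $\calH_i$ as the summand on which the eigenvalues of $[N]_*$ have \emph{archimedean norm} $N^i$ under any embedding $\Ql\hookrightarrow\CC$, rather than being equal to $N^i$. This separates the degrees because the extra factors contributed by $\pi_0$ are roots of unity. Your interpolation argument can be repaired along the same lines --- enlarge the node set to $\{N^j\zeta:\zeta^{m}=1,\ 0\le j\le d\}$ where $m$ is a common multiple of the $|\pi_0(P_s)|$ --- but only after you recognize that the eigenvalues are $N^i\cdot(\textup{root of unity})$ rather than $N^i$.
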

\begin{proof}
Take any $N\in\ZZ$ which is coprime to the cardinalities of $\pi_0(P_s)$ for all $s\in S$ (such an integer exists because there are only finitely many isomorphism types of $\pi_0(P_s)$). The $N$-th power map $[N]:P\to P$ induces an endomorphism $[N]_*$ on $\homo{*}{P/S}$. Let $\calH_i$ be the direct summand of $\homo{*}{P/S}$ on which the eigenvalues of $[N]_*$ have archimedean norm $N^i$ for any embedding $\Ql\to\CC$. It is easy to see that $\calH_i$ is independent of the choice of $N$. Then $\homo{*}{P/S}$ is the direct sum of $\calH_i$ and each $\calH_i$ is isomorphic to $\homo{i}{P/S}[i]$.
\end{proof}

The multiplication map $\textup{mult}:P\times_SP\to P$ induces a {\em Pontryagin product}
\begin{equation*}
\homo{*}{P/S}\otimes\homo{*}{P/S}\to\homo{*}{P/S}.
\end{equation*}
which, in turn, induces a Pontryagin product on the homology sheaves $\homo{i}{P/S}$. Since the multiplication map is compatible with the $N$-th power map in the obvious sense, the decomposition (\ref{eq:decomphomop}) respects the Pontryagin product on the homology {\em complex} and the Pontryagin product on the homology {\em sheaves}.

We have the following facts about the homology sheaves of $P/S$:
\begin{itemize}
\item $\homo{0}{P/S}\cong\Ql[\pi_0(P/S)]$. Recall from \cite[6.2]{NgoFL} that there is a sheaf of abelian groups $\pi_0(P/S)$ on $S$ for the \'{e}tale topology whose fiber at $s\in S$ is the finite group of connected components of $P_s$. Therefore the group algebra $\Ql[\pi_0(P/S)]$ is a $\Ql$-sheaf of algebras on $S$ whose fiber at $s\in S$ is the $0^{\textup{th}}$ homology of $P_s$. This algebra structure is the same as the one induced from the Pontryagin product.
\item If $P_s$ is {\em connected} for some $s\in S$, the stalk of $\homo{1}{P/S}$ at $s$ is the {\em $\Ql$-Tate module} $V_\ell(P_s)=T_\ell(P_s)\otimes_{\ZZ_\ell}\Ql$ of $P_s$. Moreover, the Pontryagin product induces an isomorphism
\begin{equation}\label{eq:fiberwedge}
\bigwedge^iV_\ell(P_s)=\bigwedge^i\homo{1}{P_s}\cong\homo{i}{P_s}.
\end{equation}
\end{itemize}

\begin{remark}\label{rm:cohoP}
If we work with cohomology rather than homology, the $N$-th power map also gives a natural decomposition
\begin{equation}\label{eq:decompcohop}
\coho{*}{P/S}\cong\bigoplus_i\coho{i}{P/S}[-i].
\end{equation}
This decomposition respects the cup product on the cohomology complex and the cup product on the cohomology sheaves.
\end{remark}

\subsection{The stable parts}\label{ss:Pst}

We have seen from the decomposition (\ref{eq:decomphomop}) and the fact $\homo{0}{P/S}=\Ql[\pi_0(P/S)]$ that $\pi_0(P/S)$ acts on the homology complex $\homo{*}{P/S}$ and the cohomology complex $\coho{*}{P/S}$.

\begin{defn}\label{def:stP}
The {\em stable part} of $\homo{i}{P/S}$ (resp. $\coho{i}{P/S}$) is the direct summand on which the action of $\pi_0(P/S)$ is trivial. We denote the stable parts by $\homo{i}{P/S}_{\st}$ and $\coho{i}{P/S}_{\st}$. Let $\homo{*}{P/S}_{\st}=\oplus_i\homo{i}{P/S}_{\st}[i]$ and $\coho{*}{P/S}_{\st}=\bigoplus\coho{i}{P/S}_{\st}[-i]$ be the corresponding decompositions of $\homo{*}{P/S}_{\st}$ and $\coho{*}{P/S}_{\st}$.
\end{defn}

\begin{remark}
To make sense of the invariants of a sheaf under the action of another sheaf of finite abelian groups, we refer to \cite[Prop. 8.3]{NgoFib}.
\end{remark}

It is clear that the stable part $\homo{*}{P/S}_{\st}$ (resp. $\coho{*}{P/S}_{\st}$) inherits a Pontryagin product (resp. a cup product) from that of $\homo{*}{P/S}$ (resp. $\coho{*}{P/S}$).

Let $P^0\subset P$ be the Deligne-Mumford substack over $S$ of fiberwise neutral components of $P/S$ (which exists as an open substack of $P$, cf. \cite[Prop. 6.1]{NgoFib}). Let $V_\ell(P^0/S)$ be the sheaf of $\Ql$-Tate modules of $P^0$ over $S$.

% stable part of homology is an exterior algebra

\begin{lemma}\label{l:wedge}
\begin{enumerate}
\item []
\item The embedding $P^0\subset P$ and the Pontryagin product gives a natural isomorphism of $\Ql[\pi_0(P/S)]$-algebra objects in $D^b_c(S)$:
\begin{equation}\label{eq:homowedgeP}
\Ql[\pi_0(P/S)]\otimes\homo{*}{P^0/S}\isom\homo{*}{P/S}.
\end{equation}
\item The natural embedding $P^0\subset P$ followed by the projection onto the stable part gives a natural isomorphism of algebra objects in $D^b_c(S)$:
\begin{equation}\label{eq:sthomowedge}
\bigwedge(V_\ell(P^0/S)[1])\cong\homo{*}{P^0/S}\to\homo{*}{P/S}\twoheadrightarrow\homo{*}{P/S}_{\st}.
\end{equation}

\end{enumerate}
\end{lemma}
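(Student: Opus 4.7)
My plan is to work \'etale-locally on $S$ where $\pi_0(P/S)$ becomes a constant finite abelian group $\pi_0$, and then to check canonicity for descent.

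For part (1), after \'etale base change trivializing $\pi_0(P/S)$ and choosing sections $t_\gamma : S \to P$ representing each $\gamma \in \pi_0$, we get a component decomposition $P = \bigsqcup_\gamma P_\gamma$, with translation isomorphisms $P_\gamma \xrightarrow{\cdot t_\gamma^{-1}} P^0$ inducing $\homo{*}{P_\gamma/S} \isom \homo{*}{P^0/S}$. Summing over $\gamma$ yields the desired isomorphism $\homo{*}{P/S} \cong \Ql[\pi_0] \otimes \homo{*}{P^0/S}$. Under this identification the Pontryagin product becomes convolution on $\Ql[\pi_0]$ tensored with the Pontryagin product on $\homo{*}{P^0/S}$, since group multiplication $P_{\gamma_1} \times P_{\gamma_2} \to P_{\gamma_1\gamma_2}$ corresponds (after the translations) to multiplication on $P^0$. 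Different section choices alter the translation identifications by elements of $P^0(S)$, which act trivially on $\homo{*}{P^0/S}$ because $P^0$ is fiberwise connected; hence the decomposition is canonical and descends to $S$.

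For part (2), I would split the statement into two claims. First, the Pontryagin product induces an algebra iso $\bigwedge (V_\ell(P^0/S)[1]) \isom \homo{*}{P^0/S}$: the map is well-defined because graded-commutativity of the Pontryagin product forces degree-$(-1)$ classes to anti-commute (matching the exterior-algebra relations), and it is an iso on stalks by \eqref{eq:fiberwedge}. Second, the composition $\homo{*}{P^0/S} \to \homo{*}{P/S} \twoheadrightarrow \homo{*}{P/S}_{\st}$ is an algebra iso. Under the iso from part (1) it reads $\homo{*}{P^0/S} \xrightarrow{e \otimes \id} \Ql[\pi_0] \otimes \homo{*}{P^0/S} \twoheadrightarrow (\Ql[\pi_0])_{\st} \otimes \homo{*}{P^0/S}$, with $\pi_0$ acting on the first factor by its regular representation. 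The invariant summand $(\Ql[\pi_0])_{\st}$ is the one-dimensional subalgebra spanned by the idempotent $\Pi := \frac{1}{|\pi_0|}\sum_{\gamma}\gamma$, and the composition sends $e$ to $\Pi$, which is the identity of this subalgebra; tensoring with $\id_{\homo{*}{P^0/S}}$ yields the desired algebra iso.

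The main obstacle is sheaf-theoretic: $\pi_0(P/S)$ is only a sheaf of groups, so one must verify that the local constructions (sections $t_\gamma$, decomposition into $\pi_0$-components, projector $\Pi$ onto the stable part) glue canonically. Canonicity in part (1) is secured by the fiberwise connectedness of $P^0$, while canonicity of the stable-part direct summand in part (2) is guaranteed by the semisimplicity of $\Ql[G]$ for finite $G$ in characteristic zero applied fiberwise, so that $\Pi$ is canonical and descends.
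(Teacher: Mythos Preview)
Your argument is correct and rests on the same core observation as the paper's proof: every connected component of $P$ is a translate of $P^0$. The packaging differs slightly. The paper notes that the map in (\ref{eq:homowedgeP}) is already globally defined (as the Pontryagin product of $\homo{0}{P/S}=\Ql[\pi_0(P/S)]$ with the image of $\homo{*}{P^0/S}$ under the inclusion), and that by Lemma~\ref{l:decomphomop} both sides split as direct sums of shifted sheaves; it then simply checks this map is an isomorphism on stalks at geometric points $s\in S$, where your translate argument applies directly. This sidesteps the \'etale-local section choices and the descent verification you carry out. Your route is valid and makes the independence-of-choices explicit, but it does more work than needed: since the map to be tested is canonical from the outset, no gluing is required. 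For part~(2) the two proofs coincide: under the isomorphism of part~(1), the composition $\homo{*}{P^0/S}\to\homo{*}{P/S}\twoheadrightarrow\homo{*}{P/S}_{\st}$ becomes $\id_{\homo{*}{P^0/S}}$ tensored with $\Ql\cdot e\hookrightarrow\Ql[\pi_0]\twoheadrightarrow\Ql[\pi_0]^{\pi_0}$, which is visibly an isomorphism.
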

\begin{proof}
Both maps (\ref{eq:homowedgeP}) and (\ref{eq:sthomowedge}) are direct sums of maps between (shifted) sheaves. To check they are isomorphisms, it suffices to check on the stalks. Fix a geometric point $s\in S$. Since all connected components of $P_s$ are isomorphic to $P^0_s$, we have a $\pi_0(P_s)$-equivariant isomorphism
\begin{equation}\label{eq:Hten}
\homog{*}{P_s}\cong\homog{*}{P^0_s}\otimes\Ql[\pi_0(P_s)]
\end{equation}
on which $\pi_0(P_s)$ acts via the regular representation on $\Ql[\pi_0(P_s)]$. This proves (\ref{eq:homowedgeP}). Using (\ref{eq:Hten}), the natural embedding $P^0_s\subset P_s$ followed by the projection onto the stable part
\begin{equation}\label{eq:Hc}
\homog{*}{P_s^0}\hookrightarrow\homog{*}{P_s}\twoheadrightarrow\homog{*}{P_s}_{\st}
\end{equation}
becomes the tensor product of the identity map on $\homog{*}{P^0_s}$ with the map
\begin{equation}\label{eq:obvious}
\Ql\cdot e\hookrightarrow\Ql[\pi_0(P_s)]\twoheadrightarrow\Ql[\pi_0(P_s)]^{\pi_0(P_s)}
\end{equation}
where $e\in\pi_0(P_s)$ is the identity element. Now the composition of the maps in (\ref{eq:obvious}) is obviously an isomorphism, hence the composition of the maps in (\ref{eq:Hc}) is also an isomorphism. To obtain the first isomorphism in (\ref{eq:sthomowedge}), we only need to apply the isomorphism (\ref{eq:fiberwedge}) to the connected Picard stack $P^0/S$.
\end{proof}

\begin{remark}
If $P/S$ is smooth and proper, then the above lemma easily dualize to a similar statement about the cohomology complex $\coho{*}{P/S}$. In particular, we have an isomorphism of algebra objects in $D^b_c(S)$ (with the cup product on the LHS and the wedge product on the RHS):
\begin{equation}\label{eq:stcohowedge}
\coho{*}{P/S}_{\st}\cong\coho{*}{P^0/S}\cong\bigwedge(V_\ell(P^0/S)^*[-1]).
\end{equation}
\end{remark}

\subsection{The cap product}\label{ss:cap}

Suppose $P$ acts on a Deligne-Mumford stack $M$ over $S$, with the action and projection morphisms
\begin{equation*}
\xymatrix{P\times_{S}M\ar@<.7ex>[r]^{\act}\ar@<-.7ex>[r]_{\proj} & M.}
\end{equation*}
Suppose $\calF$ is a $P$-equivariant complex on $M$, then in particular we are given an isomorphism
\begin{equation*}
\phi:\act^!\calF\isom\proj^!\calF.
\end{equation*}
Therefore we have a map
\begin{equation}\label{eq:precap}
\act_!\proj^!\calF\xrightarrow{\act_!\phi^{-1}}\act_!\act^!\calF\xrightarrow{\adj}\calF.
\end{equation}
Let $f:M\to S$ be the structure map. Using K\"{u}nneth formula ($P$ is smooth over $S$), we get
\begin{equation}\label{eq:kun}
\homo{*}{P/S}\otimes f_!\calF=g_!\DD_{P/S}\otimes f_!\calF\cong(g\times f)_!\proj^!\calF=f_!\act_!\proj^!\calF.
\end{equation}
Applying $f_!$ to the map (\ref{eq:precap}) and combining with the isomorphism (\ref{eq:kun}), we get the {\em cap product}:
\begin{equation}\label{eq:cap}
\cap:\homo{*}{P/S}\otimes f_!\calF\to f_!\calF
\end{equation}
such that $f_!\calF$ becomes a module over the algebra $\homo{*}{P/S}$ under the Pontryagin product. Using the decomposition (\ref{eq:decomphomop}) we get the actions
\begin{eqnarray*}
\cap_i&:&\homo{i}{P/S}\otimes f_!\calF\to f_!\calF[-i];\\
\cap_i^{m}&:&\homo{i}{P/S}\otimes\bR^mf_!\calF\to\bR^{m-i}f_!\calF.
\end{eqnarray*}
When $i=0$, the cap product $\cap_0$ gives an action of $\Ql[\pi_0(P/S)]$ on $f_!\calF$. By the isomorphism (\ref{eq:homowedgeP}), to understand the cap product, we only need to understand $\cap_0$ and $\cap_1$.

\section{Complement on cohomological correspondences}\label{s:complcorr}

This appendix is a complement to \cite[App. A]{GSI}. We continue to use the notations from {\em loc.cit}. In particular, we fix a correspondence diagram
\begin{equation}\label{d:corr}
\xymatrix{ & C\ar[dl]_{\overleftarrow{c}}\ar[dr]^{\overrightarrow{c}} & \\
 X\ar[dr]_{f} & & Y\ar[dl]^{g}\\
& S & }
\end{equation}

% Cup

\subsection{Cup product and correspondences}
In this subsection, we study the interaction between the cup product and cohomological correspondences. For each $i\in\ZZ$, let
\begin{eqnarray*}
\Corr^i(C;\calF,\calG)=\Corr(C;\calF[i],\calG)\\
\Corr^*(C;\calF,\calG)=\oplus_i\Corr^i(C;\calF,\calG).
\end{eqnarray*}
We have a left action of $\cohog{*}{X}$ and a right action of $\cohog{*}{Y}$ on $\Corr^*(C;\calF,\calG)$. More precisely, for $\alpha\in \cohog{j}{X},\beta\in \cohog{j}{Y}$ and $\zeta\in\Corr^i(C;\calF,\calG)$, we define $\alpha\cdot\zeta,\zeta\cdot\beta\in\Corr^{i+j}(C;\calF,\calG)$ as
\begin{eqnarray*}
\alpha\cdot\zeta:\overrightarrow{c}^*\calG\xrightarrow{\zeta}\overleftarrow{c}^!\calF[i]\xrightarrow{\overrightarrow{c}^!(\cup\alpha)}\overleftarrow{c}^!\calF[i+j];\\
\zeta\cdot\beta:\overrightarrow{c}^*\calG\xrightarrow{\overrightarrow{c}^*(\cup\beta)}\overrightarrow{c}^*\calG[j]\xrightarrow{\zeta}\overleftarrow{c}^!\calF[i+j].
\end{eqnarray*}

The following lemma is obvious.

\begin{lemma}\label{l:cup}
For $\alpha\in \cohog{j}{X},\beta\in\cohog{j}{Y}$ and $\zeta\in\Corr^i(C;\calF,\calG)$, we have
\begin{equation*}
(\alpha\cdot\zeta)_{\#}=f_*(\cup\alpha)\circ\zeta_\#;\hspace{1cm}(\zeta\cdot\beta)_\#=\zeta_\#\circ g_!(\cup\beta).
\end{equation*}
\end{lemma}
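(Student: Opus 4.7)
The plan is to prove both identities by unfolding the construction of $\zeta_\#$ and invoking naturality of the adjunction counit and of the base change morphism used in its definition. Recall that $\zeta_\#$ is defined as the composition
\begin{equation*}
g_!\calG \xrightarrow{\textup{bc}} f_*\overleftarrow{c}_!\overrightarrow{c}^*\calG \xrightarrow{f_*\overleftarrow{c}_!\zeta} f_*\overleftarrow{c}_!\overleftarrow{c}^!\calF[i] \xrightarrow{f_*(\textup{counit})} f_*\calF[i],
\end{equation*}
where $\textup{bc}$ is the base change map for the commutative square with corners $C,X,Y,S$, and the counit comes from the adjunction $(\overleftarrow{c}_!,\overleftarrow{c}^!)$. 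Both $\cup\alpha:\calF\to\calF[j]$ and $\cup\beta:\calG\to\calG[j]$ are morphisms in the appropriate derived categories, so every step in the above composition is functorial with respect to them.

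For the first identity, $\alpha\cdot\zeta$ is by definition $\overleftarrow{c}^!(\cup\alpha)\circ\zeta$. Substituting into the formula for $(\alpha\cdot\zeta)_\#$, the last two arrows become
\begin{equation*}
f_*\overleftarrow{c}_!\overleftarrow{c}^!\calF[i] \xrightarrow{f_*\overleftarrow{c}_!\overleftarrow{c}^!(\cup\alpha)} f_*\overleftarrow{c}_!\overleftarrow{c}^!\calF[i+j] \xrightarrow{f_*(\textup{counit})} f_*\calF[i+j].
\end{equation*}
Naturality of the counit $\overleftarrow{c}_!\overleftarrow{c}^!\to\id$ applied to the morphism $\cup\alpha$ lets one swap the order, replacing this composition with $f_*(\textup{counit})$ followed by $f_*(\cup\alpha)$. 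This exhibits $(\alpha\cdot\zeta)_\#$ as $f_*(\cup\alpha)\circ\zeta_\#$.

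For the second identity, $\zeta\cdot\beta=\zeta\circ\overrightarrow{c}^*(\cup\beta)$, so the middle arrow in $(\zeta\cdot\beta)_\#$ factors as $f_*\overleftarrow{c}_!\overrightarrow{c}^*(\cup\beta)$ followed by $f_*\overleftarrow{c}_!\zeta$. Naturality of the base change map $\textup{bc}: g_!\to f_*\overleftarrow{c}_!\overrightarrow{c}^*$ applied to $\cup\beta:\calG\to\calG[j]$ yields a commutative square that lets one rewrite $f_*\overleftarrow{c}_!\overrightarrow{c}^*(\cup\beta)\circ\textup{bc}_\calG = \textup{bc}_{\calG[j]}\circ g_!(\cup\beta)$. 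The remaining composition is by definition $\zeta_\#$ applied to $\calG[j]$, yielding $(\zeta\cdot\beta)_\#=\zeta_\#\circ g_!(\cup\beta)$.

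There is no serious obstacle here; the only task is to write down the two commutative diagrams that encode naturality of the counit and of the base change map with respect to the morphisms $\cup\alpha$ and $\cup\beta$. This is purely a bookkeeping exercise, so the proof should be quite short.
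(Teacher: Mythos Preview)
Your argument is correct and is exactly the natural verification one would write out. The paper itself gives no proof beyond the sentence ``The following lemma is obvious,'' so your unfolding of $\zeta_\#$ and appeal to naturality of the counit and the base change map simply supplies the details the paper omits.
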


On the other hand, $\cohog{*}{C}$ acts on $\Corr^*(C;\calF,\calG)=\Ext^*_C(\overrightarrow{c}^*\calG,\overleftarrow{c}^!\calF)$ by cup product, which we denote simply by $\cup$.

\begin{lemma}\label{l:cupcorr}
Let $\alpha\in \cohog{*}{X},\beta\in\cohog{*}{Y}$ and $\zeta\in\Corr^*(C;\calF,\calG)$, then we have
\begin{equation}\label{eq:leftrightcup}
\alpha\cdot\zeta=\zeta\cup(\overleftarrow{c}^*\alpha);\hspace{1cm}\zeta\cdot\beta=\zeta\cup(\overrightarrow{c}^*\beta).
\end{equation}
\end{lemma}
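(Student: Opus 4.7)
The proof is purely formal and follows from the compatibility of the six-functor formalism with cup products. Here is the plan.

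First, recall that for $\alpha\in\cohog{j}{X}=\Hom_{D(X)}(\const{X},\const{X}[j])$, the cup product endomorphism $\cup\alpha:\calF\to\calF[j]$ on any $\calF\in D(X)$ is obtained by tensoring $\calF$ with the morphism $\alpha$ (using the canonical isomorphism $\calF\otimes\const{X}\cong\calF$). The key compatibility I would invoke is: for a morphism $h:A\to B$ and $\alpha\in\cohog{j}{B}$, the two endomorphisms $h^!(\cup\alpha)$ and $\cup(h^*\alpha)$ of $h^!\calF$ agree, and likewise $h^*(\cup\alpha)=\cup(h^*\alpha)$ on $h^*\calF$. This is an immediate consequence of the projection-formula isomorphism $h^!(\calF\otimes\calK)\cong h^!\calF\otimes h^*\calK$ and the analogous identity for $h^*$, applied to $\calK=\const{B}$ and the morphism $\alpha:\const{B}\to\const{B}[j]$.

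With this in hand, the first identity in (\ref{eq:leftrightcup}) unfolds directly. By definition,
\begin{equation*}
\alpha\cdot\zeta:\overrightarrow{c}^*\calG\xrightarrow{\zeta}\overleftarrow{c}^!\calF[i]\xrightarrow{\overleftarrow{c}^!(\cup\alpha)}\overleftarrow{c}^!\calF[i+j].
\end{equation*}
The second arrow equals $\cup(\overleftarrow{c}^*\alpha)$ by the compatibility just stated, so the composition coincides with the cup product $\zeta\cup\overleftarrow{c}^*\alpha$ computed by cupping on the target. The second identity is analogous: unfolding $\zeta\cdot\beta$ and applying $\overrightarrow{c}^*(\cup\beta)=\cup(\overrightarrow{c}^*\beta)$ shows that $\zeta\cdot\beta$ is $\zeta$ followed by cup with $\overrightarrow{c}^*\beta$ on the source of $\zeta$.

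The only point that requires a sentence of justification is that $\zeta\cup\overleftarrow{c}^*\alpha$, regarded as an element of $\Ext^{i+j}_C(\overrightarrow{c}^*\calG,\overleftarrow{c}^!\calF)$, may be computed either as $(\cup\overleftarrow{c}^*\alpha)\circ\zeta$ (cup on target) or as $\zeta[j]\circ(\cup\overleftarrow{c}^*\alpha)$ (cup on source); this is the standard bimodule compatibility of cup product in Ext groups and involves no sign issue because $\overleftarrow{c}^*\alpha$ acts through the identity functor on $\overrightarrow{c}^*\calG$ and $\overleftarrow{c}^!\calF$. I do not anticipate any genuine obstacle; the whole argument is bookkeeping with the six functors, and the main thing to get right is the typographical check that $h^!$ and $h^*$ both intertwine cup products in the expected way.
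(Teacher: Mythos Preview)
Your proposal is correct and follows essentially the same route as the paper. The paper constructs the natural transformation $\overleftarrow{c}^!\calF\otimes\overleftarrow{c}^*\calK\to\overleftarrow{c}^!(\calF\otimes\calK)$ explicitly via the projection formula and adjunction, then uses its naturality in $\calK=\Ql$ to conclude $\overleftarrow{c}^!(\cup\alpha)=\cup(\overleftarrow{c}^*\alpha)$; you invoke exactly this compatibility directly, so the content is identical.
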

\begin{proof}
The second identity is obvious from definition. We prove the first one. By the projection formula and adjunction, we have a map
\begin{equation*}
\overleftarrow{c}_!(\overleftarrow{c}^!\calF\otimes\overleftarrow{c}^*\calK)\cong(\overleftarrow{c}_!\overleftarrow{c}^!\calF)\otimes\calK\to\calF\otimes\calK
\end{equation*}
bifunctorial in $\calF,\calK\in D^b_c(X,\Ql)$. Applying the adjunction $(\overleftarrow{c}_!,\overleftarrow{c}^!)$ again, we get a bifunctorial map
\begin{equation}\label{eq:cup}
\overleftarrow{c}^!\calF\otimes\overleftarrow{c}^*\calK\to\overleftarrow{c}^!(\calF\otimes\calK).
\end{equation}
Now taking $\calK=\Ql$, and view $\alpha\in\cohog{i}{X}$ as a map $\alpha:\calK\to\calK[i]$. The functoriality of the map (\ref{eq:cup}) in $\calK$ implies a commutative diagram
\begin{equation*}
\xymatrix{\overleftarrow{c}^!\calF\otimes\overleftarrow{c}^*\calK\ar[r]\ar[d]^{\id\otimes\overleftarrow{c}^*\alpha} & \overleftarrow{c}^!(\calF\otimes\calK)\ar[d]^{\overleftarrow{c}^!(\calF\otimes\alpha)}\\
\overleftarrow{c}^!\calF\otimes\overleftarrow{c}^*\calK[i]\ar[r] & \overleftarrow{c}^!(\calF\otimes\calK[i])}
\end{equation*}
which is equivalent to the first identity in (\ref{eq:leftrightcup}).
\end{proof}

If we have a base change diagram of correspondences induced from $S'\to S$ as in \cite[App. A.2]{GSI}, then the pull-back map
\begin{equation*}
\gamma^*:\Corr^*(C;\calF,\calG)\to\Corr^*(C';\phi^*\calF,\psi^*\calG)
\end{equation*}
commutes with the actions of $\cohog{*}{X},\cohog{*}{Y}$ and $\cohog{*}{C}$ in the obvious sense.

% cap

\subsection{Cap product and correspondences}\label{ss:capcorr}
In this subsection, we study the interaction between the cap product (see Sec. \ref{ss:cap}) and cohomological correspondences. Suppose a group scheme $P$ (or a Picard stack which is Deligne-Mumford) over $S$ acts on the correspondence diagram (\ref{d:corr}), i.e., $\overleftarrow{c}$ and $\overrightarrow{c}$ are $P$-equivariant. We use ``$\act$'' to denote the action maps by $P$ and ``$\proj$'' to denote the projections along $P$, and add subscripts to indicate the space on which $P$ is acting, e.g., $\act_C:P\times_SC\to C$. Let $\calF,\calG$ be $P$-equivariant complexes on $X$ and $Y$.

\begin{defn}\label{def:Pinv}
For $\zeta\in\Corr(C;\calF,\calG)$, we say $\zeta$ is {\em $P$-invariant} if the pull-backs $\act_C^!\zeta$ and $\proj_C^!\zeta$ correspond to each other under the isomorphism
\begin{equation*}
\Corr(P\times_SC;\act_X^!\calF,\act_Y^!\calG)\isom\Corr(P\times_SC;\proj_X^!\calF,\proj_Y^!\calG)
\end{equation*}
given by the equivariant structures of $\calF$ and $\calG$.
\end{defn}

\begin{remark} Here we use the $!$-pull-back rather than the $*$-pull-back of cohomological correspondences defined in \cite[App. A.2]{GSI}. Since the action morphisms and the projections are smooth, the $!$- and $*$-pull-backs only differ by a shift and a twist, so that the results in \cite[App. A.2]{GSI} are still applicable in this situation.
\end{remark}

\begin{lemma}\label{l:capcorr}
Suppose $X$ is proper over $S$ so that $f_!\calF=f_*\calF$. Let $\zeta\in\Corr(C;\calF,\calG)$ be $P$-invariant, then the cap product action of $\homo{*}{P/S}$ commutes with $\zeta_\#$, i.e., we have a commutative diagram
\begin{equation}\label{d:capcorr}
\xymatrix{\homo{*}{P/S}\otimes g_!\calG\ar[d]^{\cap}\ar[r]^{\id\otimes\zeta_\#} & \homo{*}{P/S}\otimes f_*\calF\ar[d]^{\cap}\\
g_!\calG\ar[r]^{\zeta_\#} & f_*\calF}
\end{equation}
\end{lemma}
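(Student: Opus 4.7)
The plan is to realize both the cap product and the pushforward $\zeta_\#$ simultaneously as $(-)_\#$ of suitable cohomological correspondences, and then invoke functoriality of the $(-)_\#$ construction under composition.

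First, I would re-express the cap product in the language of cohomological correspondences. The $P$-equivariant structure furnishes an isomorphism $\phi_X:\act_X^!\calF\isom\proj_X^!\calF$, and since $\act_X$ and $\proj_X$ are both smooth of the same relative dimension $d=\dim(P/S)$, $\phi_X^{-1}$ can be reinterpreted (up to the canonical identification $\proj_X^!\calF\cong\proj_X^*\calF[2d](d)$) as an element $\xi_X\in\Corr(P\times_SX;\calF,\calF)$ for the correspondence with left leg $\act_X$ and right leg $\proj_X$. Unpacking the definition in Sec.~\ref{ss:cap} and using the K\"unneth isomorphism to identify $f_!\act_{X,!}\proj_X^!\calF$ with $\homo{*}{P/S}\otimes f_!\calF$, one checks that $(\xi_X)_\#$ coincides with the cap product $\cap_X:\homo{*}{P/S}\otimes f_*\calF\to f_*\calF$ (using $X$ proper so $f_!=f_*$). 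An entirely parallel construction produces $\xi_Y\in\Corr(P\times_SY;\calG,\calG)$ with $(\xi_Y)_\#=\cap_Y$ on $g_!\calG$.

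Next, I would translate the $P$-invariance of $\zeta$ into an equality of two compositions of cohomological correspondences. View $P\times_SC$ as a correspondence between $X$ and $Y$ with left leg $\overleftarrow{c}\circ\act_C$ and right leg $\overrightarrow{c}\circ\proj_C$. This correspondence admits two natural factorizations: it is the composition $\xi_X\star\zeta$ over the intermediate space $P\times_SX$, and also the composition $\zeta\star\xi_Y$ over $P\times_SY$. By the formalism of \cite[App.~A]{GSI}, these factorizations produce elements of $\Corr(P\times_SC;\calF,\calG)$, and the $P$-invariance of $\zeta$ in the sense of Def.~\ref{def:Pinv} is exactly the assertion that these two elements agree. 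Having done this, I would invoke the compatibility of $(-)_\#$ with composition of cohomological correspondences (cf.\ \cite[App.~A.6]{GSI}) to get
\begin{equation*}
(\xi_X\star\zeta)_\#=\cap_X\circ\zeta_\#\qquad\text{and}\qquad(\zeta\star\xi_Y)_\#=\zeta_\#\circ\cap_Y,
\end{equation*}
and combining with the equality from the previous step yields precisely the commutativity of diagram (\ref{d:capcorr}).

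The main obstacle I expect is Step~1: one must carefully track the K\"unneth isomorphism, the shift between $\proj_X^!$ and $\proj_X^*$, and the relative dualizing complex $\DD_{P/S}$ through the definition of the cap product, and verify that the adjunction morphism appearing in $(-)_\#$ really matches the adjunction used in (\ref{eq:precap}). Once this identification is in hand, Steps~2 and~3 are essentially formal applications of the associativity and functoriality properties of cohomological correspondences already developed in \cite[App.~A]{GSI}; the properness of $X$ enters only to allow us to use $f_!=f_*$ so that the target of $\zeta_\#$ and the target of the cap product are literally the same complex.
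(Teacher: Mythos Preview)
Your approach is correct and rests on the same geometric input as the paper's proof: both arguments pass through the correspondence $P\times_S C$ and use the $P$-invariance hypothesis to identify the two pullbacks $\act_C^!\zeta$ and $\proj_C^!\zeta$. The paper packages this more directly: it applies the base-change compatibility \cite[Lem.~A.2.1]{GSI} once to the map of correspondence diagrams $(P\times_S C,\,P\times_S X,\,P\times_S Y)\to(C,X,Y)$ given by the action morphisms, obtaining a commutative square whose top row is $(\act_C^!\zeta)_\#$ and whose vertical arrows are the adjunctions $h_!h^!\to\id$ (these become the cap products after the K\"unneth identification $h_!h^!\cong\homo{*}{P/S}\otimes(-)$); invariance then rewrites the top row as $(\proj_C^!\zeta)_\#$, which is visibly $\id\otimes\zeta_\#$. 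In particular the paper never reinterprets the cap product itself as a $(\xi_X)_\#$. Your route is more categorical---everything becomes a $(-)_\#$ and commutativity reduces to associativity of convolution---and is a valid alternative; the K\"unneth/adjunction bookkeeping you anticipate in Step~1 is genuinely the only place requiring care, and it amounts to the same identification the paper performs when it recognizes the vertical adjunction arrows as the cap product.
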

\begin{proof}
Consider the correspondence of $P\times_SC$ between $P\times_SX$ and $P\times_SY$ over $P$ as the base change from the correspondence diagram (\ref{d:corr}) by the action maps $\act$. Let $h:P\to S$ be the structure morphism. By \cite[Lem. A.2.1]{GSI}, we a get commutative diagram (note that the action maps are smooth)
\begin{equation}\label{d:tr}
\xymatrix{g_!\act_{Y,!}\act_Y^!\calG\ar@{=}[r]_{\bch}\ar[dr]_{\adj}\ar@/^2pc/[rrr]^{(\act_C^!\zeta)_\#} & h_!h^!g_!\calG\ar[r]_{h_!h^!(\zeta_\#)}\ar[d]^{\adj} & h_!h^!f_*\calF\ar@{=}[r]_{\bch}\ar[d]^{\adj} & f_*\act_{X,!}\act^!_X\calF\ar[dl]^{\adj}\\
& g_!\calG\ar[r]^{\zeta_\#} & f_*\calF & }
\end{equation}
By assumption, we have $\act_C^!\zeta=\proj_C^!\zeta$. Therefore we can identify the top row of the diagram (\ref{d:tr}) with
\begin{equation}\label{eq:corrp}
h_!\circ(\proj_C^!\zeta)_\#:g_!\proj_{Y,!}\proj_{Y}^!\calG\to f_*\proj_{X,!}\proj^!_X\calF.
\end{equation}
It is easy to identify the map (\ref{eq:corrp}) with $\id\otimes\zeta_\#:\homo{*}{P/S}\otimes g_!\calG\to\homo{*}{P/S}\otimes f_*\calF$. This identifies the outer quadrangle of the diagram (\ref{d:tr}) with the diagram (\ref{d:capcorr}).
\end{proof}

\end{document}